 \documentclass[11pt]{amsart}
 \usepackage{amscd}
 \usepackage{amsxtra}
 \usepackage{amsthm}
 \usepackage{extarrows}
 \usepackage{latexsym}
 \usepackage{mathrsfs} 
 \usepackage{pst-all,multido,ifthen}
 \usepackage{amsmath}
 \usepackage{amssymb}
 \usepackage{amsfonts}
 \usepackage{stmaryrd}
 \usepackage[all]{xy}
 \usepackage{tikz}
 \usepackage{tikz-cd}
 \usepackage[english]{babel}
 \usepackage{mathtools}
 \usepackage{upgreek}
 \usepackage{hyperref}

\newtheorem{theorem}{Theorem}[section]
\newtheorem{definition}[theorem]{Definition}
\newtheorem{lemma}[theorem]{Lemma}

\newtheorem{proposition}[theorem]{Proposition}
\newtheorem{corollary}[theorem]{Corollary}

\newtheorem{ex}[theorem]{Example}
\newtheorem{remark}[theorem]{Remark}

\def\Aut{\textup{Aut}}

\def\char{\textup{char}}
\def\Chi{\textup{Chi}}

\def\crys{\textup{crys}}
\def\circul{\textup{circ}}

\def\depth{\textup{depth}}
\def\End{\textup{End}}
\def\soc{\textup{soc}}

\def\ss{\textup{ss}} 
\def\Ext{\textup{Ext}}

\def\Frob{\textup{Frob}}

\def\Hom{\textup{Hom}}
\def\Im{\textup{Im}}
\def\Isom{\textup{Isom}}

\def\Ker{\textup{Ker}}
\def\l{\textup{l}}

\def\r{\textup{r}}
\def\q{\textup{q}}

\def\rad{\textup{rad}}

\def\Spec{\textup{Spec}}

 \usepackage{scalerel}
 \usepackage{stackengine,wasysym}
 \stackMath
 \usepackage{tikz}

 \usepackage{pgffor}
 \makeatletter
 \newcommand{\medoplus}{\mathbin{\mathpalette\make@small\oplus}}
 \newcommand{\medotimes}{\mathbin{\mathpalette\make@small\otimes}}
 
 \newcommand{\make@small}[2]{%
 \vcenter{\hbox{%
 \scalebox{1.6}{$\m@th#1#2$}%
 }}%
 }
 \makeatother

\begin{document}
\title[On lifting representations and actions]{On lifting representations and actions on curves of the metacyclic groups $C_{p^s}\rtimes C_m$}
\author{Huy Dang and Adrian Vasiu}
\maketitle
 
\noindent
{\bf ABSTRACT.} For a prime $p$, a pair $(s,m)\in\mathbb N^2$ with $m$ relatively prime to $p$, a homomorphism $\chi:C_m\rightarrow\Aut(C_{p^s})$, and an algebraically closed field $k$ of characteristic $p$, we consider the semidirect product $G=C_{p^s}\rtimes_{\chi} C_m$, denote its $p$-Sylow subgroup $C_{p^s}$ by $H$, and consider a $k[G]$-module $V$. Let $R$ be a complete discrete valuation ring of residue field $k$ and mixed characteristic $(0,p)$ that contains a primitive $p^s$-th root of unity. If $\chi$ is injective, we present two necessary and sufficient criteria for lifting $V$ to an $R[G]$-module $\mathcal V$ which is a free $R$-module: (i) when no extra requirement is made on $\mathcal V$ and (ii) when we require $\mathcal V^{C_{p^s}}=\{0\}$. The criteria correct several results in the literature and we use them to prove that, if $\chi$ is injective and $G$ acts faithfully on a connected smooth projective curve $X$ over $k$, then, under mild hypotheses satisfied if $X\rightarrow X/G$ is a Harbater--Katz--Gabber cover, the $k[G]$-module $H^0(X,\Omega_X)$ has a lift $\mathcal V$ to $R$ with $\mathcal V^{C_{p^s}}=\{0\}$. With $B$ as the field of fractions of $R$, we prove the following obstruction when $p$ is odd, $G/\Ker(\chi)$ has even order, and $X/C_{p^s}\cong\mathbb P^1_k$: if no such lift $\mathcal V$ exists with the $B[H]$-module $\mathcal V\otimes_R B$ defined over $\mathbb Q$, then the action of $G$ on $X$ does not lift to $R$.
 
\bigskip\noindent
{\bf KEY WORDS:} cohomology, curve, differential form, field, genus, group, module, ramification, representation, and ring.
 
\bigskip\noindent
{\bf MSC 2020: 05E10, 13F35, 14E22, 14F30, 14F40, 14G17, 14H30, 14H37, 20C10, 20C15, and 20C20.} 

\section{Introduction}\label{S1}

For $l\in\mathbb N$ and $n\in\mathbb Z$, let $[n]_l:=n+l\mathbb Z\in\mathbb Z/l\mathbb Z$ and the {\it interval of integers} $\llbracket n,l\rrbracket:=\{i\in\mathbb Z|n\le i\le l\}$; so $\llbracket n,l\rrbracket=\emptyset$ if $n>l$. Let $C_l:=\{[i]_l|i\in\llbracket0,l-1\rrbracket\}$ be the cyclic additive group of order $l$ with `standard' generator $[1]_l$. For an $l$-th root of unity $\zeta$ in some ring and $n\in\mathbb Z$, let $\zeta^{[n]_l}:=\zeta^n$. The multiplicative group of units of a ring $S$ is denoted by $S^{\times}$.

Let $p$ be a prime. When we view $\mathbb Z/p\mathbb Z$ as a field, we denote it by $\mathbb F_p$. Let $s\in\mathbb N$. For $p>2$ we have an isomorphism $\Aut(C_{p^s})\cong\mathbb F_p^{\times}\times C_{p^{s-1}}$. For $p=2$, $\Aut(C_{2^s})$ is isomorphic to $\mathbb F_2^{\times}\times C_2\times C_{2^{s-2}}$ if $s\ge 3$, to $\mathbb F_2^{\times}\times C_2$ if $s=2$, and to $\mathbb F_2^{\times}$ if $s=1$. Thus the epimorphism $\Aut(C_{p^s})\rightarrow\mathbb F_p^{\times}$ that maps an automorphism $\alpha$ to the reduction of $\alpha([1]_{p^s})$ modulo $p$ has a unique section. Hence for $m\in\mathbb N$ relatively prime to $p$, each character $\chi:C_m\rightarrow\mathbb F_p^{\times}$ lifts uniquely to a character $C_m\rightarrow\Aut(C_{p^s})$ denoted also by $\chi$.

We consider the semidirect product 
$$G:=C_{p^s}\rtimes_{\chi} C_m$$ 
defined by (the lift $\chi:C_m\rightarrow\Aut(C_{p^s})$ of) $\chi:C_m\rightarrow\mathbb F_p^{\times}$. As $G$ is non-abelian (equivalently, non-cyclic) if $\chi$ is non-trivial, we use $\cdot$ for the binary operation on $G$. Let $H$ be the unique $p$-Sylow subgroup of $G$. As $H$ is cyclic of order $p^s$, let $\tau$ be a generator of $H$. We use $\tau$ to identify $H=C_{p^s}$, $\tau^n=[n]_{p^s}$ for each $n\in\mathbb Z$, and $\Aut(H)=\Aut(C_{p^s})$. For $(\alpha,x)\in\Aut(H)\times H$ let $x^{\alpha}:=\alpha(x)$. Let $\sigma\in G$ be an element of order $m$; we use it to identify $\Sigma=G/H=C_m$ and $\sigma^n=[n]_m$ for each $n\in\mathbb Z$, where $\Sigma$ is the subgroup of $G$ generated by $\sigma$. We have an identity $\sigma\tau\sigma^{-1}=\tau^{\chi(\sigma)}$, where $\chi(\sigma):=\chi(\sigma H)$.

Let $k$ be an algebraically closed field of characteristic $p$. Let $R_0$ be the ring of $p$-typical Witt vectors with coefficients in $k$; so it is the unique complete discrete valuation ring of mixed characteristic $(0,p)$, residue field $k$, and uniformizer $p$. Let $B_0:=R_0[\frac{1}{p}]$ be the field of fractions of $R_0$. For $l\in\mathbb N$, let $B_l$ be the finite field extension of $B_0$ obtained by adjoining a primitive $p^l$-th root of unity $\zeta_{p^l}$ and let $R_l$ be the normalization of $R_0$ in $B_l$. As
$$[B_l:B_0]=p^l-p^{l-1},$$ 
$R_l$ is a free $R_0$-module of rank $p^l-p^{l-1}$. Let $R$ be the normalization of $R_0$ in a finite field extension $B$ of $B_0$; so $B=R[\frac{1}{p}]$ and $R$ is a complete discrete valuation ring of mixed characteristic $(0,p)$. Let $\upsilon$ be a uniformizer of $R$; we have $k=R_0/R_0p=R/R\upsilon$. In practice, $R$ often contains $R_s$.

For $S$ a commutative ring with identity element, each $S[G]$-module $M$ is assumed to be a left finitely generated $S[G]$-module. However, we view $S[G]$ as an $(S[G],S[H])$-bimodule and hence for each $S[H]$-module $N$, the tensor product $S[G]\otimes_{S[H]} N$ is an $S[G]$-module such that we have an isomorphism of $S[H]$-modules
\begin{equation}\label{EQ1}
S[G]\otimes_{S[H]} N=\oplus_{i=0}^{m-1} S[H]\sigma^{i}\otimes_{S[H]} N\cong\oplus_{i=0}^{m-1} S[H]\otimes_{\chi(\sigma^i),S[H]} N.
\end{equation} 

Let $\soc_{S[G]}(M)$ be the socle of $M$. For $l\in\mathbb N\cup\{0\}$, let $lM$ be the $S[G]$-module which is the direct sum of $l$ copies of $M$. For a subset $A$ of $G$, let $M^A:=\{v\in M|g(v)=v\;\textup{for each}\; g\in A\}$. If $A=\{g\}$ has one element, then we often denote $M^{\{g\}}$ simply by $M^g$. 

\begin{definition}\label{D1}
We say that a $k[G]$-module $V$ lifts to $R$ if there exists an $R[G]$-module $\mathcal V$, called a lift of $V$ to $R$, whose reduction modulo $\upsilon$ is $V$ and which as an $R$-module is free (i.e., the multiplication by $\upsilon$ on $\mathcal V$ is injective).
\end{definition}

{\bf Motivation.} This paper grew out of the intent to apply the classification results of \cite{KT1} and \cite{KT2} on $k[G]$-modules that lift to $R_s$. As it turned out that these classifications miss many $k[G]$-modules that lift to $R_s$, our refreshed motivation, the new classification, and applications to curves are intermingled with references to  \cite{KT1} and \cite{KT2} as follows.

The interest in lifting $k[G]$-modules stems out from the following two problems on an arbitrary finite group $\Gamma$: (i) classify all faithful global actions of $\Gamma$ on connected smooth projective curves $X$ over $k$ that satisfy certain axioms and that lift to characteristic $0$ (i.e., to some $R$), and (ii) classify all faithful local actions of $\Gamma$ on $k[[y]]$ that lift to characteristic $0$ (i.e., to some $R[[y]]$). Here $y$ is an indeterminate. 

These two problems are interrelated as to every faithful local action $\break\phi:\Gamma\rightarrow \Aut_k(k[[y]])$, the Harbater--Katz--Gabber Theorem (see \cite{K}, pp.\ 70--71 and Main Thm.\ 1.4.1; for the case when $\Gamma$ is a $p$-group, see also \cite{H}, Cor.\ 2.4) associates to it a connected smooth projective curve $X$ over $k$ equipped with a faithful global action $\rho_{\phi}:\Gamma\rightarrow\Aut(X)$ such that $X/\Gamma\cong \mathbb P^1_k$ and the morphism $X\rightarrow X/\Gamma$ is ramified only above one or two points of $X/\Gamma$: above the first one it is totally and wildly ramified with inertia group $\Gamma$, and above the second one, if it exists, it is tamely ramified; $X$ is nowadays called the Harbater--Katz--Gabber (HKG) cover of $\mathbb P^1_k$ associated to $\phi$ (e.g., see \cite{P}, Sect.\ 4B, pp.\ 308--309).

Obus Conjecture (see \cite{O3}, Conj.\ 1.9) predicts that if $\Gamma=G$, then for each $\phi$ the KGB obstruction defined in \cite{CGH2}, Sect.\ 1, p.\ 539 is the only obstruction to the existence of a lift of $\phi$ to characteristic $0$. 

By definition, the KGB obstruction of $\phi:\Gamma\rightarrow\Aut_k(k[[y]])$ vanishes if there exists a smooth projective curve $\mathfrak X$ over a field $\mathfrak B$ of characteristic $0$ equipped with a faithful global action $\Gamma\rightarrow\Aut(\mathfrak X)$ such that for every subgroup $\Delta$ of $\Gamma$, the quotient curves $\mathfrak X/\Delta$ and $X/\Delta$ have the same genus.

One says that $\Gamma$ is a local Oort group for $k$ if every faithful local action $\phi$ of $\Gamma$ over $k$ lifts to characteristic $0$ (see \cite{CGH1}, Sect.\ 2, p.\ 853). The finite groups $\Gamma$ for which the KGB obstruction vanishes for every faithful local action $\phi:\Gamma\rightarrow\Aut_k(k[[y]])$ are classified in \cite{CGH2}, Thm.\ 1.2 and, as a correction, in \cite{CGH3}: they are the cyclic groups $C_l$ with $p$ arbitrary, the dihedral groups $D_{p^s}$ of order $2p^s$ with $p$ odd, and the group $A_4$ and the generalized quaternion groups $Q_{2^l}$ with $l\ge 4$ and presentation 
$$\langle g,h\mid g^{2^{l-1}}=1,\; h^2=g^{2^{l-2}},\; hgh^{-1}=g^{-1}\rangle$$ 
with $p=2$. If $p=2$, then the group $A_4$ is a local Oort group for $k$ by \cite{O2}, Thm.\ 1.1 but for each $l\ge 4$ the group $Q_{2^l}$ not a local Oort group for $k$ by \cite{BrW}, Prop.\ 4.7 and Thm.\ 4.8. Oort's Conjecture (e.g., see \cite{P}, Sect.\ 1) predicted that the cyclic groups $C_{p^s}$ are local Oort groups for $k$; the fact that it is true follows from Pop's proof of the General Oort Conjecture (see \cite{P}, Thm.\ 1.1), which uses the cyclic lifting theorem of Obus--Wewers \cite{OW}, Thm.\ 1.4 and Rmk.\ 1.5(i). So Obus Conjecture is a natural continuation of Oort's Conjecture which for the particular case of Dihedral groups $D_{p^s}$ could complete the classification of all local Oort groups over $k$. 

Several positive cases are known for dihedral groups. For $p$ odd, $D_p$ is a local Oort group over $k$ by \cite{BoW}, Thm.\ 1.2. For $p=3$, $D_9$ is a local Oort group over $k$ by \cite{O3}, Thm.\ 8.6. For $p$ odd, every faithful local action of $D_{p^2}$ on $k[[y]]$ lifts to characteristic $0$ if the first upper jump is congruent to $1$ modulo $p$ by \cite{O3}, Thm.\ 8.7. Each faithful local action of $G$ on $k[[y]]$ with upper jumps congruent to $\bigl(m-1,p(m-1),\ldots,p^{s-1}(m-1)\bigr)$ modulo $pm$ lifts to characteristic $0$ by \cite{O3}, Thm.\ 8.8. For $p=5$ (resp.\ $p=3$), $D_{25}$ (resp.\ $D_{27}$) is a local Oort group over $k$ by \cite{DDKOT}, Thm.\ 1.3.

As the Harbater--Katz--Gabber Theorem has an analog over each spectrum $\Spec R$ (see \cite{K}, Prop.\ 4.1.2), a lift of $\phi$ itself to characteristic $0$ forces the KGB obstruction to vanish for $\phi$ (see also Theorems \ref{C6} and \ref{C7}). The converse of this is highly non-trivial and it is predicted to hold by \cite{O3}, Conj.\ 1.9 when $\Gamma$ is $G$.

In \cite{KT2}, Sect.\ 5.1, taking $(p,s,m)=(5,3,2)$, so $G\cong D_{125}$, a faithful local action of $G$ is considered whose lower jumps are $l_1=9$, $l_2=189$, and $l_3=4689$ and whose associated HKG cover $X$ has genus $11656$; so $V_X:=H^0(X,\Omega_X)$ is a $k$-vector space of dimension $11656$ on which $G$ acts by functoriality. In \cite{KT2}, Sect.\ 5.1 it is claimed that for the $k[G]$-module $V_X$ there exists no lift $\mathcal V$ of it to any $R$ with $\mathcal V^H=\{0\}$. Hence \cite{KT2} concluded that Obus Conjecture is not true, that the KGB obstruction is not the only one, and that a new type of obstruction exists: whether the $k[G]$-modules of differential forms on curves have or do not have lifts to some $R$ with no non-zero elements fixed by $H$.

Example \ref{EX14} shows that in fact the $k[G]$-module $H^0(X,\Omega_X)$ does lift to $R_3$. More precisely, we have the following general theorem which is only a reformulation of Theorem \ref{T6} that does not use the extra language and notation introduced in what follows.

\begin{theorem}\label{T1}
Suppose that $\chi$ is injective. Let $X$ be a connected smooth projective curve over $k$ equipped with a faithful action of $G$ such that either the quotient morphism $X\rightarrow X/G$ is an HKG cover or $p$ is odd, $m$ is even, and the quotient curve $X/H$ is rational. Then for every $R$ containing $R_s$, there exists a lift $\mathcal V$ of the $k[G]$-module $H^0(X,\Omega_X)$ to $R$ such that $\mathcal V^H=\{0\}$.
\end{theorem}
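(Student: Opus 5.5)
The plan is to deduce the statement from the lifting criterion (ii) of the paper (the necessary and sufficient condition for a $k[G]$-module to have a lift $\mathcal V$ with $\mathcal V^H=\{0\}$), applied to an explicit decomposition of $V_X:=H^0(X,\Omega_X)$ into indecomposable $k[G]$-modules. Since $p\nmid m$ and $H$ is cyclic, every indecomposable $k[G]$-module is uniserial. It is determined by its length $l\in\llbracket 1,p^s\rrbracket$ and by the character of $\Sigma$ on its socle, and its successive radical layers are twisted by powers of the character $C_m\to k^\times$ induced by $\chi$. I will write $V_X=\oplus_{a,l} n_{a,l}U_{a,l}$ in this notation.

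\textbf{Step 1: the lifting criterion in usable form.} A lift $\mathcal V$ with $\mathcal V^H=\{0\}$ gives a $B[H]$-module $\mathcal V\otimes_R B$ which, since $R\supseteq R_s$, is a sum of nontrivial characters $\tau\mapsto\zeta$ with $\zeta\in\mu_{p^s}\setminus\{1\}$. Conjugation by $\sigma$ replaces $\zeta$ by $\zeta^{\chi(\sigma)}$. Because $\chi$ is injective, $\langle\sigma\rangle$ acts freely on $\mu_{p^s}\setminus\{1\}$, so these characters come in orbits of size exactly $m$. Consequently $\mathcal V\otimes_R B$ is a sum of induced modules $B[G]\otimes_{B[H]}\zeta$, each of which has a lattice reducing to a $k[G]$-module whose socle characters run over all of $C_m^\vee$.

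Criterion (ii) turns this into numerical conditions on the multiplicities $n_{a,l}$. Roughly, for each length class the modules $U_{a,l}$ must be groupable so that their socle characters fill complete $\Sigma$-orbits, with Jordan blocks of length $l$ obtained by reducing sums of $l$ distinct nontrivial roots of unity. I take this criterion as given from the earlier part of the paper; the only work here is to rewrite its hypotheses as inequalities and congruences on the $n_{a,l}$.

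\textbf{Step 2: the $k[G]$-structure of $V_X$.} Here I split into the two cases of the hypothesis.
\begin{itemize}
\item In the HKG case, I will use the known formula for holomorphic differentials of curves with a $p$-group $\rtimes$ tame action with cyclic Sylow (the Bleher--Chinburg--Kontogeorgis formula, specialized to $G=C_{p^s}\rtimes_\chi C_m$). It expresses each $n_{a,l}$ through the lower ramification jumps $l_1<\dots<l_s$ of the unique wild point and the tame data.
\item In the case $p$ odd, $m$ even, $X/H\cong\mathbb P^1_k$, I will compute the $H$-decomposition from the Artin--Schreier--Witt tower over $\mathbb P^1_k$, that is, from bases of $\Omega$ of the form $x^iy^j\,dx$. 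I then track the eigencharacters of $\sigma$ on these bases, using that $\sigma$ normalizes $H$ and acts on $X/H=\mathbb P^1_k$ with the branch locus $\sigma$-stable.
\end{itemize}

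\textbf{Step 3: verify the criterion, then conclude.} I check the Step 1 conditions against the multiplicities from Step 2. The key input is the congruence structure of the jumps: Hasse--Arf for cyclic $H$, together with the constraint that $\sigma$ acts on $\tau$-jumps via $\chi$, which forces $l_i\equiv$ a fixed residue modulo $m$ related to the order of $\chi$. This constraint should make the socle characters of the blocks of each length distribute uniformly over $C_m^\vee$, up to precisely the defect that the criterion tolerates. I then assemble the lift as a direct sum of lifts of the grouped summands; it is free over $R$, and it has no nonzero $H$-invariants because only nontrivial $\zeta$ are used.

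I expect this last verification to be the main obstacle. It is a delicate counting argument: one must show that the socle-character multiplicities of the length-$l$ blocks satisfy the criterion's balancing conditions for every $l$, and the shortest blocks, those near length $1$ or near $p^{j}$ boundaries, are where imbalance could arise. I would first treat $s=1$, where only the $l_1$-dependence appears, to find the exact invariant that balances. I would then induct on $s$ through the quotient $X\to X/\langle\tau^{p^{s-1}}\rangle$, whose $G$-quotient is again of the same type.
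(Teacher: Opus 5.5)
\textbf{There is a genuine gap.} The criterion you plan to verify is not the one the paper proves, and the condition you call the main obstacle is false for these curves.

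\textbf{The actual criterion.} Theorem~\ref{T2}(2) says: for $\chi$ injective and $R\supseteq R_s$, a $k[G]$-module $V$ has a lift $\mathcal V$ with $\mathcal V^H=\{0\}$ if and only if $\jmath_V=0$ and the character function $\ell_V$ is constant. Here $\ell_V$ records the composition factors of $V$ as a $k[\Sigma]$-module, not its socle, and your Step~1 conflates the two. A lattice in an induced module $B[G]\otimes_{B[H]}\zeta$ only forces each character of $\Sigma$ to occur once as a composition factor of the reduction. Its socle can be one-dimensional: a single $U_{a,m}$ lifts with no $H$-invariants.

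\textbf{Why Step~3 cannot succeed.} The real criterion does not group summands by length. Its building blocks, the constant-indecomposable modules of Definition~\ref{D4} and Proposition~\ref{P1}, chain uniserials of different lengths via $a_{i+1}=a_i-b_i\epsilon$. The per-length balancing of socle characters that Step~3 aims at fails for actual HKG covers:
\begin{itemize}
\item in Example~\ref{EX9} the summands of even length are $U_{[0]_2,2},U_{[1]_2,4},U_{[0]_2,6},U_{[1]_2,8}$, each occurring once;
\item in Example~\ref{EX10} every length that occurs has a single socle character.
\end{itemize}
Yet $V_X$ lifts in both cases. So no refinement of your counting can close Step~3 as formulated; it is the kind of partition condition refuted in Example~\ref{EX5}.

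\textbf{The missing idea.} You never need the decomposition of $V_X$, only the two invariants in Theorem~\ref{T2}(2).
\begin{itemize}
\item $\jmath_{V_X}=0$. In the HKG case this follows from $d_{p^s-1}=0$ (Example~\ref{EX8}(1)). If $X/H\cong\mathbb P^1_k$, crystalline cohomology gives a lift of $M_X$ with no $H$-invariants (Proposition~\ref{P4}(2)). Hence $\jmath_{M_X}=0$ by Theorem~\ref{T2}(1). Since projective $k[G]$-modules are injective, the submodule $V_X$ has no projective summand either.
\item $\ell_{V_X}$ constant. Theorem~\ref{T5} lifts the tame action of $\Sigma$ to characteristic $0$ and applies the holomorphic Lefschetz formula. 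For $l\in\llbracket1,m-1\rrbracket$, $\sigma^l$ has exactly two fixed points on $X$, with tangent eigenvalues $\zeta^{l}$ and $\zeta^{-l}$. So the trace of $\sigma^l$ on the differentials is $1-\frac{1}{1-\zeta^l}-\frac{1}{1-\zeta^{-l}}=0$.
\end{itemize}
Theorem~\ref{T2}(2) then concludes; this is Theorem~\ref{T6}. Your Artin--Schreier--Witt basis computations and the induction on $s$ can be dropped. In the non-HKG case no closed formula for the decomposition is available anyway.

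\textbf{Caution for the non-HKG case.} Theorem~\ref{T6} uses Proposition~\ref{P4}(4), which only gives that $\ell_{V_X}(a)+\ell_{V_X}(-a)$ is constant. That yields constancy of $\ell_{V_X}$ only when $m=2$. For arbitrary even $m$, rerun the fixed-point argument of Theorem~\ref{T5}; it works whenever $X/H\cong\mathbb P^1_k$.
\begin{itemize}
\item Above each of the two fixed points of $\sigma^l$ on $X/H$ there is exactly one fixed point on $X$. Indeed, $\sigma^l$ acts on the fibre as an affine map of $H/I$ whose linear part is $\chi(\sigma)^l\not\equiv1\pmod p$.
\item The tangent eigenvalue upstairs equals the one downstairs. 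The ramification index is a power of $p$, hence $\equiv1\pmod m$.
\end{itemize}
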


It follows that Obus Conjecture is not only not disproved but the new obstruction envisioned in \cite{KT2} always vanishes in its context. Moreover, on the positive side we add that Theorem \ref{T1} and several other results of ours could be viewed as additional evidence for the likelihood of Obus Conjecture to be true in many cases of general nature and interest, on the negative side we add that we introduce a new obstruction, and on the neutral side we add that we do not know when the new obstruction vanishes and in particular there exists the possibility that it vanishes in all cases in which the KGB obstruction vanishes. As such, our obstruction is not stated as a criterion but either as an abstract corollary (see Corollary \ref{C5}) or as a classification result (see Theorem \ref{C6}(4.d)).

Returning to the classification of $k[G]$-modules that lift to $R$, we first introduce the invariants that control it and its applications to curves.

{\bf On $Q(G)$.} We fix a primitive $m$-th root of unity $\overline{\zeta}_m$ in $k$ and we lift it to a primitive $m$-th root of unity $\zeta_m$ in $R_0$. For a divisor $l$ of $m$, let $\overline{\zeta}_l:=\overline{\zeta}_m^{\frac{m}{l}}\in k$ and $\zeta_l:=\zeta_m^{\frac{m}{l}}\in R_0$; they are primitive $l$-th roots of unity. 

Let $Z(G)$ be the center of $G$. Let $Q(G):=\Ker(\chi)$ be the kernel of the character $\chi:\Sigma=C_m\rightarrow\mathbb F_p^{\times}$. We have $Q(G)=Z(G)\cap\Sigma$. More precisely, if $\chi$ is non-trivial then $Q(G)=Z(G)$ and if $\chi$ is trivial then $G=C_{p^sm}$ is abelian and $Q(G)=\Sigma$.

Let $q$ be the order of $Q(G)$ and let $d:=\frac{m}{q}$; so $\sigma^d$ is a generator of $Q(G)$ and $d\mid p-1$. Let 
$$\epsilon\in qC_m$$ 
be the unique element such that $\chi(\sigma)=\overline{\zeta}_m^{\epsilon}\in\mathbb F_p^{\times}$. If $q=1$, then we have $\epsilon\in (\mathbb Z/m\mathbb Z)^{\times}$.

{\bf Isomorphism classes.} The set 
$$\mathbb J_{m,p^s}:=C_m\times\llbracket1,p^s\rrbracket=\mathbb Z/m\mathbb Z\times\llbracket1,p^s\rrbracket=\Sigma\times\llbracket1,p^s\rrbracket$$
parameterizes isomorphism classes of indecomposable $k[G]$-modules. For each pair $(a,b)\in\mathbb J_{m,p^s}$ let $U_{a,b}$ be the indecomposable $k[G]$-module that corresponds to it: $U_{a,b}$ is uniserial with $\dim_k(U_{a,b})=b$, its socle 
\begin{equation}\label{EQ2}
\soc_{k[G]}(U_{a,b})=\soc_{k[H]}(U_{a,b})=\Ker(\tau-1:U_{a,b}\rightarrow U_{a,b})
\end{equation}
is a vector space over $k$ of dimension $1$, and $\sigma$ acts on $\soc_{k[G]}(U_{a,b})$ as the scalar multiplication by $\overline{\zeta}_m^a$. Thus $\soc_{k[G]}(U_{a,b})\cong U_{a,1}$. The ordered factors of the unique composition series of the $k[G]$-module $U_{a,b}$ are the first $b$ terms of the infinite periodic sequence $\bigl(U(a,i)\bigr)_{i\in\mathbb N}$ of period $p-1$ defined by
\begin{equation}\label{EQ3}
U(a,l(p-1)+r):=U_{a+(1-r)\epsilon}
\end{equation}
for each $(l,r)\in(\mathbb N\cup\{0\})\times\llbracket0,p-2\rrbracket$.

For all the results of the previous paragraph see \cite{BCK}, Rmk.\ 3.4 and \cite{A}. In particular, see \cite{A}, Ch.\ II, Sect.\ 5, Lem.\ 8 for Equation (\ref{EQ2}) and see \cite{A}, Ch.\ I, Sect.\ 3, p.\ 42 and 43 for the property that indecomposable $k[G]$-modules are uniserial. This uniserial property gives that the isomorphism classes of simple $k[G]$-modules are given by $U_{a,1}$ with $a\in C_m$. Based on this and the uniserial property, to check the statement on the ordered factors of the unique composition series of $U_{a,b}$ (i.e., that Equation (\ref{EQ3}) works), one can assume that $b=2$ and this case is straightforward. See \cite{A}, Ch.\ II, Sect.\ 5, Thm.\ 3 for the property that there exist precisely $m$ isomorphism classes of indecomposable projective $k[G]$-modules: they are given by $U_{a,p^s}$ with $\break a\in C_m$. For an indecomposable $k[G]$-module $U$, the $k[G]$-module $U/\rad(U)$, i.e., the quotient of $U$ by its radical, is simple by the uniserial property. Based on the last two sentences, from \cite{A}, Ch.\ II, Sect.\ 5, Lem.\ 5 one gets that each indecomposable $k[G]$-module is a quotient of an indecomposable projective $k[G]$-module and thus their isomorphism classes are as mentioned.

{\bf Duals.} If $c_b\in\llbracket0,p-2\rrbracket$ is such that $b-1\equiv c_b\pmod{p-1}$, then from the description of the unique composition series of $U_{a,b}$ we get that the dual of the $k[G]$-module $U_{a,b}$ is isomorphic to $U_{a^{\vee}_b,b}$ where
$$a^{\vee}_b:=c_b\epsilon-a\in C_m.$$

{\bf Invariants.} Let $T_a$ be $U_{a,1}$ but viewed only as a $k[\Sigma]$-module.

Let $V$ be a $k[G]$-module. We consider an isomorphism
$$V\rightarrow \oplus_{(a,b)\in\mathbb J_{m,p^s}} \mu_V(a,b)U_{a,b}$$
of $k[G]$-modules, where 
$$\mu_V:\mathbb J_{m,p^s}\rightarrow\mathbb N\cup\{0\}$$
is the {\it multiplicity function} of $V$ by the Krull--Schmidt Theorem (see \cite{A}, Ch.\ II, Sect.\ 4, Thm.\ 3), and an isomorphism
$$V\rightarrow \oplus_{a\in C_m} \ell_V(a)T_{a}$$
of $k[\Sigma]$-modules, where 
$$\ell_V:C_m\rightarrow\mathbb N\cup\{0\}$$
is the {\it character function} of $V$. As the group $H$ acts trivially on the semisimplification $V^{\ss}$ of $V$, the $k[G]$-module $V^{\ss}$ is actually a $k[\Sigma]$-module isomorphic to $V$. So to know the character function $\ell_V$ is the same as to know the isomorphism class of the $k[G]$-module (equivalently, $k[\Sigma]$-module) $V^{\ss}$. In particular, we have an identity
\begin{equation}\label{EQ5}
\dim_k(V)=\sum_{a\in C_m} \ell_V(a).
\end{equation}

Let
$$\eta_V:=\min(\ell_V(a)|a\in C_m).$$

The number of indecomposable factors of $V$, equivalently the dimension over $k$ of $\soc_{k[G]}(V)$, is 
$$\imath_V:=\sum_{(a,b)\in\mathbb J_{m,p^s}} \mu_V(a,b),$$
and the number of indecomposable factors of $V$ that are projective $k[G]$-modules is 
$$\jmath_V:=\sum_{a\in C_m} \mu_V(a,p^s).$$

For a lift $\mathcal V$ of $V$ to $R$, let 
$$r_{\mathcal V}\in\mathbb N\cup\{0\}$$ 
be the rank of the free $R$-module ${\mathcal V}^H$. For instance, if $\mathcal U$ is a lift of a $k[H]$-module, then from Equation (\ref{EQ1}) we get that 
\begin{equation}\label{EQ6}
r_{R[G]\otimes_{R[H]}\mathcal U}=mr_{\mathcal U}. 
\end{equation}

\begin{ex}\normalfont\label{EX1}
Suppose that $G$ is a Dihedral group; so $m=2$ and $p$ is odd. Let $i\in\{0,1\}$. If $b\in\llbracket1,p^s\rrbracket$ is even, then the dual of $U_{[i]_2,b}$ is $U_{[1-i]_2,b}$ and the two eigenspaces of the action of $\sigma$ on $U_{[i]_2,b}$ have the same dimension $\frac{b}{2}$ over $k$. If $b\in\llbracket1,p^s\rrbracket$ is odd, then $U_{[i]_2,b}$ is self-dual and the eigenspace of the eigenvalue $1$ (resp.\ $-1$) of the action of $\sigma$ on $U_{[i]_2,b}$ has dimension $\frac{b+1-2i}{2}$ (resp.\ $\frac{b-1+2i}{2}$) over $k$. Thus
$$\ell_V([0]_2)-\ell_V([1]_2)=\sum_{b\in \llbracket1,p^s\rrbracket\cap (2\mathbb N-1)} [\mu_V([0]_2,b)-\mu_V([1]_2,b)].$$
\end{ex}

For $S\in\{k,R,B\}$, each $S[G]$-module $M$ is a direct sum 
\begin{equation}\label{EQ8}
M=\oplus_{j\in\llbracket1,q\rrbracket} M_j
\end{equation} 
of $S[G]$-modules, where every $M_j$ is the largest $S$-submodule of $M$ on which $\sigma^d\in Q(G)$ acts as the scalar multiplication by $\overline{\zeta}_q^j=\overline{\zeta}_m^{jd}$ if $S=k$ and by $\zeta_q^j=\zeta_m^{jd}$ if $S\in\{R,B\}$. For each pair $(i,j)\in\llbracket1,m\rrbracket\times\llbracket1,q\rrbracket$ we consider the $S[G]$-module 
\begin{equation}\label{EQ9}
M_j(i)
\end{equation}
which as an $S[H]$-module is $M_j$ and on which the new action of $\sigma$ is the initial action of $\sigma$ composed with the multiplication by $\overline{\zeta}_m^{-i}$ if $S=k$ and by $\zeta_m^{-i}$ if $S\in\{R,B\}$.\footnote{The convention that involves exponent $-i$ is suggested by Equation (\ref{EQ3}) and by the standard convention on Tate twists in crystalline cohomology.} If $[i]_q=[j]_q$, then $M_j(i)$ is an $S[G]$-module fixed by $Q(G)$ and hence it is also an $S[G/Q(G)]$-module which depends on $i$ in general. It is convenient to work with the $S[G/Q(G)]$-module 
\begin{equation}\label{EQ10}
M_j':=M_j(j)
\end{equation}
which is uniquely determined by $M_j$.

We have the following basic lifting principle. 

\begin{lemma}\label{L1}
Let $M$ be a $k[G]$-module and let $M=\oplus_{j\in\llbracket1,q\rrbracket} M_j$ be as in Equation (\ref{EQ8}) applied to $S=k$. Then $M$ has a lift $\mathcal M$ to $R$ (resp.\ it has a lift $\mathcal M$ to $R$ with $r_{\mathcal M}=0$) if and only if  for each $j\in\llbracket1,q\rrbracket$, the $k[G/Q(G)]$-module $M_j'$ has a lift $\mathcal M'_j$ to $R$ (resp.\ has a lift $\mathcal M'_j$ to $R$ with $r_{\mathcal M'_j}=0$).
\end{lemma}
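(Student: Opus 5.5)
The plan is to treat the eigenspace decomposition of Equation (\ref{EQ8}) and the twist of Equation (\ref{EQ9}) as operations on lifts. Both are compatible with reduction modulo $\upsilon$ and with freeness over $R$, and both preserve the $H$-invariants. The key input is that $Q(G)=\langle\sigma^d\rangle$ is central in $G$, which holds because $Q(G)=Z(G)\cap\Sigma$. Moreover, $Q(G)$ has order $q$ prime to $p$, and $R$ contains $\zeta_m$ since $R_0\subseteq R$.

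\emph{Step 1: decomposing a lift.} Suppose $\mathcal M$ is a lift of $M$ to $R$. As $q$ is invertible in $R$ and $\zeta_q\in R$, the elements
$$e_j:=\frac1q\sum_{i=0}^{q-1}\zeta_q^{-ij}\sigma^{di}$$
are orthogonal idempotents of $R[G]$ with sum $1$. They are central because $\sigma^d$ is central. Hence $\mathcal M=\oplus_j e_j\mathcal M$ is a decomposition into $R[G]$-modules, where $e_j\mathcal M$ is the largest $R$-submodule on which $\sigma^d$ acts as $\zeta_q^j$. Each $e_j\mathcal M$ is a direct summand of a free $R$-module, so it is free. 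The reductions of the $e_j$ modulo $\upsilon$ are the analogous idempotents of $k[G]$, since $\zeta_q$ reduces to $\overline{\zeta}_q$. Therefore $e_j\mathcal M/\upsilon e_j\mathcal M=M_j$, and $\mathcal M_j:=e_j\mathcal M$ lifts $M_j$.

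\emph{Step 2: twisting.} Next I twist by letting $\sigma$ act as $\zeta_m^{-j}\sigma$ on $\mathcal M_j$, keeping the $R[H]$-action. This is a valid $R[G]$-module structure. The defining relations are $\sigma^m=1$ and $\sigma\tau\sigma^{-1}=\tau^{\chi(\sigma)}$. The first holds since $\zeta_m^{-jm}=1$; the second holds because a scalar commutes with $\tau$. Thus we obtain $\mathcal M_j(j)$, on which $\sigma^d$ acts as $\zeta_m^{-jd}\zeta_q^j=1$. So $\mathcal M_j(j)$ is an $R[G/Q(G)]$-module, and it reduces to $M_j(j)=M_j'$. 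The $H$-action is unchanged, so $\mathcal M_j(j)^H=\mathcal M_j^H$, and the ranks satisfy $\sum_j r_{\mathcal M_j(j)}=r_{\mathcal M}$. In particular, $r_{\mathcal M}=0$ forces every $r_{\mathcal M'_j}=0$. This gives the ``only if'' direction.

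\emph{Step 3: the converse.} Given lifts $\mathcal M'_j$ of $M'_j$, I would view each one as an $R[G]$-module through $G\to G/Q(G)$. I then untwist by letting $\sigma$ act as $\zeta_m^{j}\sigma$, which yields a lift $\mathcal M_j$ of $M_j(j)$ twisted back, namely $M_j$. On $\mathcal M_j$ the element $\sigma^d$ acts as $\zeta_m^{jd}=\zeta_q^j$. Setting $\mathcal M:=\oplus_j\mathcal M_j$ gives a free $R$-module lifting $\oplus_j M_j=M$, with $r_{\mathcal M}=\sum_j r_{\mathcal M'_j}$. The $r=0$ version follows immediately.

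\emph{Main obstacle.} The only point needing care is the compatibility of the twists with reduction. This amounts to checking that $\zeta_m$ reduces to $\overline{\zeta}_m$, which holds by the choice of $\zeta_m$, and that the idempotent decomposition reduces to that of Equation (\ref{EQ8}). Both are routine.
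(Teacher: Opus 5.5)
Your proposal is correct and follows the paper's argument. For the `only if' direction the paper takes $\mathcal M_j':=\mathcal M_j(j)$ from the decomposition (\ref{EQ8}) of the lift with $S=R$, and for the `if' direction it takes $\mathcal M:=\oplus_{j}\mathcal M_j'(m-j)$, which is exactly your untwist by $\zeta_m^{j}$; in both directions the ranks of $H$-invariants add up. Your idempotent argument and your check of the defining relations supply details the paper leaves implicit: freeness of the summands, and compatibility of the decomposition and twists with reduction modulo $\upsilon$.
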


\begin{proof} The `only if' part holds as we can take $\mathcal M_j':=\mathcal M_j(j)$ for every $j\in\llbracket1,q\rrbracket$ (see Equation (\ref{EQ10})); note that we have a natural identification $(\mathcal M_j')^H=\mathcal M_j^H$ of $R$-modules and hence an identity $r_{\mathcal M_j'}=r_{\mathcal M_j}$. 

The `if' part holds as the $R[G]$-module $\mathcal M:=\oplus_{j\in\llbracket1,q\rrbracket} \mathcal M_j'(m-j)$ is a lift of $M$; note that we have a natural identification $\mathcal M^H=\oplus_{j\in\llbracket1,q\rrbracket} (\mathcal M_j')^H$ of $R$-modules and hence an identity $r_{\mathcal M}=\sum_{j\in\llbracket1,q\rrbracket} r_{\mathcal M_j'}$.\end{proof}

\smallskip
Based on Lemma \ref{L1}, the problem of lifting a $k[G]$-module $V$ to $R$ is the same as the problem of lifting $q$ suitable $k[G/Q(G)]$-modules to $R$, and hence by replacing $G$ by $G/Q(G)$ we can assume that $\chi$ is injective.

Our classifications of liftable $k[G]$-modules are as follows.

\begin{theorem}\label{T2}
Suppose that $\chi$ is injective. Let $V$ be a $k[G]$-module. Then the following properties hold. 

\medskip
{\bf (1)} If the $k[G]$-module $V$ has a lift $\mathcal V$ to $R$, then we have inequalities
$$\max\bigl(\jmath_V,\sum_{a\in\mathbb Z/m\mathbb Z} [\ell_V(a)-\eta_V]\bigl)=\max\bigl(\jmath_V,\dim_k(V)-m\eta_V\bigr)\le r_{\mathcal V}\le\imath_V.$$
Moreover, if the character function $\ell_V$ is constant, then $m\mid r_{\mathcal V}$.

\smallskip
{\bf (2)} Suppose that $R$ contains $R_s$. The $k[G]$-module $V$ has a lift $\mathcal V$ to $R$ with $r_{\mathcal V}=0$ if and only if $\ell_V$ is a constant function and $\jmath_V=0$.

\smallskip
{\bf (3)} Suppose that $R$ contains $R_s$. Let $V_0:=\oplus_{(a,b)\in C_m\times\llbracket1,p^s-1\rrbracket} \mu_V(a,b)U_{a,b}$. Then the following three statements are equivalent.

\medskip\noindent
{\bf (3.a)} The $k[G]$-module $V$ has a lift to $R$.

\smallskip\noindent
{\bf (3.b)} The $k[G]$-module $V_0$ has a lift to $R$.

\smallskip\noindent
{\bf (3.c)} For each $a\in C_m$ we have an inequality 
\begin{equation}\label{EQ11}
\ell_{V_0}(a)\le \eta_{V_0}+\ell_{\soc_{k[H]}(V_0)}(a).
\end{equation}
\end{theorem}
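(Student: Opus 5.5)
We work over $B\supseteq B_s$, where $\mathcal V\otimes_R B$ decomposes as a $B[G]$-module. Since $\chi$ is injective, $\Sigma$ acts on the characters of $H$ by $\psi\mapsto\psi^{\chi(\sigma)}$, and nontrivial characters have trivial stabilizer in $\Sigma$. Hence every irreducible $B[G]$-module is one of the following two types. Either it is a character $\zeta_m^{a}$ of $\Sigma$ (with $H$ acting trivially), or it is an induced module $B[G]\otimes_{B[H]}\psi$ with $\psi\neq 1$, of dimension $m$, on which $\sigma$ has each $m$-th root of unity as an eigenvalue exactly once. So $\mathcal V\otimes_R B\cong \oplus_a n_a\zeta_m^a\oplus W$, where $W^H=0$, and $r_{\mathcal V}=\sum_a n_a$.

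\textbf{Part (1).} We compare characters of $\Sigma$ on the reduction. Because $p\nmid m$, the $\Sigma$-character of $\mathcal V$ reduces to $\ell_V$. The part $W$ contributes a constant function, namely $(\dim W/m)$ at every $a$. Therefore $\ell_V(a)=n_a+c$ with $c$ constant, so $\ell_V(a)-\eta_V\le n_a$, and summing over $a$ gives the bound $\dim_k V-m\eta_V\le r_{\mathcal V}$. If $\ell_V$ is constant, then the $n_a$ are all equal, and so $m\mid r_{\mathcal V}$.

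For $r_{\mathcal V}\le\imath_V$: the module $\mathcal V^H$ is saturated in $\mathcal V$ (it is a kernel), so its reduction injects into $V^H=\soc_{k[H]}(V)$, which has dimension $\imath_V$ by Equation (\ref{EQ2}).

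For $\jmath_V\le r_{\mathcal V}$: projective summands of $V$ lift uniquely to projective $R[G]$-summands of $\mathcal V$, by idempotent lifting in the complete ring. Each $U_{a,p^s}$ lifts to a module whose generic fibre is the regular $B[H]$-module twisted, and this contains a trivial $H$-character. More precisely, $\mathcal V\cong\mathcal P\oplus\mathcal V'$, and $\mathcal P^H$ has rank $\jmath_V$.

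\textbf{Part (2).} Necessity follows from (1): if $r_{\mathcal V}=0$, then $\jmath_V=0$ and all $n_a=0$, so $\ell_V$ is constant. For sufficiency, it suffices by Krull--Schmidt to handle the case of a direct sum. We would group the indecomposable summands $U_{a,b}$ with $b<p^s$ into blocks whose combined character function is constant, and lift each block. The natural building blocks are induced modules $R[G]\otimes_{R[H]}\mathcal U$, where $\mathcal U$ is a lift of the $k[H]$-module $k[H]/(\tau-1)^b$ with $\mathcal U^H=0$. For example, take $R[\tau]/(\Phi(\tau))$ for a product $\Phi$ of cyclotomic factors with no $(\tau-1)$ factor; the degrees need to be adjusted, or one uses $\mathbb Z$-lattice constructions in $\oplus B_s$-characters. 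This is where $R\supseteq R_s$ is used. By Equation (\ref{EQ6}) such blocks have $r=0$. Their reductions are $\oplus_{i}U_{a+?,b}$, giving constant $\ell$.

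We must also realize non-induced combinations, such as $U_{a,b}\oplus U_{a',b'}$ with complementary characters. For these we would use $\sigma$-stable lattices in $B[G]\otimes_{B[H]}\psi$, built from filtrations by powers of $(\zeta-1)$. The main obstacle is this combinatorial step: showing that any multiset of $U_{a,b}$ with $b<p^s$ and constant total $\ell$ can be partitioned into liftable pieces. I would try induction on $\dim V$. At each step, peel off either a single orbit of full induced type or a pair $U_{a,b},U_{a',b'}$ whose composition factors interleave, via the periodic sequence (\ref{EQ3}).

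\textbf{Part (3).} (3.a)$\Leftrightarrow$(3.b) holds because projective summands always lift, by idempotent lifting, and they can be split off from any lift. For (3.b)$\Rightarrow$(3.c), apply (1) to $V_0$, where $\jmath=0$. Then $n_a=\ell_{V_0}(a)-c$ with $c\ge$ … and we need $n_a\le\ell_{\soc}(a)$. This holds because the reduction of $\mathcal V^H$ is a $\Sigma$-stable subspace of $\soc_{k[H]}(V_0)$, so the characters are compared pointwise. Together with $c\le\eta_{V_0}$, this gives (\ref{EQ11}). For (3.c)$\Rightarrow$(3.b), choose for each $a$ a number of socle summands $U_{a,b}$ equal to $\ell_{V_0}(a)-\eta_{V_0}$. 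Lift these with $\mathcal V^H$ containing the trivial-type constituent, i.e.\ as lattices in $\zeta_m^a\oplus(\text{$H$-nontrivial})$. The rest then has constant $\ell$ and is lifted by (2).
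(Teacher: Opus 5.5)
Part (1) is sound and is essentially the paper's argument: you compare $\Sigma$-characters over $B$ using the two kinds of simple $B[G]$-modules, and you reduce the saturated lattice $\mathcal V^H$ into $\soc_{k[H]}(V)$. The same holds for the `only if' of part (2), for (3.a)$\Leftrightarrow$(3.b), and for (3.b)$\Rightarrow$(3.c). One small fix: for $\jmath_V\le r_{\mathcal V}$ you cannot invoke idempotent lifting in $\End_{R[G]}(\mathcal V)$, since $\End_{R[G]}(\mathcal V)\rightarrow\End_{k[G]}(V)$ need not be onto. Instead, lift $k[H]^{\jmath_V}\hookrightarrow V$ to $R[H]^{\jmath_V}\rightarrow\mathcal V$ by projectivity; this lift is injective and maps $(R[H]^H)^{\jmath_V}$ into $\mathcal V^H$.

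The real gap is the `if' of part (2), which carries most of the theorem and which you do not prove. Induced modules $R[G]\otimes_{R[H]}\mathcal U$ reduce only to sums of circular modules $V_b^{\circul}$, and peeling off full induced orbits or pairs cannot work. Take $p=7$, $s=1$, $m=3$ and $V=U_{a,1}\oplus U_{a-\epsilon,1}\oplus U_{a-2\epsilon,4}$. Then $\jmath_V=0$, $\ell_V$ is constant of value $2$, and $V$ is not circular. No proper subsum has constant character function, since the dimensions $1,4,2,5$ are prime to $3$. So $V$ must be lifted as one block.

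By Proposition \ref{P1}, the blocks to lift are exactly the cyclic chains $U_{a_1,b_1}\oplus\cdots\oplus U_{a_n,b_n}$ with $a_{i+1}=a_i-b_i\epsilon$ and $n\le m$. For these your ``$\sigma$-stable lattices built from filtrations by powers of $(\zeta-1)$'' is a placeholder, not a construction. The paper uses three ingredients you lack:
\begin{itemize}
\item For $n=1$ (so $m\mid b$): take an $R[H]$-lift $R[x]/\prod_{\lambda\in\Lambda}(x-\lambda)$ with $\Lambda\subset\Xi_s^{\ast}$ stable under $\lambda\mapsto\lambda^t$. Obtain $\sigma$ by lifting it to an isomorphism $\mathcal V\rightarrow\mathcal V^{(t)}$ via the end-hull property, then correct its order $\upsilon$-adically.
\item If $b_1+\cdots+b_n\le p^s-1$: inside such a lift of the uniserial $U_{a_1,b_1+\cdots+b_n}$, take a sublattice adapted to the flag of saturated $R[H]$-submodules cut out by eigenspaces, whose reduction breaks the uniserial module into $\oplus_i U_{a_i,b_i}$. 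This is how the example above lifts, inside a lift of $U_{a,6}$.
\item Otherwise: induct by merging adjacent summands. When all $b_i+b_{i+1}\ge p^s$, realize the lift as the kernel of a lifted surjection out of a lift of $V_{p^s-1}^{\circul}$ whose reduction has kernel $V$; the map lifts by Lemma \ref{L6}(2).
\end{itemize}

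The step (3.c)$\Rightarrow$(3.b) fails as written. Splitting off whole summands $U_{a,b}$ with socle character $a$ does not work. Take $m=3$ (e.g.\ $p=7$, $s=1$) and $V_0=U_{a,2}\oplus U_{a+\epsilon,2}$. Then $\ell_{V_0}$ takes the values $2,1,1$ at $a,a-\epsilon,a+\epsilon$, and the socle has characters $a,a+\epsilon$, so Inequality (\ref{EQ11}) holds. But the only summand with socle character $a$ is $U_{a,2}$, which has no lift at all (part (1) would force $2\le r\le 1$). The remaining summand $U_{a+\epsilon,2}$ does not have constant character function.

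What must be done is this. Remove only a subspace $W\subset\soc_{k[H]}(V_0)$ with $\ell_W=\ell_{V_0}-\eta_{V_0}$; here that is the socle line of $U_{a,2}$. Lift $U:=V_0/W$ with $r=0$ by part (2); here $U=U_{a-\epsilon,1}\oplus U_{a+\epsilon,2}$, which is constant-indecomposable. Lift $W$ with trivial $H$-action to $\mathcal W$. Then lift the non-split class in $\Ext^1_{k[G]}(U,W)$ to $\Ext^1_{R[G]}(\mathcal U,\mathcal W)$. This extension-lifting is the missing key step. The paper obtains it from three facts:
\begin{itemize}
\item $\dim_k\Hom_{k[G]}(U,W)=\dim_k\Ext^1_{k[G]}(U,W)\le 1$ for constant-indecomposable $U$ with $\jmath_U=0$ (Proposition \ref{P2});
\item the vanishing $\Hom_{R[G]}(\mathcal U,\mathcal W)=0$;
\item a length count in the long exact sequences for multiplication by $\upsilon$, showing that $\Ext^1_{R[G]}(\mathcal U,\mathcal W)\rightarrow\Ext^1_{R[G]}(\mathcal U,W)\cong\Ext^1_{k[G]}(U,W)$ is onto.
\end{itemize}
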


We exemplify Theorem \ref{T2}(2) and (3) when the lift does not exist and when the lift exists. First, we exemplify that if we only assume that the character function $\ell_V$ is constant, in general $V$ has no lift to any $R$. 

\begin{ex}\normalfont\label{EX2}
Suppose that $V=U_{[0]_m,p^s}\oplus U_{-\epsilon,m-1}$, $\chi$ is injective, and $m\ge 3$. Thus $\ell_V$ is constant of constant value $\eta_V=1+\frac{p^s-1}{m}$, $\imath_V=2$, $V_0=U_{-\epsilon,m-1}$, $\eta_{V_0}=0$, $\ell_{\soc_{k[H]}(V_0)}(a)=0$ if $a\in C_m\setminus\{-\epsilon\}$, and $\ell_{V_0}(a)=1$ if $a\in C_m\setminus\{0\}$. As $m\ge 3$, there exists an element $c\in C_m\setminus\{0,-\epsilon\}$. As $\ell_{V_0}(c)=1>0=\eta_{V_0}+\ell_{\soc_{k[H]}(V_0)}(c)$, Inequality (\ref{EQ11}) fails for $a=c$. Thus $V$ does not lift to any $R$ by Theorem \ref{T2}(3).
\end{ex}

\begin{ex}\normalfont\label{EX3}
Suppose that $R$ contains $R_s$ and there exists a direct sum decomposition $V=\oplus_{j\in J} V_j$ into $k[G]$-modules indexed by a set $J$ such that for each $j\in J$ one of the following disjoint conditions holds. 

\medskip
{\bf (1)} We have $\jmath_{V_j}=0$ and $\ell_{V_j}$ is constant.

\smallskip
{\bf (2)} There exists $a_j\in C_m$ with $V_j\cong U_{a_j,p^s}$.

\smallskip
 {\bf (3)}  We have $\jmath_{V_j}=0$ and the image of $\ell_{V_j}$ is $\{l_j,l_j+1\}$ for some $l_j\in\mathbb N\cup\{0\}$ for which the set $\{a\in C_m|\ell_{V_j}(a)=l_j\}$ has $m-1$ elements.

\medskip
To check that $V$ has a lift to $R$ we can assume that $J=\{j\}$ has only one element. If condition (1) holds, then $V=V_0$ and $V$ has a lift to $R$ by Theorem \ref{T2}(2). If condition (2) holds, then $V_0=\{0\}$ and $V$ has a lift to $R$ by Theorem \ref{T2}(3). We assume now that condition (3) holds. Thus $V=V_0$, $\eta_V=l_j$, and there exists a unique $c\in C_m$ such that $\ell_V(c)>l_j$. Clearly, Inequality (\ref{EQ11}) holds for each $a\in C_m\setminus\{c\}$ and we have $m\ge 2$. If $\ell_{\soc_{k[H]}(V_0)}(c)=0$, then $\ell_{V}(c+\epsilon)\ge\ell_V(c)> l_j$ and hence, as $c+\epsilon$ and $c$ are distinct elements of $C_m$, we get a contradiction to the uniqueness of $c$. Thus $\ell_{\soc_{k[H]}(V_0)}(c)\ge 1$. Hence 
$$\ell_{V_0}(c)\le l_j+1=\eta_{V_0}+1\le \eta_{V_0}+\ell_{\soc_{k[H]}(V_0)}(c).$$ So Inequality (\ref{EQ11}) also holds for $a=c$. Thus $V$ has a lift to $R$ by Theorem \ref{T2}(3).  
\end{ex}

{\bf Circular $k[G]$-modules.} We say that $V$ is {\it circular} if the $\mu_V(a,b)$s depend only on $b$, i.e., if the multiplicity function $\mu_V$ factors through the second projection $\mathbb J_{m,p^s}\rightarrow \llbracket1,p^s\rrbracket$. For $b\in\llbracket1,p^s\rrbracket$, the $k[G]$-module 
$$V_b^{\circul}:=\oplus_{a\in C_m} U_{a,b}$$ 
is circular and each circular $k[G]$-module is isomorphic to a direct sum $\oplus_{b=1}^{p^s} \mu_V([0]_m,b) V_b^{\circul}$. We check that there exists a $k[G]$-linear isomorphism
\begin{equation}\label{EQ12}
V_b^{\circul}\cong k[G]\otimes_{k[H]} U_{[0]_m,b}.
\end{equation}

As $k[H]$-modules, clearly $V_b^{\circul}\cong \bigl(k[H]/(\tau-1)^b)\bigr)^m$ and from Equation (\ref{EQ1}) we get that $k[G]\otimes_{k[H]} U_{[0]_m,b}\cong\bigl(k[H]/(\tau-1)^b)\bigr)^m$. From this and Equation (\ref{EQ2}) we get that the $k[G]$-modules $V_b^{\circul}$ and $k[G]\otimes_{k[H]} U_{[0]_m,b}$ are isomorphic if and only if the $k[\Sigma]$-modules $\soc_{k[H]}(V_b^{\circul})$ and $\soc_{k[H]}(k[G]\otimes_{k[H]} U_{[0]_m,b})$ are isomorphic. We have $k[\Sigma]$-linear isomorphisms $\soc_{k[H]}(V_b^{\circul})=\oplus_{a\in C_m} T_a$ and (see Equation (\ref{EQ1}))
$$\soc_{k[H]}(k[G]\otimes_{k[H]} U_{[0]_m,b})=(\oplus_{i=0}^{m-1} k\sigma^i)\otimes_k \soc_{k[H]}(U_{[0]_m,b}).$$
As the action of $\sigma$ on $\oplus_{i=0}^{m-1} k\sigma^i$ is a $k$-linear isomorphism defined by the multiplication by $\sigma$, its characteristic polynomial is $x^m-1=\prod_{l=1}^m (x-\overline{\zeta}_m^l)$. It follows that the two socles are isomorphic as $k[\Sigma]$-modules to $\oplus_{a\in C_m} T_a$ and hence Isomorphism (\ref{EQ12}) exists.

{\bf On prior works.} A weaker version of Theorem \ref{T2}(1) is proved in \cite{KT1}, Prop.\ 14. A version of  Theorem \ref{T2}(3) is claimed in \cite{KT1}, Thm.\ 1. A version of Theorem \ref{T2}(3) is claimed in \cite{KT2}, Thm.\ 5 which is an extrapolation of \cite{KT1}, Thm.\ 1 and Prop.\ 14 with \cite{KT2}, Crit.\ 6 as a main output. Unfortunately, these two theorems are incorrect from many points of view. For instance, they are false if $\chi$ is not injective as, based on \cite{KT1}, Prop.\ 14, one only gets congruences modulo the divisor $d$ of $m$ and not modulo $m$ (note that our usual assumption ``$\chi$ is injective'' is not made in \cite{KT1} and \cite{KT2}).

In fact, \cite{KT1}, Thm.\ 1 is rarely true (presumably it is only true if $G=H$, a case which was well-known before, e.g., see \cite{KT1}, Rmk.\ 19 or Theorem \ref{T3}(2)), and even the last sentence of \cite{KT2}, Thm.\ 5, which is crucial to \cite{KT2}, Crit.\ 6, is not true. We include an example (see Example \ref{EX5}) that disproves both for all groups $G$ when $p$ is odd, $(p,s)\neq (3,1)$, and either $m=d\ge 3$ or $d\ge 2$, based on the following example which is a particular case of Theorem \ref{T2}(2) that can be proved directly as follows.

\begin{ex}\normalfont\label{EX4}
Suppose that $d=m$ (equivalently, $\chi$ is injective or $q=1$) and $R$ contains $R_s$. For $b\in\llbracket1,p^s-1\rrbracket$, for the circular $k[G]$-module $V_b^{\circul}$ the character function $\ell_{V_b^{\circul}}$ is constant with $\eta_{V_b^{\circul}}=b$ and $\jmath_{V_b^{\circul}}=0$. Thus $V_b^{\circul}$ has a lift $\mathcal V_b^{\circul}$ to $R$ with $r_{\mathcal V_b^{\circul}}=0$ by Theorem \ref{T2}(2); despite notation, the isomorphism class of $\mathcal V_b^{\circul}$ does not depend in general only on $b$ even if $R$ is fixed. Based on Isomorphism (\ref{EQ12}), we can take $\mathcal V_b^{\circul}$ such that
$$\mathcal V_b^{\circul}\cong R[G]\otimes_{R[H]} \mathcal U_{[0]_m,b},$$
where $\mathcal U_{[0]_m,b}$ is a lift of the $k[H]$-module $U_{[0]_m,b}$ to $R$ with $r_{\mathcal U_{[0]_m,b}}=0$ (e.g., see \cite{KT1}, Rmk.\ 19 or Theorem \ref{T3}(2)). Thus $r_{\mathcal V_b^{\circul}}=0$ by Equation (\ref{EQ6}).
\end{ex}

\begin{ex}\normalfont\label{EX5}
Suppose that $p$ is odd, $(p,s)\neq (3,1)$, and either $m=q\ge 3$ or $d\ge 2$. We show that the assumption that \cite{KT1}, Thm.\ 1 is true for the group $G$ leads to a contradiction. We refer to Conditions a. to c. of \cite{KT1}, Thm.\ 1. If $m=q\ge 3$, so $G=\Sigma\times H$, then $U_{[0]_m,2}$ lifts to an $R_s[G]$-module on which $\Sigma$ acts trivially (e.g., see \cite{KT1}, Rmk.\ 19) and, as $m\ge 3$, $2\not\equiv 0\;\textup{or}\;1\pmod{m}$, for the only partition of the set $\{([0]_m,2)\}$, Condition b. does not hold, a contradiction. If $d\ge 2$, then, to ease the notation, by replacing $G$ with $G/Q(G)$ we can assume that $q=1$ and $d=m$; so $m$ divides $p-1$. Let $i\in\llbracket1,\frac{p^s-3}{2}\rrbracket$. The $k[G]$-modules $V_{p^s-i-1}^{\circul}$ and $V_{p^s-i}^{\circul}$ have lifts $\mathcal V_{p^s-i-1}^{\circul}$ and $\mathcal V_{p^s-i}^{\circul}$ (respectively) with $r_{\mathcal V_{p^s-i-1}^{\circul}}=0$ and $r_{\mathcal V_{p^s-i}^{\circul}}=0$ by Example \ref{EX4}. The $R_s[G]$-module $\mathcal V:=\mathcal V_{p^s-i-1}^{\circul}\oplus\mathcal V_{p^s-i}^{\circul}$ lifts 
$$V:=V_{p^s-i-1}^{\circul}\oplus V_{p^s-i}^{\circul}=\oplus_{(a,b)\in J_0} U_{a,b}$$
and we have $r_{\mathcal V}=0$, where $ J_0:=C_m\times\{p^s-i-1,p^s-i\}$. Our assumption implies that the set $ J_0$ admits a partition in such a way that Conditions a. to c. in \cite{KT1}, Thm.\ 1 hold. As $2(p^s-i-1)>p^s$, Condition a. forces the partition of $ J_0$ to be in $2m$ sets of cardinality $1$. As $p^s-2$ is odd and $p^s-1$ is even, Condition b. does not hold, a contradiction. 

If either $m=d=2$ and $s=1$ or $s\ge 2$, by taking either $i=1$ or such that $m$ divides $i+2$ and by considering the subset $J_1:=\mathbb Z/m\mathbb Z\times\{p^s-i-1\}$ of $ J_0$, we have $p^s-i-1\equiv 1 \pmod{m}$ and thus the last sentence of \cite{KT2}, Thm.\ 5, which is not part of \cite{KT1}, Thm.\ 1, is also not true.\end{ex}

We disproved \cite{KT1}, Thm.\ 1 and \cite{KT2}, Thm.\ 5 as `if and only if' statements by disproving their `only if' parts. However, their `if' parts are true by Theorem \ref{T2}(2) and (3) despite lengthy computational proofs that are hard to follow and with many inexactitudes. The proof of Theorem \ref{T2} refers to the ideas and results of \cite{KT1}.

Briefly, the paper is structured as follows. Sections \ref{S2} to \ref{S5} contain results on various classes of modules that are required to prove Theorem \ref{T2} in Sections \ref{S6} to \ref{S8}. Section \ref{S9} contains complements on upper and lower jumps of cyclic $C_{p^s}$-extension of $k((t))$. Sections \ref{S10} to \ref{S13} contain applications to curves which are additionally exemplified in Section \ref{S14}. In particular, if $\chi$ is injective and $m$ is even (so $p$ is odd), then Theorem \ref{C6}(4.d) classifies the $B[H]$-modules $H^0(\mathcal X,\Omega_{\mathcal X})\otimes_R B$ for homomorphisms $\varrho_{\phi}:G\rightarrow\Aut(\mathcal X)$ that lift to $R$ faithful global actions $\rho_{\phi}:G\rightarrow\Aut(X)$ for which the quotient morphisms $X\rightarrow X/G$ are HKG covers associated to faithful local actions $\phi:G\rightarrow\Aut_k(k[[y]])$. 
 
\section{Preliminaries on modules}\label{S2}

We review notation and basic properties of various modules.

Let $S$ be a commutative ring with identity element. Recall that $S^{\times}$ is the multiplicative group of units of $S$. 

For two $S$-modules $N$ and $N'$, let $\Hom_{S}(N,N')$ be the $S$-module of $S$-linear maps from $N$ to $N'$, let $\End_{S}(N):=\Hom_{S}(N,N)$, let $\Isom_{S}(N,N')$ be the set of $S$-linear isomorphisms from $N$ to $N'$, let $\Aut_{S}(N)$ be the group of $S$-linear automorphisms of $N$ (i.e., of units of the $S$-algebra $\End_{S}(N)$), and let $1_N\in\Aut_{S}(N)$ be the identity automorphism of $N$ (the identity element of $\Aut_{S}(N)$). For $f\in\Hom_{S}(N,N')$, let $\Ker(f)$ be its kernel. 

Recall that $N$ is called cyclic if it is generated by one element, i.e., if there exists an ideal $I$ of $S$ such that $N\cong S/I$. 

For two $S[G]$-modules $M_1$ and $M_2$, let $\Hom_{S[G]}(M_1,M_2)$ be the $S$-module of $S[G]$-linear maps from $M_1$ to $M_2$. For each $S[H]$-module $M$ we have an identity of $S$-modules
$$\Hom_{S[G]}(S[G]\otimes_{S[H]} M,M_1)=\Hom_{S[H]}(M,M_1).$$

The dual $M_1^{\vee}$ of an $S[G]$-module $M_1$ is the $S$-module $\Hom_S(M_1,S)$ with the action given by the rule $(g\cdot f)(v):=f(g^{-1}v)$ for $(g,f,v)\in G\times M_1^{\vee}\times M_1$.

For a set $A$, let $\mathcal P(A)$ be the power set of $A$. If $t\in\mathbb N$, let $\mathcal P_t(A)$ be the subset of $\mathcal P(A)$ formed by subsets of $A$ of cardinality $t$.

Let $\Xi_s$ be the set of $p^s$-roots of unity in each $R$ (resp.\ $B$) that contains $R_s$ (resp.\ $B_s$) and let $\Xi_s^{\ast}:=\Xi_s\setminus\{1\}$. The action $(\mathbb Z/p^s\mathbb Z)^{\times}\times\mathcal P(\Xi_s)\rightarrow\mathcal P\Xi_s$ that maps a pair $(u,\Lambda)$ with $u\in (\mathbb Z/p^s\mathbb Z)^{\times}$ and $\Lambda\subset\Xi_s$ to the subset 
$$u\cdot\Lambda:=\{\lambda^u|\lambda\in\Lambda\}$$ 
restricts to an action of $(\mathbb Z/p^s\mathbb Z)^{\times}$ on each $\mathcal P_t(\Xi_s)$.

Let $S\in\{k,R,B\}$ and $x$ an indeterminate. We identify 
$$S[H]=S[x]/(x^{p^s}-1)$$ 
in such a way that $\tau\in H\subset S[H]$ is identified with $x+(x^{p^s}-1)$. For $S=k$ we get the identification $k[H]=k[x]/(x-1)^{p^s}$. 

For $n\in\mathbb N$ let $\mathfrak e_n:S[X]\rightarrow S[X]$ be the $S$-algebra endomorphism that maps $X$ to $X^n$. Let $t\in\mathbb Z\setminus p\mathbb Z$. If $t>0$, then there exists $t'\in\mathbb N$ such that $\break p^s\mid tt'-1$ and for each such $t'$ the elements $\mathfrak e_{t'}\circ\mathfrak e_t(x)-x$ and $\mathfrak e_t\circ\mathfrak e_{t'}(x)-x$ belong to the ideal $(x^{p^s}-1)$ of $S[H]$; so $\mathfrak e_t$ induces an $S$-algebra automorphism 
$$\mathfrak a_{[t]_{p^s}}:S[H]\rightarrow S[H]$$ 
defined by the rule $x+(x^{p^s}-1)\mapsto x^t+(x^{p^s}-1)$ whose inverse is $\mathfrak a_{[t']_{p^s}}$. As the notation suggests, both $\mathfrak a_{[t]_{p^s}}$ and $\mathfrak a_{[t']_{p^s}}$ depend only on $[t]_{p^s}$ and $[t']_{p^s}$; thus they are well defined even if $t<0$ or $t'<0$. 

For every $S[H]$-module $N$ defined by a $\mathbb Z$-algebra homomorphism $\break\mathfrak n:S[H]\rightarrow\End_{\mathbb Z}(N)$, let $N^{(t)}$ be the $S[H]$-module defined by the $\mathbb Z$-algebra homomorphism $\mathfrak{n}\circ \mathfrak{a}_{[t]_{p^s}}:S[H]\rightarrow\End_{\mathbb Z}(N)$. In simpler words, $N^{(t)}$ is the $S$-module $N$ on which $\tau$ (or x) acts as $\tau^t$; we have
\begin{equation}\label{EQ15}
N^{(t)}\cong S[H]\otimes_{ \mathfrak a_{[t']_{p^s}},S[H]} N.
\end{equation}
From Isomorphism (\ref{EQ15}) we get an identity $\bigl(N^{(t)}\bigr)^{(t')}=N$ of $S[H]$-modules.

With the notation of the prior paragraph, from Equation (\ref{EQ1}) we get that we have an $S[H]$-linear isomorphism
$$S[G]\otimes_{S[H]} N\cong\oplus_{i=0}^{m-1} N^{(\epsilon^i)}.$$

As $k[x]$ is a principal ideal domain, each $k[H]$-module is self-dual and a direct sum of cyclic $k[H]$-modules and each cyclic $k[H]$-module is isomorphic to $k[x]/(x-1)^b$ for some $b\in\llbracket0,p^s\rrbracket$.

For $(b,c)\in\llbracket0,p^s\rrbracket^2$ with $b\le c$, we have a $k[H]$-linear identification
\begin{equation}\label{EQ17}
k[x]/(x-1)^b=\Hom_{k[H]}(k[x]/(x-1)^c,k[x]/(x-1)^b)
\end{equation}
with $\alpha\in k[x]/(x-1)^b$ identified to $f_{\alpha}\in \Hom_{k[H]}(k[x]/(x-1)^c,k[x]/(x-1)^b)$ that satisfies $f_{\alpha}(1+(x-1)^c)=\alpha$. Thus, by duality we also have a $k[H]$-linear identification
$k[x]/(x-1)^b=\Hom_{k[H]}(k[x]/(x-1)^b,k[x]/(x-1)^c)$. For $b=c$ we get a $k[H]$-algebra identification
\begin{equation}\label{EQ18}
k[x]/(x-1)^b=\End_{k[H]}\bigl(k[x]/(x-1)^b\bigr).
\end{equation}

\section{On $B_0[H]$-modules}\label{S3}

The classifications of $B_0[H]$-modules and of $B[H]$-modules with $B$ containing $B_s$ are recalled in order to introduce notation that helps in stating the results in all the subsequent sections. For the sake of generality, we work over an arbitrary field of characteristic $0$.

\begin{definition}\label{D2}
Let $\mathcal B$ be a field of characteristic $0$. Let $\mathcal C$ be a $\mathcal B[G]$-module.

\medskip
{\bf (1)} We say that $\mathcal C$ is rational if it is definable over $\mathbb Q$, i.e., there exists a $\mathbb Q[G]$-module $\mathcal C_{\mathbb Q}$ whose extension $\mathcal C_{\mathbb Q}\otimes_{\mathbb Q} \mathcal B$ to $\mathcal B$ is isomorphic to $\mathcal C$.

\smallskip
{\bf (2)} Suppose that $\mathcal B$ contains a primitive $m$-th root of unity $\zeta_m$ (e.g., $\mathcal B$ contains $B_0$). We say that $\mathcal C$ is almost rational if it is definable over $\mathbb Q(\zeta_m)$, i.e., there exists a $\mathbb Q(\zeta_m)[G]$-module $\mathcal C_{\mathbb Q(\zeta_m)}$ whose extension $\mathcal C_{\mathbb Q(\zeta_m)}\otimes_{\mathbb Q(\zeta_m)} \mathcal B$ to $\mathcal B$ is isomorphic to $\mathcal C$.

\smallskip
{\bf (3)} A lift $\mathcal V$ of a $k[G]$-module $V$ to $R$ is called rational (resp.\ almost rational) if the $B[G]$-module $\mathcal V\otimes_R B$ is rational (resp.\ almost rational).
\end{definition}

As for $m=1$ we have $G=H$ and the notions of rational and almost rational coincide, in what follows we also apply Definition \ref{D2}(1) to $G$ replaced by $H$.

The isomorphisms classes of simple $\mathbb Q[H]$-modules are indexed by irreducible factors of the monic separable polynomial $x^{p^s}-1\in \mathbb Q[x]$. For $i\in\llbracket1,s\rrbracket$, the cyclotomic polynomial $\Psi_i:=\frac{x^{p^i}-1}{x^{p^{i-1}}-1}\in \mathbb Q[x]$ is irreducible of degree $p^{i-1}(p-1)$. Let $\Psi_0:=x-1\in \mathbb Q[x]$. As we have a product decomposition $x^{p^s}-1=\prod_{i=0}^s \Psi_i(x)$, we get that the isomorphism classes of simple $\mathbb Q$-modules are indexed by $i\in\llbracket0,s\rrbracket$ and given by
$$\mathcal B_{i,\mathbb Q}:=\mathbb Q[x]/\bigl(\Psi_i(x)\bigr).$$ 
So for each $\mathbb Q[H]$-module $\mathcal C_{\mathbb Q}$ there exists a unique function
$$\varkappa_{\mathcal C_{\mathbb Q}}:\llbracket0,s\rrbracket\rightarrow\mathbb N\cup\{0\}$$
such that 
$$\mathcal C_{\mathbb Q}\cong\oplus_{i=0}^s \varkappa_{\mathcal C_{\mathbb Q}}(i)\mathcal B_{i,_{\mathbb Q}}$$
and the monic characteristic polynomial of the $\mathbb Q$-linear map $\tau:\mathcal C_{\mathbb Q}\rightarrow \mathcal C_{\mathbb Q}$ is 
\begin{equation}\label{EQ19.5}
\prod_{i=0}^s \Psi_i(x)^{\varkappa_{\mathcal C_{\mathbb Q}}(i)}\in\mathbb Q[x].
\end{equation}

The above paragraph holds with $\mathbb Q$ replaced by $B_0$. So the $B_0[H]$-module 
$$\mathcal B_i:=\mathcal B_{i,_{\mathbb Q}}\otimes_{\mathbb Q} B_0=B_0[x]/\bigl(\Psi_i(x)\bigr)$$
is irreducible and for each $B_0[H]$-module $\mathcal C$ there exists a unique function
$$\varkappa_{\mathcal C}:\llbracket0,s\rrbracket\rightarrow\mathbb N\cup\{0\}$$
such that 
$$\mathcal C\cong\oplus_{i=0}^s \varkappa_{\mathcal C}(i)\mathcal B_i.\footnote{As in the case of $k[H]$ and $k[G]$-modules, the isomorphism classes of simple $\mathbb B_0[G]$-modules are indexed by elements of the set $\mathbb J_{m,p^s}=C_m\times\llbracket0,p^s\rrbracket$.}$$

Moreover, for $i\in\llbracket0,s\rrbracket$ we have an isomorphism of $B_0$-algebras
$$\End_{B_0[H]}(\mathcal B_i)\cong B_i$$
and an identity
\begin{equation}\label{EQ20}
\dim_{B_0}(\mathcal B_i)=\begin{cases}1\quad\quad\quad\quad\quad\quad {\rm if}\quad i=0\\
p^{i-1}(p-1)\quad\;\;\, {\rm if}\quad i\ge 1.\end{cases}
\end{equation}

For $(i,j)\in\llbracket0,s\rrbracket^2$ we have 
\begin{equation}\label{EQ21}
\mathcal B_i^{\tau^{p^j}}=\begin{cases}0\quad\quad {\rm if}\quad j<i\\
\mathcal B_i\quad\;\; {\rm if}\quad j\ge i.\end{cases}
\end{equation}
Therefore for $i\in\llbracket0,s\rrbracket$ we have
\begin{equation}\label{EQ22}
\varkappa_{\mathcal C}(i)=\begin{cases} \dim_{B_0}(\mathcal C^{\tau})\quad\quad\quad\quad\quad\quad\quad\quad  {\rm if}\quad i=0\\
\frac{\dim_{B_0}(\mathcal C^{\tau^{p^i}})-\dim_{B_0}(\mathcal C^{\tau^{p^{i-1}}})}{p^i-p^{i-1}}\quad\quad\, {\rm if}\quad i\ge 1.\end{cases}
\end{equation}

Let $\zeta\in\Xi_s\cap\mathcal B$. For a $\mathcal B[H]$-module $\mathcal C$, let $\mathfrak E_{\zeta}(\mathcal C)$ be the eigenspace of the action of $\tau$ on $\mathcal C$ that  corresponds to eigenvalue $\zeta$ and let 
$$\kappa_{\mathcal C}(\zeta):=\dim_{\mathcal B}\bigl(\mathfrak E_{\zeta}(\mathcal C)\bigr)\in\mathbb N\cup\{0\}.$$

\begin{lemma}\label{L2} Let $\mathcal B$ be a field of characteristic $0$.  Let $\mathcal C$ and $\mathcal E$ be $\mathcal B[H]$-modules. For a field extension $\mathcal B\rightarrow\mathcal B'$ the following properties hold.

\medskip
{\bf (1)} The $\mathcal B[H]$-modules $\mathcal C$ and $\mathcal E$ are isomorphic if and only if the $\mathcal B'[H]$-modules $\mathcal C\otimes_\mathcal B \mathcal B'$ and $\mathcal E\otimes_\mathcal B \mathcal B'$ are isomorphic.

\smallskip
{\bf (2)} The $\mathcal B[H]$-module $\mathcal C$ is rational if and only if the $\mathcal B'[H]$-module $\mathcal C\otimes_\mathcal B \mathcal B'$ is rational.
\end{lemma}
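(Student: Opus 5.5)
The natural tool is the character of $\tau$, i.e. the eigenvalue multiplicities over an algebraic closure. Since $\mathcal B$ has characteristic $0$ and $H$ is finite, the group algebra $\mathcal B[H]=\mathcal B[x]/(x^{p^s}-1)$ is semisimple, because $x^{p^s}-1$ is separable. Hence every $\mathcal B[H]$-module is determined up to isomorphism by its multiplicities of simple modules.

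Fix an algebraic closure $\overline{\mathcal B'}$ of $\mathcal B'$. It contains an algebraic closure $\overline{\mathcal B}$ of $\mathcal B$, and both contain $\Xi_s$. For a $\mathcal B[H]$-module $\mathcal C$, the function $\zeta\mapsto\kappa_{\mathcal C\otimes_{\mathcal B}\overline{\mathcal B}}(\zeta)$ on $\Xi_s$ is unchanged by further scalar extension, since $\tau$ is diagonalizable and eigenspace dimensions are stable under field extension.

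\textbf{Step 1: the function $\kappa$ determines $\mathcal C$.} Each simple $\mathcal B[H]$-module is $\mathcal B[x]/(f)$ for a monic irreducible factor $f$ of $x^{p^s}-1$ in $\mathcal B[x]$. Distinct such $f$ have disjoint root sets in $\Xi_s$. Therefore the multiplicity of $\mathcal B[x]/(f)$ in $\mathcal C$ equals $\kappa(\zeta)$ for any root $\zeta$ of $f$. This gives the claim.

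\textbf{Part (1).} The `only if' direction is trivial. For `if', an isomorphism over $\mathcal B'$ gives equal $\kappa$-functions over $\overline{\mathcal B'}$. These are the same functions as those computed over $\overline{\mathcal B}$ for $\mathcal C$ and $\mathcal E$. By Step 1 applied over $\mathcal B$, we get $\mathcal C\cong\mathcal E$.

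\textbf{Part (2).} First we characterize rationality intrinsically. A $\mathcal B[H]$-module is rational if and only if its $\kappa$-function is constant on each set of primitive $p^i$-th roots of unity, for $i\in\llbracket0,s\rrbracket$.
\begin{itemize}
\item For `only if': a module $\mathcal C_{\mathbb Q}\otimes_{\mathbb Q}\mathcal B$ has characteristic polynomial of $\tau$ equal to (\ref{EQ19.5}), so $\kappa(\zeta)=\varkappa_{\mathcal C_{\mathbb Q}}(i)$ for every $\zeta$ of exact order $p^i$.
\item For `if': given such constancy with values $n_i$, the module $\oplus_i n_i\mathcal B_{i,\mathbb Q}\otimes_{\mathbb Q}\mathcal B$ has the same $\kappa$-function as $\mathcal C$, so it is isomorphic to $\mathcal C$ by Step 1.
\end{itemize}
The criterion depends only on the $\kappa$-function, which is the same for $\mathcal C$ and $\mathcal C\otimes_{\mathcal B}\mathcal B'$. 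Hence $\mathcal C$ is rational if and only if $\mathcal C\otimes_{\mathcal B}\mathcal B'$ is.

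\textbf{Main obstacle.} The only delicate point is Step 1 when $\mathcal B$ lacks roots of unity. This is handled by the disjointness of root sets of distinct irreducible factors of a separable polynomial, together with the fact that the eigenvalue count of $\tau$ on $\mathcal B[x]/(f)\otimes\overline{\mathcal B}$ is one for each root of $f$.
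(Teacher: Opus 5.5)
Your proof is correct, but it is organized differently from the paper's.

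\textbf{Part (1).} The paper uses semisimplicity to reduce to comparing $\dim_{\mathcal K}\Hom_{\mathcal B[H]}(\mathcal D,-)$ for each simple $\mathcal D=\mathcal B[x]/(\Psi)$. It splits into two cases:
\begin{itemize}
\item For a finite extension, it uses adjunction and the fact that $\mathcal C\otimes_{\mathcal B}\mathcal B'$ is, as a $\mathcal B[H]$-module, $[\mathcal B':\mathcal B]$ copies of $\mathcal C$, then divides out the degree.
\item For an infinite extension, it passes to finite extensions over which the characteristic polynomials split, and only then compares eigenvalue multiplicities.
\end{itemize}
You avoid this case split. You work once over $\overline{\mathcal B}\subset\overline{\mathcal B'}$, and your Step 1 shows that the eigenvalue-multiplicity function $\kappa$ on $\Xi_s$ is a complete invariant of $\mathcal B[H]$-modules. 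This holds because the irreducible factors of the separable polynomial $x^{p^s}-1$ have pairwise disjoint root sets, each root occurring once. This is more uniform than the paper's argument and is in substance the paper's infinite-extension step done over an algebraic closure.

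\textbf{Part (2).} The paper needs no rationality criterion. If $\mathcal C\otimes_{\mathcal B}\mathcal B'\cong\mathcal C_{\mathbb Q}\otimes_{\mathbb Q}\mathcal B'$, it applies part (1) to the pair $\mathcal C$ and $\mathcal E:=\mathcal C_{\mathbb Q}\otimes_{\mathbb Q}\mathcal B$; you could have finished the same way in one line from your part (1). Your intrinsic characterization ($\mathcal C$ is rational iff $\kappa$ is constant on the primitive $p^i$-th roots of unity for each $i\in\llbracket0,s\rrbracket$) is more than this lemma needs. It is, however, essentially the equivalence $(1)\Leftrightarrow(2)$ of Lemma \ref{L3} (characteristic polynomial of $\tau$ in $\mathbb Q[x]$). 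So your route proves that lemma's key step along the way, and it does not depend on Lemma \ref{L2} as the paper's proof of $(2)\Rightarrow(1)$ there does.
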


\begin{proof}
The `only if' of either part (1) or part (2) is clear.

The `if' of part (1) is well-known and we recall one of its many standard proofs. As each $\mathcal B[H]$-module is semisimple, it suffices to show that if $\Psi$ is a monic irreducible factor of $x^{p^s}-1\in \mathcal B[x]$, then for the simple $\mathcal B[H]$-module $\mathcal D:=\mathcal B[x]/(\Psi)$, by denoting $\mathcal K:=\End_{\mathcal B[H]}(\mathcal D)$ which is $\mathcal D$ but viewed as a field, we have 
\begin{equation}\label{EQ23}
\dim_{\mathcal K}\bigl(\End_{\mathcal B[H]}(\mathcal D,\mathcal C)\bigr)=\dim_{\mathcal K}\bigl(\End_{\mathcal B[H]}(\mathcal D,\mathcal E)\bigr).
\end{equation}

We first assume that the field extension $\mathcal B\rightarrow\mathcal B'$ is finite. As the $\mathcal B'[H]$-modules $\mathcal C\otimes_\mathcal B \mathcal B'$ and $\mathcal E\otimes_\mathcal B \mathcal B'$ are isomorphic, we have an identity of dimensions $\dim_{\mathcal K}\bigl(\End_{\mathcal B'[H]}(\mathcal D\otimes_\mathcal B \mathcal B',\mathcal C\otimes_\mathcal B \mathcal B')\bigr)=\dim_{\mathcal K}\bigl(\End_{\mathcal B'[H]}(\mathcal D\otimes_\mathcal B \mathcal B',\mathcal E\otimes_\mathcal B \mathcal B')\bigr)$. Thus
$\dim_{\mathcal K}\bigl(\End_{\mathcal B[H]}(\mathcal D,\mathcal C\otimes_\mathcal B \mathcal B')\bigr)=\dim_{\mathcal K}\bigl(\End_{\mathcal B[H]}(\mathcal D,\mathcal E\otimes_\mathcal B \mathcal B')\bigr)$. So Equation (\ref{EQ23}) holds as it holds after multiplication by $[\mathcal B':\mathcal B]$. 

Next we assume that the field extension $\mathcal B\rightarrow\mathcal B'$ is infinite. By replacing both $\mathcal B$ and $\mathcal B'$ by finite field extensions of them based on the previous paragraph, we can assume that the characteristic polynomials of the $\mathcal B$-linear maps $\tau:\mathcal C\rightarrow\mathcal C$ and $\tau:\mathcal E\rightarrow\mathcal E$ are split; so $\Psi(x)=x-\zeta$ has degree $1$. The multiplicity of the eigenvalue $\zeta$ for the action of $\tau$ on $\mathcal C$ (resp.\ $\mathcal E$) equals to the multiplicity of the eigenvalue $\zeta$ for the action of $\tau$ on $\mathcal C\otimes_{\mathcal B} \mathcal B'$ (resp.\ $\mathcal C\otimes_{\mathcal B} \mathcal B'$). From this and the fact that the $\mathcal B'[H]$-modules $\mathcal C\otimes_\mathcal B \mathcal B'$ and $\mathcal E\otimes_\mathcal B \mathcal B'$ are isomorphic we get that Equation (\ref{EQ23}) holds. So part (1) holds. 

For the `if' of part (2), let $\mathcal C_{\mathbb Q}$ be a $\mathbb Q[H]$-module such that the $\mathcal B'[H]$-modules $\mathcal C\otimes_\mathcal B \mathcal B'$ and $\mathcal C_{\mathbb Q}\otimes_{\mathbb Q} \mathcal B'$ are isomorphic. As the $\mathcal B[H]$-modules $\mathcal C$ and $\mathcal C_{\mathbb Q}\otimes_{\mathbb Q} \mathcal B$ are isomorphic by part (1), the `if' of part (2) holds.
\end{proof}

\begin{lemma}\label{L3}
Let $\mathcal B$ be a field of characteristic $0$. Let $\mathcal C$ be a $\mathcal B[H]$-module. Then the following statement are equivalent.

\medskip
{\bf (1)} The  $\mathcal B[H]$-module $\mathcal C$ is rational.

\smallskip
{\bf (2)} The monic characteristic polynomial of the $\mathcal B$-linear map $\tau:\mathcal C\rightarrow \mathcal C$ belongs to $\mathbb Q[x]$.

\smallskip
{\bf (3)} For each $i\in\llbracket1,p^s\rrbracket$, the trace of  the $\mathcal B$-linear map $\tau^i:\mathcal C\rightarrow \mathcal C$ is a rational number.

\smallskip
{\bf (4)} For each $i\in\llbracket1,p^s-1\rrbracket$, the trace of  the $\mathcal B$-linear map $\tau^i:\mathcal C\rightarrow \mathcal C$ is a rational number.
\end{lemma}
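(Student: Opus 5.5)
The plan is to prove the cycle of implications $(1)\Rightarrow(3)\Rightarrow(4)\Rightarrow(2)\Rightarrow(1)$.

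For $(1)\Rightarrow(3)$, suppose $\mathcal C\cong\mathcal C_{\mathbb Q}\otimes_{\mathbb Q}\mathcal B$. Then the matrix of each $\tau^i$ in a $\mathbb Q$-basis of $\mathcal C_{\mathbb Q}$ has rational entries, so its trace is rational. The implication $(3)\Rightarrow(4)$ is trivial.

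For $(4)\Rightarrow(2)$, we first remark that the trace of $\tau^{p^s}=1$ is $\dim_{\mathcal B}(\mathcal C)$, which is rational; so (4) already gives (3). Let $n:=\dim_{\mathcal B}(\mathcal C)$, and pass to an algebraic closure $\overline{\mathcal B}$. The eigenvalues $\lambda_1,\ldots,\lambda_n$ of $\tau$ are $p^s$-th roots of unity, so the power sums $P_j:=\sum_l\lambda_l^j$ are traces of $\tau^j$. Since $\lambda_l^{p^s}=1$, each $P_j$ equals $P_{j'}$, where $j'\in\llbracket1,p^s\rrbracket$ is congruent to $j$ modulo $p^s$. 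Hence all power sums $P_j$ with $j\ge1$ are rational. Newton's identities in characteristic $0$ express the elementary symmetric functions $e_1,\ldots,e_n$ as $\mathbb Q$-polynomials in $P_1,\ldots,P_n$ (with division by integers $\le n$, allowed in characteristic $0$). So the characteristic polynomial $\prod_l(x-\lambda_l)$ has rational coefficients. This is the only step with a genuine computation, and it is standard.

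For $(2)\Rightarrow(1)$, the main point is to produce $\mathcal C_{\mathbb Q}$. By Lemma \ref{L2}(2) it suffices to show that $\mathcal C\otimes_{\mathcal B}\overline{\mathcal B}$ is rational. Over $\overline{\mathcal B}$ the module is semisimple and hence determined by the eigenvalue multiplicities $\kappa(\zeta)$, $\zeta\in\Xi_s$. Write the characteristic polynomial, which lies in $\mathbb Q[x]$ and divides a power of $x^{p^s}-1$, as $\prod_{i=0}^s\Psi_i^{n_i}$ by unique factorization in $\mathbb Q[x]$, using that the $\Psi_i$ are irreducible. Since each $\Psi_i$ has as roots exactly the primitive $p^i$-th roots of unity, each with multiplicity one, we get $\kappa(\zeta)=n_i$ for every $\zeta$ of exact order $p^i$. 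Now set $\mathcal C_{\mathbb Q}:=\oplus_{i=0}^s n_i\mathcal B_{i,\mathbb Q}$. Its extension to $\overline{\mathcal B}$ is semisimple with the same eigenvalue multiplicities, because $\mathcal B_{i,\mathbb Q}\otimes\overline{\mathcal B}$ splits as the direct sum of the eigenlines for the roots of $\Psi_i$. Therefore $\mathcal C\otimes_{\mathcal B}\overline{\mathcal B}\cong\mathcal C_{\mathbb Q}\otimes_{\mathbb Q}\overline{\mathcal B}$.

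The only mild obstacle is the use of Lemma \ref{L2} to descend from $\overline{\mathcal B}$ to $\mathcal B$. That lemma is stated for field extensions, so it applies directly to $\mathcal B\rightarrow\overline{\mathcal B}$.
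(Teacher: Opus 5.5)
Your proof is correct and takes essentially the paper's approach: Newton's identities, the fact that the trace of $\tau^{p^s}$ is $\dim_{\mathcal B}(\mathcal C)$, and, for $(2)\Rightarrow(1)$, factoring into the $\Psi_i$, building $\oplus_i n_i\mathcal B_{i,\mathbb Q}$, comparing over $\overline{\mathcal B}$ and descending with Lemma~\ref{L2}(2). The differences are minor: you get $(1)\Rightarrow(3)$ from rational matrix entries rather than the paper's $(1)\Rightarrow(2)$ via $\prod_i\Psi_i^{\varkappa(i)}$, and you spell out the periodicity of power sums that Newton's identities need when $\dim_{\mathcal B}(\mathcal C)>p^s$, which the paper leaves implicit.
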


\begin{proof}
We have $(1)\Rightarrow (2)$ by Equation (\ref{EQ19.5}). 

From the Newton's Identities we get that $(2)\Leftrightarrow (3)$. 

As the trace of the identity $\mathcal B$-linear map $\tau^{p^s}:\mathcal C\rightarrow \mathcal C$ is $\dim_{\mathcal B}(\mathcal C)\in\mathbb N\cup\{0\}$, we have $(3)\Leftrightarrow (4)$. 

To prove that $(2)\Rightarrow (1)$, let $\char_{\tau}(x)\in\mathbb Q[x]$ be the monic characteristic polynomial of the $\mathcal B$-linear map $\tau:\mathcal C\rightarrow \mathcal C$. As the set of zeros in $\mathbb C$ of $\char_{\tau}(x)$ is contained in the set of zeros of $x^{p^s-1}=\prod_{i=0}^s \Psi_i(x)$ and each $\Psi_i(x)$ with $i\in\llbracket0,s\rrbracket$ is irreducible, there exists a function $\varkappa_{\mathcal C}:\varkappa_{\mathcal C}:\llbracket0,s\rrbracket\rightarrow\mathbb N\cup\{0\}$ such that $\char_{\tau}(x)=\prod_{i=0}^s \Psi_i(x)^{\varkappa_{\mathcal C}(i)}$. We consider the $\mathbb Q[x]$-module $\mathcal C_{\mathbb Q}:=\varkappa_{\mathcal C}(i)\mathcal B_{i,\mathbb Q}$. From the identity $\char_{\tau}(x)=\prod_{i=0}^s \Psi_i(x)^{\varkappa_{\mathcal C}(i)}$ we get that, for an algebraic closure $\overline{\mathcal B}$ of $\mathcal B$, the $\overline{\mathcal B}$-modules $\mathcal C_{\mathbb Q}\otimes_{\mathbb Q} \overline{\mathcal B}$ and $\mathcal C\otimes_{\mathcal B} \overline{\mathcal B}$ are isomorphic. From this and Lemma \ref{L2}(2) we get that that part (1) holds. Thus $(2)\Rightarrow (1)$.
\end{proof}

\begin{lemma}\label{L4}
Suppose that $\chi$ is injective and $m$ is even. Let $\mathcal B$ be a field of characteristic $0$. We consider a short exact sequence of $\mathcal B[G]$-modules
\begin{equation}\label{EQ24}
0\rightarrow\mathcal C_1\rightarrow\mathcal C\rightarrow\mathcal C_2\rightarrow 0
\end{equation}
 such that $\mathcal C_1$ and $\mathcal C_2$ are dual to each other. If the $\mathcal B[H]$-module $\mathcal C$ is rational, then the $\mathcal B[H]$-modules $\mathcal C_1$ and $\mathcal C_2$ are isomorphic and rational.
\end{lemma}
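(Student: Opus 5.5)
Over $\mathcal B$ of characteristic $0$ the sequence (\ref{EQ24}) splits as $\mathcal B[G]$-modules, so $\mathcal C\cong\mathcal C_1\oplus\mathcal C_2$ and $\mathcal C_2\cong\mathcal C_1^{\vee}$. By Lemma \ref{L2} we may replace $\mathcal B$ by an algebraic closure. Then every $\mathcal B[H]$-module is determined up to isomorphism by its eigenvalue multiplicities $\kappa(\zeta)$, $\zeta\in\Xi_s$. The plan is to show $\kappa_{\mathcal C_1}(\zeta)=\kappa_{\mathcal C_2}(\zeta)$ for all $\zeta$, and that these functions are constant on the orbits of $(\mathbb Z/p^s\mathbb Z)^{\times}$ acting on $\Xi_s$.

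\textbf{Step 1: $\sigma$-invariance.} Since $\sigma\tau\sigma^{-1}=\tau^{\chi(\sigma)}$, the element $\sigma$ maps $\mathfrak E_{\zeta}(\mathcal C_i)$ isomorphically onto $\mathfrak E_{\zeta^{u}}(\mathcal C_i)$ for an appropriate $u=\chi(\sigma)^{\pm1}$. Hence each $\kappa_{\mathcal C_i}$ is invariant under the subgroup $\langle\chi(\sigma)\rangle\subset(\mathbb Z/p^s\mathbb Z)^{\times}$, which is the lift of $\chi(\Sigma)\subset\mathbb F_p^{\times}$. Because $\chi$ is injective and $m$ is even, this subgroup has order $m$ and contains the unique element of order $2$ in the cyclic part, namely $-1$. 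Indeed, the lift of $-1\in\mathbb F_p^\times$ through the unique section is $-1\in(\mathbb Z/p^s\mathbb Z)^\times$, since $-1$ has order $2$ prime to $p$. Therefore $\kappa_{\mathcal C_i}(\zeta)=\kappa_{\mathcal C_i}(\zeta^{-1})$.

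\textbf{Step 2: duality.} For the dual module, $\tau$ acts on $\mathcal C_1^{\vee}$ by the transpose of $\tau^{-1}$. Thus $\kappa_{\mathcal C_2}(\zeta)=\kappa_{\mathcal C_1}(\zeta^{-1})$, and by Step 1 this equals $\kappa_{\mathcal C_1}(\zeta)$. Consequently $\mathcal C_1\cong\mathcal C_2$ as $\mathcal B[H]$-modules, and Lemma \ref{L2}(1) descends this isomorphism to the original $\mathcal B$.

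\textbf{Step 3: rationality.} Now $\mathcal C\cong 2\mathcal C_1$ as $\mathcal B[H]$-modules, so $\operatorname{tr}(\tau^i\mid\mathcal C_1)=\tfrac12\operatorname{tr}(\tau^i\mid\mathcal C)\in\mathbb Q$ for every $i$, using Lemma \ref{L3}(3) for $\mathcal C$. Lemma \ref{L3} then gives that $\mathcal C_1$ is rational, and hence so is $\mathcal C_2$.

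The only delicate point is Step 1: checking that $-1\in(\mathbb Z/p^s\mathbb Z)^{\times}$ really lies in the image of the lifted $\chi$. This needs $m$ even, so that $\chi(\sigma^{m/2})=-1$ in $\mathbb F_p^\times$ by injectivity, and it needs the uniqueness of the section in the introduction, which lifts $-1$ to $-1$. Note that under these hypotheses $p$ is odd, since $m$ is even and $m\mid p-1$. In fact Step 3 alone already yields rationality of $\mathcal C_1$ once $\mathcal C_1\cong\mathcal C_2$ is known, so the main work is the $H$-isomorphism $\mathcal C_1\cong\mathcal C_2$.
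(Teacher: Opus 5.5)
Your proposal is correct and follows the paper's proof. Both arguments use $\sigma^{m/2}$, which conjugates $\tau$ to $\tau^{-1}$, together with duality to obtain $\kappa_{\mathcal C_1}=\kappa_{\mathcal C_2}$ and hence $\kappa_{\mathcal C}=2\kappa_{\mathcal C_1}$. The only cosmetic difference is the last step: you conclude rationality of $\mathcal C_1$ from the trace criterion of Lemma \ref{L3}, whereas the paper notes that each $\varkappa_{\mathcal C_{\mathbb Q}}(i)$ is even and halves them to build an explicit $\mathbb Q$-form $\mathcal C_{1,\mathbb Q}$.
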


\begin{proof}
It suffices to show that $\mathcal C_1$ is rational. Let $\mathcal C_{\mathbb Q}$ be a $\mathbb Q[H]$-module such that we have an isomorphism $\mathcal C_{\mathbb Q}\otimes_{\mathbb Q} \mathcal B\cong \mathcal C$.

Based on Lemma \ref{L2}(2) we can assume that $B_s\subset \mathcal B$. Let $\zeta\in\Xi_s$ be arbitrary. As Short Exact Sequence (\ref{EQ24}) splits, we have
\begin{equation}\label{EQ25}
\kappa_{\mathcal C}(\zeta)=\kappa_{\mathcal C_1}(\zeta)+ \kappa_{\mathcal C_2}(\zeta).
\end{equation}
As $m$ is even and $\chi$ is injective, for $\sigma_1:=\sigma^{\frac{m}{2}}$ we have $\sigma_1\tau\sigma_1^{-1}=\tau^{-1}$. This implies that $\sigma_1\bigl(\mathfrak E_{\zeta}(\mathcal C_1)\bigr)=\mathfrak E_{\zeta^{-1}}(\mathcal C_1)$. Thus 
\begin{equation}\label{EQ26}
\kappa_{\mathcal C_1}(\zeta)=\kappa_{\mathcal C_1}(\zeta^{-1}).
\end{equation}
As $\mathcal C_1$ and $\mathcal C_2$ are dual $B[G]$-modules, it follows that
\begin{equation}\label{EQ27}
\kappa_{\mathcal C_2}(\zeta)=\kappa_{\mathcal C_1}(\zeta^{-1}).
\end{equation}
From Equations (\ref{EQ25}) to (\ref{EQ27}) we get first that 
\begin{equation}\label{EQ28}
\kappa_{\mathcal C_1}(\zeta)=\kappa_{\mathcal C_2}(\zeta)
\end{equation}
and second that
\begin{equation}\label{EQ29}
\kappa_{\mathcal C}(\zeta)=2\kappa_{\mathcal C_1}(\zeta).
\end{equation}

From Equation (\ref{EQ29}) we get that for each $i\in\llbracket0,s\rrbracket$, $\varkappa_{\mathcal C_{\mathbb Q}}(i)$ is even. Thus we can consider the $\mathbb Q[H]$-module 
$$\mathcal C_{1,\mathbb Q}:=\oplus_{i=0}^s \frac{\varkappa_{\mathcal C_{\mathbb Q}}(i)}{2}\mathcal B_{i,\mathbb Q}.$$
Clearly, $\kappa_{\mathcal C_{1,\mathbb Q}\otimes_{\mathbb Q} \mathcal B}(\zeta)=\frac{\kappa_{\mathcal C}(\zeta)}{2}$. From this and Equations (\ref{EQ28}) and (\ref{EQ29}) we get that $\kappa_{\mathcal C_{1,\mathbb Q}\otimes_{\mathbb Q} \mathcal B}(\zeta)=\kappa_{\mathcal C_1}(\zeta)$. As $\zeta\in\Xi_s$ is arbitrary, the $B[H]$-modules $\mathcal C_{1,\mathbb Q}\otimes_{\mathbb Q} \mathbb B$, $\mathcal C_1$, and $\mathcal C_2$ are isomorphic and the lemma holds.
\end{proof}

\begin{remark}\normalfont\label{R1}
If in Lemma \ref{L4} we have $\mathcal B=B$ and the $B[G]$-module $\mathcal C$ is almost rational, then in general the $B[G]$-module $\mathcal C_1$ (or $\mathcal C_2$) is not rational. For instance, suppose that $B$ and $G$ are such that there exists $i\in\llbracket1,s\rrbracket$ and two non-isomorphic dual to each other simple $B_0[G]$-module structures $\mathcal B_{i,1}$ and $\mathcal B_{i,2}$ that extend the self-dual $B_0[H]$-module structure of $\mathcal B_i$ with the property that we have a direct sum decomposition $\mathcal B_{i,1}\otimes_{B_0} B=\mathcal D_1\oplus\mathcal D_2$ into non-zero $B[G]$-modules. So we also have a direct sum decomposition $\mathcal B_{i,2}\otimes_{B_0} B=\mathcal D_3\oplus\mathcal D_4$ into non-zero $B[G]$-modules such that $\mathcal D_3\cong \mathcal D_1^{\vee}$ and $\mathcal D_4\cong\mathcal D_2^{\vee}$. For a pair $(s,t)\in\mathbb N^2$ with $s\neq t$, we take the $B[G]$-modules $\mathcal C_1:=s\mathcal D_1\oplus t\mathcal D_2\oplus t\mathcal D_3\oplus s\mathcal D_4$, $\mathcal C_2:=t\mathcal D_1\oplus s\mathcal D_2\oplus s\mathcal D_3\oplus t\mathcal D_4$, and $\mathcal C:=\mathcal C_1\oplus\mathcal C_2$. So $\mathcal C_1$ (or $\mathcal C_2$) is not an almost rational $B[G]$-module but 
$$\mathcal C\cong [(s+t)\mathcal B_{i,1}\oplus (s+t)\mathcal B_{i,2}]\otimes_{B_0} B$$ 
is a self-dual rational $B[G]$-module.
\end{remark}

\begin{lemma}\label{L5}
Let $V$ be a $k[H]$-module such that it has a rational lift $\mathcal V$ to $R$ with $r_{\mathcal V}=0$. Then $V$ has an increasing, separated, and exhaustive filtration
\begin{equation}\label{EQ30}
\{0\}=V_0\subset V_1\subset V_2\subset\cdots\subset V_{s-1}\subset V_s=V
\end{equation}
by $k[V]$-submodules such that for each $i\in\llbracket1,s\rrbracket$, for the quotient $k[H]$-module $W_i:=V_i/V_{i-1}$ the following properties hold.

\medskip
{\bf (1)} The dimension $\dim_k(W_i)$ is divisible by $p^i-p^{i-1}$.

\smallskip
{\bf (2)} We have $W_i=W_i^{\tau^{p^i}}$.
\end{lemma}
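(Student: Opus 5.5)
The plan is to build the filtration from a decomposition of the generic fibre $\mathcal V\otimes_R B$ into $\tau^{p^i}$-isotypic pieces and then saturate these pieces inside $\mathcal V$.

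\textbf{Step 1: decompose the generic fibre.} Since $\mathcal V$ is rational, by Lemma \ref{L3} and the discussion before Equation (\ref{EQ22}) (which holds over $B$ as well), there is an isomorphism
$$\mathcal V\otimes_R B\cong\oplus_{i=0}^s \varkappa(i)\,(\mathcal B_{i,\mathbb Q}\otimes_{\mathbb Q} B).$$
The hypothesis $r_{\mathcal V}=0$ means $\mathcal V^H=\{0\}$, so $(\mathcal V\otimes_R B)^\tau=\{0\}$ and hence $\varkappa(0)=0$.

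\textbf{Step 2: define the filtration.} For $i\in\llbracket0,s\rrbracket$ let
$$\mathcal V_i:=\mathcal V\cap (\mathcal V\otimes_R B)^{\tau^{p^i}} = \mathcal V^{\tau^{p^i}}.$$
These are $R[H]$-submodules of $\mathcal V$ and are saturated, i.e.\ $\mathcal V/\mathcal V_i$ is torsion free. Hence each $\mathcal V_i$ is a direct summand of $\mathcal V$ as an $R$-module, and $\mathcal V_{i-1}$ is a direct summand of $\mathcal V_i$. By Equation (\ref{EQ21}),
$$\mathcal V_i\otimes_R B\cong\oplus_{j\le i}\varkappa(j)\,\mathcal B_j\otimes B.$$
Moreover $\mathcal V_0=\mathcal V^H=\{0\}$ and $\mathcal V_s=\mathcal V$.

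Now set $V_i:=\mathcal V_i/\upsilon\mathcal V_i$. By saturatedness this is a $k[H]$-submodule of $V=\mathcal V/\upsilon\mathcal V$, and the $V_i$ form an increasing chain with $V_0=\{0\}$ and $V_s=V$.

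\textbf{Step 3: check properties (1) and (2).} The quotient $W_i=V_i/V_{i-1}$ is the reduction modulo $\upsilon$ of the free $R$-module $\mathcal W_i:=\mathcal V_i/\mathcal V_{i-1}$. Its rank is
$$\operatorname{rank}_R(\mathcal W_i)=\varkappa(i)\,(p^i-p^{i-1})$$
by Equation (\ref{EQ20}) together with $\varkappa(0)=0$. This gives (1); for $i=1$ it uses that $W_1=V_1$ and $\varkappa(0)=0$.

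For (2), $\tau^{p^i}$ acts trivially on $\mathcal V_i$, hence on its reduction $V_i$, hence on the quotient $W_i$.

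\textbf{Main obstacle.} The only delicate point is that reduction modulo $\upsilon$ preserves the inclusions and the quotients. That is, one needs $\mathcal V_{i-1}\otimes k\to\mathcal V_i\otimes k\to\mathcal V\otimes k$ to be injective, and $W_i\cong \mathcal W_i\otimes_R k$. This follows because every quotient $\mathcal V/\mathcal V_i$ and $\mathcal V_i/\mathcal V_{i-1}$ is torsion free, hence free, over the discrete valuation ring $R$, so all the relevant short exact sequences split over $R$.

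Rationality is used only to identify the ranks via the $\mathbb Q$-forms $\mathcal B_{i,\mathbb Q}$. Without it, the eigenvalue multiplicities could be distributed unevenly among the Galois conjugates of the roots of unity, and the divisibility in (1) would fail.
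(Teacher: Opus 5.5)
Your proof is correct and follows essentially the same route as the paper. The paper picks a $B_0$-form $\mathcal C_0$ of $\mathcal V\otimes_R B$ and intersects $\mathcal V$ with (the $B$-span of) the part of $\mathcal C_0$ isotypic for $\mathcal B_1,\ldots,\mathcal B_i$, which by Equation (\ref{EQ21}) is exactly your $\mathcal V^{\tau^{p^i}}$; it then reduces modulo $\upsilon$ and counts ranks via Equation (\ref{EQ20}) as you do, and your intrinsic definition $\mathcal V_i=\mathcal V^{\tau^{p^i}}$ is slightly cleaner because it makes $R[H]$-stability and saturatedness immediate without any choice of form.
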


\begin{proof}
Let $\mathcal C_0$ be a $B_0[H]$-module such that there exists a bijective $B[H]$-linear map $f:\mathcal C_0\otimes_{B_0} B\rightarrow \mathcal V\otimes_R B$. As $r_{\mathcal V}=0$, we have $\varkappa_{\mathcal C_0}(0)=0$. For each $i\in\llbracket1,s\rrbracket$, let $\mathcal C_i$ be the unique $B_0[H]$-submodule of $\mathcal C_0$ such that $\mathcal C_i\cong\sum_{j=1}^i \varkappa_{\mathcal C_0}(j)\mathcal B_j$ and let $\mathcal V_i:=\mathcal V\cap f(\mathcal C_i)$, the intersection being taken inside $\mathcal V[\frac{1}{p}]=\mathcal V\otimes_R B$. Clearly, $\mathcal V_i$ is an $R[H]$-submodule of $\mathcal V$ such that $\mathcal V/\mathcal V_i$ is a free $R$-module. Defining $\mathcal V_0:=\{0\}\subset\mathcal V$, we have an increasing, separated, and exhaustive filtration
$$\{0\}=\mathcal V_0\subset\mathcal V_1\subset\mathcal V_2\subset\cdots\subset\mathcal V_{s-1}\subset\mathcal V_s=\mathcal V$$
and we take its reduction modulo $\upsilon$ to be the Filtration (\ref{EQ30}). Hence for each $i\in\llbracket1,s\rrbracket$, $W_i$ is the reduction modulo $\upsilon$ of the quotient $R[H]$-module $\mathcal W_i:=\mathcal V_i/\mathcal V_{i-1}$. 

We have isomorphisms of $B[H]$-modules
$$\mathcal W_i\otimes_R B\cong (\mathcal C_i\otimes_{B_0} B)/(\mathcal C_{i-1}\otimes_{B_0} B)\cong [\varkappa_{\mathcal C_0}(i)\mathcal B_i]\otimes_{B_0} B.$$
 From this, the identity $\dim_{B_0}\bigl(\varkappa_{\mathcal C_0}(i)\mathcal B_i\bigr)=\varkappa_{\mathcal C_0}(i)(p^i-p^{i-1})$, and Equation (\ref{EQ21}), we get that $p^i-p^{i-1}$ divides the rank of the free $R$-module $\mathcal W_i$ and $\mathcal W_i=\mathcal W_i^{\tau^{p^i}}$. As $W_i=\mathcal W_i/\upsilon\mathcal W_i$, we get that parts (1) and (2) hold.\end{proof}

\section{On  lifts of $k[H]$-modules}\label{S4}

In this section we study various classes of lifts of a $k[H]$-module $V$ to $R$. Taking $(G,m)=(H,1)$, the isomorphism classes of $H$-modules are parameterized by $\mathbb J_{p^s}:=\llbracket1,p^s\rrbracket$, with the isomorphism class that corresponds to $b\in\mathbb J_{p^s}$ defined by $U(b):=U([0]_1,b)\cong k[x]/(x-1)^b$. For a $k[H]$-module $V$, denoting $\mu_V(b):=\mu_V([0]_1,b)$ we get the multiplicity function 
$$\mu_V:\mathbb J_{p^s}\rightarrow\mathbb N$$ 
of $V$: we have a $k[H]$-linear isomorphism
$$V\cong\oplus_{b\in\mathbb J_{p^s}} \mu_V(b)U(b).$$

\begin{definition}\label{D3}
Let $V$ be a $k[H]$-module and let $\mathcal V$ be a lift of $V$ to $R$. 

\medskip
{\bf (1)} We call $\mathcal V$ an end-hull of $V$ if the $R$-algebra-linear map 
$$\End_{R[H]}(\mathcal V)\rightarrow\End_{k[H]}(V)$$ 
defined by the reduction modulo $\upsilon$ is surjective.

\smallskip
{\bf (2)} Suppose that $R$ contains $R_s$. For $t\in\mathbb Z\setminus p\mathbb Z$ we say that $\mathcal V$ is potentially $t$-invariant if the $B[H]$-modules $\mathcal V\otimes_R B$ and $\mathcal V^{(t)}\otimes_R B$ are isomorphic and we say that $\mathcal V$ is $t$-invariant if the $R[H]$-modules $\mathcal V$ and $\mathcal V^{(t)}$ are isomorphic.
\end{definition}

\begin{remark}\normalfont\label{R2}
If $\mathcal V$ is a lift of $V$ which is an end-hull, then the reduction modulo $\upsilon$  group homomorphism $\Aut_{R[H]}(\mathcal V)\rightarrow\Aut_{k[H]}(V)$ is also surjective.
\end{remark}

\begin{theorem}\label{T3} 
Let $V$ be a $k[H]$-module. Let $R$ be such that it contains $R_s$. Let $t\in\mathbb Z\setminus p\mathbb Z$. Let $o$ be the order of $[t]_{p^s}\in (\mathbb Z/p^s\mathbb Z)^{\times}$. Then the following properties hold.

\medskip
{\bf (1)} The following statements are equivalent.

\medskip\noindent
{\bf (1.a)} The $k[H]$-module $V$ is cyclic.

\smallskip\noindent
{\bf (1.b)} The $k[H]$-module $V$ is indecomposable.

\smallskip\noindent
{\bf (1.c)} The $k[H]$-module $V$ is strongly indecomposable.

\smallskip\noindent
{\bf (1.d)} We have $\imath_V\in\{0,1\}$.

\smallskip\noindent
{\bf (1.e)} Each lift $\mathcal V$ of $V$ to $R$ is cyclic.

\smallskip\noindent
{\bf (1.f)} Each lift $\mathcal V$ of $V$ to $R$ is cyclic and strongly indecomposable.

\medskip
{\bf (2)} The $k[H]$-module $V$ has a lift $\mathcal H$ to $R$ which is an end-hull of $V$ and a direct sum of cyclic $R[H]$-modules. Moreover, if $\jmath_V=0$, then we can also assume that $r_{\mathcal H}=0$.

\smallskip
{\bf (3)} Suppose that $V$ is cyclic. Let $\mathcal V_1$ and $\mathcal V_2$ be two lifts of $V$. Then the $R[H]$-modules $\mathcal V_1$ and $\mathcal V_2$ are isomorphic if and only if the $B[H]$-modules $\mathcal V_1\otimes_R B$ and $\mathcal V_2\otimes_R B$ are isomorphic.

\smallskip
{\bf (4)} The $k[H]$-modules $V$ and $V^{(t)}$ are isomorphic.

\smallskip
{\bf (5)} The following statements are equivalent.

\medskip\noindent
{\bf (5.a)} There exists a lift $\mathcal V$ of $V$ to $R$ which is $t$-invariant and a direct sum of cyclic $R[H]$-modules (resp.\ which is $t$-invariant and a direct sum of cyclic $R[H]$-modules with $r_{\mathcal V^H}=0$).

\smallskip\noindent
{\bf (5.b)} For each $b\in\llbracket1,p^s\rrbracket\setminus o\llbracket1,\lfloor\frac{p^s}{t}\rfloor\rrbracket$ such that $\mu_V(b)\neq 0$, $\mu_V(b)$ is a sum of cardinals of orbits of the action of $[t]_{p^s}$ on $\mathcal P_b(\Xi_s)$ (resp.\ $\mathcal P_b(\Xi_s^{\ast})$), equivalently of subsets $\Phi$ of $\mathcal P_b(\Xi_s)$ (resp.\ $\mathcal P_b(\Xi_s^{\ast})$) with $[t]_{p^s}\cdot\Phi=\Phi$.

\medskip
{\bf (6)} Suppose that $p$ is odd, $[t]_{p^s}=[p^s-1]_{p^s}$ (equivalently, $o=2$) and $\dim_k(V)$ is even. Then $V$ has a lift to $R$ which is a direct sum of cyclic $R[H]$-modules and is $t$-invariant.

\medskip
{\bf (7)} Suppose that $p$ is odd, $[t]_{p^s}=[p^s-1]_{p^s}$, $\mu_V(d)$ is even for each odd $d\in\llbracket1,p^s\rrbracket$, and $\jmath_V=0$. Then $V$ has a lift $\mathcal V$ to $R$ which is a direct sum of cyclic $R[H]$-modules and is $t$-invariant with $r_{\mathcal V}=0$.
\end{theorem}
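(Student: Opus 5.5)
The basic tool is the explicit model of cyclic lifts. For $b\in\llbracket1,p^s\rrbracket$ and $\Lambda\in\mathcal P_b(\Xi_s)$ I set
$$\mathcal U_\Lambda:=R[x]/\bigl(\textstyle\prod_{\lambda\in\Lambda}(x-\lambda)\bigr).$$
Every element of $\Xi_s$ reduces to $1$ modulo $\upsilon$, so $\mathcal U_\Lambda$ is a free $R$-module of rank $b$ whose reduction is $k[x]/(x-1)^b=U(b)$. Thus $\mathcal U_\Lambda$ is a cyclic lift of $U(b)$, and its $B[H]$-module has eigenvalue multiset $\Lambda$.

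Conversely, let $\mathcal V$ be a cyclic lift of $U(b)$. Then $\mathcal V\cong R[x]/I$, and $\tau$ is semisimple on $\mathcal V\otimes_R B$ with eigenvalues in $\Xi_s$. The ideal $I$ contains the minimal polynomial of $\tau$, which is a product of distinct factors $x-\lambda$; since the rank is $b$, this forces $\mathcal V\cong\mathcal U_\Lambda$ for a unique $\Lambda\in\mathcal P_b(\Xi_s)$. Moreover $\mathcal U_\Lambda^H\neq0$ exactly when $1\in\Lambda$, and $\mathcal U_\Lambda^{(t)}\cong\mathcal U_{t'\cdot\Lambda}$.

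\textbf{Part (1).} The equivalences (1.a)$\Leftrightarrow$(1.b)$\Leftrightarrow$(1.d) are the standard structure of $k[x]/(x-1)^{p^s}$-modules. For (1.c), $\End_{k[H]}(U(b))=k[x]/(x-1)^b$ by Equation (\ref{EQ18}), which is local. For (1.e) and (1.f), if $V$ is cyclic then Nakayama's lemma lifts a generator, so $\mathcal V$ is cyclic. It is then of the form $\mathcal U_\Lambda$, and its endomorphism ring $R[x]/(\prod_{\lambda\in\Lambda}(x-\lambda))$ is local, being a quotient of the local ring $R[x]_{(\upsilon,x-1)}$ that is finite over $R$. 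Conversely, a cyclic lift has cyclic reduction.

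\textbf{Part (3).} Two cyclic lifts $\mathcal U_\Lambda$ and $\mathcal U_{\Lambda'}$ have isomorphic $B[H]$-modules if and only if $\Lambda=\Lambda'$, by comparing eigenvalues.

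\textbf{Part (2).} I take $\mathcal H=\oplus_b\oplus_{i=1}^{\mu_V(b)}\mathcal U_{\Lambda_b}$, using for each $b$ a single $\Lambda_b$. When $\jmath_V=0$ we have $b\le p^s-1$, so I may choose $\Lambda_b\subset\Xi_s^*$, which gives $r_{\mathcal H}=0$. To get the end-hull property, it suffices to lift every $k[H]$-map $U(b)\to U(c)$ to an $R[H]$-map $\mathcal U_{\Lambda_b}\to\mathcal U_{\Lambda_c}$. I choose the sets nested: $\Lambda_b\subset\Lambda_c$ for $b\le c$, for example the first $b$ elements of a fixed ordering of $\Xi_s$ (respectively of $\Xi_s^*$).

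With nested sets the lifting is explicit. For $b\le c$, multiplication by $\prod_{\lambda\in\Lambda_c\setminus\Lambda_b}(x-\lambda)$ maps $\mathcal U_{\Lambda_b}$ to $\mathcal U_{\Lambda_c}$. This product reduces to $(x-1)^{c-b}$, and these maps together with the $R[x]$-module structure generate the lifts of all maps $U(b)\to U(c)$ as described by Equation (\ref{EQ17}) and its dual. In the other direction, the quotient map $\mathcal U_{\Lambda_c}\to\mathcal U_{\Lambda_b}$ lifts the projection. I expect this step to be the main technical point; nestedness is exactly what makes it work.

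\textbf{Part (4).} $U(b)^{(t)}$ is cyclic of the same dimension, since $x^t-1=(x-1)\cdot$unit in $k[x]/(x-1)^b$ when $p\nmid t$.

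\textbf{Part (5).} By the classification above, a lift of $V$ that is a direct sum of cyclic modules is $\oplus\,\mathcal U_{\Lambda}$ over a multiset of $\Lambda$'s, with $\mu_V(b)$ of them in $\mathcal P_b$. By Krull--Schmidt over the complete ring $R$ (each summand has local endomorphism ring), $t$-invariance means this multiset is stable under $\Lambda\mapsto t\cdot\Lambda$. A stable multiset decomposes into orbits with multiplicities, so for each $b$ the number $\mu_V(b)$ must be a sum of orbit sizes; orbit sizes of singletons (fixed sets) are $1$. The excluded values of $b$ are those for which a $[t]_{p^s}$-fixed $\Lambda$ of size $b$ exists, in which case $\mu_V(b)$ is automatically such a sum, so no condition is needed there. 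The constraint $1\notin\Lambda$ gives the resp.\ version with $\Xi_s^*$.

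\textbf{Parts (6) and (7).} Here $t=-1$. For even $b$, I pick $\Lambda=\Lambda_0\cup\Lambda_0^{-1}$, which is fixed and avoids $1$. For odd $b<p^s$, I pick $\Lambda=\{1\}\cup\Lambda_0\cup\Lambda_0^{-1}$, which is fixed but contains $1$. If instead I want to avoid $1$ as in (7), I pair $\Lambda$ with its inverse $\Lambda^{-1}$; this uses the evenness of $\mu_V(b)$ for odd $b$. For (6), $\dim_k V$ even forces $\sum_{b\ \mathrm{odd}}\mu_V(b)$ even. Odd-$b$ summands are handled as fixed sets containing $1$, and $b=p^s$ is handled by $\Xi_s$ itself, which is fixed. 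Then I apply the criterion of (5).
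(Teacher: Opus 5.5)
Most of your proposal is sound and tracks the paper closely: nested eigenvalue sets for the end-hull in (2), Krull--Schmidt--Azumaya together with $\mathcal U_\Lambda\cong\mathcal U_{\Lambda'}\Leftrightarrow\Lambda=\Lambda'$ for (5), and inverse pairs $\{\Lambda,\Lambda^{-1}\}$ for (6)--(7). Your construction for (6), like the paper's, never actually uses that $\dim_k(V)$ is even.

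Your classification of cyclic lifts takes a genuinely different and more elementary route than the paper's Auslander--Buchsbaum argument over $R[[y]]$ in (3). The monic $\Theta=\prod_{\lambda\in\Lambda}(x-\lambda)\in R[x]$ kills $\mathcal V\otimes_R B$, hence kills the torsion-free $\mathcal V$. So $R[x]/(\Theta)\twoheadrightarrow\mathcal V$ is a surjection of free $R$-modules of equal rank, hence an isomorphism.

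The gap is in (5), in how you handle the excluded values $b\in o\llbracket1,\lfloor\frac{p^s}{t}\rfloor\rrbracket$ (presumably $\lfloor\frac{p^s}{o}\rfloor$ is intended). For (5.b)$\Rightarrow$(5.a) the only thing needed is that each excluded $b$ admits a $[t]_{p^s}$-stable $\Lambda\in\mathcal P_b(\Xi_s)$ (resp.\ $\mathcal P_b(\Xi_s^{\ast})$). You replace this by the claim that the excluded $b$ are \emph{exactly} those admitting such a $\Lambda$. That is not what the statement says, and it is false as a characterization: $\{1\}$ is always stable, yet $b=1$ is excluded only when $o=1$. The inclusion you actually need is never proved, and it does not follow merely from $o\mid b$. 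On primitive $p^j$-th roots with $j<s$, the orbits of $\lambda\mapsto\lambda^t$ have length $o_j$, the order of $t$ modulo $p^j$, which can be strictly smaller than $o$.

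A correct argument runs as follows. The primitive $p^j$-th roots split into $\frac{p^j-p^{j-1}}{o_j}$ orbits of length exactly $o_j$, and $o_{j-1}\mid o_j\mid p^j-p^{j-1}$. If $b\le p^s-p^{s-1}$, take $b/o$ primitive orbits at level $s$. Otherwise take all of level $s$ and recurse with $b-(p^s-p^{s-1})\le p^{s-1}$, which is divisible by $o_{s-1}$. This yields stable subsets of $\Xi_s$ of every size in $o\mathbb N\cap\llbracket1,p^s\rrbracket$, and stable subsets of $\Xi_s^{\ast}$ of every such size $\le p^s-1$.

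In the resp.\ case the inclusion genuinely fails at $b=p^s$, since $\mathcal P_{p^s}(\Xi_s^{\ast})=\emptyset$. Take $t=1$, so $o=1$ and every $b\in\llbracket1,p^s\rrbracket$ is excluded, making (5.b) vacuous. But by your own classification $V=U(p^s)$ has the unique lift $\mathcal U_{\Xi_s}$, with $r_{\mathcal U_{\Xi_s}}=1\neq0$. So the resp.\ form of (5) can only be proved under the extra hypothesis $\jmath_V=0$, or with $p^s$ removed from the excluded set, and your write-up should say so.

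The paper's own argument at this step has the same two defects. It forms a union of $\frac{b}{o}$ orbits $\{\lambda^{t^i}\mid i\in\llbracket1,o\rrbracket\}$ inside $\Xi_s^{\ast}$, tacitly assuming all orbits have length $o$, and it overlooks $b=p^s$.
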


\begin{proof}
The case $\dim_k(V)=0$ is trivial for all parts. Thus we can assume that $\dim_k(V)\in\mathbb N$, equivalently, that $\imath_V\ge 1$.

The equivalence $(1.a)\Leftrightarrow (1.b)$ holds for all torsion modules over a quotient of a principal ideal domain such as $k[x]/(x-1)^{p^s}$. 

The implication $(1.b)\Rightarrow (1.c)$ follows from Equation (\ref{EQ18}) as $k[x]/(x-1)^b$ is a local ring (see also \cite{A}, Ch.\ II, Sect.\ 4, Lem.\ 1). 

The principal ideal $\upsilon R[G]$ of $R[G]$ is contained in the radical of $R[G]$ by Jacobson's Theorem (see \cite{Bo}, Sect.\ 9, Thm.\ 1). Hence the implication  $(1.d)\Rightarrow (1.e)$ holds as for $\imath_V=1$, if $v\in\mathcal V$ is such that its reduction modulo $\upsilon$ generates $V$, then from Nakayama's Lemma (see \cite{Bo}, Sect.\ 9, Thm.\ 2) applied to the ideal $\upsilon R[G]$ of $R[G]$ we get that $v$ generates $\mathcal V$.

The implications $(1.c)\Rightarrow (1.d)$, $(1.e)\Rightarrow (1.a)$, and $(1.f)\Rightarrow (1.e)$ are clear. 

We check that $(1.e)\Rightarrow (1.f)$. As $\mathcal V$ is cyclic, it is isomorphic to $R[x]/(\Theta)$ with $\Theta\in R[x]$ a divisor of $x^{p^s}-1$; so $\Theta\in B[x]$ is separable. As $R_s\subset R$, $\Theta$ is also split. Hence the $B$-algebra $\End_{B[H]}(\mathcal V)$ is isomorphic to $B^{\dim_k(V)}$. This implies that the $R$-algebra $\End_{R[H]}(\mathcal V)$ is local if and only if the $k$-algebra $\End_{k[H]}(V)$ is local. From this and the fact that $(1.e)\Rightarrow (1.f)$ we get that the $R$-algebra $\End_{R[H]}(\mathcal V)$ is local. Thus $(1.e)\Rightarrow (1.f)$. So part (1) holds. 

For parts (2) to (7), to ease notation, we can assume that we have a direct sum decomposition of $k[H]$-modules
$$V=\oplus_{i=1}^{\imath_V} k[H]/(x-1)^{b_i}$$
such that $(b_i)_{i\in\llbracket1,\imath_V\rrbracket}$ is a non-decreasing sequence in $\llbracket1,b_{\imath_V}\rrbracket$.

For part (2), let $\nu:=(\lambda_1,\ldots,\lambda_{b_{\imath_V}})\in\Xi_s^{b_{\imath_V}}$ have distinct entries. For $i\in\llbracket1,\imath_V\rrbracket$, let $\mathcal V_i:=R[x]/\bigl(\prod_{j=1}^{b_i} (x-\lambda_j)\bigr)$. Thus the direct sum
$$\mathcal H_{\nu}:=\oplus_{i=1}^{\imath_V} \mathcal V_i.$$
of cyclic $R[H]$-modules lifts $V$. To show that $\mathcal H_{\nu}$ is an end-hull of $V$ it suffices to show that for each pair $(i_1,i_2)\in\llbracket1,n\rrbracket^2$, the $R$-linear map 
$$\mathfrak l_{i_1,i_2}:\Hom_{R[H]}(\mathcal V_{i_1},\mathcal V_{i_2})\rightarrow \Hom_{k[H]}(k[x]/(x-1)^{b_{i_1}},k[x]/(x-1)^{b_{i_2}}\bigr),$$
which is the reduction modulo $\upsilon$, is surjective. 

Using duals, to check this we can assume that $i_2\le i_1$; using Equation (\ref{EQ17}) and its analog $R[H]$-linear identification
$$R[x]/\bigl(\prod_{j=1}^{b_{i_2}} (x-\lambda_j)\bigr)=\Hom_{R[H]}(\mathcal V_{i_1},\mathcal V_{i_2})$$ 
that maps every $\alpha\in R[x]/\bigl(\prod_{j=1}^{b_{i_2}} (x-\lambda_j)\bigr)$ to the $R[H]$-linear map $\break f_{\alpha}\in\Hom_{R[H]}(\mathcal V_{i_1},\mathcal V_{i_2})$ with $f_{\alpha}\bigl(1+(\prod_{j=1}^{b_{i_1}} (x-\lambda_j))\bigr)=\alpha$, we get that $\mathfrak l_{i_1,i_2}$ is surjective. If $\jmath_V=0$, then $b_{\imath_V}\le p^s-1$, and thus we can assume that $\{\lambda_i|i\in\llbracket1,b_{\imath_V}\rrbracket\}\subset\Xi_s^{\ast}$; this implies that $\mathcal H_{\nu}^H=\{0\}$, i.e., $r_{\mathcal H_{\nu}}=0$. So part (2) holds.

The `only if' of part (3) is clear. For the `if' of part (3), both $\mathcal V_1$ and $\mathcal V_2$ are cyclic by part (1). Let $\mathfrak R:=R[[y]]$ be the $R$-algebra of formal power series in the indeterminate $y:=x-1$; it is a regular local ring of dimension $2$. Let $\mathfrak I:=(x^{p^s}-1)\mathfrak R=[(y+1)^{p^s}-1]\mathfrak R$; we have $R[x]/(x^{p^s}-1)=\mathfrak R/\mathfrak I$. For $\iota\in\{1,2\}$, we view $\mathcal V_{\iota}$ as an $\mathfrak R$-module annihilated by $\mathfrak I$. As $\depth_{\mathfrak R}(\mathcal V_{\iota})=1$ and $\depth(\mathfrak R)=2$, from the Auslander--Buchsbaum formula we get that the $\mathfrak R$-module $\mathcal V_{\iota}$ has projective dimension $1$. So we have a projective resolution 
$$0\rightarrow\mathfrak I_{\iota}\rightarrow\mathfrak R\rightarrow \mathcal V_{\iota}\rightarrow 0$$
of $\mathfrak R$-modules. As $\mathfrak R$ is a unique factorization domain, it follows that there exists $\Upsilon_{\iota}\in\mathfrak R$ such that $\mathfrak I_{\iota}=\Upsilon_{\iota}\mathfrak R$. As $\Upsilon_{\iota}$ annihilates $\mathcal V_{\iota}$, its image in $\mathfrak R/\mathfrak I=R[x]/(x^{p^s-1})$ is the same as the image in $R[x]/(x^{p^s-1})$ of a divisor $\Theta_{\iota}$ of $x^{p^s}-1$ in $R[x]$ of degree $\dim_k(V)$ which we can assume to be monic. As the monic divisors of $x^{p^s}-1$ over $R$ and $B$ are the same, it follows that $\Theta_1=\Theta_2$. Hence $\mathcal V_1$ and $\mathcal V_2$ are both isomorphic to $R[x]/(\Theta_1)$. So the `if' of part (3) holds. Thus part (3) holds.

To check that part (4) holds, we can assume that $V$ is cyclic. So $V^{(t)}$ is also cyclic and $\dim_k(V^{(t)})=\dim_k(V)$. Thus $V$ and $V^{(t)}$ are isomorphic to $k[x]/(x-1)^{\dim_k(V)}$. So part (4) holds.

For part (5), let $\mathfrak b:=\{b_i|i\in\llbracket1,\imath_V\rrbracket\}$. 

We show that $(5.a)\Rightarrow (5.b)$. Based on the equivalence $(1.a)\Leftrightarrow (1.f)$, we have a direct sum decomposition $\mathcal V=\oplus_{i=1}^{\imath_V} \mathcal V_i$ into cyclic $R[H]$-modules such that for each $i\in\llbracket1,\imath_V\rrbracket$ the $R$-module $\mathcal V_i$ is free of rank $b_i$. For $i\in\llbracket1,\imath_V\rrbracket$, let $\Theta_i\in R[x]$ be a divisor of $x^{p^s}-1$ such that the $R[H]$-module $\mathcal V_i$ is isomorphic to $R[x]/(\Theta_i)$ and let $\Lambda_i:=\{\lambda\in\Xi_s|\Theta_i(\lambda)=0\}$; we have $|\Lambda_i|=b_i$ as $\Theta_i$ is separable over $B$ and the $R$-module $\mathcal V_i$ has rank $b_i$. 

For $b\in\mathfrak b$, let $I_b:=\{i\in\llbracket1,\imath_V\rrbracket|b_i=b\}$; we have $|I_b|=\mu_V(b)$. As $\mathcal V_i\cong R[x]/(\Theta_i)=R[x]/\bigl(\prod_{\lambda\in\Lambda_i} (x-\lambda)\bigr)$, from Equation (\ref{EQ15}) we get that 
\begin{equation}\label{EQ31}
\mathcal V_i^{(t)}\cong R[x]/\bigl(\prod_{\lambda\in\Lambda_i} (x-\lambda^{t'})\bigr)=R[x]/\bigl(\prod_{\lambda\in [t']_{p^s}\cdot\Lambda_i} (x-\lambda)\bigr).
\end{equation}
As $\mathcal V\cong\mathcal V^{(t)}$, as the isomorphism class of each $\mathcal V_i$ (resp.\ $\mathcal V_i^{(t)}$) is uniquely determined by $\Lambda_i$ (resp.\ $[t']_{p^s}\cdot \Lambda_i$), and as the isomorphism class of $\mathcal V$ (resp.\ $\mathcal V^{(t)}$) is uniquely determined by the isomorphism classes of the $\mathcal V_i$s (resp.\ $\mathcal V^{(t)}_i$s) by the supplement \cite{AF}, Ch.\ 3, Sect.\ 12, Thm.\ 12.4 to the Krull--Schmidt--Remak--Azumaya theorem (e.g., see \cite{AF}, Ch.\ 3, Sect.\ 12, Thm.\ 12.6), it follows that for each $b\in\mathfrak b$ there exists a permutation $\vartheta_b$ of $I_b$ such that $\Lambda_i=[t']_{p^s}\cdot \Lambda_{\vartheta_b(i)}$ for each $i\in I_b$. This implies that $\mu_V(b)=|I_b|$ is a sum of cardinals of orbits of the permutation action of $[t']_{p^s}$ or $[t]_{p^s}=[t']^{-1}_{p^s}$ on the subset $\{\Lambda_i|i\in I_b\}$ of $\mathcal P_b(\Xi_s)$ (resp.\ $\mathcal P_b(\Xi_s^{\ast})$). Thus $(5.a)\Rightarrow (5.b)$.

To show that $(5.b)\Rightarrow (5.a)$, we can assume that $\mathfrak b=\{b\}$ has a unique element; thus $b_1=\cdots=b_{\imath_V}=b$ and we use only $b$. If $o\mid b$, then let $\Phi_b\in\mathcal P_b(\Xi_s^{\ast})$ be such that $|\Phi_b|=1$ and the unique element of $\Phi_b$ is a disjoint union of $\frac{b}{o}$ subsets of $\Xi_s$ (resp.\ $\Xi_s^{\ast}$) of the form 
$$\{\lambda^{t^i}|i\in\llbracket1,o\rrbracket\}$$
for a suitable $\lambda\in \Xi_s^{\ast}$; as $[t]_{p^s}\cdot\Phi_b=\Phi_b$ and $\mu_V(b)=\imath_V$ is a sum of $\imath_V$ terms equal to $1=|\Phi_b|$, it follows that the hypotheses of part (5.b) holds even if $o|d$.

Thus, to prove that $(5.b)\Rightarrow (5.a)$ we can also assume that there exists a subset $\Phi_b$ of $\mathcal P_b(\Xi_s)$ (resp.\ $\mathcal P_b(\Xi_s^{\ast})$) with $[t]_{p^s}\cdot\Phi_b=\Phi_b$ and $|\Phi_b|=\imath_V$. We take 
$$\mathcal V:=\oplus_{\Lambda\in\Phi_b} R[x]/\bigl(\prod_{\lambda\in\Lambda} (x-\lambda)\bigr).$$
As $\mathcal V^{(t)}\cong\oplus_{\Lambda\in [t']_{p^s}\cdot\Phi_b} R[x]/\bigl(\prod_{\lambda\in\Lambda} (x-\lambda)\bigr)$ by Equation (\ref{EQ15}) and we have $[t]_{p^s}\cdot\Phi_b=\Phi_b$, we get that $\mathcal V^{(t)}\cong\mathcal V$. Thus $(5.b)\Rightarrow (5.a)$. So part (5) holds.

For parts (6) and (7), we list $\Xi_s^{\ast}=\{\lambda_i|i\in\llbracket1,p^s-1\rrbracket\}$ in such a way that for each $j\in\llbracket1,\frac{p^s-1}{2}\rrbracket$ we have $\lambda_{2j}=\lambda_{2j-1}^{-1}$. 

For $i\in\llbracket1,\imath_V\rrbracket$ with $b_i$ even (resp.\ odd), let $\Lambda_i:=\{\lambda_i|i\in\llbracket1,b_i\rrbracket\}$ (resp.\ $\Lambda_i:=\{1\}\cup\{\lambda_i|i\in\llbracket1,b_i-1\rrbracket\}$). 

Let $\mathcal V:=\oplus_{i=1}^{\imath_V} R[x]/\bigl(\prod_{\lambda\in\Lambda_i} (x-\lambda)\bigr)$; it is a direct sum of cyclic $R[H]$-modules by part (1) which lifts $V$. As for each $i\in\llbracket1,\imath_V\rrbracket$ we have $[p^s-1]_{p^s}\cdot\Lambda_i=\Lambda_i$, $\mathcal V$ is $t$-invariant by Equation (\ref{EQ31}). So part (6) holds.

For part (7), for a fixed odd $b\in\mathfrak b$, we modify the $\Lambda_i$s with $b_i=b$ by replacing $1$ half times by $\lambda_b$ and half times by $\lambda_{b+1}=\lambda_b^{-1}$. In terms of cardinals, for such a $b$ we have $b\le p^s-2$ as $\jmath_V=0$ and $\mu_V(b)\in 2\mathbb N$ by hypotheses, hence $\mu_V(b)$ is a sum of $2$s and $2$ is the cardinality of the subset 
$$\Phi_b:=\{\lambda_i|i\in\llbracket1,b_i\rrbracket\}\cup\{\lambda_i|i\in\llbracket1,b_i-1\rrbracket\cup\{b_i+1\}\}$$ 
of $\mathcal P_b(\Xi_s^{\ast})$ for which we have $[t]_{p^s}\cdot \Phi_b=\Phi_b$. Thus from Equation (\ref{EQ31}) we get $\mathcal V$ is $t$-invariant. So part (7) holds.\end{proof}

\begin{remark}\normalfont\label{R3}
{\bf (1)} If $H$ has at least $3$ elements, then Theorem \ref{T3}(3) does not hold if $V$ is not cyclic as one can easily see based on Krull--Schmidt--Remak--Azumaya Theorem and its mentioned supplement.

\smallskip
{\bf (2)} If $V$ is cyclic, then for each $R$, every lift of $V$ to $R$ is an end-hull. To check this we can assume that $R$ contains $R_s$ and this case follows from the surjectivity of $\mathfrak l_{i_1,i_2}$ for $i_1=i_2=1$ in the proof of Theorem \ref{T3}(2).
\end{remark}

\begin{corollary}\label{C1}
Suppose that $R$ contains $R_s$. Let $b\in\llbracket1,p^s\rrbracket$. Then the following properties hold.

\medskip
{\bf (1)} For each subset $\Lambda\in\mathcal P_b(\Xi_s)$, there exists a unique $R[H]$-module $\mathcal U(b;\Lambda)$ up to isomorphisms which is a lift of $U(b)$ and we have 
$$\kappa_{\mathcal U(b;\lambda)\otimes_R B}(\zeta)=\begin{cases}1\quad {\rm if}\quad \zeta\in\Lambda\\
0\quad {\rm if}\quad \zeta\notin\Lambda.\end{cases}$$ 
We have $\mathcal U(b;\Lambda)\cong R[x]/\bigl(\prod_{\zeta\in\Lambda} (x-\zeta)\bigr)$.

\smallskip
{\bf (2)} The isomorphism classes of lifts of $U(b)$ to $R$ are the $\mathcal U(b;\Lambda)$s with $\Lambda\in\mathcal P_b(\Xi_s)$.

\smallskip
{\bf (3)} The $R[H]$-module $\mathcal U(p^s-1;\Xi_s^{\ast})$ is the unique lift of $U(p^s-1)$ to $R$ with $r_{\mathcal U(p^s-1;\Xi_s^{\ast})}=0$ up to isomorphisms. Moreover, $\mathcal U(p^s-1;\Xi_s^{\ast})$ is uniquely defined over $R_0$, i.e., there exists a unique lift $\mathcal U_0(p^s-1;\Xi_s^{\ast})$ of $U(p^s-1)$ to $R_0$ up to isomorphisms with $\mathcal U_0(p^s-1;\Xi_s^{\ast})\otimes_{R_0} R\cong \mathcal U(p^s-1;\Xi_s^{\ast})$.
\end{corollary}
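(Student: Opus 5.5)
The plan is to build all three parts of Corollary \ref{C1} on Theorem \ref{T3}(1) and (3), using the explicit cyclic modules $R[x]/(\Theta)$.

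\textbf{Part (1).} For $\Lambda\in\mathcal P_b(\Xi_s)$, set $\Theta_\Lambda:=\prod_{\zeta\in\Lambda}(x-\zeta)\in R[x]$. This makes sense because $R\supset R_s$ contains $\Xi_s$.

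\begin{itemize}
\item Since $\Theta_\Lambda$ is monic and divides $x^{p^s}-1$, the module $R[x]/(\Theta_\Lambda)$ is an $R[H]$-module that is free of rank $b$ over $R$.
\item Each $\zeta\in\Xi_s$ reduces to $1$ in $k$. Hence $\Theta_\Lambda$ reduces to $(x-1)^b$, and the reduction modulo $\upsilon$ is $k[x]/(x-1)^b=U(b)$.
\item After $\otimes_R B$ the polynomial $\Theta_\Lambda$ is separable and split. By the Chinese remainder theorem, $B[x]/(\Theta_\Lambda)\cong\prod_{\zeta\in\Lambda}B[x]/(x-\zeta)$. So $\tau$ has eigenvalue $\zeta$ with multiplicity $1$ for $\zeta\in\Lambda$ and $0$ otherwise, which is the stated formula for $\kappa$.
\end{itemize}

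For uniqueness, let $\mathcal V$ be any lift of $U(b)$ with $\kappa_{\mathcal V\otimes_R B}=1_\Lambda$. Since $U(b)$ is cyclic, $\mathcal V$ is cyclic by Theorem \ref{T3}(1). Every $B[H]$-module is semisimple and determined by its eigenvalue multiplicities, so $\mathcal V\otimes_R B\cong B[x]/(\Theta_\Lambda)$. Theorem \ref{T3}(3) then gives $\mathcal V\cong R[x]/(\Theta_\Lambda)$ over $R[H]$.

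\textbf{Part (2).} Let $\mathcal V$ be an arbitrary lift of $U(b)$. By Theorem \ref{T3}(1) it is cyclic, so $\mathcal V\cong R[x]/(\Theta)$ for some divisor $\Theta$ of $x^{p^s}-1$.

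\begin{itemize}
\item Since $\mathcal V$ is $R$-free of rank $b$, we may take $\Theta$ monic of degree $b$.
\item Since $\Theta$ divides the separable, split polynomial $x^{p^s}-1$, it equals $\Theta_\Lambda$ for a $b$-element subset $\Lambda\subset\Xi_s$.
\end{itemize}

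Alternatively, one can read $\Lambda$ off the eigenvalues of $\mathcal V\otimes_R B$. These have multiplicity at most $1$ because $\mathcal V\otimes_R B$ is a cyclic $B[H]$-module. Then part (1) applies.

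\textbf{Part (3).} First I show $r_{\mathcal U(b;\Lambda)}=0$ if and only if $1\notin\Lambda$.

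\begin{itemize}
\item The $B$-module $\mathcal U(b;\Lambda)^H\otimes_R B$ is the $\zeta=1$ eigenspace, so its dimension is $\kappa(1)$.
\item Moreover, $\mathcal U(b;\Lambda)^H$ is a saturated submodule, so its rank equals this dimension.
\end{itemize}

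For $b=p^s-1$, the only $\Lambda\in\mathcal P_{p^s-1}(\Xi_s)$ avoiding $1$ is $\Xi_s^{\ast}$. Uniqueness then follows from part (2).

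For the descent to $R_0$, take $\mathcal U_0:=R_0[x]/\bigl(\frac{x^{p^s}-1}{x-1}\bigr)$. Note that $\frac{x^{p^s}-1}{x-1}=\prod_{\zeta\in\Xi_s^{\ast}}(x-\zeta)$ has coefficients in $\mathbb Z$. Then $\mathcal U_0$ lifts $U(p^s-1)$, and $\mathcal U_0\otimes_{R_0}R\cong\mathcal U(p^s-1;\Xi_s^{\ast})$.

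For uniqueness over $R_0$, let $\mathcal U_0'$ be another lift with $\mathcal U_0'\otimes_{R_0}R\cong\mathcal U(p^s-1;\Xi_s^{\ast})$.

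\begin{itemize}
\item By Theorem \ref{T3}(1) applied over $R_0$ (cyclicity via Nakayama, which does not use $R_s$), $\mathcal U_0'\cong R_0[x]/(\Theta_0)$ with $\Theta_0$ a monic divisor of $x^{p^s}-1$ of degree $p^s-1$.
\item Base change to $R$ forces $\Theta_0=\prod_{\zeta\in\Xi_s^{\ast}}(x-\zeta)$.
\end{itemize}

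This last step is where care is needed. The argument in Theorem \ref{T3}(3) used $R_s\subset R$ only to identify $\Theta$. Over $R_0$ I would instead argue via Fitting ideals or annihilators: the annihilator of the cyclic module $\mathcal U_0'$ is $(\Theta_0)$, and base change is flat, so $\Theta_0 R[x]=\Theta R[x]$. Since both polynomials are monic, $\Theta_0$ equals the product over $\Xi_s^{\ast}$. This forces $\mathcal U_0'\cong\mathcal U_0$.
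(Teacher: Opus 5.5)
Your proposal is correct. For parts (1), (2) and the first sentence of (3) it follows the paper. The paper derives these from Theorem \ref{T3}(1)--(3) together with the explicit cyclic lifts $R[x]/\bigl(\prod_{\zeta\in\Lambda}(x-\zeta)\bigr)$ built in the proof of Theorem \ref{T3}(2).

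The only divergence is the uniqueness of the descent to $R_0$.
\begin{itemize}
\item The paper tensors the given isomorphism up to $B$ and descends it to an isomorphism of $B_0[H]$-modules via Lemma \ref{L2}(1). It then applies Theorem \ref{T3}(3) over $R_0$. This tacitly uses that the proof of that theorem still works without $R\supseteq R_s$, because monic divisors of $x^{p^s}-1$ over $B_0$ already lie in $\mathbb Z[x]$.
\item You instead compare the annihilator ideals of the base-changed cyclic modules directly. You then conclude $\Theta_0=\prod_{\zeta\in\Xi_s^{\ast}}(x-\zeta)$ from monicity.
\end{itemize}
Your route avoids Lemma \ref{L2} and any extension of Theorem \ref{T3}(3) beyond its stated hypothesis.

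You leave one claim unjustified: that an $R_0$-free cyclic $R_0[H]$-module has the form $R_0[x]/(\Theta_0)$ with $\Theta_0$ a monic divisor of $x^{p^s}-1$. It holds for the same integrality reason (use saturation of the preimage ideal in $R_0[x]$). It can also be bypassed: the annihilator $J$ of $\mathcal U_0'$ in $R_0[H]$ satisfies $JR[H]=\prod_{\zeta\in\Xi_s^{\ast}}(x-\zeta)\,R[H]$. Hence $J=\prod_{\zeta\in\Xi_s^{\ast}}(x-\zeta)\,R_0[H]$, because $R$ is finite free, hence faithfully flat, over $R_0$.

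A minor point: in part (3) the saturation remark is unnecessary. The rank of $\mathcal U(b;\Lambda)^H$ equals $\dim_B\bigl(\mathcal U(b;\Lambda)^H\otimes_R B\bigr)$ simply because taking $H$-invariants commutes with the localization $R\to B$.
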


\begin{proof}
Parts (1) and (2) follows from Theorem \ref{T3}(2) and (3) and its proof. 

The first sentence of part (3) follows from parts (1) and (2) applied to $b=p^s-1$. Let
\begin{equation}\label{EQ32}
\mathcal U_0(p^s-1;\Xi_s^{\ast}):=R_0[x]/\bigl(\prod_{\zeta\in\Xi_s^{\ast}} (x-\zeta)\bigr)=R_0[x]/(\sum_{i=0}^{p^s-1} x^i).
\end{equation}
Its uniqueness follows from Theorem \ref{T3}(3) and Lemma \ref{L2} applied to the field extension $B_0\rightarrow B$. We have 
$$\mathcal U_0(p^s-1;\Xi_s^{\ast})\otimes_{R_0} R=R[x]/\bigl(\prod_{\zeta\in\Lambda} (x-\zeta)\bigr)\cong \mathcal U(p^s-1;\Xi_s^{\ast}).$$ So part (3) holds.
\end{proof}

\begin{lemma}\label{L6}
The following properties hold.

\medskip
{\bf (1)} We have a short exact sequence of $R_0[H]$-modules
$$0\rightarrow R_0[H]^H\rightarrow R_0[H]\rightarrow\mathcal U_0(p^s-1;\Xi_s^{\ast})\rightarrow 0.$$

{\bf (2)} Let $V$ be a $k[H]$-module such that it has a lift $\mathcal V$ to $R$ with $r_{\mathcal V}=0$. Then we have a commutative diagram of $R$-modules
\begin{equation}\label{EQ34}
\xymatrix@R=10pt@C=21pt@L=2pt{
\Hom_{R[H]}\bigl(\mathcal U_0(p^s-1;\Xi_s^{\ast})\otimes_{R_0} R,\mathcal V\bigr) \ar[r]\ar[d] &\Hom_{R[H]}(R[H],\mathcal V)=\mathcal V\ar[d]\\
\Hom_{k[H]}\bigl(U(p^s-1),V\bigr) \ar[r] & \Hom_{k[H]}(k[H],V)=V\\}
\end{equation}
whose vertical arrows are reductions modulo $\upsilon$ and are surjective and whose horizontal arrows are isomorphisms.
\end{lemma}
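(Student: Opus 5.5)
For part (1), I will work with $R_0[H]=R_0[x]/(x^{p^s}-1)$ and use the factorization $x^{p^s}-1=(x-1)\Phi(x)$, where $\Phi(x):=\sum_{i=0}^{p^s-1}x^i$. The invariants $R_0[H]^H$ consist of the classes $f$ with $(x-1)f\in(x^{p^s}-1)$. Since $R_0[x]$ is a domain, these are exactly the multiples of $\Phi$. So $R_0[H]^H=\Phi\cdot R_0[H]=R_0\Phi$.

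The quotient of $R_0[H]$ by $(\Phi)$ is $R_0[x]/(\Phi)$, which is $\mathcal U_0(p^s-1;\Xi_s^{\ast})$ by Equation (\ref{EQ32}). The surjection $R_0[H]\rightarrow\mathcal U_0(p^s-1;\Xi_s^{\ast})$ is the natural projection. This gives the exact sequence.

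For part (2), the top horizontal arrow is induced by the surjection of part (1) tensored with $R$. Since $R$ is flat over $R_0$, the sequence stays exact, and $R_0[H]^H\otimes_{R_0}R=R\Phi$. Applying $\Hom_{R[H]}(-,\mathcal V)$ and using left exactness identifies
$$\Hom_{R[H]}\bigl(\mathcal U_0(p^s-1;\Xi_s^{\ast})\otimes_{R_0}R,\mathcal V\bigr)$$
with the set of $v\in\mathcal V$ killed by $\Phi$. I will show that this is all of $\mathcal V$. Indeed, $\Phi(\tau)v$ is $H$-invariant, because $(\tau-1)\Phi(\tau)=\tau^{p^s}-1=0$. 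As $r_{\mathcal V}=0$, we have $\mathcal V^H=\{0\}$, so $\Phi(\tau)v=0$. Hence the top arrow is an isomorphism.

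The bottom row is handled the same way. Over $k$, $\Phi\equiv(x-1)^{p^s-1}$, so $k[x]/(\Phi)=U(p^s-1)$. Here $\Hom_{k[H]}(U(p^s-1),V)$ identifies with $\{v\in V\mid(\tau-1)^{p^s-1}v=0\}$. This is a subspace of $V$, and it is all of $V$ provided $V$ has no projective summand $U(p^s)$, i.e.\ $\jmath_V=0$.

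I expect this to be the main obstacle: I need $\jmath_V=0$ from the existence of a lift with $r_{\mathcal V}=0$. I will argue by reduction. The operator $\Phi(\tau)$ acts as $0$ on $\mathcal V$, so its reduction $(\tau-1)^{p^s-1}$ acts as $0$ on $V=\mathcal V/\upsilon\mathcal V$. This forces all Jordan blocks to have size at most $p^s-1$, hence $\jmath_V=0$, and the bottom arrow is an isomorphism.

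Commutativity of the diagram is clear, since both horizontal maps are evaluation at the class of $1$ and reduction modulo $\upsilon$ commutes with it. The right vertical arrow $\mathcal V\rightarrow V$ is surjective by definition of a lift. The left vertical arrow is then surjective because it is conjugate to the right one through the horizontal isomorphisms.
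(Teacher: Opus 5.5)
Your proposal is correct and follows the paper's route. Part (1) identifies $R_0[H]^H$ with the ideal generated by $\sum_{i=0}^{p^s-1}\tau^i$, and part (2) applies $\Hom_{R[H]}(-,\mathcal V)$ to the base-changed sequence, using $\Hom_{R[H]}(R[H]^H,\mathcal V)=\mathcal V^H=\{0\}$ when $r_{\mathcal V}=0$. Your bottom-row argument, reducing $\Phi(\tau)\mathcal V=\{0\}$ modulo $\upsilon$ to get $(\tau-1)^{p^s-1}V=\{0\}$, spells out a step the paper leaves implicit.
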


\begin{proof}
Part (1) follows from Equation (\ref{EQ32}) and the fact that in the short exact sequence $0\rightarrow (\sum_{i=0}^{p^s-1} x^i)\rightarrow R_0[H]\rightarrow R_0[x]/(\sum_{i=0}^{p^s-1} x^i)\rightarrow 0$ of $R_0[H]$-modules, the ideal $(\sum_{i=0}^{p^s-1} x^i)=(\sum_{i=0}^{p^s-1}\tau^i)$ of $R_0[H]$ is $R_0[H]^H$.

For part (2), $r_{\mathcal V}=0$ implies $\Hom_{R[H]}(R[H]^H,\mathcal V)=0$. Based on this, by applying $\Hom_{R[H]}(-,\mathcal V)$ to the extension of Short Exact Sequence (\ref{EQ32}) to $R$ we get that Diagram (\ref{EQ34}) is as mentioned; so part (2) holds. 
\end{proof}

\section{On $k[G]$-modules with constant character functions}\label{S5}

As the category of $k[\Sigma]$-modules is semisimple, for each short exact sequence $0\rightarrow V_1\rightarrow V\rightarrow V_2\rightarrow 0$ of $k[\Sigma]$-modules we have an identity
$$\ell_V=\ell_{V_1}+\ell_{V_2}.$$
So the functions $\ell_V$, $\ell_{V_1}$, and $\ell_{V_2}$ are all constant if and only if two of them are constant. This explains the form of the first part of the following definition.

\begin{definition}\label{D4} 
Let $V$ be a $k[G]$-module with a constant function $\ell_V$. 

\medskip 
{\bf (1)} We say that $V$ is constant-indecomposable if each direct summand $k[G]$-submodule $U$ of it with $\ell_U$ constant is $\{0\}$ or $V$.

\smallskip
{\bf (2)} We say that $\natural_V$ holds if $\imath_V\in\llbracket2,m\rrbracket$ and there exists a direct sum decomposition $V=\oplus_{i=1}^{\imath_V} U_{a_i,b_i}$ such that, with $(a_{\imath_V+1},b_{\imath_V+1}):=(a_1,b_1)$, for each $i\in\llbracket1,\imath_V\rrbracket$ we have $a_{i+1}=a_i-b_i\epsilon$, the subset $\{a_i|i\in\llbracket1,\imath_V\rrbracket\}$ of $C_m$ has $\imath_V$ elements, and for every pair $(i_1,i_2)\in\llbracket1,\imath_V\rrbracket^2$ that satisfies $i_1<i_2<i_1+\imath_V$, we have $m\nmid \sum_{l=i_1}^{i_2-1} b_l$.
\end{definition} 

In the proof of Theorem \ref{T2} we use the following proposition.

\begin{proposition}\label{P1}
Suppose that $\chi$ is injective. Let $V$ be a $k[G]$-module with $\ell_V$ constant. Then $V$ is constant-indecomposable if and only if either $\imath_V=1$ or $\natural_V$ holds.
\end{proposition}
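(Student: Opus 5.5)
The plan is to translate everything into a statement about balanced directed multigraphs on the cycle $C_m$ and then use the elementary fact that a minimal nonempty balanced edge multiset is a simple directed cycle.

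\textbf{Step 1: character functions as arcs.} Since $\chi$ is injective we have $q=1$, so $\epsilon\in(\mathbb Z/m\mathbb Z)^{\times}$ generates $C_m$. Because $m\mid p-1$, Equation (\ref{EQ3}) shows that the ordered composition factors of $U_{a,b}$ are
$$U_{a,1},\;U_{a-\epsilon,1},\;\ldots,\;U_{a-(b-1)\epsilon,1}.$$
Hence $\ell_{U_{a,b}}$ is the indicator function, with multiplicities, of the ``arc'' $a,a-\epsilon,\ldots,a-(b-1)\epsilon$, which winds around $C_m$. For a function $f:C_m\to\mathbb Z$ set $(\delta f)(c):=f(c)-f(c+\epsilon)$. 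Applying $\delta$ to this indicator, telescoping along the arc gives
$$\delta\ell_{U_{a,b}}=\mathbf 1_{a}-\mathbf 1_{a-b\epsilon}.$$
Now attach to each indecomposable summand $U_{a,b}$ of $V$ a directed edge $a\to a-b\epsilon$; this edge is a loop exactly when $m\mid b$. Since $\epsilon$ generates $C_m$, $\ell_V$ is constant if and only if $\delta\ell_V=0$. Thus $\ell_V$ is constant if and only if the directed multigraph $\Gamma_V$ on the vertex set $C_m$ is balanced, i.e.\ in-degree equals out-degree at every vertex.

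\textbf{Step 2: summands as sub-multisets.} By the Krull--Schmidt Theorem, the direct summand $k[G]$-submodules of $V$ are, up to isomorphism, exactly the sub-multisets of its indecomposable summands. Consequently $V$ is constant-indecomposable if and only if $\Gamma_V$ is a nonempty balanced multigraph with no proper nonempty balanced sub-multiset of edges. The standard Eulerian argument applies: starting from any edge of a balanced multigraph and following out-edges, one eventually repeats a vertex, which yields a simple directed cycle, and its complement in the edge multiset is again balanced. It follows that a minimal balanced multigraph is a single simple directed cycle, either a loop or a cycle through distinct vertices. Conversely, a simple cycle has no proper nonempty balanced sub-multiset, since every vertex it visits has in- and out-degree $1$.

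\textbf{Step 3: matching with the two cases of the statement.} If $\Gamma_V$ is a loop, then $\imath_V=1$. Conversely, if $\imath_V=1$, then $V=U_{a,b}$ is indecomposable and hence trivially constant-indecomposable. If $\Gamma_V$ is a simple cycle of length $\imath_V\ge 2$, we order its edges along the cycle; this gives exactly $a_{i+1}=a_i-b_i\epsilon$ with the $a_i$ pairwise distinct, and in particular $\imath_V\le m$. Moreover,
$$a_{i_2}-a_{i_1}=-\epsilon\sum_{l=i_1}^{i_2-1}b_l,$$
and $\epsilon$ is a unit, so distinctness of the $a_i$ is equivalent to $m\nmid\sum_{l=i_1}^{i_2-1}b_l$ for $i_1<i_2<i_1+\imath_V$. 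This is precisely $\natural_V$. The same computation read backwards shows that $\natural_V$ describes a simple cycle; in particular no $b_i$ is divisible by $m$, so there are no loops among the edges.

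The main obstacle is Step 1: one must get the direction and sign conventions of Equation (\ref{EQ3}) right, namely that $r$ and $r+1$ shift the label by $-\epsilon$ and that the period $p-1$ is harmless because $m\mid p-1$. Everything after Step 1 is combinatorics.
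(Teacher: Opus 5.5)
Your proof is correct, but it follows a different route from the paper's.

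\textbf{How the paper argues.} The paper works entirely with modules.
\begin{itemize}
\item For the `only if' direction it runs an induction on $\imath_V+\dim_k(V)$. It compares $\ell_V$ at $a_1'=a_1-b_1\epsilon$ and at $a_1'+\epsilon$; this is your balance condition at the single vertex $a_1'$. That comparison produces a summand $U_{a_2,b_2}$ with $a_2=a_1'$. The paper then merges $U_{a_1,b_1}\oplus U_{a_2,b_2}$ into $U_{a_1,b_1+b_2-\varepsilon(p^s-1)}$ and recurses to obtain the cyclic ordering $a_{i+1}=a_i-b_i\epsilon$. The correction by $p^s-1\in m\mathbb Z$ keeps the length at most $p^s$ without moving the endpoint.
\item Distinctness of the $a_i$ then comes from exhibiting a consecutive block as a proper summand with constant character.
\item For the `if' direction, the paper applies the `only if' direction to a hypothetical proper summand $W$ with constant character. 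It shows that $W$ must be a cyclic interval of the summands of $V$, which contradicts $\natural_V$.
\end{itemize}

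\textbf{What your route buys.} Your operator $\delta$ converts constancy of $\ell_V$ into the global condition in-degree $=$ out-degree in one step. After that, both directions are the standard fact that the minimal nonempty balanced sub-multisets of a directed multigraph are exactly the simple directed cycles. This removes the auxiliary merged modules and the nested induction. It also sidesteps a point the paper's induction uses without comment, namely that the merged module again satisfies the inductive hypothesis. As a by-product, every $V$ with $\ell_V$ constant splits into constant-indecomposable summands indexed by a decomposition of $\Gamma_V$ into simple cycles.

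\textbf{What the paper's route buys.} The merging $U_{a_1,b_1}\oplus U_{a_2,b_2}\rightsquigarrow U_{a_1,b_1+b_2}$ is exactly the operation reused in Subcase 3.1 of the proof of Theorem \ref{T2}(2).

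\textbf{A shared gap.} Both arguments tacitly assume $V\neq\{0\}$. The zero module is vacuously constant-indecomposable, yet $\imath_V=0$ and $\natural_V$ fails, so the statement needs that exclusion.
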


\begin{proof}
We can assume that $\imath_V\ge 2$; so $m\ge 2$. We consider a direct sum decomposition $V=\oplus_{i=1}^{\imath_V} U_{a_i,b_i}$ into indecomposable $k[G]$-modules; hence $(a_i,b_i)\in\mathbb J_{m,p^s}$ for each $i\in\llbracket1,\imath_V\rrbracket$. As $\ell_V$ is constant, we have $\dim_k(V)=m\eta_V\ge m$. So $\imath_V+\dim_k(V)\ge m+2$.

We show that the `only if' part holds. As $V$ is constant-indecomposable and $\imath_V\ge 2$, for each $i\in\llbracket1,m\rrbracket$ for the direct summand $U_{a_i,b_i}$ of $V$ the character function $\ell_{U_{a_i,b_i}}$ is not constant and hence $m\nmid b_i$. 

For each $i\in\llbracket1,\imath_V\rrbracket$, let $a_i':=a_i-b_i\epsilon\in\mathbb Z/m\mathbb Z$. 

As $\ell_V$ is constant, we show by induction on $\imath_V+\dim_k(V)$ in $\mathbb N\cap [m+2,\infty)$ that by reindexing we can assume that $a_i'=a_{i+1}$ for each $i\in\llbracket1,\imath_V\rrbracket$. 

If $\imath_V+\dim_k(V)=m+2$, then $\imath_V=2$, $\dim_k(V)=b_1+b_2=m$, and we have $C_m=\{a_1-j\epsilon|j\in\llbracket0,b_1-1\rrbracket\}\cup \{a_2-j\epsilon|j\in\llbracket0,b_2-1\rrbracket\}$ by Equation (\ref{EQ3}); this implies that the unique $j\in\llbracket0,b_2-1\rrbracket$ with $a_1'=a_2-j\epsilon$ is $j=0$. So the base of the induction holds. 

For an integer $n>m+2$, assume that the statement on reindexing holds if $\imath_V+\dim_k(V)<n$ and we show that it also holds if $\imath_V+\dim_k(V)=n$. If $a_1'\notin\{a_i|i\in\llbracket2,\imath_V\rrbracket\}$, then $\ell_V\bigl(a_1-(b_1-1)\epsilon\bigr)>\ell_V(a_1')$, a contradiction. Thus, by reindexing we can assume that  $a_1'=a_2$. If $\imath=2$, then we similarly argue that $a_2'=a_1$. Thus we can assume that $\imath_V\ge 3$. 

Let $\varepsilon\in\{0,1\}$ be such that it is $1$ if and only if $b_1+b_2> p^s$. We consider the $k[G]$-module $V_1:=U\bigl(a_1,b_1+b_2-\varepsilon(p^s-1)\bigr)\bigoplus\oplus_{i=3}^{\imath_V} U(a_i,b_i)$. The function $\ell_{V_1}$ is constant and we have identities $\eta_{V_1}=\eta_{V}-\frac{\varepsilon(p^s-1)}{m}$, $\dim_k(V_1)=\dim_k(V)-\varepsilon(p^s-1)$, $\imath_{V_1}=\imath_V-1\ge 2$, and 
$$a_1-[b_1+b_2+\varepsilon(p^s-1)]\epsilon=a_1-(b_1+b_2)\epsilon=a_1'-b_2\epsilon=a_2'\in C_m.$$ 
Thus $\dim_k(V_1)+\imath_{V_1}=n-1-\varepsilon(p^s-1)<n$; hence, by the induction hypotheses, up to a reindexing of the indices in $\llbracket3,\imath_V\rrbracket$ we can assume that $a_i'=a_{i+1}$ for each $i\in\llbracket2,\imath_V\rrbracket$. This ends the inductive step and hence the induction.

If there exists a pair $(i_1,i_2)\in\llbracket1,\imath_V+1\rrbracket^2$ such that $i_1<i_2<i_1+\imath_V$ and $a_{i_1}=a_{i_2}$, then for the direct summand $W:=\oplus_{i=i_1}^{i_2-1} U_{a_i,b_i}$ of $V$ we have $\{0\}\subsetneq W\subsetneq V$ and $\ell_W$ constant, a contradiction to $V$ being constant-indecomposable. Thus the elements $a_1,\ldots,a_{\imath_V}$ in $\mathbb Z/m\mathbb Z$ are distinct and therefore $\imath_V\le m$. As $a_{i_1}\neq a_{i_2}$ is equivalent to the sum $\sum_{l=i_1}^{l_2-1} b_l$ not being divisible by $m$, $\natural_V$ holds. So the  `only if' part holds.

For the `if' part, it suffices to show that for each subset $A$ of $\llbracket1,\imath_V\rrbracket$ with $1\le |A|\le \imath_V-1$, the assumption that the $k[G]$-module $W:=\oplus_{i\in A} U_{a_i,b_i}$ has a constant character function leads to a contradiction. We can assume that $W$ is constant-indecomposable; so either $\imath_W=1$ or $\natural_W$ holds. We have $|A|=\imath_W$. We list $A=\{i_h|h\in\llbracket1,\imath_W\rrbracket\}$ such that $i_1<i_2<\cdots<i_{\imath_W}$. 
If $\imath_W=1$, then $m$ divides $b_{i_1}$, a contradiction to $\natural_V$ applied to $i_2=i_1+1$. If $\imath_W\ge 2$, then the fact that $\natural_W$ holds implies that there exists a permutation $\theta$ of $\llbracket1,\imath_W\rrbracket$ such that for $l\in\llbracket2,\imath_W\rrbracket$ we have $a_{\theta(i)}=a_{\theta(i-1)}-b_{\theta(i-1)}\epsilon$. As the subset $\{a_i|i\in\llbracket1,\imath_V\rrbracket\}$ of $C_m$ has $\imath_V$ elements, from this and the fact that $\natural_V$ holds we get that $\theta$ is the identity permutation and $A$ is either $\llbracket i_1,i_1+\imath_W-1\rrbracket$ or $\{1\}\cup\llbracket\imath_V-\imath_W+2,\imath_V\rrbracket$. So the fact that $m\mid\dim_k(W)=\sum_{l=1}^{\imath_W} b_{i_l}$ contradicts $\natural_V$ applied to either the pair $(i_1,i_{\imath_W}+1)$ or the pair $(i_1+1,i_2)=(2,i_2)$. Thus the `if' part holds.
\end{proof}

\smallskip
The following example shows that for $m\ge 2$, the inequality $\imath_V\le m$ in Definition \ref{D4}(2) is optimal.

\begin{ex}\normalfont\label{EX6}
Suppose that $m\ge 2$ and $\chi$ is injective. As $U_{[0]_m,p^s}$, when viewed as a $k[H]$-module, is isomorphic to $k[H]$, from Equation (\ref{EQ12}) we get that we have a $k[G]$-linear isomorphism
\begin{equation}\label{EQ35}
k[G]\cong V_{p^s}^{\circul}=\oplus_{a\in C_m} U(a,p^s)
\end{equation}
which we view as an identification
Thus $\ell_{k[G]}$ is constant, $\eta_{k[G]}=p^s$, and $\imath_{k[G]}=\jmath_{k[G]}=m$; so we use only $m$. To check that $\natural_{k[G]}$ holds, as we have $V_{p^s}^{\circul}=\oplus_{a\in C_m} U(a,p^s)$, we can assume that $a_i=[1-(i-1)\epsilon]_m$ for each $i\in\llbracket1,m\rrbracket$ and that $b_1=\cdots=b_m=p^s$. As $a_i'=a_i-\epsilon$, we get that $a_i'=a_{i+1}$ for each $i\in\llbracket1,m\rrbracket$. Clearly, the set $\{a_i|i\in\llbracket1,m\rrbracket\}$ has $m$ elements. For every pair $(i_1,i_2)\in\llbracket1,m\rrbracket^2$ that satisfies $i_1<i_2<i_1+m$, we have a congruence $\sum_{l=i_1}^{i_2-1} b_l\equiv i_2-i_1\pmod{m}$ and thus $m\nmid \sum_{l=i_1}^{i_2-1} b_l$. So $\natural_{k[G]}$ holds.
\end{ex}

\begin{corollary}\label{C2}
For each $a\in C_m$, there exists a unique lift $\mathcal U(a,p^s)$ of $U(a,p^s)$ to $R$ up to isomorphisms. Moreover, $\mathcal U(a,p^s)$ is a cyclic projective $R[G]$-module.
\end{corollary}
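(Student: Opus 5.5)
Since $m$ is prime to $p$, the idempotents of $k[\Sigma]$ lift to $R_0[\Sigma]\subset R[G]$. Let
$$e_a:=\frac{1}{m}\sum_{i=0}^{m-1}\zeta_m^{-ai}\sigma^i\in R[G].$$
These are orthogonal idempotents with $\sum_{a\in C_m}e_a=1$, and the reduction $\bar e_a$ is the projector onto the $\overline{\zeta}_m^{a}$-eigenspace of $\sigma$.

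For existence, set $\mathcal P_a:=R[G]e_a$. This is a cyclic projective $R[G]$-module, being a direct summand of $R[G]$, and it is free over $R$. Its reduction is $k[G]\bar e_a$. By the identification (\ref{EQ35}), $k[G]\cong\oplus_{a\in C_m}U(a,p^s)$, and each summand $U(a,p^s)$ is indecomposable projective with simple top. Hence $k[G]\bar e_a\cong\Hom$-dual to the correct component. Concretely, $k[G]\bar e_a$ is the projective cover of the simple module on which $\sigma$ acts by $\overline{\zeta}_m^{a}$ (up to the convention sign, $\overline{\zeta}_m^{\pm a}$). As $U(a,p^s)$ is the unique indecomposable projective with a given simple top, $k[G]\bar e_{a'}\cong U(a,p^s)$ for a suitable bijection $a\mapsto a'$ of $C_m$. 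We therefore take $\mathcal U(a,p^s):=R[G]e_{a'}$. This proves existence together with cyclicity and projectivity. Dimension counting confirms the identification: $\dim_k k[G]\bar e_{a'}=p^s$, since $k[G]\bar e_{a'}\cong k[H]\otimes\bar e_{a'}$ as $k[H]$-modules.

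For uniqueness, let $\mathcal V$ be any lift of $U(a,p^s)$. The surjection $\mathcal P:=\mathcal U(a,p^s)\to U(a,p^s)$ lifts along the surjection $\mathcal V\to\mathcal V/\upsilon\mathcal V=U(a,p^s)$, because $\mathcal P$ is projective over $R[G]$. This gives $f:\mathcal P\to\mathcal V$ whose reduction modulo $\upsilon$ is an isomorphism.

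By Nakayama's Lemma, applied as in the proof of Theorem \ref{T3}(1) (with $\upsilon R[G]$ in the radical and $\mathcal V$ finitely generated over $R$), $f$ is surjective. Both $\mathcal P$ and $\mathcal V$ are free $R$-modules of rank $p^s$, so $f$ is a surjection between free $R$-modules of equal finite rank and is therefore an isomorphism. This gives uniqueness up to isomorphism.

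The only delicate step is matching the index, that is, checking that $k[G]\bar e_{a'}$ has socle $U_{a,1}$ for the right $a'$. This is harmless for the statement, since the $e_{a'}$ exhaust all $m$ indecomposable projectives, and there are exactly $m$ isomorphism classes $U(a,p^s)$.
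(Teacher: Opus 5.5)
Your proof is correct. The uniqueness half is the same as the paper's: projectivity of the lift gives a map reducing to an isomorphism, and Nakayama then finishes. Your rank remark (a surjection between free $R$-modules of equal finite rank is an isomorphism) simply makes explicit the injectivity that the paper folds into ``by Nakayama's Lemma''.

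The existence half takes a genuinely different route.
\begin{itemize}
\item The paper lifts the decomposition $k[G]=\oplus_{a\in C_m}U(a,p^s)$ of Equation (\ref{EQ35}) successively to $(R/\upsilon^{2^l}R)[G]$ by the general lifting of idempotents, then passes to the limit using $\upsilon$-adic completeness. The resulting summands lift the prescribed summands by construction, so no index matching is needed. This method works for any decomposition of any finite group algebra into projectives.
\item You instead exploit the structure $G=H\rtimes\Sigma$ with $p\nmid m$. The explicit idempotents $e_a\in R_0[\Sigma]$ give $\mathcal U(a,p^s)=R[G]e_a$, i.e.\ the module induced from the rank one $R[\Sigma]$-module on which $\sigma$ acts as $\zeta_m^a$. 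This needs neither completeness nor an approximation argument. It makes cyclicity and projectivity immediate, with no appeal to Theorem \ref{T3}(1), and it describes the lift concretely.
\end{itemize}

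The cost of your route is identifying $k[G]\bar e_a$ with the correct indecomposable projective, and this is cleaner than your write-up suggests.
\begin{itemize}
\item $\Hom_{k[G]}(k[G]\bar e_a,U_{c,1})=\bar e_aU_{c,1}$ is non-zero exactly when $c=a$. Hence $k[G]\bar e_a$ has simple top $U_{a,1}$ and is the projective cover of $U_{a,1}$.
\item The sequence of composition factors in Equation (\ref{EQ3}) has period $p-1$, and $p-1\mid p^s-1$. So the last composition factor of $U_{a,p^s}$ equals the first, and its top is also $U_{a,1}$.
\item Therefore $k[G]\bar e_a\cong U(a,p^s)$ with $a'=a$.
\end{itemize}
You can delete the garbled ``$\Hom$-dual'' sentence and the hedging about a bijection $a\mapsto a'$.
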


\begin{proof}
We show by induction on $l\in\mathbb N\cup\{0\}$ that we have a direct sum decomposition $f_l:(R/\upsilon^{2^l}R)[G]=\oplus_{a\in C_m} \mathcal U_{a,p^s;2^l}$ into $(R/\upsilon^{2^l}R)[G]$-modules which lifts Equation (\ref{EQ35}) and, if $l\ge 1$, whose reduction modulo $\upsilon^{2^{l-1}}R$ is $f_{l-1}$. The base of the induction for $l=0$ holds by Equation (\ref{EQ35}). For $l\ge 2$, the passage from $l-1$ to $l$ follows from \cite{Bo}, Sect.\ 9, Subsect.\ 4, Cor.\ 1. As $R[G]$ is complete in the $\upsilon$-adic topology, we conclude that there exists a direct sum decomposition $f:R[G]=\oplus_{a\in C_m} \mathcal U_{a,p^s}$ into $R[G]$-modules which lifts Equation (\ref{EQ35}). So each $\mathcal U(a,p^s)$ is a projective lift of $U(a,b)$ to $R$ and it is cyclic as the $R[H]$-module $\mathcal U(a,p^s)$ is so by Theorem \ref{T3}(1). 

If $\mathcal U'(a,p^s)$ is a lift of $U(a,p^s)$ to $R$, then, as $\mathcal U(a,p^s)$ is a projective $R[G]$-module, there exists an $R[G]$-linear map $\mathcal U(a,p^s)\rightarrow \mathcal U'(a,p^s)$ whose reduction modulo $\upsilon$ is an isomorphism and hence itself is an isomorphism by Nakayama's Lemma.
\end{proof}

\begin{proposition}\label{P2}
Suppose that $\chi$ is injective. Let $V$ be a $k[G]$-module with $\ell_V$ constant. Let $a\in C_m$ and $W:=U(a,1)$. We assume that $V$ is constant-indecomposable and we consider a direct sum decomposition into indecomposable $k[G]$-modules $V=\oplus_{i=1}^{\imath_V} U(a_i,b_i)$. Let 
$$J:=\{a_i+\epsilon|i\in\llbracket1,\imath_V\rrbracket\}.$$
Then the following properties hold.

\medskip
{\bf (1)} We have an identity
$$\dim_k\bigl(\Hom_{k[G]}(V,W)\bigr)=\begin{cases}1\quad\textup{if}\quad a\in J\\0\quad\textup{if}\quad a\notin J.\end{cases}$$

{\bf (2)} Let $(c,b)\in\mathbb J_{m,p^s}$. Then 
$$\dim_k\bigl(\Ext^1_{k[G]}(U(c,b),W)\bigr)=\begin{cases}1\quad\textup{if}\quad  a=c+\epsilon\;\textup{and}\;b<p^s\\0\quad\textup{if}\quad a\neq c+\epsilon\;\textup{or}\;b=p^s.\end{cases}$$

{\bf (3)} Suppose that $\jmath_V=0$. Then 
$$\dim_k\bigl(\Hom_{k[G]}(V,W)\bigr)=\dim_k\bigl(\Ext^1_{k[G]}(V,W)\bigr).$$
\end{proposition}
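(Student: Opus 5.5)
The plan is to reduce everything to the uniserial structure of the $U(c,b)$'s, namely their tops, socles and projective covers, using Equation (\ref{EQ3}) and Corollary \ref{C2}. Throughout we use that, since $\chi$ is injective, $\epsilon$ has order $m\mid p-1$, so $(p-1)\epsilon=0$ in $C_m$.

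\emph{Tops of indecomposables.} Write $b=l(p-1)+r$ with $r\in\llbracket0,p-2\rrbracket$. By Equation (\ref{EQ3}) the top composition factor $U(c,b)/\rad(U(c,b))$ is
$$U_{c+(1-r)\epsilon,1}=U_{c+(1-b)\epsilon,1}=U_{c'+\epsilon,1},\qquad c':=c-b\epsilon .$$
Since $U(c,b)$ is uniserial, every nonzero map to a simple module factors through this top. Hence $\dim_k\Hom_{k[G]}(U(c,b),W)$ equals $1$ if $a=c'+\epsilon$ and $0$ otherwise.

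\emph{Part (1).} By additivity, $\dim_k\Hom_{k[G]}(V,W)=|\{i\in\llbracket1,\imath_V\rrbracket : a=a_i'+\epsilon\}|$, where $a_i':=a_i-b_i\epsilon$.
\begin{itemize}
\item If $\imath_V=1$: since $\ell_V$ is constant we have $m\mid b_1$, so $a_1'=a_1$.
\item If $\imath_V\ge2$: Proposition \ref{P1} gives $\natural_V$, so after reindexing $a_i'=a_{i+1}$ cyclically and the $a_i$ are pairwise distinct.
\end{itemize}
In both cases $\{a_i'\}=\{a_i\}$ as sets and the $a_i'$ are distinct. The count is therefore $1$ exactly when $a\in J$ and $0$ otherwise, which is part (1). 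This reindexing is the only place where constant-indecomposability is used.

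\emph{Part (2).} If $b=p^s$, then $U(c,p^s)$ is projective by Corollary \ref{C2} (indeed by Example \ref{EX6} it is a summand of $k[G]$), so $\Ext^1$ vanishes. Suppose $b<p^s$.
\begin{enumerate}
\item The projective cover of $U(c,b)$ is the indecomposable projective $U(e,p^s)$ with the same top. Because $p^s\equiv1\pmod{p-1}$, its top is $U_{e+(1-p^s)\epsilon,1}=U_{e,1}$, so matching tops gives $e=c-(b-1)\epsilon$.
\item The kernel $\Omega$ of $U(e,p^s)\to U(c,b)$ is the unique submodule of dimension $p^s-b$. It is uniserial with socle $U_{e,1}$, so $\Omega\cong U(e,p^s-b)$.
\item Apply $\Hom_{k[G]}(-,W)$ to $0\to\Omega\to U(e,p^s)\to U(c,b)\to0$. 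Since $\Omega\subset\rad U(e,p^s)$ (as $b\ge1$), restriction $\Hom(U(e,p^s),W)\to\Hom(\Omega,W)$ is zero, and hence $\Ext^1_{k[G]}(U(c,b),W)\cong\Hom_{k[G]}(\Omega,W)$.
\item By the top computation, the top of $\Omega$ is $U_{e+(1-(p^s-b))\epsilon,1}=U_{c+(2-p^s)\epsilon,1}=U_{c+\epsilon,1}$, again using $p^s\equiv1\pmod{p-1}$. This gives the dichotomy $a=c+\epsilon$ versus $a\neq c+\epsilon$.
\end{enumerate}
The bookkeeping of the $\epsilon$-shifts in steps 1 and 4, in particular using $p^s\equiv1$ and $m\mid p-1$, is the step I expect to need the most care.

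\emph{Part (3).} If $\jmath_V=0$, every $b_i<p^s$. By part (2) and additivity of $\Ext^1$, $\dim_k\Ext^1_{k[G]}(V,W)=|\{i : a=a_i+\epsilon\}|$, which is $1$ if $a\in J$ and $0$ otherwise, since the $a_i$ are distinct. This coincides with the count from part (1).
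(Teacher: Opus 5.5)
Your proof is correct. Parts (1) and (3) follow the paper's argument: the top of $U(c,b)$ is $U_{c-(b-1)\epsilon,1}$, Proposition \ref{P1} lets you reindex so that $a_i-b_i\epsilon=a_{i+1}$ with the $a_i$ distinct (or $m\mid b_1$ when $\imath_V=1$), and (3) follows by summing (2) over the summands.

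For part (2) you take a genuinely different route. The paper works directly with an extension $0\rightarrow W\rightarrow V^+\rightarrow U(c,b)\rightarrow 0$ and runs a case analysis on $\imath_{V^+}$. Either $V^+\cong U(a,b+1)$, which is non-split and forces $a=c+\epsilon$ by socle and uniseriality, or the sequence splits. It then bounds $\dim_k\Ext^1$ by $1$: it fixes the surjection $V^+\rightarrow U(c,b)$ and shows that any two non-split classes differ by a scalar in $k^{\times}$.

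You instead dimension-shift along the projective cover $U(c-(b-1)\epsilon,p^s)\rightarrow U(c,b)$. You identify the kernel as $\Omega\cong U(c-(b-1)\epsilon,p^s-b)$. Restriction of maps to the simple module $W$ vanishes on $\Omega\subset\rad$, so $\Ext^1_{k[G]}(U(c,b),W)\cong\Hom_{k[G]}(\Omega,W)$. The answer is then read off from the top $U_{c+\epsilon,1}$ of $\Omega$, using $(p^s-1)\epsilon=0$.

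Your route is shorter and avoids the case analysis. Existence of a non-zero class and the bound $\dim\le 1$ come out together from a single $\Hom$-into-a-simple computation. The only inputs beyond Equation (\ref{EQ3}) are that the indecomposable projectives are the $U(e,p^s)$ and that uniserial modules are determined by socle and length, both of which the paper records in Section \ref{S1}.

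The paper's route, in exchange, exhibits the non-split extension explicitly (middle term $U(a,b+1)$). The cost is the more ad hoc argument that all non-split extensions are scalar multiples of one another.
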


\begin{proof}
Based on Proposition \ref{P1} we can assume we have $a_i'=a_{i+1}$ for each $i\in\llbracket1,\imath_V\rrbracket$, where $a_{\imath_V+1}:=a_1$ and $a_i':=a_i-b_i\epsilon$. This makes sense even if $\imath_V=1$ as in this case $m\mid b_1$ and hence $a_1'=a_1=a_2$. 

For $c\in C_m$ we have 
\begin{equation}\label{EQ36}
\Hom_{k[G]}\bigl(U(c,1),W\bigr)=\begin{cases}\End_{k[G]}(W)\quad\;\textup{if}\;c=a\\
0\quad\quad\quad\quad\quad\quad\textup{if}\;c\neq a.\end{cases}
\end{equation} 
We compute
\begin{equation}\label{EQ37}
\begin{split}
\Hom_{k[G]}(V,W)=\oplus_{i=1}^{\imath_V}\Hom_{k[G]}(U(a_i,b_i),W)\\
=\oplus_{i=1}^{\imath_V}\Hom_{k[G]}\bigl(U(a_i-(b_i-1)\epsilon,1),W\bigr)=\oplus_{i=1}^{\imath_V}\Hom_{k[G]}\bigl(U(a_{i+1}+\epsilon,1),W\bigr)\\
=\oplus_{i\in\llbracket1,\imath_V\rrbracket,a_{i+1}+\epsilon=a} \End_{k[G]}(W),\\
\end{split}
\end{equation}
where the third and fourth identities follow from the description of the unique composition series of each $U(a_i,b_i)$ and Equation (\ref{EQ36}) (respectively). 
From Equation (\ref{EQ37}) we get that $\dim_k\bigl(\Hom_{k[G]}(V,W)\bigr)\in\{0,1\}$ and that $\dim_k\bigl(\Hom_{k[G]}(V,W)\bigr)=1$ if and only if $a\in J$.

For part (2), we consider a short exact sequence of $k[G]$-modules 
\begin{equation}\label{EQ38}
0\rightarrow W\rightarrow V^+\rightarrow U(c,b)\rightarrow 0.
\end{equation} 
If $b=p^s$, then the $k[G]$-module $U(c,b)$ is projective and hence we have $V^+\cong W\oplus U(c,b)$ and $\Ext^1_{k[G]}(U(c,b),W)=0$. Thus we can assume that $b\le p^s-1$.

We have $\dim_k(V^+)=b+1$ and, as $V^+$ surjects onto $U(c,b)$, $V^+$ has a direct summand which is indecomposable of dimension over $k$ at least equal to $b$. It follows that either $\imath_{V^+}=2$ and $V^+\cong U(c_1,1)\oplus U(c_2,b)$ or $\imath_{V^+}=1$ and $V\cong U(c_0,b+1)$, where $(c_0,c_1,c_2)\in C_m^3$. 

If $\imath_{V^+}=1$, then $V\cong U(c_0,b+1)$ and from Short Exact Sequence (\ref{EQ38}) we get by reasons of socles that $a=c_0$ and by reasons of uniserialness that $c=c_0-\epsilon$; thus $a=c+\epsilon$, $V\cong U(a,b)$, and the Short Exact Sequence (\ref{EQ38}) does not split. 

If $\imath_{V^+}=2$ and $b\ge 2$, then $V^+\cong U(c_1,1)\oplus U(c_2,b)$ and from Short Exact Sequence (\ref{EQ38}) we get that there exist surjective $k[G]$-linear maps $U(c_2,b)\rightarrow U(c,b)$ and $W\rightarrow U(c_1,1)$. Hence $a=c_1$ and $c=c_2$ and the Short Exact Sequence (\ref{EQ38}) splits.

If $\imath_{V^+}=2$ and $b=1$, then $\{c_1,c_2\}=\{a,c\}$ and it is easy to see that the Short Exact Sequence (\ref{EQ38}) splits.

The last three paragraphs give that $\Ext^1_{k[G]}\bigl(U(c,b),W\bigr)=\{0\}$ if $a\neq c+\epsilon$. 

We are left to prove that if we have $a=c+\epsilon$ and $b\le p^s-1$, then $\dim_k\bigl(\Ext^1_{k[G]}(U(c,b),W)\bigr)=1$. As $V^+$ can be isomorphic to $U(a,b)$ and in such a case the Short Exact Sequence (\ref{EQ38}) does not splits, we get that $\Ext^1_{k[G]}\bigl(U(c,b),W\bigr)\neq 0$. Each non-zero class $\beta\in\Ext^1_{k[G]}\bigl(U(c,b),W\bigr)$ is defined by a Short Exact Sequence (\ref{EQ38}) with $V^+=U(a,b)$. We fix a generator $\overline{v}\in U(c,b)$; each $\overline{v}^+\in V^+$ that maps to $\overline{v}$ is a generator of $V^+$. So we can assume that the surjective $k[G]$-linear map $V^+\rightarrow U(c,b)$ is fixed; its kernel is $\soc_{k[G]}(V^+)$. So the other non-zero classes in $\Ext^1_{k[G]}(U(c,b),W)$ are defined by injective maps $W\rightarrow \soc_{k[G]}(V^+)$ and hence are all of the form $\gamma\beta$ with $\gamma\in k^{\times}$. Thus $\dim_k\bigl(\Ext^1_{k[G]}(U(c,b),W)\bigr)\le 1$ and part (2) holds.

As $\Ext^1_{k[G]}(V,W)=\oplus_{i=1}^{\imath_V} \Ext^1_{k[G]}\bigl(V,U(a_i,b_i)\bigr)$, part (3) follows from part (1) and part (2) applied with $(c,b)\in\{(a_i,b_i)|i\in\llbracket1,\imath_V\rrbracket\}$.
\end{proof}

\section{Proof of Theorem \ref{T2}(1)}\label{S6}

The image of $\mathcal V^H$ in $V$ is a $k$-vector subspace of $\soc_{k[G]}(V)$ of dimension $r_{\mathcal V}$. Thus $r_{\mathcal V}\le \imath_V$. 

Let $\Chi_B(H)$ be the group of characters $H\rightarrow B^{\times}$ and let $\psi_0$ be its identity element. As $R_s\subset R$, we have $|\Chi_B(H)|=p^s$; so $\Chi_B(H)$ is the dual of $H$. 

The group $\Sigma$ acts on $\Chi_B(H)$ via the rule: if $\psi\in\Chi_B(H)$, then $(\sigma\psi)(\tau)=\psi\bigl(\chi(\tau)\bigr)$. 

As we are assuming $\chi$ is injective, $m\mid p-1$ and this action has one orbit $\{\psi_0\}$ with $1$ element and $\frac{p^s-1}{m}$ orbits with $m$ elements.

We consider a direct sum decomposition 
$$\mathcal V\otimes_{R} B=\oplus_{j\in J} \mathcal V_j$$ 
into simple $B[G]$-modules. From the prior paragraph we get that for each $j\in J$ we have $\dim_{B}(\mathcal V_j)\in\{1,m\}$; we have $\dim_{B}(\mathcal V_j)=1$ if and only if  $\mathcal V_j\subset\mathcal V^H\otimes_R B$. 

Let $J_0:=\{j\in J|\dim_{B}(\mathcal V_j)=1\}$. We compute
\begin{equation}\label{EQ40}
\dim_k(V)=\dim_B(\mathcal V\otimes_R B)=\sum_{j\in J} \dim_B(\mathcal V_j)=|J_0|+m|J\setminus J_0|.
\end{equation}

In this paragraph we consider $j\in J\setminus J_0$. We have a direct sum decomposition $\mathcal V_j=\oplus_{l=1}^m \mathcal V_{j,l}$ into simple $B[H]$-modules of dimension $1$ over $B$. If $\psi_{j,l}\in\Chi_B(H)$ is such that $\tau$ acts on $\mathcal V_{j,l}$ via $\psi_{j,l}(\tau)$, then the set $\{\psi_{j,l}|l\in\llbracket1,m\rrbracket\}$ is an orbit of the action of $\Sigma$ on $\Chi_B(H)$ and hence $\psi_{j,l}\neq\psi_0$. Thus, there exists a $B$-basis $\mathcal B_j=\{v_{j,l}|l\in\llbracket1,m\rrbracket\}$ of $\mathcal V_j$ contained in $\cup_{l=1}^m \mathcal V_{j,l}$ and such that $\sigma(v_{j,l})=v_{j,l+1}$ if $l\in\llbracket1,m-1\rrbracket$ and $\sigma(v_{j,m+1})=v_{j,1}$. The (monic) characteristic polynomial of the $B$-linear endomorphism $\sigma:\mathcal V_{j}\rightarrow\mathcal V_j$ is $x^m-1=\prod_{i=1}^m (x-\zeta_m^i)$. Hence $\mathcal V_j$ contributes with value $1$ to each $\ell_V(a)$ with $a\in C_m$. 

Therefore $|J\setminus J_0|\le\eta_V$. Based on this and Equation (\ref{EQ40}), we estimate
\begin{equation}\label{EQ41}
r_{\mathcal V}=\dim_B(\mathcal V^H\otimes_R H)=|J_0|=\dim_k(V)-m|J\setminus J_0|\ge \dim_k(V)-m\eta_V.
\end{equation}
Equation (\ref{EQ5}) implies that $\sum_{a\in C_m} [\ell_V(a)-\eta_V]=\dim_k(V)-m\eta_V$. 

For each $a\in C_m$, we have an isomorphism $k[H]\rightarrow U_{a,p^s}$ of $k[H]$-modules. Thus we have an injective $k[H]$-linear map $\overline{f}:k[H]^{\jmath_V}\rightarrow V$ and hence an $R[H]$-linear map $f:k[H]^{\jmath_V}\rightarrow V$ whose reduction modulo $\upsilon$ is $\overline{f}$. It follows that we have a commutative diagram of $R[H]$-modules
$$\begin{tikzcd}[row sep=tiny]
R[H]^{\jmath_V} \arrow[rd, "f"'] \arrow[rr, "h"] & & \mathcal V \arrow[ld, "\xi"] \\
& V
\end{tikzcd}$$
in which $\xi$ is the reduction modulo $\upsilon$. As $h$ modulo $\upsilon$, i.e., $\overline{f}$, is injective, it follows that $h$ in injective. Thus $h\bigl((R[H]^H)^{\jmath_V}\bigr)\subset \mathcal V^H$. Hence $\jmath_V\le r_{\mathcal V}$.

If $\ell_V$ is a constant function, the constant value is $\eta_V$, and Equation (\ref{EQ5}) implies that $\dim_k(V)=m\eta_V$; from this and Equation (\ref{EQ41}) we get that $r_{\mathcal V}=m[\eta_V-|J\setminus J_0|]$ is divisible by $m$. So Theorem \ref{T2}(1) holds.

\section{Proof of Theorem \ref{T2}(2)}\label{S7}

If $\mathcal V$ is a lift of $V$ to $R$ with $r_{\mathcal V}=0$, then from Theorem \ref{T2}(1) we get that $\jmath_V=0$ and that $\ell_V(a)=\eta_V$ for each $a\in\mathbb Z$, i.e., $\ell_V$ is a constant function. So the `only if' part holds.

Next we prove the `if' part. We can assume that $R=R_s$ and we use $R$ in what follows. If $m=1$, then $G=H$ and this case is well-known; for instance, see Theorem \ref{T3}(2). Thus we can assume that $m\ge 2$; so $p\ge m+1$ is odd.

We show by induction on $\eta_V\in\mathbb N\cup\{0\}$ that there exists a lift $\mathcal V$ of $V$ to $R$ with $r_{\mathcal V}=0$. If $\eta_V=0$, then $V=\{0\}$ and we can take $\mathcal V=\{0\}$; so the base of the induction holds. 

For the inductive step we can assume that $\eta_V\in\mathbb N$ is such that for each $k[G]$-module $W$ with the properties that the function $\ell_W$ is constant, $\jmath_W=0$, and $\eta_W\le\eta_V-1$, there exists a lift $\mathcal W$ of $W$ to $R$ with $r_{\mathcal W}=0$. 

As $\jmath_V=0$, we write $V=\oplus_{j=1}^{\imath_V} U_{a_j,b_j}$ with each $(a_j,b_j)\in\mathbb J_m\setminus (C_m\times\{p^s\})$. Let $(a_{\imath_V+1},b_{\imath_V+1}):=(a_1,b_1)$.

In this paragraph we assume that $V$ is not constant-indecomposable. So there exists a direct sum $V=V_1\oplus V_2$ with the function $\ell_{V_{\iota}}$ constant and $V_{\iota}\neq\{0\}$ for each $\iota\in\{1,2\}$. We have $\eta_{V_{\iota}}=\frac{\dim_k(V_{\iota})}{m}<\frac{\dim_k(V)}{m}=\eta_V$ and $\jmath_{V_{\iota}}=0$. By the inductive assumption we get that there exists a lift $\mathcal V_{\iota}$ of $V_{\iota}$ to $R$ with $r_{\mathcal V_{\iota}}=0$. Then $\mathcal V:=\mathcal V_1\oplus\mathcal V_2$ is a lift of $V$ to $R$ with $r_{\mathcal V}=0$. 

So we can assume that $V$ is constant-indecomposable. We have $\imath_V\le m$ by Proposition \ref{P1}. If $\imath_V\ge 2$, then we can assume that $a_{i+1}=a_i-b_i\epsilon$ for each $i\in\llbracket1,\imath_V\rrbracket$, that the subset $\{a_i|i\in\llbracket1,\imath_V\rrbracket\}\subset C_m$ has $n$ elements, and that for every pair $(i_1,i_2)\in\llbracket1,\imath_V\rrbracket^2$ with $i_1<i_2<i_1+\imath_V$ we have $m\nmid \sum_{l=i_1}^{i_2-1} b_l$ by Proposition \ref{P1}.

Let $t\in\llbracket1,p^s-1\rrbracket\setminus p\mathbb Z$ be the unique element such that $\tau^t=\tau^{\epsilon}$; so the order of $[t]_{p^s}\in (\mathbb Z/p^s\mathbb Z)^{\times}$ is $m$. 

We consider three cases as follows, the first two being disjoint particular cases and the third one being the general case.

{\bf Case 1: $\imath_V=1$.} Let $\mathcal V=\mathcal H_{\nu}$ be an $R[H]$-module which is an end-hull lift of the $k[H]$-module $V$ with $r_{\mathcal V}=0$ by the proof of Theorem \ref{T3}(2), where $\nu=(\lambda_i)_{i\in\llbracket1,\dim_k(V)\rrbracket}$ is a $\dim_k(V)$-tuple of distinct elements of $\Xi_s^{\ast}$.  As $m\eta_V=\dim_k(V)$ by Equation (\ref{EQ5}), we have $m\eta_V\le p^s-1$. So we can assume that $[t]_{p^s}\cdot\{\lambda_i|i\in\llbracket1,\dim_k(V)\rrbracket\}=\{\lambda_i|i\in\llbracket1,\dim_k(V)\rrbracket\}$. Thus the $R[H]$-modules $\mathcal V$ and $\mathcal V^{(t)}$ are isomorphic by Equation (\ref{EQ31}). 

By a second induction on $l\in\mathbb N$ we show that there exists an element
$$\varsigma_l\in\Isom_{R[H]}(\mathcal V,\mathcal V^{(t)})$$
such that its reduction modulo $\upsilon$ is $\sigma$, it defines an element of $\Aut_{R}(\mathcal V)$ whose reduction modulo $\upsilon^l$ has order $m$ via the identification $V^{(t)}=V$ of $R$-modules, and, if $l\ge 2$, its reduction modulo $\upsilon^{l-1}$ is the reduction of $\varsigma_{l-1}$ modulo $\upsilon^{l-1}$. 

As the $R[H]$-modules $\mathcal V$ and $\mathcal V^{(t)}$ are isomorphic and $\mathcal V$ is an end-hull, it follows that the $R[H]$-linear map $\Isom_{R[H]}(\mathcal V,\mathcal V^{(t)})\rightarrow \Isom_{k[H]}(V,V^{(t)})$ is surjective. This implies that $\varsigma_1$ exists. So the base of the induction holds. 

For the inductive passage from $l-1\ge 1$ to $l$, we note that the order of the image of $\varsigma_{l-1}$ in $\Aut_R(\mathcal V/\upsilon^l\mathcal V)$ is either $m$, in which case we take $\varsigma_l:=\varsigma_{l-1}$, or $pm$, in which case we take $\varsigma_l:=(\varsigma_{l-1})^{1+mm'}$, where $m'\in\mathbb N$ is such that $p\mid 1+mm'$. This ends the second induction. 

As $R$ is complete, there exists a unique $\varsigma\in\Isom_{R[H]}(\mathcal V,\mathcal V^{(t)})$ such that its reduction modulo $\upsilon^l$ is $\varsigma_l$ for each $l\in\mathbb N$. We have identities $\varsigma^m=1_{\mathcal V}$ and $\varsigma\tau\varsigma^{-1}=\tau^t=\tau^{\epsilon}$ as they hold modulo $\upsilon^l$ for each $l\in\mathbb N$. Letting $\sigma\in\Sigma$ act on $\mathcal V$ as $\varsigma$ does, $\mathcal V$ gets the structure of an $R[G]$-module and hence we can take $\mathcal V$ to be this $R[G]$-module. Clearly, the $R[G]$-module $\mathcal V$ is a lift of $V$ to $R$ with $r_{\mathcal V}=0$ and it is cyclic as the $k[H]$-module $V$ is cyclic.

{\bf Case 2: $\imath_V\in\llbracket2,m\rrbracket$ and $\dim_k(V)\le p^s-1$.} We consider the cyclic $k[G]$-module $W:=U_{a_1,\dim_k(V)}$. By Case 1 we get that there exists a cyclic $R[G]$-module $\mathcal W$ which is a lift of $W$ to $R$ with $r_{\mathcal W}=0$ and it is associated to a $\dim_k(V)$-tuple $\nu=(\lambda_i)_{i\in\llbracket1,\dim_k(V)\rrbracket}$ of distinct elements of $\Xi_s^{\ast}$ with the property that $[t]_{p^s}\cdot\{\lambda_i|i\in\llbracket1,\dim_k(V)\rrbracket\}=\{\lambda_i|i\in\llbracket1,\dim_k(V)\rrbracket\}$. 

For $i\in\llbracket1,\dim_k(V)\rrbracket$, let $\mathfrak E_i$ be the eigenspace of the action of $\tau$ on $\mathcal W\otimes_R B$ that corresponds to the eigenvalue $\lambda_i$; we have $\dim_B(\mathfrak E_i)=1$. We define $\mathcal U_i:=\mathcal W\cap(\oplus_{j=1}^i \mathfrak E_j)$, the intersection being taken inside $\mathcal W\otimes_R B$. So $\mathcal U_i$ is an $R[H]$-submodule of $\mathcal W$ which, as an $R$-module, is a direct summand of rank $i$.

We consider an $R$-basis $\mathcal B=\{v_i|i\in\llbracket1,\dim_k(V)\rrbracket\}$ of $\mathcal W$ such that for each $i\in\llbracket1,\dim_k(V)\rrbracket$, $\{v_j|j\in\llbracket1,i\rrbracket\}$ is an $R$-basis of $\mathcal U_i$. With $\mathcal U_0:=\{0\}$ as the zero $R$-submodule of $\mathcal W$, we have the following property.

\medskip
{\bf ($\sharp$)} We have $\tau(v_i)-\lambda_iv_i\in\mathcal U_{i-1}$ for each $i\in\llbracket1,\dim_k(V)\rrbracket$.

\medskip
Let $\{\overline{v}_i|i\in\llbracket1,\dim_k(V)\rrbracket\}$ be the $k$-basis of $W$ which is the reduction of $\mathcal B$ modulo $\upsilon$. For each $i\in\llbracket1,\dim_k(W)\rrbracket$, let $U_i$ be the $k$-span of $\{\overline{v}_j|j\in\llbracket1,i\rrbracket\}$; it is the image of $\mathcal U_i$ in $W$. 

A third induction on $i\in\llbracket1,\dim_k(V)\rrbracket$ gives, based on property ($\sharp$), that $W_i$ is a $k[H]$-submodule of $W_i$. The unique composition series 
$$\{0\}=W_0\subsetneq W_1\subsetneq\cdots\subsetneq W_{\dim_k(V)}=W$$
of the $k[H]$-module $W$ is also the unique composition series of the $k[G]$-module $W$. So $W_i$ is a $k[G]$-submodule of $W$ for each $i\in\llbracket1,\dim_k(V)\rrbracket$. This implies that for each $i\in\llbracket1,\dim_k(V)\rrbracket$, $\mathcal W_i:=\upsilon\mathcal W+\mathcal U_i$ is an $R[G]$-submodule of $\mathcal W$. We note that $\mathcal W_{\dim_k(V)}=\mathcal W$. 

Let 
$$\mathcal V:=\sum_{i=1}^{\imath_V} \upsilon^{i-1} \mathcal W_{\sum_{j=1}^i b_j};$$ 
it is also an $R[G]$-submodule of $\mathcal W$. So 
$$\{\upsilon^{r-1}v_j|r\in\llbracket1,\imath_V\rrbracket,\ j\in\llbracket1+\sum_{l=1}^{r-1}b_l,\sum_{l=1}^r b_l\rrbracket\}$$ 
is an $R$-basis of $\mathcal V$. Based on this, property ($\sharp$) implies that the $k[G]$-module $\mathcal V/\upsilon\mathcal V$ is isomorphic to the direct sum $\oplus_{i=1}^{\imath_V} W_{\sum_{j=1}^i b_i} /W_{\sum_{j=1}^{i-1} b_i}$. Thus we have a chain of $k[G]$-linear isomorphisms 
$$\oplus_{i=1}^{\imath_V} W_{\sum_{j=1}^i b_i} /W_{\sum_{j=1}^{i-1} b_i}\cong \oplus_{i=1}^{\imath_V} U_{a_1,\sum_{j=1}^i b_i} /U_{a_1,\sum_{j=1}^{i-1} b_i}\cong \oplus_{i=1}^{\imath_V} U_{a_i,b_i}.$$ 
We conclude that we have a $k[G]$-linear isomorphism $\mathcal V/\upsilon\mathcal V\cong V$. We have $r_{\mathcal V}=0$ as $\mathcal V^H\subset \mathcal W^H=\{0\}$. So the $R[G]$-module $\mathcal V$ is a lift of $V$ to $R$ with $r_{\mathcal V}=0$.

{\bf Case 3: the general case.} We prove the existence of $\mathcal V$ by a fourth induction on $\imath_V\in\llbracket1,m\rrbracket$. The base of induction for $\imath_V=1$ holds by Case 1. For $\imath_V\ge 2$, the passage from $\imath_V-1$ to $\imath_V$ goes as follows. We consider two disjoint subcases as follows.

{\bf Subcase 3.1: there exists $i\in\llbracket1,\imath_V\rrbracket$ such that $b_i+b_{i+1}\le p^s-1$.} Up to a circular reindexing, we can assume that $b_1+b_2\le p^s-1$. Based on Case 2 we can assume that $\dim_k(V)\ge p^s$; so $\imath_V\ge 3$ and $m\nmid b_1+b_2$. 

Let $V_1:=U_{a_1,b_1+b_2}\oplus_{i=3}^{\imath_V} 
U_{a_i,b_i}$. We have $\imath_{V_1}=\imath_V-1$ and $\ell_{V_1}=\ell_V$ is constant. We check that $V_1$ is constant-indecomposable. We have inequalities $3\le \imath_V\le m$. As 
$$a_{12}':=a_1-(b_1+b_2)\epsilon=a_1'-b_2\epsilon=a_2-b_2\epsilon=a_3,$$
the set $\{a_{12}'\}\cup\{a_i'|i\in\llbracket3,\imath_V\rrbracket\}$ is equal to $\{a_1\}\cup\{a_i|i\in\llbracket3,\imath_V\rrbracket\}$ and thus has $\imath_{V_1}$ elements. For each $(i_1,i_2)\in\llbracket2,\imath_V\rrbracket^2$ with $i_1<i_2<i_1+\imath_{V_1}=i_1+\imath_V-1$ we have $m\nmid \sum_{l=i_1}^{i_2-1} b_l$ if $i_1\ge 3$ and $m\nmid b_1+b_2+\sum_{l=i_1+1}^{i_2-1} b_l$ if $i_1=2$ as $\natural_V$ holds. Thus $\natural_{V_1}$ holds and therefore $V_1$ is constant-indecomposable.

By induction, there exists a lift $\mathcal V_1$ of $V_1$ to $R$ with $r_{\mathcal V_1}=0$. Let $\mathcal V$ be the unique $R[G]$-submodule of $\mathcal V_1$ which contains $\upsilon\mathcal V_1$ and $\mathcal V/\upsilon\mathcal V_1$ is the $k[G]$-submodule $U_{a_1,b_1}$ of $V_1$. As in Case 2 we get that $\mathcal V/\upsilon V\cong V$. As $\mathcal V^H\subset\mathcal V_1^H=\{0\}$, $\mathcal V$ is a lift of $V$ to $R$ with $r_{\mathcal V}=0$.

{\bf Subcase 3.2: for each $i\in\llbracket1,\imath_V\rrbracket$ we have $b_i+b_{i+1}\ge p^s$.} We consider the $k[G]$-module $V^+:=\oplus_{i=1}^{\imath_V} U_{a_i,p^s-1}$. For each $i\in\llbracket1,\imath_V\rrbracket$ we have an injective $k[G]$-linear map $U_{a_i,b_i}\rightarrow U_{a_i,p^s-1}$; taking their direct sum we get an injective $k[G]$-linear map $V\rightarrow V^+$ which we view as an inclusion. We consider the $k[G]$-module $V':=V^+/V$. We have a direct sum decomposition $V'=\oplus_{i=1}^{\imath_V} U_{a_i',b_i'}$ where $b_i':=p^s-1-b_i$ and $a_i'=a_{i+1}$. For each $i\in\llbracket1,\imath_V\rrbracket$, as $m\nmid b_i$ we have $1\le b_i\le p^s-2$ and hence $1\le b_i'\le p^s-2$. Thus $\jmath_{V^+}=\jmath_{V'}=0$. Clearly, $\ell_{V^+}$ is constant of constant value $\frac{(p^s-1)\imath_V}{m}$. So $\ell_{V'}=\ell_{V^+}-\ell_V$ is also constant of constant value $\eta_{V'}=\eta_{V^+}-\eta_V$. Let $a_0':=a_{\imath_V}'$. We have 
$$a_i'':=a_i'-(p^s-1-b_i)\epsilon=a_{i+1}+b_i\epsilon=a_i=a_{i-1}'$$
for each $i\in\llbracket1,\imath_V\rrbracket$. Thus the set $\{a_i''|i\in\llbracket1,\imath_V\rrbracket\}$ is equal to the set $\{a_i|i\in\llbracket1,\imath_V\rrbracket\}$ and hence it has $\imath_V$ elements.
For every pair $(i_1,i_2)\in\llbracket1,\imath_V\rrbracket^2$ with $i_1<i_2$ we have $\sum_{i=i_1}^{i_2} b_i'\equiv -\sum_{i=i_1}^{i_2} b_i\not\equiv 0\pmod{m}$. Thus $V'$ is also constant-indecomposable by Proposition \ref{P1}. 

As $b_1'+b_2'=2(p^s-1)-(b_1+b_2)\le 2(p^s-1)-p^s\le p^s-2$, from Subcase 3.1 we get that there exists a lift $\mathcal V'$ of $V'$ to $R$ such that $r_{\mathcal V'}=0$. 

By Case 1, for each $a\in C_m$ there exists an $R[G]$-module $\mathcal U_{a,p^s-1}$ which is a lift of $U_{a,p^s-1}$ to $R$ with $r_{\mathcal U_{a,p^s-1}}=0$. Such a lift is cyclic as it is so as an $R[H]$-module by Theorem \ref{T3}(1). One can check based on Theorem \ref{T3}(3) and Remark \ref{R1} that $\mathcal U_{a,p^s-1}$ is uniquely determined up to isomorphisms but this is not needed in what follows. Let 
$$V^{\perp}:=\oplus_{a\in C_m\setminus\{a_i|i\in\llbracket1,\imath_V\rrbracket\}} U_{a,p^s-1}$$ 
and 
$$V^{+,\perp}:=V^+\oplus V^{\perp}.$$ We have $V^{+,\perp}\cong V_{p^s-1}^{\circul}$ and thus $V^{+,\perp}$ has a lift $\mathcal V^{+,\perp}$ to $R$ isomorphic to $\mathcal V_{p^s-1}^{\circul}\cong R[G]\otimes_{R[H]} \mathcal U_{[0]_m,p^s-1}$ by Example \ref{EX4}. 

We consider the following $R[G]$-modules $\mathcal V^{\perp}:=\oplus_{a\in C_m\setminus\{a_i|i\in\llbracket1,\imath_V\rrbracket\}} \mathcal U_{a,p^s-1}$, $\mathcal V^{+,\perp}=\mathcal V^+\oplus \mathcal V^{\perp}$, and $\mathcal V^{',\perp}:=\mathcal V'\oplus\mathcal V^{\perp}$. We have a surjective $k[G]$-linear map 
$$\overline{f}:\mathcal V^{+,\perp}/\upsilon\mathcal V^{+,\perp}\rightarrow \mathcal V^{',\perp}/\upsilon\mathcal V^{',\perp}$$ 
which is the direct sum of $V^+\rightarrow V'$ and of the identity automorphism of the reduction $V^{\perp}$ of $\mathcal V^{\perp}$ modulo $\upsilon$; we have $V=\Ker(\overline{f})$. 

We show that there exists an $R[G]$-linear map $f:\mathcal V^{+,\perp}\rightarrow \mathcal V^{',\perp}$ whose reduction modulo $\upsilon$ is $\overline{f}$. As $\mathcal V^{+,\perp}\cong R[G]\otimes_{R[H]} \mathcal U_{[0]_m,p^s-1}$, it suffices to show that each $k[H]$-linear map $U_{[0]_m,p^s-1}\rightarrow V^{',\perp}$ lifts to an $R[H]$-linear map $\mathcal U_{[0]_m,p^s-1}\rightarrow\mathcal V^{',\perp}$, which holds by Lemma \ref{L6}(2). So $f$ exists. 

As $\overline{f}$ is surjective, $f$ is also surjective by Nakayama's Lemma. Therefore $\mathcal V:=\Ker(f)$ is a lift of $V$ to $R$. As $\mathcal V^H\subset (\mathcal V^{+,\perp})^H=\{0\}$, we have $r_{\mathcal V}=0$.

From the two subcases we get that $V$ has a lift $\mathcal V$ to $R$ with $r_{\mathcal V}=0$. This completes the inductive step and the fourth  induction on $\imath_V$. This also completes the first induction on $\eta_V$. Hence Theorem \ref{T2}(2) holds.

\section{Proof of Theorem \ref{T2}(3)}\label{S8}

Let $V_1$ be a direct summand of $V$ which is a supplement of $V_0$. We have $V=V_0\oplus V_1$, $\imath_{V_0}=\imath_V-\jmath_V$, and $\imath_{V_1}=\jmath_{V_1}=\jmath_V$. So $V_1$ is a direct sum of indecomposable projective $k[G]$-module and hence it is a projective $k[G]$-module. From Corollary \ref{C2} we get that there exists a projective lift $\mathcal V_1$ of $V_1$ to $R$.

Note that for the equivalence and the first implication below we do not require that $R$ contains $R_s$.

{\bf Proof that $(3.a)\Leftrightarrow (3.b)$.} If $V_0$ has a lift $\mathcal V_0$ to $R$, then $\mathcal V:=\mathcal V_0\oplus\mathcal V_1$ is a lift of $V$ to $R$. Thus $(3.b)\Rightarrow (3.a)$. 

Assume now that $V$ has a lift $\mathcal V$ to $R$. We have a commutative diagram of $R[G]$-modules
$$\xymatrix@R=10pt@C=21pt@L=2pt{
\mathcal V_1 \ar[r]^f\ar[d] &\mathcal V\ar[d]\\
V_1 \ar[r]^{\textup{incl}} & V\\}$$
whose vertical arrows are reductions modulo $\upsilon$, whose arrow $\textup{incl}:V_1\rightarrow V$ is the inclusion, and whose arrow $f$ exists as $\mathcal V_1$ is a projective $R[G]$-module. Clearly, $f$ is injective and its image is a direct summand of the $R$-module $\mathcal V$. So $\mathcal V_0:=\mathcal V/f(\mathcal V_1)$ is a lift of $V/V_1\cong V_0$ to $R$. Thus $(3.a)\Rightarrow (3.b)$.

We conclude that $(3.a)\Leftrightarrow (3.b)$. 

To prove that $(3.b)\Leftrightarrow (3.c)$, to ease the notation we can assume that $V=V_0$; hence $\jmath_V=0$.

{\bf Proof that $(3.b)\Rightarrow (3.c)$.} Let $\mathcal V$ be a lift of $V$ to $R$. Let $W$ be the image of $\mathcal V^H$ in $V$. Recall that $W$ is a $k$-vector subspace of $\soc_{k[G]}(V)$; so for each $a\in C_m$ we have $\ell_W(a)\le\ell_{\soc_{k[G]}(V)}(a)$. Let $U:=V/W$ and $\mathcal U:=\mathcal V/\mathcal V^H$. So $\mathcal U$ is a lift of $U$ to $R$ with $r_{\mathcal U}=0$. As $\jmath_V=0$, we also have $\jmath_U=0$. From Theorem \ref{T2}(2) we get that $\ell_U$ is constant. From this and the identity $\ell_V=\ell_W+\ell_U$ we get that $\eta_V=\eta_W+\eta_U$. For $a\in C_m$, we estimate
$$\ell_V(a)=\ell_W(a)+\ell_U(a)=\ell_W(a)+\eta_U=\ell_W(a)+\eta_V-\eta_W$$
$$\le\ell_{\soc_{k[G]}(V)}(a)+\eta_V-\eta_W\le \ell_{\soc_{k[G]}(V)}(a)+\eta_V.$$
Thus $(3.b)\Rightarrow (3.c)$.

{\bf Proof that $(3.c)\Rightarrow (3.b)$ when $R$ contains $R_s$.} From Equation (\ref{EQ11}) we get that for each $a\in C_m$ there exists a $k$-vector subspace $W_a$ of $\soc{k[H]}(V)$ of dimension $\ell_V(a)-\eta_V$ on which $\sigma$ acts as the scalar multiplication by $\overline{\zeta}_m^a$. Let $W:=\sum_{a\in C_m} W_a$; we have $W=\oplus_{a\in C_m} W_a$. We consider a direct sum decomposition $W=\oplus_{i\in A} W_i$ into $k[G]$-modules who as $k$-vector spaces have dimension $1$. For $i\in A$, let $\mathcal W_i$ be an $R[G]$-module which is a lift of $W_i$ to $R$ with $r_{\mathcal W_i}=1$; so $\mathcal W_i=\mathcal W_i^H$ is uniquely determined up to isomorphism. let $\mathcal W:=\oplus_{i\in A} \mathcal W_i$; we have $\mathcal W=\mathcal W^H$.

Let $U:=V/W$. As $\ell_W(a)=\ell_V(a)-\eta_V$ for each $a\in C_m$, we have $\ell_U(a)=\eta_V$ for each $a\in C_m$. Thus $\ell_V$ is constant of constant value $\eta_U=\eta_V$. As $\jmath_V=0$, we also have $\jmath_U=0$. We consider a direct sum decomposition $U=\oplus_{j\in J} U_j$ into constant-indecomposable $k[G]$-module. For each $j\in J$, let $\mathcal U_j$ be a lift of $U_j$ to $R$ with $r_{\mathcal U_j}=0$ by Theorem \ref{T2}(2). 

We show that there exists a short exact sequence of $R[G]$-modules
$$0\rightarrow\mathcal W\rightarrow\mathcal V\rightarrow\mathcal U\rightarrow 0$$
whose reduction modulo $\upsilon$ is the short exact sequence of $k[G]$-modules 
$$0\rightarrow W\rightarrow V\rightarrow U\rightarrow 0.$$
To show this it suffices to show that the image of the reduction modulo $\upsilon$ homomorphism $\Ext^1_{R[G]}(\mathcal U,\mathcal W)\rightarrow \Ext^1_{k[G]}(U,W)$ contains the class $\beta\in\Ext^1_{k[G]}(U,W)$ that defines $0\rightarrow W\rightarrow V\rightarrow U\rightarrow 0$. As we have direct sum decompositions $\Ext^1_{R[G]}(\mathcal U,\mathcal W)=\oplus_{(i,j)\in A\times J} \Ext^1_{R[G]}(\mathcal U_j,\mathcal W_i)$ and $\Ext^1_{k[G]}(U,W)=\oplus_{(i,j)\in A\times J} \Ext^1_{k[G]}(U_j,W_i)$, to show the existence of $\mathcal V$ we can assume that $|A\times J|=1$ and that  $\beta\neq 0$. Thus $U$ is constant-indecomposable, $\dim_k(W)=1$, and $\Ext^1_{k[G]}(U,W)\neq \{0\}$. From Proposition \ref{P2}(1) and (3) we get that $\dim_k\bigl(\Ext^1_{k[G]}(U,W)\bigr)=1$.

By applying $\Hom_{R[G]}(-,W)$ to the short exact sequence 
$$0\rightarrow\mathcal U\overset{\upsilon}\rightarrow\mathcal U\rightarrow U\rightarrow 0,$$ the resulting long exact $\Ext^{*}(-,W)$ complex contains the short exact sequence
\begin{equation}\label{EQ42}
0\rightarrow\Hom_{R[G]}(\mathcal U,W)\rightarrow\Ext_{R[G]}^1(U,W)\rightarrow\Ext_{R[G]}^1(\mathcal U,W)\rightarrow 0
\end{equation}
of $R$-modules annihilated by $\upsilon$ and hence of $k$-vector spaces. As $r_{\mathcal U}=0$ and $\mathcal W=\mathcal W^H$, we have $\Hom_{R[G]}(\mathcal U,\mathcal W)=\{0\}$. Based on this, by applying $\Hom_{R[G]}(\mathcal U,-)$ to the short exact sequence $0\rightarrow\mathcal W\overset{\upsilon}\rightarrow\mathcal W\rightarrow W\rightarrow 0$ we get an exact complex
\begin{equation}\label{EQ43}
0\rightarrow\Hom_{R[G]}(\mathcal U,W)\rightarrow\Ext_{R[G]}^1(\mathcal U,\mathcal W)\overset{\upsilon}\rightarrow\Ext_{R[G]}^1(\mathcal U,\mathcal W)\rightarrow\Ext_{R[G]}^1(\mathcal U,W).
\end{equation}

As $\Hom_{R[G]}(\mathcal U,W)=\Hom_{k[G]}(U,W)$ and $\Ext^1_{k[G]}(U,W)\neq \{0\}$, from Proposition \ref{P2}(1) and (3) we get that
$$\dim_k\bigl(\Hom_{R[G]}(\mathcal U,W)\bigr)=1.$$

With respect to the surjective ring homomorphism $R[G]\rightarrow k[G]$, the functorial $R$-linear map 
$$\Ext^1_{k[G]}(U,W)\rightarrow \Ext^1_{R[G]}(U,W)$$ 
(see \cite{CE}, Ch.\ VI, Sect.\ 4, p.\ 118) is injective as its composite with the $R$-homomorphism $\Ext_{R[G]}^1(U,W)\rightarrow\Ext_{R[G]}^1(\mathcal U,W)$ is the injective $R$-linear $\Ext^1_{k[G]}(U,W)\rightarrow \Ext^1_{R[G]}(\mathcal U,W)$. 

The direct sum $k$-linear map 
$$\Hom_{R[G]}(\mathcal U,W)\oplus\Ext^1_{k[G]}(U,W)\rightarrow \Ext_{R[G]}^1(U,W)$$ 
is an isomorphism as one can check based on the fact that, as $R$ is a principal ideal domain, for a short exact sequence $0\rightarrow W\rightarrow \tilde V\rightarrow U\rightarrow 0$ of $R[G]$-modules, the $R$-module $\tilde V$ is either a $k$-vector space or the direct sum of a $k$-vector space with $R/\upsilon^2R$. From this, $\dim_k\bigl(\Ext^1_{k[G]}(U,W)\bigr)=1$, and Short Exact Sequence (\ref{EQ42}) we get that the $k$-linear map $\Ext^1_{k[G]}(U,W)\rightarrow\Ext^1_{R[G]}(\mathcal U,W)$ is an isomorphism of $1$-dimensional $k$-vector spaces. 

From this and $\dim_k\bigl(\Hom_{R[G]}(\mathcal U,W)\bigr)=1$, by reasons of length of torsion $R$-modules applied to the Complex (\ref{EQ43}) we get that the $R$-linear 
$$\Ext_{R[G]}^1(\mathcal U,\mathcal W)\rightarrow\Ext_{R[G]}^1(\mathcal U,W)$$ 
is surjective. 

From the last two paragraphs we get that $\mathcal V$ exists. As $\mathcal V$ is a lift of $V$, we get that $(3.c)\Rightarrow (3.b)$ if $R\subset R_s$.

From the last two implications we get that $(3.b)\Leftrightarrow (3.c)$. From this and $(3.a)\Leftrightarrow (3.b)$ we get that Theorem \ref{T2}(3) holds. Thus Theorem \ref{T2} holds.

\section{Upper and lower ramification jumps}\label{S9}

In this section we present some basic properties of upper and lower ramification jumps of cyclic extensions of $k((t))$ of Galois group $C_{p^s}$, to be called simply as $C_{p^s}$-extensions of $k((t))$, that are required in what follows. We simply call them upper and lower jumps. For the following classical result we refer to \cite{OP}, Lem.\ 3.5.

\begin{lemma}\label{L7}
For a strictly increasing sequence $u_1<\cdots<u_s$ of positive integers, the following statements are equivalent.

\medskip
{\bf (1)} There exists a cyclic $C_{p^s}$-extension of $k((t))$ whose upper jumps are $u_1<\cdots<u_s$.

\smallskip
{\bf (2)} The following conditions hold.

\medskip\noindent
{\bf (2.a)} We have $p\nmid u_1$.

\smallskip\noindent
{\bf (2.b)} For each $i\in\llbracket2,s\rrbracket$, either $u_i=pu_{i-1}$ or $u_i>pu_{i-1}$ and $p\nmid u_i$.
\end{lemma}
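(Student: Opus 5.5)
The plan is to reduce Lemma \ref{L7} to Artin--Schreier--Witt theory and then induct on $s$. A $C_{p^s}$-extension of $k((t))$ corresponds to a class $\underline f=(f_1,\ldots,f_s)\in W_s(k((t)))/\wp W_s(k((t)))$, with $\wp=F-1$ and the first coordinate not in $\wp k((t))$. Since $k$ is algebraically closed, each class has a reduced representative: every $f_i$ lies in $t^{-1}k[t^{-1}]$, and every monomial of $f_i$ whose exponent is divisible by $p$ has been removed. The key input is the known formula for the upper jumps of such a reduced vector (going back to Brylinski/Thomas; see the reference \cite{OP}):
\begin{equation*}
u_i=\max\bigl(p^{i-1}\deg_{t^{-1}}(f_1),\,p^{i-2}\deg_{t^{-1}}(f_2),\ldots,\deg_{t^{-1}}(f_i)\bigr),
\end{equation*}
where a zero $f_j$ is given degree $-\infty$.

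For $(1)\Rightarrow(2)$: since $f_1\neq 0$ is reduced, $u_1=\deg(f_1)$ is prime to $p$, which gives (2.a). For $i\ge 2$, the formula gives $u_i=\max(pu_{i-1},\deg f_i)$. So either $u_i=pu_{i-1}$, or $u_i=\deg f_i>pu_{i-1}$; in the second case $p\nmid u_i$ because $f_i$ is reduced. This is (2.b).

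For $(2)\Rightarrow(1)$, I would construct the vector explicitly. Take $f_1=t^{-u_1}$. For $i\ge 2$, take $f_i=0$ if $u_i=pu_{i-1}$, and $f_i=t^{-u_i}$ otherwise; in the latter case $p\nmid u_i$, so $f_i$ is reduced. By the formula, the upper jumps of this vector are exactly $u_1<\cdots<u_s$. Its first coordinate is not in $\wp k((t))$, so it defines a cyclic extension of degree $p^s$.

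The main obstacle is the jump formula itself, in particular the claim that reduced representatives realize the maximum with no cancellation. If one cannot simply cite it, I would prove it by induction on $s$. The inductive step uses the Herbrand function: it compares the conductor of the subextension of degree $p^{s-1}$, which has the same first $s-1$ upper jumps, with the new jump coming from the Witt-vector addition polynomial. That step requires controlling the degree of the carry polynomials, which is bounded by $p\cdot u_{s-1}$. Since the statement is classical, I would expect the paper to simply cite \cite{OP}, Lem.\ 3.5.
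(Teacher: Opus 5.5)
Your proposal is correct. The paper gives no argument for this lemma and simply cites \cite{OP}, Lem.\ 3.5. You instead derive the statement from two standard inputs. The first is the existence of standard-form Artin--Schreier--Witt representatives over the algebraically closed field $k$. The second is the upper-jump formula for vectors in standard form, which the paper itself records later as Lemma \ref{L10} (from \cite{OP}, Lem.\ 3.4).

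Given these inputs, both directions go through as you say:
\begin{itemize}
\item For $(1)\Rightarrow(2)$, the recursion $u_i=\max(pu_{i-1},\deg f_i)$, together with $p\nmid\deg f_i$ for every non-zero reduced $f_i$, yields (2.a) and (2.b).
\item For $(2)\Rightarrow(1)$, the vector with $f_1=t^{-u_1}$ and $f_i\in\{0,t^{-u_i}\}$ is in standard form. Its first coordinate lies outside $\wp\bigl(k((t))\bigr)$ because $p\nmid u_1$, and it has the prescribed jumps by induction on $i$.
\end{itemize}

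This is essentially how the cited lemma is obtained in the literature. Your route buys transparency, since the lemma is visibly a corollary of the jump formula, and a constructive converse of the same monomial type the paper later uses in Example \ref{EX11}. The paper's citation buys brevity.

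Your fallback sketch for proving the jump formula by induction is not a proof as it stands. A degree bound on the carry polynomials only gives an upper bound on the new jump, whereas the real content is that the maximum is attained. This does not affect the proposal, because the formula can be cited exactly as the paper does.
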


\begin{proposition}\label{P3}
We consider a cyclic $C_{p^s}$-extension of $k((t))$, its lower jumps $l_1<\cdots <l_s$, and its upper jumps $u_1<\cdots<u_s$. Then the following properties hold. 

\medskip
{\bf (1)} The lower and upper jumps determine uniquely each other by the recursive identities $l_1:=u_1$ and $l_i:=l_{i-1}+p^{i-1}(u_i-u_{i-1})$ for each $i\in\llbracket2,s\rrbracket$. 

\smallskip
{\bf (2)} For each $j\in\llbracket1,s\rrbracket $ we have $l_j=u_1+\sum_{i=1}^{j-1} p^i(u_{i+1}-u_i)$.

\smallskip
{\bf (3)} For each $j\in\llbracket1,s\rrbracket$ we have identities 
$$\sum_{i=1}^j \frac{l_i}{p^i}=\frac{\sum_{i=1}^j p^{i-1}u_i}{p^j}$$
and
$$p^{j-1}u_j:=l_j+(p-1)\sum_{i=1}^{j-1}l_ip^{j-i-1}.$$

{\bf (4)} For $b\in\llbracket0,p^s-1\rrbracket$, write
\[
b=\delta_{1,b}+\delta_{2,b}p+\cdots+\delta_{s,b}p^{s-1}
\]
with $\delta_{i,b}\in\llbracket0,p-1\rrbracket$ for each $i\in\llbracket1,s\rrbracket$ and define
\[
N_{s,b}:=\sum_{i=1}^s p^{s-i}\bigl[p-1+(p-1-\delta_{i,b})l_i\bigr].
\] Then for each $b\in \llbracket1,p^s-1\rrbracket$, if $j\in\llbracket0,s-1\rrbracket$ is the largest such that $p^j\mid b$, then we have identities 
\begin{equation*}
\begin{aligned}
N_{s,b-1}-N_{s,b}&=p^{s-j-1}\Big[l_{j+1}-(p-1)\sum_{i=1}^j p^{j+1-i}l_i\Big]\\
&=p^{s-j-1}[l_1+\sum_{i=1}^j p^i(u_{i+1}-pu_i)].
\end{aligned}
\end{equation*}
In particular, $N_{s,b-1}-N_{s,b}\ge p^{s-j-1}l_1\ge l_1>0$ and therefore we have $\frac{l_{j+1}}{p^{j+1}}-(p-1)\sum_{i=1}^j\frac{l_i}{p^i}\ge\frac{l_1}{p^{j+1}}>0$.
\end{proposition}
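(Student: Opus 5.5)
The argument takes the Hasse--Arf description of the lower jumps (part (1)) as the single input and derives parts (2)--(4) from it by elementary algebra. For part (1), the recursion $l_1=u_1$, $l_i=l_{i-1}+p^{i-1}(u_i-u_{i-1})$ is the standard translation between upper and lower numbering through the Herbrand function $\psi$. Since $H=C_{p^s}$, the ramification groups $G^u$ are $C_{p^{s-i+1}}$ for $u_{i-1}<u\le u_i$, and the slope of $\psi$ on that interval is the index $[G_0:G_{\psi(u)}]=p^{i-1}$. Integrating $\psi'$ from $u_{i-1}$ to $u_i$ then gives the recursion, and the two sequences determine each other because $\psi$ is a bijection. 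I would quote this as the classical fact (Serre, Local Fields, Ch.\ IV), or the source used for Lemma \ref{L7}, rather than reprove it.

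Part (2) follows by telescoping part (1): $l_j=l_1+\sum_{i=2}^{j}p^{i-1}(u_i-u_{i-1})$, and reindexing gives the stated formula. For part (3), I would prove the second identity by induction on $j$. The base $j=1$ is trivial. For the inductive step, write $p^{j}u_{j+1}=p^{j}u_j+p^j(u_{j+1}-u_j)=p\cdot p^{j-1}u_j+(l_{j+1}-l_j)$, substitute the induction hypothesis for $p^{j-1}u_j$, and collect the $l_j$ terms: $pl_j-l_j=(p-1)l_j$. This produces exactly $l_{j+1}+(p-1)\sum_{i=1}^{j}l_ip^{j-i}$. The first identity then comes from the second. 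Multiplying it by $p^j$, it reads $\sum_i p^{j-i}l_i=\sum_i p^{i-1}u_i$. Its increment from $j-1$ to $j$ is $l_j+(p-1)\sum_{i<j}p^{j-1-i}l_i=p^{j-1}u_j$, so both sides grow by the same amount at each step and agree at $j=1$.

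Part (4) needs the base-$p$ digits of $b-1$ versus $b$. If $p^j\,\|\,b$, then $\delta_{i,b}=0$ for $i\le j$ and $\delta_{j+1,b}\ge1$. Passing to $b-1$ sets $\delta_{i,b-1}=p-1$ for $i\le j$, replaces $\delta_{j+1,b}$ by $\delta_{j+1,b}-1$, and leaves the higher digits unchanged. Hence
\[
N_{s,b-1}-N_{s,b}=p^{s-j-1}l_{j+1}-\sum_{i=1}^{j}p^{s-i}(p-1)l_i,
\]
and factoring out $p^{s-j-1}$ gives the first displayed identity. To reach the second form, I would substitute the part-(3) expression $(p-1)\sum_{i\le j}p^{j-i}l_i=p^{j}u_{j+1}-l_{j+1}$ and its analogues, or equivalently rewrite everything via part (2) and telescope; this is the only step with real bookkeeping, and it is where I expect the indices to be error-prone. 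The claimed identity $l_{j+1}-(p-1)\sum_{i=1}^{j}p^{j+1-i}l_i=l_1+\sum_{i=1}^{j}p^i(u_{i+1}-pu_i)$ should be checked carefully against the digit computation, since the computation above gives exponent $p^{j-i}$ after factoring, and I would first verify the case $j=1$ numerically.

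For the inequalities, Lemma \ref{L7}(2.b) gives $u_{i+1}\ge pu_i$ for every $i$, so each summand $p^i(u_{i+1}-pu_i)$ is nonnegative. This yields $N_{s,b-1}-N_{s,b}\ge p^{s-j-1}l_1\ge l_1=u_1>0$. Dividing the first identity by $p^{j+1}$ gives the final inequality.
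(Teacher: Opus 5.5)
Your route is the paper's: Herbrand's formula for (1), telescoping for (2), an induction for (3), and the base-$p$ digit computation plus Lemma \ref{L7} for (4). In (3) you induct on the second identity via (1) and deduce the first. The paper inducts on the first via (2) and deduces the second by subtraction. Both orders work, and your computations there are correct.

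The flaw is in (4), at exactly the step you flagged. Your remark that the digit computation \emph{gives exponent $p^{j-i}$ after factoring} is false, and it contradicts your own preceding sentence. Since $p^{s-i}=p^{s-j-1}p^{j+1-i}$, factoring $p^{s-j-1}$ out of $p^{s-j-1}l_{j+1}-(p-1)\sum_{i=1}^jp^{s-i}l_i$ leaves exactly $l_{j+1}-(p-1)\sum_{i=1}^jp^{j+1-i}l_i$, as stated.

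Having cast doubt on the identity, you never prove the second form. It takes two lines. Multiplying the first identity of (3) by $p^{j+1}$ gives $\sum_{i=1}^jp^{j+1-i}l_i=\sum_{i=1}^jp^iu_i$. Part (2) gives $l_{j+1}=u_1+\sum_{i=1}^jp^i(u_{i+1}-u_i)$. Hence
\[
l_{j+1}-(p-1)\sum_{i=1}^jp^{j+1-i}l_i=u_1+\sum_{i=1}^jp^iu_{i+1}-p\sum_{i=1}^jp^iu_i=l_1+\sum_{i=1}^jp^i(u_{i+1}-pu_i).
\]
Your proposed substitution $(p-1)\sum_{i\le j}p^{j-i}l_i=p^ju_{j+1}-l_{j+1}$ also works. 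You must first multiply it by $p$, which turns the bracket into $(p+1)l_{j+1}-p^{j+1}u_{j+1}$, and then expand $l_{j+1}$ by (2).

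The exponent also matters for the last claim. Dividing the bracket inequality by $p^{j+1}$ yields $\frac{l_{j+1}}{p^{j+1}}-(p-1)\sum_{i=1}^j\frac{l_i}{p^i}\ge\frac{l_1}{p^{j+1}}$ precisely because the exponent is $j+1-i$. As a numerical check, take $p=5$, $(u_1,u_2)=(3,19)$, $(l_1,l_2)=(3,83)$ and $j=1$. Then $83-4\cdot5\cdot3=23=3+5(19-15)$, whereas the exponent $p^{j-i}$ would give $71$.
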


\begin{proof}
Part (1) is only the standard Herbrand formula for cyclic $p$-power extensions of $k((t))$ (see \cite{S}, Ch.\ IV, Sect.\ 3). 

As $l_j=l_1+\sum_{i=1}^{j-1} (l_{i+1}-l_i)$, part (2) follows from part (1).

We prove part (3) by induction on $j\in\llbracket1,s\rrbracket$. The base of the induction for $j=1$ holds as $l_1=u_1$ by part (1). For the inductive step, let $j\in\llbracket1,s-1\rrbracket$ be such that we have identities $\sum_{i=1}^j \frac{l_i}{p^i}=\frac{\sum_{i=1}^j p^iu_i}{p^{j+1}}=\frac{\sum_{i=1}^j p^{i-1}u_i}{p^{j}}$ and $p^{j-1}u_j=l_j+(p-1)\sum_{i=1}^{j-1}l_ip^{j-i-1}$. Based on this and part (2) we get that
$$\sum_{i=1}^{j+1} \frac{l_i}{p^i}=\Big(\sum_{i=1}^j \frac{l_i}{p^i}\Big)+\frac{l_{j+1}}{p^{j+1}}=\frac{\sum_{i=1}^j p^{i-1}u_i}{p^j}+\frac{u_1+\sum_{i=1}^j p^i(u_{i+1}-u_i)}{p^{j+1}}$$
is equal to $\frac{\sum_{i=1}^{j+1} p^{i-1}u_i}{p^{j+1}}$. Hence 
$$p^ju_{j+1}=\Big(\sum_{i=1}^{j+1} p^{i-1}u_i\Big)-\Big(\sum_{i=1}^j p^{i-1}u_i\Big)=p^{j+1}\Big(\sum_{i=1}^{j+1} \frac{l_i}{p^i})-p^j\Big(\sum_{i=1}^{j} \frac{l_i}{p^i}\big)$$ is equal to $l_{j+1}+(p-1)\sum_{i=1}^{j}l_ip^{j-i}$. This ends the inductive step and the induction. So part (3) holds.

For part (4), we first assume that $j=0$; so $p\nmid b$. We have $\delta_{1,b-1}=\delta_{1,b}-1$ and $\delta_{i,b-1}=\delta_{i,b}$ for each $i\in\llbracket2,s\rrbracket$. Hence $N_{s,b-1}-N_{s,b}=p^{s-1}l_1\ge l_1>0$ and so part (4) holds if $j=0$.

Suppose now that $j\ge 1$; so $p\mid b$. We have $\delta_{1,b}=\cdots=\delta_{j,b}=0$ and $\delta_{j+1,b}\in\llbracket1,p-1\rrbracket$. Thus $\delta_{1,b-1}=\cdots=\delta_{j,b-1}=p-1$, $\delta_{j+1,b-1}=\delta_{j+1,b}-1$, and $\delta_{i,b-1}=\delta_{i,b}$ for each $i\in\llbracket j+2,s\rrbracket$. Thus 
\begin{equation*}
\begin{aligned}
N_{s,b-1}-N_{s,b}&=p^{s-j-1}l_{j+1}-(p-1)\sum_{i=1}^j p^{s-i}l_i\\
&=p^{s-j-1}\Big[l_{j+1}-(p-1)\sum_{i=1}^j p^{j+1-i}l_i\Big]
\end{aligned}
\end{equation*}
and hence the first identity of part (4) holds.
Based on parts (2) and (3) we compute
\[
\begin{split}
l_{j+1}-(p-1)\sum_{i=1}^j p^{j+1-i}l_i
&=l_1+\sum_{i=1}^j p^i(u_{i+1}-u_i)
-(p-1)\sum_{i=1}^j p^iu_i\\
&=l_1+\sum_{i=1}^j p^i(u_{i+1}-pu_i).
\end{split}
\]
We conclude that 
$N_{s,b-1}-N_{s,b}=p^{s-j-1}[l_1+\sum_{i=1}^j p^i(u_{i+1}-pu_i)]$, i.e., the second identity of part (4) holds. By Lemma \ref{L7}, we have $u_{i+1}\ge pu_i$ for each $i\in\llbracket1,j\rrbracket$. So part (4) also holds if $j\ge 1$.
\end{proof}

\begin{corollary}\label{C3}
Let $l_0:=0$. For a strictly increasing sequence $l_1<\cdots<l_s$ of positive integers, the following statements are equivalent.

\medskip
{\bf (1)} There exists a cyclic $C_{p^s}$-extension of $k((t))$ whose lower jumps are $l_1<\cdots<l_s$.

\smallskip
{\bf (2)} The following conditions hold.

\medskip\noindent
{\bf (2.a)} We have $p\nmid l_1$.

\smallskip\noindent
{\bf (2.b)} For each $j\in\llbracket1,s-1\rrbracket$ we have $p^j\mid l_{j+1}-l_j$ and
$$l_{j+1}-l_j\ge (p-1)\sum_{i=0}^{j-1} p^{j-i}(l_{i+1}-l_i).$$

\noindent
{\bf (2.c)} For each $j\in\llbracket1,s-1\rrbracket$ such that $l_{j+1}-l_j> (p-1)\sum_{i=0}^{j-1} p^{j-i}(l_{i+1}-l_i)$, the positive integer $l_1+\sum_{i=1}^{j}\frac{l_{i+1}-l_i}{p^i}$ is not divisible by $p$.
\end{corollary}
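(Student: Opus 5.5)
Since the lower jumps and the upper jumps determine each other via Proposition \ref{P3}(1), the plan is to translate the conditions of Lemma \ref{L7} on $u_1<\cdots<u_s$ into conditions on $l_1<\cdots<l_s$. Given a strictly increasing sequence $l_1<\cdots<l_s$ of positive integers, set $u_1:=l_1$ and, for $i\in\llbracket2,s\rrbracket$,
$$u_i:=u_{i-1}+\frac{l_i-l_{i-1}}{p^{i-1}}\in\mathbb Q.$$
This inverts the recursion of Proposition \ref{P3}(1). Statement (1) holds if and only if this sequence consists of positive integers, is strictly increasing, and satisfies conditions (2.a) and (2.b) of Lemma \ref{L7}. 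Indeed, a $C_{p^s}$-extension with lower jumps $l_i$ has upper jumps given by this formula; conversely, an extension with upper jumps $u_i$ has lower jumps $l_i$ by Proposition \ref{P3}(1).

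\textbf{Integrality.} The $u_i$ are integers if and only if $p^{j}\mid l_{j+1}-l_j$ for every $j\in\llbracket1,s-1\rrbracket$. This is the divisibility half of (2.b). Positivity and strict increase are automatic from $l_{j+1}>l_j$.

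\textbf{The $p$-adic conditions.} Condition (2.a) of Lemma \ref{L7} reads $p\nmid l_1$, which is condition (2.a) of the corollary. Telescoping the recursion gives
$$u_{j+1}=l_1+\sum_{i=1}^{j}\frac{l_{i+1}-l_i}{p^i},$$
so the non-divisibility requirement in Lemma \ref{L7}(2.b) becomes exactly condition (2.c), provided the strict inequality $u_{j+1}>pu_j$ matches the strict inequality in (2.c).

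\textbf{Comparing $u_{j+1}$ with $pu_j$.} This is the main computational step. We have
$$u_{j+1}-pu_j=\frac{l_{j+1}-l_j}{p^j}-(p-1)u_j,\qquad u_j=\sum_{i=0}^{j-1}\frac{l_{i+1}-l_i}{p^i},$$
where the second identity uses $l_0=0$ and the telescoped expression above. Multiplying by $p^j$ gives
$$p^j(u_{j+1}-pu_j)=(l_{j+1}-l_j)-(p-1)\sum_{i=0}^{j-1}p^{j-i}(l_{i+1}-l_i).$$
Hence $u_{j+1}\ge pu_j$ is equivalent to the inequality in (2.b), and equality holds in one exactly when it holds in the other. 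Consequently, the disjunction ``$u_{j+1}=pu_j$, or $u_{j+1}>pu_j$ and $p\nmid u_{j+1}$'' in Lemma \ref{L7}(2.b) is equivalent to the inequality in (2.b) together with (2.c). This proves (1)$\Leftrightarrow$(2).

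The only delicate point is index bookkeeping with $l_0=0$. Part (3) of Proposition \ref{P3} can serve as a cross-check of the formula for $u_j$.
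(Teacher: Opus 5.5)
Your proposal is correct and follows essentially the same route as the paper: you invert the Herbrand recursion of Proposition \ref{P3}(1) to get $u_{j+1}=l_1+\sum_{i=1}^{j}\frac{l_{i+1}-l_i}{p^i}$, compute $p^j(u_{j+1}-pu_j)$ exactly as in the paper's Equation (\ref{EQ46}), and transfer the conditions of Lemma \ref{L7}. The only difference is packaging: you give a single chain of equivalences, while the paper proves $(1)\Rightarrow(2)$ and $(2)\Rightarrow(1)$ separately. Like the paper, you implicitly use that the upper jumps of a cyclic extension are integers (Hasse--Arf) when deducing the divisibility in (2.b).
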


\begin{proof}
We prove that $(1)\Rightarrow (2)$. We consider the upper jumps $u_1<\cdots<u_s$ of the cyclic $C_{p^s}$-extension of $k((t))$ whose lower jumps are $l_1<\cdots<l_s$. From Proposition \ref{P3}(1) we get that part (2.a) and the divisibility property of part (2.b) hold. Same argument that proved Proposition \ref{P3}(2) gives that for each $j\in\llbracket1,s-1\rrbracket$ we have $u_{j+1}=l_1+\sum_{i=1}^j \frac{l_{i+1}-l_i}{p^i}$; thus 
\begin{equation}\label{EQ46}
\begin{split}
u_{j+1}-pu_j
&=l_1+\sum_{i=1}^j \frac{l_{i+1}-l_i}{p^i}
-p\left(l_1+\sum_{i=1}^{j-1} \frac{l_{i+1}-l_i}{p^i}\right)\\
&=\frac{l_{j+1}-l_j}{p^j}-(p-1)\sum_{i=0}^{j-1} \frac{l_{i+1}-l_i}{p^i}.
\end{split}
\end{equation}
and hence, as $u_{j+1}-pu_j\ge 0$ by Lemma \ref{L7}, we get that the inequality property of part (2.b) holds. So part (2.b) holds.

If $j\in\llbracket1,s-1\rrbracket$ is such that $l_{j+1}-l_j> (p-1)\sum_{i=0}^{j-1} p^{j-i}(l_{i+1}-l_i)$, then from the last paragraph we get that $u_{j+1}-pu_j>0$ and therefore $p\nmid u_{j+1}=l_1+\sum_{i=1}^{j}\frac{l_{i+1}-l_i}{p^i}$ by Lemma \ref{L7}, so part (2.c) also holds. Hence $(1)\Rightarrow (2)$.

We prove that $(2)\Rightarrow (1)$. We define $u_1:=l_1$ and $u_{j+1}:= l_1+\sum_{i=1}^{j}\frac{l_{i+1}-l_i}{p^i}$ for each $j\in\llbracket1,s-1\rrbracket$. From parts (2.a) and (2.b) and the strict inequalities $l_1<\cdots<l_s$ we get that we have strict inequalities $u_1<\cdots<u_s$ between positive integers. As $u_1=l_1$, we have $p\nmid u_1$ by part (2.a). From Equation (\ref{EQ46}) and part (2.b) we get that for each $j\in\llbracket1,s-1\rrbracket$ we have $u_{j+1}\ge pu_j$. If there exists  $j\in\llbracket1,s-1\rrbracket$ such that $u_{j+1}> pu_j$, then $p\nmid u_{j+1}$ by part (2.c). Hence Lemma \ref{L7}(2.a) and (2.b) hold and thus there exists a cyclic $C_{p^s}$-extension of $k((t))$ whose upper jumps are $u_1<\cdots<u_s$ by Lemma \ref{L7}. As $u_1=l_1$ and $u_{j+1}-u_j=\frac{l_{j+1}-l_j}{p^j}$ for each $j\in\llbracket1,s-1\rrbracket$, from Proposition \ref{P3}(1) we get that the lower jumps of this cyclic $C_{p^s}$-extension of $k((t))$ are $l_1<\cdots<l_s$. Hence $(2)\Rightarrow (1)$.
\end{proof}

\section{Actions of $G$ on curves over $k$. Part I: general principles}\label{S10}

After reviewing the notation for curves that are used until the end, we include three general results on faithful actions of $G$ on them.

{\bf Notation.} For each connected smooth projective curve $X$ over $k$, let $\textup{g}_X\in\mathbb N\cup\{0\}$ be its genus, $V_X:=H^0(X,\Omega_X)$, $W_X:=H^1(X,\mathcal O_X)$, and $M_X:=H^1_{dR}(X)$. We have $\dim_k(V_X)=\dim_k(W_X)=\textup{g}_X$. 

Let $\Frob_0$ be the Frobenius automorphism of $R_0$. Let $\mathcal M_X:=H^1_{\crys}(X/R_0)$ be the first crystalline cohomology group of $X$; it is a free $R_0$-module of rank $2\textup{g}_X$ equipped with an injective $\Frob_0$-linear endomorphism $\varphi:\mathcal M_X\rightarrow\mathcal M_X$ such that $p\mathcal M_X\subset\varphi(\mathcal M_X)\subset\mathcal M_X$ and $\mathcal M_X/\varphi(\mathcal M_X)\cong k^{\textup{g}_X}$; so for each $(\alpha,v)\in R_0\times\mathcal M_X$ we have $\varphi(\alpha x)=\Frob_0(\alpha)\varphi(v)$. 

We have a functorial identification $M_X=\mathcal M_X/p\mathcal M_X$ that explains the notation and a functorial Hodge--de Rham short exact sequence (see \cite{Be}, Ch.\ V, Cor.\ 2.3.7) of $k$-vector spaces
\begin{equation}\label{EQ47}
0\rightarrow V_X\rightarrow M_X\rightarrow W_X\rightarrow 0.
\end{equation}

Until the end we fix a faithful action $\rho:G\rightarrow\Aut(X)$. So Short Exact Sequence (\ref{EQ47}) is of $k[G]$-modules. We consider the quotient curve $Y:=X/H$ and the quotient morphism $\pi:X\rightarrow X/H$.

\begin{proposition}\label{P4}
The following properties hold.

\medskip
{\bf (1)} The pullback $R_0$-linear map $$\pi^*:\mathcal M_Y\rightarrow\mathcal M_X$$ is injective and we have $r_{\mathcal M_X}=2\textup{g}_Y$.

\smallskip
{\bf (2)} We have $Y\cong\mathbb P^1_k$ if and only if $\mathcal M_X^H=\{0\}$.

\smallskip
{\bf (3)} If $X/H\cong\mathbb P^1_k$, then $\jmath_{V_X}=0$.

\smallskip
{\bf (4)} Suppose that $\chi$ is injective and $Y\cong\mathbb P^1_k$. Then $m\mid 2\textup{g}_X$ and for each $a\in C_m$ we have $\ell_{V_X}(a)+\ell_{V_X}(-a)=\frac{2\textup{g}_X}{m}$. In particular, if $m=2$, then $\textup{g}_X$ is even and $\ell_{V_X}$ is constant of constant value $\frac{\textup{g}_X}{2}$.
\end{proposition}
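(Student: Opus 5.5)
For part (1) I will use the transfer. Let $N:=\sum_{h\in H}h$ and let $\pi_*:\mathcal M_X\rightarrow\mathcal M_Y$ be the trace (pushforward) map of the finite flat morphism $\pi$; the point is the identities $\pi_*\circ\pi^*=p^s\cdot 1_{\mathcal M_Y}$ and $\pi^*\circ\pi_*=N$. As $\mathcal M_Y$ is a free $R_0$-module, the first identity gives that $\pi^*$ is injective. Then I pass to $B_0$. The second identity shows that $\pi^*\otimes B_0$ maps onto $N(\mathcal M_X\otimes B_0)=\mathcal M_X^H\otimes B_0$, because $N/p^s$ is a projector onto the invariants in characteristic $0$. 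Its image is also contained in the invariants, so $r_{\mathcal M_X}=\textup{rank}_{R_0}(\mathcal M_Y)=2\textup{g}_Y$. If the trace formalism in crystalline cohomology looks too delicate, I will instead use the comparison $H^1_{\crys}\otimes B_0\cong H^1_{\textup{rig}}$, or equivalently $\ell$-adic $H^1$. There $H^1(X)^H\otimes\mathbb Q_\ell=H^1(Y)\otimes\mathbb Q_\ell$ is standard for quotients by finite groups, and ranks agree across Weil cohomologies for $H^1$ (both equal $2\textup{g}$). Part (2) follows at once, since $\mathcal M_X^H$ is free, so it vanishes iff its rank $2\textup{g}_Y$ is $0$, i.e.\ iff $Y\cong\mathbb P^1_k$.

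For part (3) I will argue by contradiction. A projective indecomposable summand $U_{a,p^s}$ of $V_X$ is, as a $k[H]$-module, isomorphic to $k[H]$, which is injective. So it is a direct summand of $M_X$ as well (it sits in $V_X\subset M_X$ via Sequence (\ref{EQ47})). Since projective $k[G]$-modules lift uniquely (Corollary \ref{C2}), I aim to lift this summand into $\mathcal M_X$. Concretely, the idempotent argument of Corollary \ref{C2} applied to the $R_0[H]$-module $\mathcal M_X$ should show that a free $k[H]$-summand of $\mathcal M_X/p\mathcal M_X$ lifts to a free $R_0[H]$-summand of $\mathcal M_X$. This is the step I expect to be the main obstacle: I need the $k[H]$-summand of $M_X$ to come from an $R_0[H]$-summand. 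I plan to handle it via the map $R_0[H]\rightarrow\mathcal M_X$ of Section \ref{S6}, which is injective modulo $p$, hence injective, so that $\mathcal M_X^H\supset N\cdot R_0[H]\neq 0$. That contradicts part (2). The argument is exactly the proof that $\jmath_V\le r_{\mathcal V}$ in Section \ref{S6}, applied to the lift $\mathcal M_X$ of $M_X$, noting $\jmath_{V_X}\le\jmath_{M_X}$ because injective summands of a submodule remain summands.

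For part (4) I will apply Theorem \ref{T2}(1) to the lift $\mathcal M_X$ of $M_X$. Part (2) gives $r_{\mathcal M_X}=0$, so $\ell_{M_X}$ is constant with value $\frac{2\textup{g}_X}{m}$, and hence $m\mid 2\textup{g}_X$. By Sequence (\ref{EQ47}) we have $\ell_{M_X}=\ell_{V_X}+\ell_{W_X}$. Serre duality makes $W_X$ the $k[G]$-dual of $V_X$ (compatibly with the $G$-action), so $\ell_{W_X}(a)=\ell_{V_X}(-a)$, because the dual of a character $\overline{\zeta}_m^a$ of $\Sigma$ is $\overline{\zeta}_m^{-a}$. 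This yields $\ell_{V_X}(a)+\ell_{V_X}(-a)=\frac{2\textup{g}_X}{m}$. For $m=2$ we have $a=-a$, so $\ell_{V_X}(a)=\frac{\textup{g}_X}{2}$ for both $a$ and $\textup{g}_X$ is even.
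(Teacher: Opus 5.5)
Your proposal is correct and follows essentially the paper's route: the same transfer identities $\pi_*\pi^*=p^s$ and $\pi^*\pi_*=\sum_{h\in H}h^*$ for parts (1) and (2), deducing $\jmath_{V_X}=0$ from $r_{\mathcal M_X}=0$ for part (3), and for part (4) the constancy of $\ell_{M_X}$ forced by $r_{\mathcal M_X}=0$ together with Serre duality. In part (3) you usefully make explicit a step the paper compresses into a single citation, namely that the argument of Section \ref{S6} needs only an injective map $k[H]^{\jmath_{V_X}}\hookrightarrow V_X\hookrightarrow M_X$ (and does not need $\chi$ injective); the idempotent-lifting detour you were worried about is therefore unnecessary.
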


\begin{proof}
Part (1) is well-known to specialists. As we could not find a good reference for it, among many approaches towards proving it, we include here a short and standard one due to Ofer Gabber. The morphism $\pi$ is finite flat of degree $p^s=|H|$. The image of $\pi^*:\mathcal M_Y\rightarrow\mathcal M_X$ is contained in $\mathcal M_X^H$. Let 
$$\pi_*:\mathcal M_X\rightarrow\mathcal M_Y$$ 
be the direct image $R$-linear map (see \cite{Be}, Ch.\ VII, Subsect.\ 2.2, Equations (2.2.3) and (2.2.5)). For $v\in\mathcal M_Y$ we have the projection formula $\pi_*\bigl(\pi^*(v)\bigr)=p^sv$ (cup product with $p^s$, see \cite{Be}, Ch.\ VII, Prop.\ 2.2.3) and for $w\in\mathcal M_X$ we have $\pi^*\bigl(\pi_*(w)\bigr)=\sum_{i=0}^{p^s-1} (\tau^i)^*(w)$ as one can see by a direct computation. Thus $\pi_*\circ\pi^*\in\End_{R_0}(\mathcal M_Y)$ is the multiplication by $p^s$ and hence it is injective as $\mathcal M_Y$ is torsion free. So $\pi^*$ is also injective.

For $w\in\mathcal M_X^H$ we have $\pi^*\bigl(\pi_*(w)\bigr)=p^sw$. As the $R_0$-module $\mathcal M_X$ is also torsion free, it follows that the $B_0$-linear map $\mathcal M_X^H\otimes_{R_0} B_0\rightarrow\mathcal M_Y\otimes_{R_0} B_0$, which is the restriction of $\pi_*\otimes 1_{B_0}$, has an inverse induced by $\pi^*\otimes 1_{B_0}$ and therefore it is a bijection. So $r_{\mathcal M_X}$ is the rank $2\textup{g}_Y$ of $\mathcal M_Y$ and part (1) holds.

Part (2) follows from part (1) as $\textup{g}_Y=0$ if and only if $Y\cong\mathbb P^1_k$.

Part (3) holds as $r_{\mathcal M_X}=0$ by part (2), hence $\jmath_{V_X}=0$ by Theorem \ref{T3}(1).

For part (4), $\ell_{M_X}$ is a constant function of constant value $\eta_{M_X}$ by part (2) and Theorem \ref{T2}(2). Thus $2\textup{g}_X=\dim_k(M_X)=m\eta_{M_X}$. Hence $m\mid 2\textup{g}_X$. 

The de Rham (or crystalline) cohomology gives that the $k[G]$-modules $V_X$ and $W_X$ are dual to each other; so $\ell_{W_X}(a)=\ell_{V_X}(-a)$ for each $a\in C_m$. As $M_X$ and $V_X\oplus W_X$ are isomorphic as $k[\Sigma]$ modules, we have $\ell_{M_X}(a)=\ell_{V_X}(a)+\ell_{W_X}(a)$ 
for each $a\in C_m$. Thus for each $a\in C_m$ we compute
$$\ell_{V_X}(a)+\ell_{V_X}(-a)=\ell_{V_X}(a)+\ell_{W_X}(a)=\ell_{M_X}(a)=\eta_{M_X}=\frac{2\textup{g}_X}{m}.$$
So part (4) holds.\end{proof}

\smallskip
Proposition \ref{P4}(1) and Equation (\ref{EQ20}) give the following consequence.

\begin{corollary}\label{C4}
For each $j\in\llbracket0,s\rrbracket$ let $H_{s-j}$ be the subgroup of $H$ of order $p^{s-j}$ and let $Y_j:=X/H_{s-j}$; so $X=Y_s$ and $Y=Y_0$. We consider the $B_0[H]$-module $\mathcal C_X:=\mathcal M_X\otimes_{R_0} B_0$. Then for $i\in\llbracket0,s\rrbracket$ we have
$$\varkappa_{\mathcal C_X}(i)=\begin{cases} 2\textup{g}_{Y_i}\quad\quad\quad\quad\quad {\rm if}\quad i=0\\
\frac{2\textup{g}_{Y_i}-2\textup{g}_{Y_{i-1}}}{p^i-p^{i-1}}\quad\,\;\;\;\, {\rm if}\quad i\ge 1.\end{cases}$$
\end{corollary}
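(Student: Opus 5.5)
The plan is to reduce the statement to Proposition~\ref{P4}(1) applied to each intermediate cover, combined with Equation~(\ref{EQ22}) and Equation~(\ref{EQ20}).

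\emph{Step 1: invariants of $\mathcal C_X$ under intermediate subgroups.} Fix $j\in\llbracket0,s\rrbracket$. The subgroup of $H$ generated by $\tau^{p^j}$ is $H_{s-j}$, so $\mathcal C_X^{\tau^{p^j}}=\mathcal C_X^{H_{s-j}}$. I will rerun the argument of Proposition~\ref{P4}(1) with $H$ replaced by the cyclic $p$-group $H_{s-j}$ and $\pi$ replaced by the quotient morphism $\pi_j:X\rightarrow Y_j=X/H_{s-j}$. That argument only used that $\pi$ is finite flat of degree equal to the group order, together with the two identities $\pi_*\circ\pi^*=p^{s-j}$ and $\pi^*\circ\pi_*=\sum_{h\in H_{s-j}}h^*$. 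Both hold verbatim for $\pi_j$, so no property of $H$ beyond being a finite group acting on $X$ is needed. It follows that $\pi_j^*\otimes 1_{B_0}$ induces a $B_0$-linear isomorphism $\mathcal M_{Y_j}\otimes_{R_0}B_0\rightarrow\mathcal C_X^{H_{s-j}}$. Hence
$$\dim_{B_0}\bigl(\mathcal C_X^{\tau^{p^j}}\bigr)=\textup{rk}_{R_0}(\mathcal M_{Y_j})=2\textup{g}_{Y_j}.$$

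\emph{Step 2: plug into Equation~(\ref{EQ22}).} For $i=0$, Equation~(\ref{EQ22}) gives $\varkappa_{\mathcal C_X}(0)=\dim_{B_0}(\mathcal C_X^\tau)$, and by Step~1 with $j=0$ this equals $2\textup{g}_{Y_0}$. For $i\ge1$, Equation~(\ref{EQ22}) gives
$$\varkappa_{\mathcal C_X}(i)=\frac{\dim_{B_0}\bigl(\mathcal C_X^{\tau^{p^i}}\bigr)-\dim_{B_0}\bigl(\mathcal C_X^{\tau^{p^{i-1}}}\bigr)}{p^i-p^{i-1}}=\frac{2\textup{g}_{Y_i}-2\textup{g}_{Y_{i-1}}}{p^i-p^{i-1}},$$
where the second equality is Step~1 applied with $j=i$ and $j=i-1$. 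Equation~(\ref{EQ22}) itself follows from Equations~(\ref{EQ20}) and~(\ref{EQ21}), which are already established, so nothing further is needed there.

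The only step requiring genuine care is Step~1. One must verify that the projection formula and the trace identity $\pi_j^*\circ\pi_{j*}=\sum_{h}h^*$ hold for the intermediate quotient $Y_j$ rather than $Y$. I expect this to be immediate, since those identities are general facts about finite flat quotients by finite groups in crystalline cohomology. I would simply state that Proposition~\ref{P4}(1) holds for any subgroup of $H$ in place of $H$, with the same proof.
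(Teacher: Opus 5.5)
Your proposal is correct and is essentially the paper's own proof. The paper likewise applies Proposition~\ref{P4}(1) to the intermediate quotient $X\rightarrow Y_i=X/H_{s-i}$, using that $H_{s-i}$ is generated by $\tau^{p^i}$, to obtain $\dim_{B_0}(\mathcal C_X^{\tau^{p^i}})=2\textup{g}_{Y_i}$, and then reads off $\varkappa_{\mathcal C_X}(i)$ from Equation~(\ref{EQ22}); your explicit remark that the pullback--trace argument of Proposition~\ref{P4}(1) uses nothing specific to $H$ is exactly the justification the paper leaves tacit.
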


\begin{proof}
The case $i=0$ follows directly from Proposition \ref{P4}(1). If $i\ge 1$, then $\dim_{B_0}(\mathcal C_X^{\tau^{p^i}})=\dim_{B_0}\bigl(H^1_{\crys}(Y_i/R_0)\otimes_{R_0} B_0\bigr)=2\textup{g}_{Y_i}$ based on Proposition \ref{P4}(1) applied to the quotient morphism $X\rightarrow Y_i=X/H_{s-i}$ as $H_{s-i}$ is generated by $\tau^{p^i}$. Based on the last two sentences we also get that $\dim_{B_0}(\mathcal C_X^{\tau^{p^{i-1}}})=\dim_{B_0}\bigl(H^1_{\crys}(Y_{i-1}/R_0)\otimes_{R_0} B_0\bigr)=2\textup{g}_{Y_{i-1}}$. Based on the last two identities between dimensions of $B_0$-vector spaces and Equation (\ref{EQ22}) we get that the corollary holds.
\end{proof}

\smallskip
We have the following application of Lemma \ref{L4}.

\begin{proposition}\label{P5}
Suppose that $p$ is odd, $\chi$ is injective, $m$ is even, and there exists a smooth projective curve $\mathcal X$ over $\Spec R$ such that its reduction modulo $\upsilon$ is $X$ and $\rho$ lifts to a homomorphism $\varrho:G\rightarrow\Aut(\mathcal X)$. Then the following properties hold.

\medskip
{\bf (1)} The $R[H]$-modules $H^0(\mathcal X,\Omega_{\mathcal X})$ and $H^1(\mathcal X,\mathcal O_{\mathcal X})$ are rational lifts of the $k[H]$-modules $V_X$ and $W_X$ (respectively).

\smallskip
{\bf (2)} The $B[H]$-modules $H^0(\mathcal X,\Omega_{\mathcal X})\otimes_R B$ and $H^1(\mathcal X,\mathcal O_{\mathcal X})\otimes_R B$ are isomorphic to half of the $B[H]$-module $\mathcal C_X$ of Corollary \ref{C4}, i.e., are isomorphic to $\mathcal R_X:=\textup{g}_{Y_0}\mathcal B_0\bigoplus\oplus_{i=1}^s \frac{\textup{g}_{Y_i}-\textup{g}_{Y_{i-1}}}{p^i-p^{i-1}}\mathcal B_i$.
\end{proposition}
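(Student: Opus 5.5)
The plan is to apply Lemma \ref{L4} to the relative Hodge--de Rham short exact sequence of the lifted curve, after base change to $B$. Since $\mathcal X$ is smooth projective over $\Spec R$ with connected reduction $X$, the Hodge--de Rham spectral sequence of $\mathcal X/R$ degenerates and its terms are free $R$-modules commuting with base change. This is standard for smooth proper relative curves: $H^0(\mathcal X,\Omega_{\mathcal X})$ and $H^1(\mathcal X,\mathcal O_{\mathcal X})$ are free of rank $\textup{g}_X$ and reduce to $V_X$ and $W_X$. Hence we get a $G$-equivariant short exact sequence of free $R$-modules
$$0\rightarrow H^0(\mathcal X,\Omega_{\mathcal X})\rightarrow H^1_{dR}(\mathcal X/R)\rightarrow H^1(\mathcal X,\mathcal O_{\mathcal X})\rightarrow 0$$
lifting (\ref{EQ47}). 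In particular the two outer terms are lifts of $V_X$ and $W_X$ as $R[G]$-modules, hence as $R[H]$-modules.

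Next I would identify $H^1_{dR}(\mathcal X/R)\otimes_R B$ with $\mathcal M_X\otimes_{R_0}B=\mathcal C_X\otimes_{B_0}B$ as $B[G]$-modules, via the Berthelot--Ogus comparison isomorphism between crystalline cohomology of the special fibre and de Rham cohomology of the lift after inverting $p$; this isomorphism is functorial and hence $G$-equivariant. The $B_0[H]$-module $\mathcal C_X$ is rational because each $\mathcal B_i=\mathcal B_{i,\mathbb Q}\otimes_{\mathbb Q}B_0$. Alternatively, and more cheaply, rationality follows from Lemma \ref{L3}: the traces of $\tau^i$ on $H^1_{dR}$ equal those on $\ell$-adic cohomology of the generic fibre, which lie in $\mathbb Z$. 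Either way, $\mathcal C:=H^1_{dR}(\mathcal X/R)\otimes_R B$ is a rational $B[H]$-module with $\varkappa$-function given by Corollary \ref{C4}.

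Then I apply Lemma \ref{L4} with $\mathcal B=B$, $\mathcal C_1=H^0(\mathcal X,\Omega_{\mathcal X})\otimes_R B$ and $\mathcal C_2=H^1(\mathcal X,\mathcal O_{\mathcal X})\otimes_R B$. The hypotheses that $\chi$ is injective and $m$ is even are assumed. Duality of $\mathcal C_1$ and $\mathcal C_2$ as $B[G]$-modules comes from Serre duality on the generic fibre, equivalently from the cup product pairing on $H^1_{dR}$, under which $H^0(\Omega)$ is maximal isotropic; this pairing is $G$-equivariant. Lemma \ref{L4} then gives that $\mathcal C_1$ and $\mathcal C_2$ are isomorphic and rational, which proves part (1).

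For part (2), the proof of Lemma \ref{L4} shows that $\kappa_{\mathcal C}=2\kappa_{\mathcal C_1}$, so $\varkappa_{\mathcal C_1}=\frac12\varkappa_{\mathcal C_X}$. Corollary \ref{C4} then yields exactly the multiplicities $\textup{g}_{Y_0}$ and $\frac{\textup{g}_{Y_i}-\textup{g}_{Y_{i-1}}}{p^i-p^{i-1}}$, so $\mathcal C_1\cong\mathcal C_2\cong\mathcal R_X\otimes_{B_0}B$. I expect the main obstacle to be justifying the comparison of $H^1_{dR}(\mathcal X/R)\otimes B$ with $\mathcal C_X\otimes B$ equivariantly when $R$ is ramified, where crystalline base change must be handled via Berthelot--Ogus. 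I would first try to bypass this by computing $\dim H^1_{dR}(\mathcal X_B)^{\tau^{p^i}}=2\textup{g}(\mathcal X_B/H_{s-i})$ directly on the generic fibre. The remaining point is then that the genus of $\mathcal X_B/H_{s-i}$ equals $\textup{g}_{Y_i}$, since the quotients of $\mathcal X$ by $H_{s-i}$ are flat with reductions $Y_i$.
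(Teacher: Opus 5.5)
Your proof is correct and is essentially the paper's argument: the paper likewise lifts the Hodge--de Rham sequence to the $G$-equivariant sequence (\ref{EQ48}). It uses the functorial $p$-adic comparison isomorphism (\ref{EQ49}) with $\mathcal M_X\otimes_{R_0}B$ to see that $H^1_{dR}(\mathcal X)\otimes_R B$ is rational, and deduces both parts from Lemma \ref{L4} together with Corollary \ref{C4}. Your fallback routes, namely integral Lefschetz traces via Lemma \ref{L3} and computing invariants on the generic fibre using that the quotients $\mathcal X/H_{s-i}$ are smooth with special fibres $Y_i$, are essentially what the paper does later in the proofs of Theorems \ref{C7} and \ref{C6}(4.a)--(4.b).
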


\begin{proof}
We note that $\mathcal V_{\mathcal X}:=H^0(\mathcal X,\Omega_{\mathcal X})$ is a lift of $V_X$ to $R$, $\mathcal W_{\mathcal X}:=H^1(\mathcal X,\mathcal O_{\mathcal X})$ is a lift of $W_X$ to $R$, $\mathcal M_{\mathcal X}:=H^1_{dR}(\mathcal X)$ is a lift of $M_X$ to $R$, and $\mathcal W_{\mathcal X}$ and $\mathcal V_{\mathcal X}$ are dual to each other. The functorial Hodge--de Rham short exact sequence
\begin{equation}\label{EQ48}
0\rightarrow\mathcal V_{\mathcal X}\rightarrow\mathcal M_{\mathcal X}\rightarrow\mathcal W_{\mathcal X}\rightarrow 0
\end{equation}
of $R$-modules is also of $R[G]$-modules and lifts Short Exact Sequence (\ref{EQ47}). The $p$-adic Hodge theory provides a functorial isomorphism
\begin{equation}\label{EQ49}
\mathcal M_{\mathcal X}\otimes_R B\rightarrow\mathcal M_X\otimes_{R_0} B
\end{equation}
of $B$-vector spaces and thus also of $B[G]$-modules; so $\mathcal M_{\mathcal X}\otimes_R B$ is a rational $B[H]$-module. From this and Lemma \ref{L4} we get that both parts hold.\end{proof}

\section{Actions of G on curves, Part II: numerical results}\label{S11}

With the notation of Section \ref{S10}, we recall a definition (see \cite{P}, Sect.\ 4B).

\begin{definition}\label{D5}
The morphism $\pi:X\rightarrow X/G$ is an \emph{HKG cover} if the following conditions hold.

\medskip
{\bf (1)} We have $X/G\cong \mathbb P_k^1$ and the number of branch points is $\min(2,m)$.

\smallskip
{\bf (2)} There exists one branch point over which $\pi$ is totally and wildly ramified with inertia group $G$. 

\smallskip
{\bf (3)} If $m>1$, the second branch point is tamely ramified and has $\Sigma$ as an inertia group.
\end{definition}

\begin{lemma}\label{L8} Suppose that $X\rightarrow X/G$ is an HKG cover. Then $Y\cong\mathbb P^1_k$.
\end{lemma}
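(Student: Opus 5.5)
The plan is to compute the genus of $Y=X/H$ with the Riemann--Hurwitz formula applied to the tame cover $Y\rightarrow X/G$, which is Galois with group $G/H\cong\Sigma=C_m$. Since $|\Sigma|=m$ is prime to $p$, this cover is tame, so
$$2\textup{g}_Y-2=m(2\cdot 0-2)+\sum_{Q\in Y}(e_Q-1),$$
where $e_Q$ is the ramification index of $Y\rightarrow X/G$ at $Q$. The branch points of $Y\rightarrow X/G$ are among the branch points of $X\rightarrow X/G$. By Definition \ref{D5} there are at most $\min(2,m)$ of them. If $m=1$, then $Y=X/G\cong\mathbb P^1_k$ and there is nothing to prove, so I would assume $m\ge 2$.

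First I would determine the ramification of $Y\rightarrow X/G$ above each branch point $P$ of $X\rightarrow X/G$. Let $I_x\subset G$ be the inertia group at a point $x$ over $P$. The inertia group at the image point $Q\in Y$ for the cover $Y\rightarrow X/G$ is the image of $I_x$ in $G/H$, so $e_Q=|I_xH/H|$.

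Over the first branch point $P_1$, the inertia group is $G$. Hence the image is all of $\Sigma$, so $e_Q=m$ and there is exactly one point $Q_1$ above $P_1$.

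Over the second branch point $P_2$, the inertia group is a conjugate of $\Sigma$. Since $\Sigma\cap H=\{1\}$, its image in $G/H$ is again all of $\Sigma$. So $e=m$ and there is a single point $Q_2$ above $P_2$.

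Plugging these into Riemann--Hurwitz gives
$$2\textup{g}_Y-2=-2m+2(m-1)=-2,$$
so $\textup{g}_Y=0$ and therefore $Y\cong\mathbb P^1_k$.

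The only step that needs care is the claim that the inertia groups of the quotient cover are the images of the inertia groups upstairs. This holds because $H$ is normal in $G$, so $Y\rightarrow X/G$ is Galois with group $G/H$ and inertia passes to quotients. I would also note that $Y\rightarrow X/G$ is unramified outside $\{P_1,P_2\}$, because any point ramified in $Y$ must already be ramified in $X$. The remaining calculation is routine.
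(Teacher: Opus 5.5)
Your proposal is correct and follows the paper's proof essentially verbatim. Both arguments factor $X\rightarrow X/G$ through $Y$ and note that the inertia groups $G$ and (a conjugate of) $\Sigma$ both surject onto $G/H\cong\Sigma$, so $Y\rightarrow X/G\cong\mathbb P^1_k$ is a tame cyclic cover totally ramified at exactly two points, giving $2\textup{g}_Y-2=-2m+2(m-1)=-2$ by Riemann--Hurwitz. Your added remarks — that a conjugate of $\Sigma$ meets $H$ trivially, and that $Y\rightarrow X/G$ is unramified elsewhere — make explicit what the paper leaves implicit.
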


\begin{proof} Recall that $Y=X/H$. As $X/G\cong\mathbb P^1_k$ by Definition \ref{D5}(1), we can assume that $m>1$. The quotient morphism $X\rightarrow X/G$ factors as
$$X\rightarrow Y=X/H\rightarrow X/G.$$
The Galois group of $Y\rightarrow X/G$ is $G/H\cong\Sigma$. At the wild branch point $\infty\in (X/G)(k)$ the inertia group of $X\rightarrow X/G$ is $G$ and at the second branch point in $(X/G)(k)$ the inertia group of $X\rightarrow X/G$ is $\Sigma$. As both groups $G$ and $\Sigma$ map onto $G/H$, it follows that $Y\rightarrow X/G\cong\mathbb P_k^1$ is a tame cyclic cover totally ramified at both branch points in $(X/G)(k)$. Therefore
$$2\textup{g}_Y-2=-2m+2(m-1)=-2$$
by the Riemann--Hurwitz formula applied to it. So $\textup{g}_Y=0$, i.e., $Y\cong \mathbb P_k^1$.
\end{proof}

\smallskip
The next theorem (proved in \cite{K}, Main Thm.\ 1.4.1) classifies the HKG covers $X\rightarrow X/G$.

\begin{theorem}[{\bf Harbater--Katz--Gabber}]
\label{T4}
Let $\phi:G\rightarrow \Aut_k(k[[y]])$ be a faithful local action. Then there exists a pair $(X,\rho_{\phi})$ uniquely determined up to isomorphisms, with $X$ a  connected smooth projective curve over $k$ and a faithful global action $\rho_{\phi}:G\rightarrow\Aut(X)$, such that the quotient morphism $X\rightarrow X/G$ is an HKG cover whose completion at the unique wildly ramified $k$-valued point of $X$ is isomorphic to $\phi$.
\end{theorem}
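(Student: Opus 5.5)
Since the statement is quoted from \cite{K}, Main Thm.\ 1.4.1, the proof I propose follows Katz--Gabber. The plan is to turn the local action into a Galois extension of Laurent series fields, build a global cover from it, and then use Katz's equivalence of categories for uniqueness.

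\textbf{Step 1: reduce to a Galois extension.} Let $L:=k((y))$ with the action $\phi$, and let $K:=L^G$. Then $K\cong k((t))$, and $L/K$ is a totally ramified $G$-Galois extension. Because $G$ is the full inertia group, the ramification filtration forces $H$ to be the wild inertia subgroup and $G/H\cong C_m$ to be the tame quotient. It therefore suffices to produce a connected smooth projective $G$-cover $X\rightarrow\mathbb P^1_k$ with the following properties:
\begin{itemize}
\item it is étale over $\mathbb G_m=\mathbb P^1_k\setminus\{0,\infty\}$;
\item it is tamely ramified over $0$ with inertia $\Sigma$;
\item it is totally ramified over $\infty$;
\item its completion at the point above $\infty$ is $L/K$, with $t$ identified with a coordinate at $\infty$.
\end{itemize}
Definition \ref{D5} then holds automatically.

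\textbf{Step 2: existence.} I would build the cover in two stages.
\begin{itemize}
\item \emph{Tame layer.} Write $K_1:=L^H$. It is the Kummer extension $K(t^{1/m})$, which is the completion at $\infty$ of the cyclic cover $z\mapsto z^m=t$ of $\mathbb P^1_k$, totally ramified at $0$ and $\infty$. Set $K_1=k((z^{-1}))$.
\item \emph{Wild layer.} By Artin--Schreier--Witt theory, $L/K_1$ corresponds to a class $[w]\in W_s(K_1)/(F-1)W_s(K_1)$. I would show that this class has a representative in $W_s(k[z])$. Terms of positive valuation lie in $(F-1)W_s$ by a convergent geometric series, and constants are absorbed because $k$ is algebraically closed. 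Such a representative defines a $C_{p^s}$-cover of $\mathbb A^1_k$ that is étale away from $\infty$.
\end{itemize}
The key point is $\Sigma$-equivariance. Conjugation by $\sigma$ acts on $H$ through $\chi$. Correspondingly, the class $[w]$ is an eigenvector for the semilinear action $z\mapsto\zeta_m z$, with the eigencharacter determined by $\chi$. The Witt components of the chosen polynomial representative can be projected onto this eigenspace, because $m$ is invertible in $k$ and the projection preserves polynomiality. With this representative, the function field of the $C_{p^s}$-cover is Galois over $k(t)$ with group $G$. Normalizing $\mathbb P^1_k$ in it gives $X$ with the required ramification, and its completion above $\infty$ recovers $L/K$. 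Connectedness follows from total ramification at $\infty$.

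\textbf{Step 3: uniqueness.} Any two such covers are $G$-covers of $\mathbb G_m$ that are tame at $0$ and have isomorphic restrictions to the punctured formal neighbourhood of $\infty$. I would invoke Katz's equivalence between special covers of $\mathbb G_m$ and finite extensions of $k((1/t))$. An elementary route also works: the Artin--Schreier--Witt class in $W_s(k[z])/(F-1)$ is determined by its image in the local quotient, since a polynomial Witt vector that is locally of the form $Fv-v$ has $v$ polynomial. Hence the two function fields coincide.

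\textbf{Main obstacle.} I expect the equivariant choice of representative in Step 2, together with the injectivity used in Step 3, to be the main difficulty. Witt-vector addition is not componentwise, so "project onto the eigenspace" has to be done carefully. I would first try induction on $s$ via the exact sequence $0\rightarrow W_{s-1}\xrightarrow{V}W_s\rightarrow W_1\rightarrow0$, treating one Artin--Schreier step at a time and using that $\mathbb Z/m$ acts semisimply on each $k[z]$-graded piece.
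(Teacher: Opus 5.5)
\textbf{Uniqueness.} Your Steps 1--2 are sound, but the first sentence of Step~3 hides a genuine gap. The hypothesis says only that the completed local rings of two such covers $X,X'$ at their wild points are both $G$-isomorphic to $\phi$. Composing these isomorphisms and passing to $G$-invariants gives an isomorphism between the completions of $X/G$ and $X'/G$ at the wild branch points. In coordinates this is a continuous automorphism of $k((1/t))$, and nothing forces it to come from a global isomorphism $X/G\cong X'/G$, i.e.\ from $t\mapsto at$ (or $t\mapsto at+b$ if $m=1$).

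Both Katz's equivalence and your Witt-vector injectivity argument need the two restrictions to be isomorphic as $G$-extensions of one and the same $k((1/t))$. So you cannot pass from the hypothesis to ``isomorphic restrictions to the punctured formal neighbourhood of $\infty$''. No argument can, because uniqueness fails in that generality. Take $G=C_p$ (so $m=1$), $p\ge 5$, $\lambda\in k^{\times}$, and the HKG covers $y^p-y=t^3$ and $y^p-y=t^3+\lambda t$.
\begin{itemize}
\item The automorphism $t\mapsto t+\lambda/(3t)$ of $k((1/t))$ sends $t^3$ to $t^3+\lambda t$ plus an element of $t^{-1}k[[t^{-1}]]\subset\wp\bigl(k((1/t))\bigr)$. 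Hence the two local $C_p$-actions are isomorphic.
\item A $C_p$-isomorphism of the curves would induce some $t\mapsto at+b$ with $(at+b)^3-t^3-\lambda t\in\wp(k(t))\cap k[t]=\wp(k[t])$.
\item This polynomial has degree $<p$, so it would have to be constant. That forces $b=0$ and then $\lambda=0$, a contradiction.
\end{itemize}

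The uniqueness clause must therefore be read as in Katz's Main Theorem~1.4.1 in \cite{K}. Fix an identification $k[[y]]^G=k[[1/t]]$ with $t$ a coordinate on $X/G\cong\mathbb P^1_k$. Then the cover, together with a $G$-isomorphism of its completion with $\phi$ over $k[[1/t]]$, is unique up to unique isomorphism. State uniqueness that way. With the base fixed, your elementary route does work, for two reasons:
\begin{itemize}
\item A $C_{p^s}$-torsor over $\mathbb A^1_k$ that splits over $k((1/z))$ extends to an \'etale cover of $\mathbb P^1_k$, hence is trivial. This gives injectivity of $W_s(k[z])/\wp\rightarrow W_s(k((1/z)))/\wp$.
\item $X'/H\rightarrow X'/G$ is forced to be $z^m=t$, as in Lemma~\ref{L8}.
\end{itemize}

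\textbf{Existence.} Here your route genuinely differs from the paper, which simply cites Katz's theorem (valid for every $P\rtimes C_m$). Yours is an explicit Kummer--Artin--Schreier--Witt construction that uses the cyclicity of $H$, in the spirit of Lemma~\ref{L9} and Example~\ref{EX11}.

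Your ``main obstacle'' dissolves. Define $c\in(\mathbb Z/p^s\mathbb Z)^{\times}$ by $\sigma^{-1}\tau\sigma=\tau^c$; then $\sigma[w]=c[w]$. Set $w':=m^{-1}\sum_{j=0}^{m-1}c^{-j}\sigma^j(w)$, computed in the $\mathbb Z/p^s\mathbb Z$-module $W_s(k[z])$, on which $\sigma$ acts by ring automorphisms commuting with $\wp$. Then $w'$ lies in $W_s(k[z])$, satisfies $\sigma(w')=c\cdot w'$ exactly, and is congruent to $w$ modulo $\wp W_s(K_1)$. No componentwise projection or induction on $s$ is needed.

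You do skip one step. An $H$-equivariant $K_1$-isomorphism $\iota$ from the completion onto $L$ carries $\widetilde\sigma$ to an element of order $m$ in $\sigma H$. By Schur--Zassenhaus this element equals $h\sigma h^{-1}$ for some $h\in H$. Replacing $\iota$ by $h^{-1}\iota$ makes it $G$-equivariant, which is what ``recovers $\phi$'' requires.
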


Until the end of this section we assume that $X\rightarrow X/G$ is an HKG cover and we explain how the results of \cite{BCK} were applied in \cite{KT2} to compute in this case the multiplicity function of the $k[G]$-module
$V_X$. The main inputs are the following ones.
\begin{enumerate}
\item The upper jumps of the wild cyclic subgroup $H$ at the wild branch point.
\item The action of $\Sigma$ on $Y$, or equivalently the character $\chi:\Sigma\rightarrow \mathbb F_p^\times$.
\end{enumerate}
The output, accomplished in five steps, is the multiplicity function 
$$\mu_X:=\mu_{V_X}:\mathbb J_{m,p^s}\rightarrow\mathbb N\cup\{0\}.$$

{\bf Step 1: computing the divisors that determine the $\tau$-filtration.} For $b\in\llbracket0,p^s\rrbracket$, let
$$V_X^{(b)}:=\Ker((\tau-1)^b:V_X\rightarrow V_X).$$
So $\big(V_X^{(b)}\big)_{b=0}^{p^s}$ is the increasing, separated, and exhaustive $\tau$-filtration of $V_X$
by $k[G]$-submodules. As for each $b\in\llbracket0,p^s-1\rrbracket$, $\tau$ acts trivially on the quotient factor $V_X^{(b+1)}/V_X^{(b)}$, this quotient factor is a $k[\Sigma]$-module. 

By \cite{BCK}, Prop.\ 4.1 and Lem.\ 4.2, for each $b\in\llbracket0,p^s-1\rrbracket$ there exists an effective divisor $D_b$ on $Y$ such that, with the notation of Display (\ref{EQ9}), we have a $k[\Sigma]$-linear isomorphism
\begin{equation}\label{EQ50}
V_X^{(b+1)}/V_X^{(b)}\cong H^0\bigl(Y,\Omega_Y(D_b)\bigr)(b\epsilon).
\end{equation}
As $Y\cong\mathbb P_k^1$ and $X\rightarrow X/G$ is an HKG cover, $D_b$ is supported only at the wild branch $\infty\in Y(k)$ (see \cite{BCK}, proof of Prop.\ 4.1, bottom of p.\ 18); so there exists a unique $d_b\in\mathbb N\cup\{0\}$ such that $D_b=d_b\infty$.

Fixing an identification $Y=\mathbb P^1_k$, we have $\mathbb A^1_k=\mathbb P^1_k\setminus\{\infty\}=\Spec\, k[x]$. 

The $d_b$s are computed in three substeps via the upper and lower jumps of $H$ as follows.

\begin{proposition}\label{P6}
Suppose that $X\rightarrow X/G$ is an HKG cover. Let
$$l_1<\cdots<l_s$$
be the lower jumps of the wild cyclic subgroup $H=C_{p^s}$ at the wild branch point. Then the following properties hold.

\medskip
\smallskip
{\bf (1) (Bleher--Chinburg--Kontogeorgis)} For each $b\in\llbracket0,p^s-1\rrbracket$, with the notation of Proposition \ref{P3}(4), we have
\begin{equation}\label{EQ51}
d_b=\left\lfloor \frac{N_{s,b}}{p^s}\right\rfloor=\left\lfloor \frac{\sum_{i=1}^s p^{s-i}[p-1+(p-1-\delta_{i,b})l_i]}{p^s}\right\rfloor.
\end{equation}

{\bf (2)} The set $\{x^idx|i\in\llbracket0,d_b-2\rrbracket\}$ is a $k$-basis of $H^0\bigl(\mathbb P_k^1,\Omega_{\mathbb P_k^1}(D_b)\bigr)$. In particular, if $d_b\le 1$, then this $k$-vector space is $\{0\}$.

\smallskip
{\bf (3)} For $b\in\llbracket0,p^s-1\rrbracket$, we have $d_b=0$ if and only if $b=p^s-1$.

\smallskip
{\bf (4)} The sequence $(d_b)_{b\in\llbracket0,p^s\rrbracket}$ is non-increasing.
\end{proposition}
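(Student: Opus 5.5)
Part (1) is the formula of \cite{BCK} and will simply be quoted; the work lies in parts (2), (3) and (4), which I would derive from Equation (\ref{EQ51}) together with Proposition \ref{P3}(4).

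\textbf{Part (2).} On $\mathbb P^1_k$ the differential $dx$ has a double pole at $\infty$ and no other zeros or poles. So for a polynomial $f\in k[x]$ of degree $i$, the form $f\,dx$ has a pole of order $i+2$ at $\infty$ and is regular elsewhere. The forms in $H^0\bigl(\mathbb P^1_k,\Omega_{\mathbb P^1_k}(d_b\infty)\bigr)$ are exactly those regular on $\mathbb A^1_k$ with pole order at most $d_b$ at $\infty$. These are the $f\,dx$ with $f\in k[x]$ and $\deg f\le d_b-2$, which gives the basis $\{x^idx\mid i\in\llbracket0,d_b-2\rrbracket\}$. The dimension is $\max(d_b-1,0)$, which is the Riemann--Roch value $-1+d_b+1$ when $d_b\ge 1$.

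\textbf{Part (4).} Proposition \ref{P3}(4) gives $N_{s,b-1}-N_{s,b}>0$ for each $b\in\llbracket1,p^s-1\rrbracket$. So $(N_{s,b})_b$ is strictly decreasing, and since the floor function is monotone, $d_{b-1}\ge d_b$. For the index $b=p^s$, where the formula (\ref{EQ51}) does not apply, I would set $d_{p^s}:=0$; this is consistent because $V_X^{(p^s)}=V_X$ and the quotient $V_X^{(p^s+1)}/V_X^{(p^s)}$ is zero. Non-increase up to $p^s$ then follows once part (3) gives $d_{p^s-1}=0$.

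\textbf{Part (3).} For $b=p^s-1$ every digit is $\delta_{i,b}=p-1$, so
$$N_{s,p^s-1}=\sum_{i=1}^s p^{s-i}(p-1)=p^s-1<p^s,$$
hence $d_{p^s-1}=0$. For the converse I would show that $b\le p^s-2$ forces $N_{s,b}\ge p^s$. By monotonicity it suffices to treat $b=p^s-2$, where $N_{s,p^s-2}=N_{s,p^s-1}+p^{s-1}l_1$ by the case $j=0$ of Proposition \ref{P3}(4). This equals $p^s-1+p^{s-1}l_1$, which is at least $p^s$ since $l_1\ge 1$. Thus $d_b\ge 1$ for all $b\le p^s-2$.

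The only delicate point is this bookkeeping at the two endpoints: the value of $d_{p^s}$, and the fact that a single step of size $p^{s-1}l_1$ already carries $N_{s,b}$ past $p^s$. No genuine obstacle is expected.
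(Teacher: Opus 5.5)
Your proposal is correct. Parts (1), (2) and (4) are argued as in the paper: (2) rests on $\Omega_{\mathbb P^1_k}(D_b)\cong\mathcal O_{\mathbb P^1_k}\bigl((d_b-2)\infty\bigr)$, and your pole-order count for $f\,dx$ at $\infty$ is the same computation. Part (4) likewise combines the convention $d_{p^s}=0$, the value $d_{p^s-1}=0$, and the strict decrease of $N_{s,b}$ from Proposition \ref{P3}(4).

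The one genuine difference is the converse in part (3).

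The paper bounds $N_{s,b}$ directly from the base-$p$ digits of $b$:
\begin{itemize}
\item It picks $j$ with $\delta_{j,b}\le p-2$ (the printed range $\llbracket1,p-2\rrbracket$ should read $\llbracket0,p-2\rrbracket$).
\item The terms with $i<j$ contribute at least $\sum_{i=1}^{j-1}p^{s-i}(p-1)=p^s-p^{s-j+1}$.
\item The $j$-th term contributes at least $p^{s-j}(p-1+l_j)\ge p^{s-j+1}$.
\item Hence $N_{s,b}\ge p^s$.
\end{itemize}
You instead use the monotonicity of $(N_{s,b})_b$ to reduce everything to the single value $N_{s,p^s-2}=p^s-1+p^{s-1}l_1$.

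The two routes trade off as follows. The paper's bound is self-contained and uses only $l_j\ge 1$. Your reduction to $b=p^s-2$ imports the full Proposition \ref{P3}(4), whose positivity for $b$ divisible by $p$ rests on Lemma \ref{L7}, i.e., on $u_{i+1}\ge pu_i$ and hence on the $l_i$ being actual lower jumps. This costs nothing here, since part (4) needs that input anyway. In return your argument is shorter and covers $b=0$ without comment, whereas the paper's text only treats $b\in\llbracket1,p^s-2\rrbracket$. Using $p\nmid l_1$, it also immediately yields $d_{p^s-2}=\lfloor\frac{l_1}{p}\rfloor+1$, which is the computation in Example \ref{EX8}(2).
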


\begin{proof}
Part (1) holds by \cite{BCK}, Eqs.\ (4.18) and (4.15) (cf.\ \cite{KT2}, p.\ 11).

As $D_b=d_b\infty$ and $\Omega_{\mathbb P_k^1}\cong\mathcal O_{\mathbb P_k^1}(-2\infty)$, we have
$$\Omega_{\mathbb P_k^1}(D_b)\cong \mathcal O_{\mathbb P_k^1}\bigl((d_b-2)\infty\bigr).$$
Thus there exist no global sections if $d_b\le 1$. If $d_b\ge 2$, then on $\mathbb A_k^1=\mathbb P_k^1\setminus\{\infty\}$ with coordinate $x$, the global sections are exactly the forms $f(x)dx$ with $f\in k[x]$ such that $\deg(f)\le d_b-2$. So part (2) holds.

For part (3), as $\delta_{i,p^s-1}=p-1$ for
each $i\in\llbracket1,s\rrbracket$, Equation (\ref{EQ51}) gives
\[
d_{p^s-1}=\left\lfloor\frac{1}{p^s}\sum_{i=1}^s p^{s-i}(p-1)\right\rfloor
=\left\lfloor\frac{p^s-1}{p^s}\right\rfloor=0.
\]

Assume now that $b\in\llbracket1,p^s-2\rrbracket$. Then there exists a smallest $j\in\llbracket1,s\rrbracket$ such that $\delta_{i,b}\in\llbracket1,p-2\rrbracket$. As $l_j\ge 1$, we have
$$\sum_{i=1}^s p^{s-i}[p-1+(p-1-\delta_{i,b})l_i]\ge \Big[\sum_{i=1}^{j-1} p^{s-i}(p-1)\Big]+p^{s-j}(p-1+1)=p^s$$
and hence $d_b\ge 1$. So part (3) holds.

As $d_{p^s}=0$ by definition, $d_{p^s-1}=0$ by part (3), and $d_{b-1}-d_b\ge 0$ for each $b\in\llbracket1,p^s-1\rrbracket$ by part (1) and Proposition \ref{P3}(4), we get that part (4) holds.\end{proof}

\smallskip
{\bf Step 2: computing the quotient factors of the $\tau$ filtration of $V_X$.} For $(a,b)\in C_m\times\llbracket0,p^s\rrbracket$, following the notation of \cite{KT2}, Rmk.\ 16 let $\mu_{b,a}\in\mathbb N\cup\{0\}$ be such that it is $0$ if $b=p^s$ and we have a $k[\Sigma]$-linear isomorphism
$$H^0\bigl(\mathbb P_k^1,\Omega_{\mathbb P_k^1}(D_b)\bigr)\cong\oplus_{a\in C_m} \mu_{b,a}T_a$$
if $b\in\llbracket0,p^s-1\rrbracket$. Hence, for $b\in\llbracket0,p^s-1\rrbracket$,  Equation (\ref{EQ50}) becomes
\begin{equation}\label{EQ52}
V_X^{(b+1)}/V_X^{(b)}\cong \oplus_{a\in C_m} \mu_{b,a}T_{a-b\epsilon}.
\end{equation}

As $\sigma(\infty)=\infty$, $\sigma$ induces an automorphism of $k[x]$ of order $m$ prime to $p$. Thus $\sigma$ fixes one $k$-valued point $\alpha$ of $\mathbb A^1_k$ (it is unique if $m>1$). So, replacing $x$ by $x-\alpha$ and, from now on, taking the fixed primitive root $\overline{\zeta}_m$ to be the scalar by which $\sigma$ acts on $x$, we can assume that $\sigma(x)=\overline{\zeta}_m x$. Then $\sigma(x^i dx)=\overline{\zeta}_m^{i+1}x^i dx$ for each $i\in\mathbb N\cup\{0\}$. If $d_b\le 1$, then $\mu_{b,a}=0$ for all $a$. If $d_b\ge 2$, using the $k$-basis $\{x^i dx\mid i\in\llbracket0,d_b-2\rrbracket\}$ of eigenvectors of the action of $\sigma$ on $H^0\bigl(\mathbb P_k^1,\Omega_{\mathbb P_k^1}(D_b)\bigr)$, we get that $\mu_{b,a}$ is the number of exponents $i\in\llbracket0,d_b-2\rrbracket$ such that $i+1\equiv a\pmod m$. Equivalently,
\begin{equation}\label{EQ53}
\mu_{b,a}=\left\lfloor\frac{d_b-1}{m}\right\rfloor+\mu_b(a),
\end{equation}
where $\mu_b(a)\in\{0,1\}$ is $1$ if and only if there exists a positive integer $\break j\in\bigl\llbracket m\bigl\lfloor\frac{d_b-1}{m}\bigr\rfloor+1,d_b-1\bigr\rrbracket$ with $j\equiv a\pmod m$.

\begin{ex}\normalfont\label{EX7}
Suppose that $G\cong D_{p^s}$ and $X\rightarrow X/G$ is an HKG cover. Thus $p$ is odd, $m=2$, $\chi$ is injective, $\zeta_2=-1$, and $\sigma(x)=\overline{\zeta}_2 x=-x$. For $b\in\llbracket0,p^s-1\rrbracket$, we check that 
\begin{equation}\label{EQ54}
\mu_{b,[0]_2}=\left\lceil\frac{d_b-2}{2}\right\rceil\;\;\;\textup{and}\;\;\;
\mu_{b,[1]_2}=\left\lceil\frac{d_b-1}{2}\right\rceil
\end{equation}
if $d_b\ge 1$ and $\mu_{b,[0]_2}=\mu_{b,[1]_2}=0$ if $d_b=0$. The case $d_b\le 1$ is clear. For $d_b\ge 2$, as $\sigma(x^i dx)=(-1)^{i+1}x^i dx$, $x^idx$ contributes to $\mu_{b,[0]_2}$ (resp.\ $\mu_{b,[1]_2}$) when $i$ is odd (resp.\ even); so, as the set $\llbracket0,d_b-2\rrbracket$ has $\lceil\frac{d_b-2}{2}\rceil$ odd integers and $\lceil\frac{d_b-1}{2}\rceil$ even integers, Equation (\ref{EQ54}) holds.
\end{ex}

{\bf Step 3: computing the $\mu_X$.} The next result proved in \cite{KT2}, Cor.\ 19 follows directly from Equation (\ref{EQ52}).

\begin{proposition}[{\bf Kontogeorgis--Terezakis}]
\label{P7}
Suppose that $X\rightarrow X/G$ is an HKG cover. Then for $(a,b)\in\mathbb J_{m,p^s}$ we have
\begin{equation}\label{EQ55}
\mu_X(a,b)=\mu_{b-1,a}-\mu_{b,a}.
\end{equation}
\end{proposition}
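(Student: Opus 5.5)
We read off $\mu_X(a,b)$ from the graded pieces of the $\tau$-filtration $\bigl(V_X^{(b)}\bigr)_{b}$, using the uniserial structure of the indecomposable modules $U_{a,b}$. Take a decomposition $V_X\cong\oplus_{(a,b)}\mu_X(a,b)U_{a,b}$. For each summand $U_{c,e}$, the filtration $\Ker\bigl((\tau-1)^b\bigr)$ is exactly its unique composition series. Thus $\Ker\bigl((\tau-1)^{b+1}\bigr)/\Ker\bigl((\tau-1)^b\bigr)$ on $U_{c,e}$ is $0$ if $b\ge e$, and otherwise it is the $(b+1)$-st composition factor counted from the socle. By Equation (\ref{EQ3}) with $r=b$ (and using periodicity modulo $p-1$, which is compatible with working modulo $m$ because $m\mid p-1$), this factor is $U_{c-b\epsilon,1}$. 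Here we use that the $i$-th factor is $U_{c+(1-i)\epsilon}$ and that $\epsilon$ has order dividing $m$. The filtration is compatible with direct sums, so
$$V_X^{(b+1)}/V_X^{(b)}\cong\oplus_{c\in C_m}\Bigl(\sum_{e>b}\mu_X(c,e)\Bigr)T_{c-b\epsilon}$$
as $k[\Sigma]$-modules.

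Next we compare this with Equation (\ref{EQ52}), namely $V_X^{(b+1)}/V_X^{(b)}\cong\oplus_a\mu_{b,a}T_{a-b\epsilon}$. The $T_a$ are pairwise non-isomorphic simple $k[\Sigma]$-modules and $k[\Sigma]$ is semisimple, so matching multiplicities of $T_{a-b\epsilon}$ gives
$$\sum_{e>b}\mu_X(a,e)=\mu_{b,a}$$
for every $b\in\llbracket0,p^s-1\rrbracket$ and $a\in C_m$. This identity also holds for $b=p^s$, where both sides vanish by the convention $\mu_{p^s,a}=0$.

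Finally, for $b\in\llbracket1,p^s\rrbracket$ we subtract the identity at $b$ from the identity at $b-1$. This yields $\mu_X(a,b)=\mu_{b-1,a}-\mu_{b,a}$, which is Equation (\ref{EQ55}).

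The main point needing care is the identification of the graded pieces of the kernel filtration of $U_{c,e}$ with the composition factors indexed from the socle. We must check that the factor $U(c,b+1)$ sits in degree $b$, so that the twist is exactly $-b\epsilon$ and matches the twist $(b\epsilon)$ in Equation (\ref{EQ50}). For this we use that $U_{c,e}$ restricted to $H$ is $k[x]/(x-1)^e$, whose kernel filtration consists of the unique submodules. We then use Equation (\ref{EQ2}) together with the $b=2$ computation that underlies Equation (\ref{EQ3}): $\sigma$ acts on $(\tau-1)$-images with an extra factor governed by $\chi(\sigma)=\overline{\zeta}_m^{\epsilon}$, which gives the shift by $\epsilon$ at each step.
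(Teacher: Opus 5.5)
Your proof is correct and is the argument the paper intends when it says Proposition~\ref{P7} ``follows directly from Equation (\ref{EQ52})''. Matching the graded pieces of the $\tau$-filtration of each summand $U_{c,e}$ against Equation (\ref{EQ52}) gives $\sum_{e>b}\mu_X(a,e)=\mu_{b,a}$, and differencing in $b$ yields Equation (\ref{EQ55}).

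There is one small correction to your justification. The $(b+1)$-st factor corresponds to $i=b+1$ in Equation (\ref{EQ3}), not $r=b$. Reducing $c+(1-i)\epsilon$ modulo the period relies on $(p-1)\epsilon=0$ in $C_m$, which holds because $\overline{\zeta}_m^{\epsilon}=\chi(\sigma)\in\mathbb F_p^{\times}$. It does not follow from $m\mid p-1$, which need not hold when $\chi$ is not injective.
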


\begin{ex}\normalfont\label{EX8}
Suppose that $X\rightarrow X/G$ is an HKG cover. 

\medskip
{\bf (1)} As $d_{p^s-1}=0$ by Proposition \ref{P6}(3), we have $\mu_{p^s-1,a}=0$ for each $a\in C_m$ by Equation (\ref{EQ53}). Therefore we have $\mu_X(a,p^s)=\mu_{p^s-1,a}=0$ by Equation (\ref{EQ55}). Hence $\jmath_{V_X}=0$.

\smallskip
{\bf (2)} As we have $p^s-2=(p-2)+\sum_{i=1}^{s-1} (p-1)p^i$, Equation (\ref{EQ51}) gives
\[
d_{p^s-2}
=
\left\lfloor \frac{1}{p^s}\Big[p^{s-1}l_1+\Big(\sum_{l=1}^s p^{s-l}(p-1)\Big)\Big]\right\rfloor
=
\left\lfloor \frac{p^s-1+p^{s-1}l_1}{p^s}\right\rfloor
=\left\lfloor \frac{l_1}{p}\right\rfloor+1,
\]
where for the last identity we used that $p\nmid l_1$ by Corollary \ref{C3}(2.a). 
\end{ex}

\begin{ex}\normalfont\label{EX9}
Suppose that $(p,s,m)=(3,2,2)$, $\chi$ is injective (equivalently, an isomorphism), and $X\rightarrow X/G$ is an HKG cover whose wild cyclic subgroup $H=C_9$ has upper jumps
$$u_1=5\;\;\;\textup{and}\;\;\;u_2=17.$$
Thus $G\cong D_9$ and the corresponding lower jumps are
$$l_1=u_1=5\;\;\;\textup{and}\;\;\;l_2=l_1+3(u_2-u_1)=5+3(12)=41.$$
For $b=\delta_{1,b}+\delta_{2,b}3$ with $(\delta_{1,b},\delta_{2,b})\in\{0,1,2\}^2$, Equation (\ref{EQ51}) gives
$$d_b=\Bigl\lfloor \frac{3\bigl(2+5(2-\delta_{1,b})\bigr)+2+41(2-\delta_{2,b})}{9}\Bigr\rfloor.$$
Thus
$$(d_0,d_1,d_2,d_3,d_4,d_5,d_6,d_7,d_8)=(13,11,10,8,7,5,4,2,0).$$
Using Equation (\ref{EQ54}) for $b\in\llbracket0,8\rrbracket$, we get the semisimple multiplicities
$$\bigl((\mu_{0,[0]_2},\mu_{0,[1]_2}),\ldots,(\mu_{7,[0]_2},\mu_{7,[1]_2}),(\mu_{8,[0]_2},\mu_{8,[1]_2})\bigr)$$
$$=\bigl((6,6),(5,5),(4,5),(3,4),(3,3),(2,2),(1,2),(0,1),(0,0)\bigr).$$
Based on this, Equation (\ref{EQ55}) gives that the non-zero indecomposable multiplicities are $\mu_X([0]_2,1)$, $\mu_X([1]_2,1)$, $\mu_X([0]_2,2)$, $\mu_X([0]_2,3)$, $\mu_X([1]_2,3)$, $\mu_X([1]_2,4)$, $\mu_X([0]_2,5)$, $\mu_X([1]_2,5)$, $\mu_X([0]_2,6)$,
$\mu_X([0]_2,7)$, $\mu_X([1]_2,7)$, and $\mu_X([1]_2,8)$, all of them being equal to $1$. Therefore
\begin{align*}
H^0(X,\Omega_X)\cong &\ U_{[0]_2,1}\oplus U_{[1]_2,1}\oplus U_{[0]_2,2}\oplus U_{[0]_2,3}\oplus U_{[1]_2,3}\oplus U_{[1]_2,4}\\
&\oplus U_{[0]_2,5}\oplus U_{[1]_2,5}\oplus U_{[0]_2,6}\oplus U_{[0]_2,7}\oplus U_{[1]_2,7}\oplus U_{[1]_2,8}.
\end{align*}
\end{ex}

In order to make Equation (\ref{EQ55}) more precise we introduce extra notation. For $a\in C_m$, we consider the non-decreasing function $\nu_a:\mathbb Z\rightarrow\mathbb N\cup\{0\}$ defined by the rule 
$$\nu_a(n):=\begin{cases}|\{j\in\llbracket1,n\rrbracket\mid [j]_m=a\}|\quad\;\,\textup{if}\quad n\in\mathbb N\\
0\quad\quad\quad\quad\quad\quad\quad\quad\quad\quad\quad\textup{if}\quad n\in\mathbb Z\setminus\mathbb N.\end{cases}$$
For $(n,a)\in\mathbb Z\times C_m$, let $(\r_n,\r_a)\in\llbracket0,m-1\rrbracket^2$ be such that $[\r_n]_m=[n]_m$ and $[\r_a]_m=a$. We have
\[
\nu_a(n)=\max\Bigl(0,\Big\lfloor\frac{n+\r_{-a}}{m}\Big\rfloor\Bigr)=
\begin{cases}
\big\lfloor\frac{n+\r_{-a}}{m}\big\rfloor\quad\quad\;\textup{if}\quad n\ge m-\r_{-a},\\
0\quad\quad\quad\quad\quad\;\textup{if}\quad n<m-\r_{-a}.
\end{cases}
\]
In particular, if either $n\in\mathbb N\cup\{0\}$ or $n=-1$ and $a\neq [0]_m$ (i.e., and $\r_{-a}\ge 1$), then $\nu_a(n)=\lfloor\frac{n+\r_{-a}}{m}\rfloor$.

For each $n\in\mathbb Z$ with $n\ge -\r_{-a}$ and $(\q,\r)\in (\mathbb N\cup\{0\})\times\llbracket0,m\rrbracket$, we have 
$\nu_a(n+m\q)-\nu_a(n)=\q$ and therefore, as $\nu_a$ is non-decreasing, 
\begin{equation}\label{EQ56}
\nu_a(n+m\q+\r)-\nu_a(n)\in\{\q,\q+1\}.
\end{equation} 

Let $\delta_s:\llbracket0,p^s\rrbracket\rightarrow\mathbb N\cup\{0\}$ be the function defined by 
$\delta_s(t):=d_t$ for $t\in\llbracket0,p^s-1\rrbracket$ and
$\delta_s(p^s):=0$. 

As $\mu_{b,a}=\nu_a(\delta_s(b)-1)$ by Equation (\ref{EQ53}), Equation (\ref{EQ55}) becomes
\begin{equation}\label{EQ57}
\mu_X(a,b)=\nu_a\bigl(\delta_s(b-1)-1\bigr)-\nu_a\bigl(\delta_s(b)-1\bigr)
\end{equation}
for each $(a,b)\in C_m\times\llbracket1,p^s\rrbracket$. 

For $j\in\llbracket0,s\rrbracket$, let $A_{s;j}:=(\llbracket1,p^s\rrbracket\cap p^j\mathbb N)\setminus p^{j+1}\mathbb N$ be the subset of $\llbracket1,p^s\rrbracket$ formed by integers that are divisible by $p^j$ but are not divisible by $p^{j+1}$; we have a disjoint union 
$$\llbracket1,p^s\rrbracket=\sqcup_{j=0}^s A_{s;j}.$$

\begin{proposition}[{\bf The Interval of Integers Principle}]
\label{P8}
Suppose that $X\rightarrow X/G$ is an HKG cover. For each integer $j\in\llbracket0,s-1\rrbracket$ let the pair $(\l_{j,0},\l_{j,-1})\in (\mathbb N\cup\{0\})\times\llbracket0,m-1\rrbracket$ be such that 
$$\Big\lfloor\frac{l_{j+1}}{p^{j+1}}-(p-1)\sum_{i=1}^j\frac{l_i}{p^i}\Big\rfloor=m\l_{j,0}+\l_{j,-1};$$
e.g., $(\l_{0,0},\l_{0,-1})\in (\mathbb N\cup\{0\})\times\llbracket0,m-1\rrbracket$ is such that $\lfloor\frac{l_1}{p}\rfloor=m\l_{0,0}+\l_{0,-1}$. Let $b\in\llbracket1,p^s\rrbracket$ and let the pair $(\q_{s,b},\r_{s,b})\in \mathbb Z\times\llbracket0,m-1\rrbracket$ be such that $\delta_s(b-1)-\delta_s(b)=m\q_{s,b}+\r_{s,b}$.  Then the following properties hold.

\medskip
{\bf (1)} If either $a=[0]_m$ with $\delta_s(b)\ge 1$ (i.e., with $b\le p^s-2$) or $a\neq [0]_m$, then we have identities
\begin{equation}\label{EQ58}
\begin{split}\mu_X(a,b)=&\q_{s,b}+\Big\lfloor\frac{\delta_s(b)-1+\r_{-a}+\r_{s,b}}{m}\Big\rfloor-\Big\lfloor\frac{\delta_s(b)-1+\r_{-a}}{m}\Big\rfloor\\
=&\begin{cases}\q_{s,b}+1\quad\;\textup{if}\quad \r_{\delta_s(b)-1+\r_{-a}}+\r_{s,b}\ge m\\
\q_{s,b}\quad\quad\quad\textup{if}\quad \r_{\delta_s(b)-1+\r_{-a}}+\r_{s,b}<m.\end{cases}
\end{split}
\end{equation}
In particular, we have $\mu_X(a,b)=\q_{s,b}+1$ if and only if $\r_{s,b}\neq 0$ (i.e., $\r_{s,b}\in\llbracket1,m-1\rrbracket$) and $a\in [\delta_s(b)]_m+\{[i]_m|i\in\llbracket0,\r_{s,b}-1\rrbracket\}$.

{\bf (2)} We have identities 
\begin{equation*}
\q_{s,p^s-1}=\Big\lfloor\frac{\lfloor \frac{l_1}{p}\rfloor+1}{m}\Big\rfloor=\begin{cases}\l_{0,0}+1\quad\textup{if}\quad\l_{0,-1}=m-1\\
\l_{0,0}\quad\quad\;\;\,\textup{if}\quad \l_{0,-1}\in\llbracket0,m-2\rrbracket,\end{cases}
\end{equation*} 
$\r_{s,p^s-1}=\Big\lfloor \frac{l_1}{p}\Big\rfloor+1-m\Big\lfloor\frac{\lfloor \frac{l_1}{p}\rfloor+1}{m}\Big\rfloor$, and
\begin{equation}\label{EQ59}
\begin{split}\mu_X([0]_m,p^s-1)=\q_{s,p^s-1}+\Big\lfloor\frac{\delta_s(p^s-1)-1+\r_{s,p^s-1}}{m}\Big\rfloor-1-\Big\lfloor\frac{\delta_s(p^s-1)-1}{m}\Big\rfloor\\
=\l_{0,0}=\begin{cases}\q_{s,p^s-1}\quad\quad\quad\textup{if}\quad \r_{s,p^s-1}\ge 1\\
\q_{s,p^s-1}-1\quad\;\textup{if}\quad \r_{s,p^s-1}=0\end{cases}=\begin{cases}\Big\lfloor\frac{\lfloor \frac{l_1}{p}\rfloor+1}{m}\Big\rfloor\quad\quad\quad\;\;\textup{if}\quad m\nmid \lfloor \frac{l_1}{p}\rfloor+1\\
\Big\lfloor\frac{\lfloor \frac{l_1}{p}\rfloor+1}{m}\Big\rfloor-1\quad\;\;\;\textup{if}\quad m\mid\lfloor \frac{l_1}{p}\rfloor+1.\end{cases}
\end{split}
\end{equation}

{\bf (3)} For each $a\in C_m$ we have inequalities 
$$\mu_X([0]_m,p^s-1)\le\mu_X(a,p^s-1)\le \mu_X([0]_m,p^s-1)+1.$$

{\bf (4)} For each pair $(a_1,a_2)\in C_m^2$ and every $b\in\llbracket1,p^s-1\rrbracket$ we have inequalities $0\le |\mu_X(a_1,b)-\mu_X(a_2,b)|\le 1$.

\smallskip
{\bf (5)} Let $j\in\llbracket0,s-1\rrbracket$. We assume that $b\in A_{s;j}$; so $p^j\mid b$ but $p^{j+1}\nmid b$. Then the following properties hold.

\medskip\noindent
{\bf (5.a)} We have identities
$$\q_{s,b}=\begin{cases}\l_{j,0}+1\quad\quad\textup{if}\quad\l_{j,-1}=m-1\quad\textup{and}\quad \r_{s,b}=0\\
\l_{j,0}\quad\quad\quad\;\;\,\textup{if}\quad\l_{j,-1}=m-1\quad\textup{and}\quad \r_{s,b}>0\\
\l_{j,0}\quad\quad\quad\;\;\,\textup{if}\quad \l_{j,-1}\in\llbracket0,m-2\rrbracket.\end{cases}$$

\noindent
{\bf (5.b)} If $j=0$ we assume that $b\neq p^s-1$. Then for each $a\in C_m$ we have inequalities 
$$\l_{j,0}\le \q_{s,b}\le\mu_X(a,b)\le \l_{j,0}+1\le\q_{s,b}+1.$$

\noindent
{\bf (5.c)} If $\l_{j,-1}=m-1$, $\r_{s,b}=0$, and for $j=0$ we have $b\neq p^s-1$, then $\mu_X(a,b)=\q_{s,b}=\l_{j,0}+1$.

\noindent
{\bf (5.d)} There exists a uniquely determined subset $\mathbb J_{X;j}$ of $C_m\times A_{s;j}$ such that 
$$\mu_X(a,b)=\begin{cases}\l_{j,0}+1\quad\;\textup{if}\quad (a,b)\in\mathbb J_{X;j}\\
\l_{j,0}\quad\quad\quad\textup{if}\quad (a,b)\in(C_m\times A_{s;j})\setminus\mathbb J_{X;j}.\end{cases}$$

{\bf (6)} Let $\mathbb J_X:=\sqcup_{j=0}^{s-1}\mathbb J_{X;j}$. We have an identity
$$\textup{g}_X=\frac{mp^s(p-1)}{2}\sum_{j=0}^{s-1} \l_{j,0}p^{s-1-j}+\sum_{(a,b)\in\mathbb J_X} b.$$
\end{proposition}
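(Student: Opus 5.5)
The plan is to work entirely from Equation (\ref{EQ57}), i.e. $\mu_X(a,b)=\nu_a(\delta_s(b-1)-1)-\nu_a(\delta_s(b)-1)$, together with the explicit formula $\nu_a(n)=\lfloor\frac{n+\r_{-a}}{m}\rfloor$ valid for $n\ge 0$, or for $n=-1$ when $a\neq[0]_m$.

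\textbf{Part (1).} Write $\delta_s(b-1)-1=(\delta_s(b)-1)+m\q_{s,b}+\r_{s,b}$. Since $\delta_s$ is non-increasing by Proposition \ref{P6}(4), we have $\q_{s,b}\ge 0$. Under the hypotheses of (1), the explicit formula for $\nu_a$ applies at $n=\delta_s(b)-1$. By Proposition \ref{P6}(3), $\delta_s(b)\ge1$ exactly when $b\le p^s-2$; otherwise $n=-1$ and we need $a\neq[0]_m$. Substituting gives the first line of Equation (\ref{EQ58}). The floor difference equals $1$ or $0$ according to whether $\r_{\delta_s(b)-1+\r_{-a}}+\r_{s,b}\ge m$. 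Unwinding this residue condition gives the ``in particular'' description: the $a$ for which the floor jumps form a block of $\r_{s,b}$ consecutive residues starting at $[\delta_s(b)]_m$.

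\textbf{Part (2).} Compute $\delta_s(p^s-2)-\delta_s(p^s-1)=\lfloor l_1/p\rfloor+1$ from Example \ref{EX8}(2) and Proposition \ref{P6}(3). Then evaluate Equation (\ref{EQ57}) at $a=[0]_m$, $b=p^s-1$ directly: there $\nu_{[0]_m}(-1)=0$, and $\nu_{[0]_m}(\lfloor l_1/p\rfloor)=\lfloor\lfloor l_1/p\rfloor/m\rfloor=\l_{0,0}$. This yields $\l_{0,0}$, and the case split in terms of $\r_{s,p^s-1}$ is bookkeeping.

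\textbf{Parts (3) and (4).} These follow from Equation (\ref{EQ56}) applied to both terms of Equation (\ref{EQ57}); more precisely, from (1), since each $\mu_X(a,b)$ lies in $\{\q_{s,b},\q_{s,b}+1\}$. At $b=p^s-1$ one compares the values from (1) for $a\neq[0]_m$ with the value $\l_{0,0}$ from (2).

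\textbf{Part (5).} This is the main step, because it must identify $\q_{s,b}$ with $\l_{j,0}$. By Proposition \ref{P6}(1), $\delta_s(b-1)-\delta_s(b)=\lfloor N_{s,b-1}/p^s\rfloor-\lfloor N_{s,b}/p^s\rfloor$. By Proposition \ref{P3}(4), $N_{s,b-1}-N_{s,b}=p^s\bigl(\frac{l_{j+1}}{p^{j+1}}-(p-1)\sum_{i\le j}\frac{l_i}{p^i}\bigr)=:p^sx_j$. Hence the difference of floors is $\lfloor x_j\rfloor$ or $\lfloor x_j\rfloor+1$, i.e. $m\l_{j,0}+\l_{j,-1}$ or one more. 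This yields (5.a) after separating the case where adding $1$ carries past $m$. The carry occurs exactly when $\l_{j,-1}=m-1$ and the new residue is $\r_{s,b}=0$. Combining with (1) gives (5.b) and (5.c); for $j=0$ we exclude $b=p^s-1$ so that (1) applies to every $a$. (5.d) is then immediate, with $\mathbb J_{X;j}$ defined as the set where the value is $\l_{j,0}+1$. The point needing care is showing that the two possible values $\q_{s,b}$ and $\q_{s,b}+1$ never exceed the range $\{\l_{j,0},\l_{j,0}+1\}$. In the carry case $\r_{s,b}=0$, so $\mu_X=\q_{s,b}$ by (1) and no overshoot occurs. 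The case $b=p^s-1$ is handled by (2) and (3), since $\mu_X([0]_m,p^s-1)=\l_{0,0}$ and the others are $\l_{0,0}$ or $\l_{0,0}+1$.

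\textbf{Part (6).} Use $\textup{g}_X=\dim_k V_X=\sum_{(a,b)}\mu_X(a,b)\,b$. Here $\jmath_{V_X}=0$ by Example \ref{EX8}(1), so the terms with $b=p^s$ vanish. By (5.d), this sum equals $\sum_j\l_{j,0}\,m\sum_{b\in A_{s;j}}b+\sum_{(a,b)\in\mathbb J_X}b$. It remains to compute $\sum_{b\in A_{s;j}}b=p^j\bigl(\sum_{c\le p^{s-j}}c-p\sum_{c\le p^{s-j-1}}c\bigr)=\frac{p^{2s-j-1}(p-1)}{2}$. Rewriting this as $\frac{p^s(p-1)}{2}p^{s-1-j}$ gives the stated formula.
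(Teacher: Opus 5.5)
Your proposal is correct and takes essentially the paper's route: Equation (\ref{EQ57}) with the closed form of $\nu_a$ for (1); Proposition \ref{P3}(4) together with $\lfloor x+y\rfloor-\lfloor y\rfloor\in\{\lfloor x\rfloor,\lfloor x\rfloor+1\}$ for (5.a); then (1) for (5.b)--(5.d), and the summation over the sets $A_{s;j}$ for (6). The only deviation is in (2), where you evaluate $\mu_X([0]_m,p^s-1)=\nu_{[0]_m}(\lfloor l_1/p\rfloor)-\nu_{[0]_m}(-1)=\l_{0,0}$ directly instead of going through the $j=0$ case of (5.a), which is a harmless and slightly cleaner shortcut.
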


\begin{proof}
As $\delta_s(b-1)-1\ge 0$ by Proposition \ref{P6}(3), we have an identity $\nu_a\bigl(\delta_s(b-1)-1\bigr)=\lfloor\frac{\delta_s(b-1)-1+\r_{-a}}{m}\rfloor=\q_{s,b}+\lfloor\frac{\delta_s(b)-1+\r_{-a}+\r_{s,b}}{m}\rfloor$. 

For part (1) we have an identity $\nu_a\bigl(\delta_s(b)-1\bigr)=\lfloor\frac{\delta_s(b)-1+\r_{-a}}{m}\rfloor$ and hence part (1) follows from Equation (\ref{EQ57}).

For parts (2) to (5) we use the notation of Proposition \ref{P3}(4). 

We first prove part (5.a) when $j=0$. As $p\nmid b$, we have an identity $N_{s,b-1}=N_{s,b}+p^{s-1}l_1$ by Proposition \ref{P3}(4) and thus Equation (\ref{EQ51}) gives
\begin{equation}\label{EQ60}
\delta_s(b-1)-\delta_s(b)=\Big\lfloor\frac{l_1}{p}+\frac{N_{s,b}}{p^s}\Big\rfloor-\Big\lfloor\frac{N_{s,b}}{p^s}\Big\rfloor\in\Big\{\Big\lfloor\frac{l_1}{p}\Big\rfloor,\Big\lfloor\frac{l_1}{p}\Big\rfloor+1\Big\}.
\end{equation}
Thus $m\q_{s,b}+\r_{s,b}\in\{m\l_{0,0}+\l_{0,-1},m\l_{0,0}+\l_{0,-1}+1\}$ and from this we get that part (5.a) holds for $j=0$.

For part (2), we have $\delta_s(p^s-1)=0$ by Proposition \ref{P6}(3), $\r_{-[0]_m}=0$, and $\delta_s(p^s-2)=\left\lfloor \frac{l_1}{p}\right\rfloor+1$ by Example \ref{EX8}(2). Therefore $\q_{s,p^s-1}=\Big\lfloor\frac{\lfloor \frac{l_1}{p}\rfloor+1}{m}\Big\rfloor$ and $\r_{s,p^s-1}=\Big\lfloor \frac{l_1}{p}\Big\rfloor+1-m\Big\lfloor\frac{\lfloor \frac{l_1}{p}\rfloor+1}{m}\Big\rfloor$. The other identities of part (2) follow from them and the case $j=0$ of part (5.a). So part (2) holds. 

Based on part (1) and the case $j=0$ of part (5.a), to prove part (3) we can assume that $\mu_X([0]_m,p^s-1)=\q_{s,p^s-1}-1$. We have $m\mid\lfloor \frac{l_1}{p}\rfloor+1$, $\l_{0,-1}=m-1$, $\q_{s,p^s-1}=\l_{0,0}+1$, $\r_{s,p^s-1}=0$, and $\mu_X([0]_m,p^s-1)=\q_{s,p^s-1}-1=\l_{0,0}$ by part (2).  As $\r_{s,p^s-1}=0$, from part (1) we get that for each $a\in C_m\setminus\{[0]_m\}$ we have $\mu_X(a,p^s-1)=\q_{s,p^s-1}$. Thus part (3) holds.

Based on part (3), to prove part (4) we can assume that $b\le p^s-2$, and this case follows from part (1). So part (4) holds. 

Part (5.b) holds for $j=0$ by part (1). Thus to prove parts (5.a) and (5.b) we can assume that $s\ge 2$ and $j\in\llbracket1,s-1\rrbracket$. 

We have $N_{s,b-1}=N_{s,b}+l_{j+1}p^{s-j-1}-(p-1)\sum_{i=1}^j p^{s-i}l_i$ by Proposition \ref{P3}(4). Using Equation (\ref{EQ51}) we get that
$$\delta_s(b-1)-\delta_s(b)=\Big\lfloor \frac{l_{j+1}}{p^{j+1}}-(p-1)\sum_{i=1}^j\frac{l_i}{p^i}+\frac{N_{s,b}}{p^s}\Big\rfloor-\Big\lfloor\frac{N_{s,b}}{p^s}.\Big\rfloor$$ 
So $m\q_{s,b}+\r_{s,b}\in\{m\l_{j,0}+\l_{j,-1},m\l_{j,0}+\l_{j,-1}+1\}$. Thus, if $\l_{j,-1}\le m-2$, then $\q_{s,b}=\l_{j,0}$ and part (5.a) holds. 

If $\q_{s,b}=\l_{j,0}$, then from part (1) we get that part (5.b) holds. So to prove parts (5.a) and (5.b) we can assume that $\l_{j,-1}=m-1$. So we have the belonging relation $\delta_s(b-1)-\delta_s(b)\in\{m(\l_{j,0}+1)-1,m(\l_{j,0}+1)\}$.

We first assume that $\q_{s,b}>\l_{j,0}$. From the belonging relation and the inequalities $\delta_s(b-1)-\delta_s(b)\ge m\q_{s,b}\ge m(\l_{j,0}+1)$ we get the identity $\delta_s(b-1)-\delta_s(b)=m(\l_{j,0}+1)$. So $\q_{s,b}=\l_{j,0}+1$ and $\r_{s,b}=0$. As $\r_{s,b}=0$, we have $\mu_X=\q_{s,b}$ by part (1). So part (5.b) holds. 

Next we assume that $\q_{s,b}=\l_{j,0}$. From this and the belonging relation we get that $\delta_s(b-1)-\delta_s(b)=m(\l_{j,0}+1)-1$ and hence $m\nmid \delta_s(b-1)-\delta_s(b)$. 

From the last two paragraphs we get that part (5.a) holds if $\l_{j,-1}=m-1$. So part (5.a) holds. 

Part (5.c) follows from parts (5.a) and (5.b). 

Part (5.d) follows from part (5.b) and, when $b=p^s-1$, also from part (2). Thus part (5) holds.

For part (6), we first note that 
\begin{equation*}
\begin{aligned}
\sum_{b\in A_{s;j}} b&=p^j\Big(\sum_{i=1}^{p^{s-j}} i\Big)-p^{j+1}\Big(\sum_{i=1}^{p^{s-j-1}} i\Big)\\
&=\frac{p^{s}(p^{s-j}+1)}{2}-\frac{p^{s}(p^{s-j-1}+1)}{2}=\frac{p^{2s-j}-p^{2s-j-1}}{2}=\frac{p^{2s-j-1}(p-1)}{2}.\\
\end{aligned}
\end{equation*}
Based on this and $\mu_X(a,p^s)=0$ for each $a\in C_m$, we compute
\begin{equation*}
\begin{aligned}
\textup{g}_X&=\dim_k(V_X)=\sum_{(a,b)\in\mathbb J_{m,p^s}}\mu_X(a,b)b=\sum_{j=0}^{s-1} \sum_{(a,b)\in C_m\times A_{s;j}}\mu_X(a,b)b\\
&=\sum_{j=0}^{s-1} \Big[\sum_{(a,b)\in C_m\times A_{s;j}\setminus\mathbb J_{X;j}} \l_{j,0}b+\sum_{(a,b)\in \mathbb J_{X;j}} (\l_{j,0}+1)b\Big]\\
&=\Big(m\sum_{j=0}^{s-1} \l_{j,0}\sum_{b\in A_{s;j}} b\Big)+\sum_{j=0}^{s-1}\sum_{(a,b)\in \mathbb J_{X;j}}b\\
&=\frac{mp^s(p-1)}{2}\Big(\sum_{j=0}^{s-1} \l_{j,0}p^{s-1-j}\Big)+\sum_{(a,b)\in\mathbb J_X} b.\\
\end{aligned}
\end{equation*}
So part (6) holds.\end{proof}

\begin{definition}\label{D6} Suppose that $m\ge 2$ and $X\rightarrow X/G$ is an HKG cover. For a subset $A$ of $\llbracket1,p^s\rrbracket$, we consider the restriction
$$\mu_{X;A}:=\mu_X|C_m\times A:C_m\times A\rightarrow\mathbb N\cup\{0\}$$
of $\mu_X$ to $C_m\times A$; so $\mu_X=\mu_{X;\llbracket1,p^s\rrbracket}$. If $A=\{b\}$, let $\mu_{X;b}:=\mu_{X,\{b\}}$.

\medskip
{\bf (1)} For $m\ge 2$ (resp.\ $m\ge 3$) we say that $\mu_{X;A}$ has exceptional $+1$ (resp.\ $-1$) jumps or that $\mu_{X;A}$ has jump type $1$ (resp.\ $-1$) if there exists $b\in A\cap\llbracket 1,p^s-1\rrbracket$ such that the function $\mu_{X;b}$ is non-constant and for each such $b$, $\mu_{X;b}$ attains its maximum (resp.\ minimum) at precisely one element of the source.

\smallskip
{\bf (2)} We say that $\mu_{X;A}$ has jump type $0$ if either $m=2$ and it does not have jump type $ 1$ or $m\ge 3$ and it does not have jump types $-1$ or $1$. 

\smallskip
{\bf (3)} We say that $\mu_{X;A}$ has regular $\pm 1$ jumps if for each $b\in A$ the function $\mu_{X;b}$ is non-constant, attains its maximum at two or more elements of the source, and attains its minimum at two or more elements of the source.

\smallskip
{\bf (4)} For $m=2$ (resp.\ $m\ge 3$), we fix an $s$-tuple $(\epsilon_0,\ldots,\epsilon_{s-1})\in\{0,1\}^s$ (resp. $(\epsilon_0,\ldots,\epsilon_{s-1})\in\{-1,0,1\}^s$). We say the multiplicity function $\mu_X$ has jump type $(\epsilon_0,\ldots,\epsilon_{s-1})$ if for each $j\in\llbracket0,s-1\rrbracket$, $\mu_{X;A_{s,j}}$ has jump type $\epsilon_j$.

\smallskip
{\bf (5)} We call the $2s$-tuple 
$$(\l_{0,0},|\mathbb J_{X;0}|,\l_{1,0},|\mathbb J_{X;1}|,\ldots,\l_{s-1,0},|\mathbb J_{X;s-1}|)\in(\mathbb N\cup\{0\})^{2s} $$
 as the multiplicity type of $\mu_X$. 
\end{definition}

Clearly, if $\mu_{X;A}$ has regular $\pm 1$ jumps, then it has jump type $0$.

For a function $\mathfrak f$, let $\Im(\mathfrak f)$ be its image. 

\begin{ex}\normalfont\label{EX9.5}
Suppose that $m\ge 3$ and $X\rightarrow X/G$ is an HKG cover. For the lower jumps $l_1<\cdots<l_s$, we use the notation of Proposition \ref{P8}.

\medskip
{\bf (1)} If $\r_{s,p^s-1}=0$, then $\Im(\mu_{X;p^s-1})=\{q_{s,p^s-1}-1,\q_{s,p^s-1}\}$ and moreover $|\mu^{-1}_{X;p^s-1}(q_{s,p^s-1}-1)|=1$ by the proof of Proposition \ref{P8}(3); hence $\mu_{X;p^s-1}$ has jump type $-1$. If  $\r_{s,p^s-1}>0$, then $\Im(\mu_{X;p^s-1})=\{q_{s,p^s-1},\q_{s,p^s-1}+1\}$ and $|\mu^{-1}_{X;p^s-1}(q_{s,p^s-1}+1)|=\r_{s,p^s-1}$ by Proposition \ref{P8}(1) and (4); hence $\mu_{X;p^s-1}$ has jump type $1$ if $\r_{s,p^s-1}=1$, has jump type $-1$ if $\r_{s,p^s-1}=m-1$, and has regular $\pm 1$ jumps if $\r_{s,p^s-1}\in\llbracket2,m-2\rrbracket$. 

\smallskip
{\bf (2)} Let $b\in\llbracket1,p^s-2\rrbracket$. If $\r_{s,b}=0$, then the function $\mu_{X;b}$ is constant by Proposition \ref{P8}(1) and thus it has jump type $0$. As in part (1), by Proposition \ref{P8}(1), if $\r_{s,b}=1$ then $\mu_{X;b}$ has jump type $1$, if $\r_{s,b}=m-1$ then $\mu_{X;b}$ has jump type $-1$, and if $\r_{s,b}\in\llbracket2,m-2\rrbracket$ then $\mu_{X;b}$ has regular $\pm 1$ jumps.

\smallskip
{\bf (3)} Let $(\epsilon_0,\ldots,\epsilon_{s-1})\in\{-1,0,1\}^s$ be the jump type of $\mu_X$. For $j\in\llbracket0,s\rrbracket$, let $A_{s;j}$ be as in Definition \ref{D6}(4). From parts (1) and (2) we get that $\epsilon_0$ is $1$ if and only if we have $\r_{s,b}=1$ for each $b\in A_{s;0}$ and it is $-1$ if and only if $\r_{s,p^s-1}\in\{0,m-1\}$ and $\r_{s,b}=m-1$ for each $b\in A_{s;0}\setminus\{p^s-1\}$. Similarly, if $j\in\llbracket1,s-1\rrbracket$, from parts (1) and (2) we get that $\epsilon_j$ is $1$ (resp.\ $-1$) if and only if we have $\r_{s,b}=1$ (resp.\ $\r_{s,b}=m-1$) for each $b\in A_{s;j}$.

\smallskip
{\bf (4)} If $\bigl[\big\lfloor\frac{l_1}{p}\big\rfloor\bigr]_m\in\{[i]_m|i\in\llbracket2,m-3\rrbracket\}$, then from Equation (\ref{EQ60}) we get that $2\le \r_{s,b}\le m-2$ for each $b\in A_{s;0}$ and hence $\epsilon_0=0$ by part (3). 
\end{ex}

\section{Actions of G on curves, Part III: basic results}\label{S12}

\begin{theorem}\label{T5}
Suppose that $X\rightarrow X/G$ is an HKG cover. Then $\chi$ is injective if and only if the character function $\ell_{V_X}$ is constant.
\end{theorem}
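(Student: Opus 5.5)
The plan is to compute $\ell_{V_X}$ directly from the $\tau$-filtration of Step~1. By Equation (\ref{EQ52}) and the additivity of character functions over short exact sequences of $k[\Sigma]$-modules (the category of $k[\Sigma]$-modules is semisimple), for each $c\in C_m$ we have
$$\ell_{V_X}(c)=\sum_{b=0}^{p^s-1}\mu_{b,c+b\epsilon}=\sum_{b=0}^{p^s-1}\nu_{c+b\epsilon}\bigl(d_b-1\bigr),$$
using $\mu_{b,a}=\nu_a(\delta_s(b)-1)$ from Equation (\ref{EQ53}). Everything then reduces to a combinatorial statement about the non-increasing sequence $(d_b)_{b}$ of Proposition \ref{P6} and the twist by $b\epsilon$. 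The key structural input is that $\epsilon\in qC_m$, so the residue of $c+b\epsilon$ modulo $q$ does not depend on $b$. By Proposition \ref{P6}(3),(4) we also have $d_{p^s-1}=0$ and $d_b\ge 1$ for $b\le p^s-2$.

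\emph{Only if.} Suppose $\chi$ is injective, so $q=1$ and $\epsilon$ generates $C_m$; in particular $m\mid p-1$, hence $m\mid p^s-1$. I would split $\llbracket0,p^s-1\rrbracket$ into the $\frac{p^s-1}{m}$ blocks $\{b_0,b_0+1,\ldots,b_0+m-1\}$ and try to show that the contribution of each block, or of suitable telescoping combinations of blocks, is independent of $c$. Concretely, I would rewrite the sum via Proposition \ref{P7}, as $\sum_{(a,b)}\mu_X(a,b)\ell_{U_{a,b}}$, and then use the Interval of Integers Principle (Proposition \ref{P8}(1)). It says that for fixed $b$ the $a$'s with $\mu_X(a,b)=\q_{s,b}+1$ form an interval of length $\r_{s,b}$ starting at $[\delta_s(b)]_m$. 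Since $\ell_{U_{a,b}}(c)$ counts $i\in\llbracket0,b-1\rrbracket$ with $a-i\epsilon=c$, summing over $a$ in such an interval yields a function of $c$ that combines with the neighbouring terms into a constant. I expect the cleanest route is to show $\ell_{V_X}(c)=\ell_{V_X}(c-\epsilon)$, by matching terms $b\leftrightarrow b+1$ and using $\nu_{a}(n)-\nu_{a-1}(n)\in\{0,\pm1\}$ together with the telescoping over $b$ coming from $d_{p^s-1}=0$. As a consistency check, Proposition \ref{P4}(4) (with Lemma \ref{L8}) already gives $\ell_{V_X}(a)+\ell_{V_X}(-a)=\frac{2\textup{g}_X}{m}$, so for $m=2$ this direction is immediate.

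\emph{If.} Suppose $\chi$ is not injective, so $q\ge 2$. Since $c+b\epsilon\equiv c\pmod q$, the formula gives
$$\sum_{[c]_q=[j]_q}\ell_{V_X}(c)=\sum_{b=0}^{p^s-1}\bigl|\{i\in\llbracket1,d_b-1\rrbracket\mid i\equiv j \pmod q\}\bigr|.$$
Each summand is non-increasing in $j\in\llbracket1,q\rrbracket$, so the class $j=1$ gets at least as much as the class $j=q$. If $\ell_{V_X}$ were constant, all these class sums would be equal. That would force $q\mid d_b-1$ for every $b\le p^s-2$, and for $b=p^s-1$ we have $d_b-1=-1$, which contributes nothing. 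I would then derive a contradiction from Example \ref{EX8}(2), which gives $d_{p^s-2}=\lfloor l_1/p\rfloor+1$, together with the values of $d_b$ for other $b$ given by Proposition \ref{P6}(1). The intended argument is that consecutive differences $d_{b-1}-d_b$ for $b\in A_{s;0}$ take both values $\lfloor l_1/p\rfloor$ and $\lfloor l_1/p\rfloor+1$ (Equation (\ref{EQ60})), unless the fractional parts $N_{s,b}/p^s$ are specially aligned. In that case I would look at $d_{p^s-2}$ and $d_{p^s-3}$ directly.

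The main obstacle I expect is the ``only if'' bookkeeping: turning the interval structure of Proposition \ref{P8} into an exact cancellation for all $c$. If that fails, I would instead use Lemma \ref{L8} and Proposition \ref{P4}(2) to get $\mathcal M_X^H=\{0\}$, so that Theorem \ref{T2}(2) makes $\ell_{M_X}$ constant, and then try to separate $V_X$ from $W_X$ using the explicit eigenbasis $x^i\,dx$.
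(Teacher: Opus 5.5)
Your formula $\ell_{V_X}(c)=\sum_{b=0}^{p^s-1}\nu_{c+b\epsilon}(d_b-1)$ is correct. In the `if' direction, your deduction that constancy forces $q\mid d_b-1$ for all $b\in\llbracket0,p^s-2\rrbracket$ is also correct. The gap is that neither direction uses anything tying $\chi$ to the ramification data. The $d_b$ depend only on $l_1,\ldots,l_s$, while $\epsilon$ and $q$ enter your formula as free parameters, and with that information alone both directions are false.

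\emph{The `only if' direction.} Take $(p,s,m)=(5,1,4)$ and $l_1=3$, which Proposition \ref{P10} allows with $q=1$. Then $(d_0,\ldots,d_4)=(3,2,2,1,0)$. Your formula gives $\ell_{V_X}=(1,1,1,1)$ for $\epsilon=[1]_4$ but $(0,1,2,1)$ for $\epsilon=[3]_4$. Proposition \ref{P8} and the properties of $\nu_a$ you invoke hold for every $\epsilon$, so no telescoping built from them can work. You would first need the local relation $\chi(\sigma)=\theta_0(\sigma)^{l_1}$, which comes from $\theta_{l_1}(\sigma\tau\sigma^{-1})=\theta_0(\sigma)^{l_1}\theta_{l_1}(\tau)$ and fixes $\epsilon$ in terms of $l_1$ modulo $m$. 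You would then still have to carry out the cancellation, which your proposal only anticipates. Your fallback through $\ell_{M_X}$ only reproduces Proposition \ref{P4}(4), which cannot separate $\ell_{V_X}(a)$ from $\ell_{V_X}(-a)$ once $m\ge 3$.

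The paper avoids all of this. It lifts the tame $\Sigma$-action to characteristic $0$ and applies the holomorphic Lefschetz formula. For $l\in\llbracket1,m-1\rrbracket$, $\sigma^l$ has exactly two fixed points on $X$, with tangent eigenvalues $\zeta^l$ and $\zeta^{-l}$ (using $p^s\equiv 1\pmod m$ at $P$). So the trace of $\sigma^{-l}$ on $H^0(\mathcal X_B,\Omega_{\mathcal X_B})$ is $1-\frac{1}{1-\zeta^l}-\frac{1}{1-\zeta^{-l}}=0$, and the character is a multiple of the regular one.

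\emph{The `if' direction.} The congruences $q\mid d_b-1$ are not contradicted by Proposition \ref{P6}, Equation (\ref{EQ60}) or Example \ref{EX8}(2). If $s=1$ and $l_1=pA+1$ with $q\mid A$, then $d_b=(p-1-b)A+1$ for $b\le p-2$. So every congruence holds, and $d_{p-2}=A+1$, $d_{p-3}=2A+1$ give nothing. Concretely, $(p,s,m)=(5,1,2)$ with $\chi$ trivial and $l_1=11$ gives $(d_b)_b=(9,7,5,3,0)$, and your formula returns the constant value $10$.

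What excludes such data is $\gcd(m,l_1)=q$ from Proposition \ref{P10}, that is, $q\mid l_1$. With this input your route closes:
\begin{itemize}
\item $q$ divides $d_{p^s-2}-1=\lfloor l_1/p\rfloor$.
\item Hence every difference $d_{b-1}-d_b$ with $p\nmid b\le p^s-2$ must equal $\lfloor l_1/p\rfloor$, not $\lfloor l_1/p\rfloor+1$.
\item Summing these differences over $b\in\llbracket1,p-1\rrbracket$ (over $\llbracket1,p-2\rrbracket$ if $s=1$) forces $l_1\equiv1\pmod p$, hence $l_1\equiv 1\pmod q$, contradicting $q\mid l_1$.
\end{itemize}
The paper argues globally instead. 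Constancy gives $\textup{g}_{X/Q(G)}=\dim_k(V_X^{Q(G)})=\textup{g}_X/q$. Riemann--Hurwitz for the tame cover $X\rightarrow X/Q(G)$, ramified exactly at $P$ and at the $p^s$ points over $Q$, then gives $2(q-1)=(q-1)(p^s+1)$, so $q=1$.
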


\begin{proof}
To check this we can assume that $m\ge 2$.

Recall that $Y\cong\mathbb P_k^1$. Let $\overline{\zeta}\in k^\times$ be the eigenvalue of the action of $\sigma$ on the tangent space of $Y$ at the branch point $\infty\in Y(k)$. The induced action of $\Sigma$ on $Y$ is faithful, so $\overline{\zeta}$ is a primitive $m$-th root of unity.

For the `only if' part we fix $l\in\llbracket1,m-1\rrbracket$. The automorphism $\sigma^l$ fixes precisely two $k$-valued points of $Y$: the image of the wildly ramified point $P\in X(k)$ and another point $Q$ by Definition \ref{D5}(3). Clearly, $G$ fixes $P$. The inertia groups of points of $X$ that map to $Q$ are $h\Sigma h^{-1}$ with $h\in H$ by Definition \ref{D5}(3). As $\chi$ is injective, if $h\in H$ is not the identity element, then $\Sigma\cap (h\Sigma h^{-1})$ is the trivial subgroup of $G$. Thus $\sigma^l$ fixes a unique point of $X$ above $Q$. So $\sigma^l$ fixes exactly two points of $X$.

The tangent eigenvalues of $\sigma^l$ on $Y$ at its two fixed points $\infty$ and $Q$ are $\overline{\zeta}^l$ and $\overline{\zeta}^{-l}$ (respectively). At the fixed point above $Q$, the map $X\rightarrow Y$ is \'etale, so the tangent eigenvalue on $X$ is $\overline{\zeta}^{-l}$. It remains to identify the tangent eigenvalue at $P$. 

Let $x$ be a local parameter of $X$ at the point above $\infty$. As $X\rightarrow Y$ is totally ramified of degree $p^s$ at this point, let $y$ be a local parameter of $Y$ at $\infty$ such that
$$y\equiv x^{p^s}\pmod{(x^{p^s+1})}.$$
If $\omega\in k^{\times}$ is such that
$$\sigma(x)\equiv\omega x\pmod{(x^2)},$$
then $\sigma(y)\equiv \sigma(x)^{p^s}\pmod{(\sigma(x)^{p^s+1})}$ gives $\omega^{p^s}=\overline{\zeta}$. As $\chi$ is injective, $m\mid p-1$, and hence $p^s\equiv 1\pmod m$. As $\omega$ is also an $m$-th root of unity, it follows that $\omega=\overline{\zeta}$. Thus the tangent eigenvalue of $\sigma^l$ at $P$ is $\overline{\zeta}^l$.

We now lift only the tame pair $(X,\Sigma)$ to characteristic $0$. Equivalently, after lifting the quotient curve $X/\Sigma$ together with the branch divisor, the prime-to-$p$ specialization theorem for tame covers lifts the cyclic tame cover $X\rightarrow X/\Sigma$; see \cite{SGA 1}, Exp.\ XIII, Cor.\ 2.12 and Exp.\ X, Cor.\ 2.4. Thus there exists an $R$ and a connected smooth projective curve $\mathcal X$ over $R$ together with an automorphism that lifts $\sigma$. The fixed points and their tangent characters lift as well, with $\overline{\zeta}$ replaced by its Teichm\"uller lift $\zeta\in R_0^{\times}\subset R^\times$.

We apply the Lefschetz fixed point formula of Atiyah--Bott \cite{AB}, Sect.\ 4, Thm.\ 4.12 to the automorphism $\sigma^l$ of the generic fiber $\mathcal X_B$. If $\zeta_{\mathcal P}$ denotes the tangent eigenvalue of $\sigma^l$ at a fixed point $\mathcal P\in\mathcal X_B$, then
$$\sum_{i=0}^1 (-1)^i\textup{Tr}\bigl(\sigma^l|H^i(\mathcal X_B,\mathcal O_{\mathcal X_B})\bigr)=\sum_{\mathcal P\in \mathcal X_B^{\sigma^l}}\frac{1}{1-\zeta_{\mathcal P}}.$$
Using $H^0(\mathcal X_B,\mathcal O_{\mathcal X_B})=B$ and Serre duality, this becomes
$$\textup{Tr}\bigl(\sigma^{-l}|H^0(\mathcal X_B,\Omega_{\mathcal X_B})\bigr)=
1-\frac{1}{1-\zeta^l}-\frac{1}{1-\zeta^{-l}}=0.$$
As $l$ runs through $\llbracket1,m-1\rrbracket$, so does $-l$ modulo $m$. Therefore
$$\textup{Tr}\bigl(\sigma^l|H^0(\mathcal X_B,\Omega_{\mathcal X_B})\bigr)=0$$
for all $l\in\llbracket1,m-1\rrbracket$.

We now translate this trace calculation back into the character function on the special fiber. As $m$ is invertible in $R$, the polynomial
$$T^m-1=\prod_{a\in C_m}(T-\zeta_m^a)\in R[T]$$
has distinct roots over $R$. Thus the finite free $R$-module $H^0(\mathcal X,\Omega_{\mathcal X/R})$ decomposes as the direct sum of the kernels of $\sigma-\zeta_m^a$. Each kernel is finite free over $R$, and reducing this decomposition modulo the maximal ideal of $R$ gives the corresponding eigenspace decomposition of $V_X$. As $k[\Sigma]$ is semisimple, the dimension of the $\overline{\zeta}_m^a$-eigenspace of $V_X$ is precisely $\ell_{V_X}(a)$. Hence the rank of the $\zeta_m^a$-eigenspace of $H^0(\mathcal X_B,\Omega_{\mathcal X_B})$ is $\ell_{V_X}(a)$. Thus
$$0=\textup{Tr}\bigl(\sigma^l|H^0(\mathcal X_B,\Omega_{\mathcal X_B})\bigr)
=\sum_{a\in C_m}\ell_{V_X}(a)\zeta_m^{al}\in R$$
for each $l\in\llbracket1,m-1\rrbracket$. Hence, denoting
$$F(T):=\sum_{a=0}^{m-1}\ell_{V_X}(a)T^a\in \mathbb Z[T],$$
we have $F(\zeta_m^l)=0$ for each $l\in\llbracket1,m-1\rrbracket$. As $F(1)=\dim_k(V_X)=\textup{g}_X$, the polynomial
$$F_0(T):=F(T)-\frac{\textup{g}_X}{m}(1+T+\cdots+T^{m-1})\in\mathbb Q[T]$$
has degree at most $m-1$ and vanishes at each $m$-th root of unity in $\mathbb C$. Therefore $F_0(T)=0$. Hence
$$F(T)=\frac{\textup{g}_X}{m}(1+T+\cdots+T^{m-1}).$$
So $\ell_{V_X}(a)=\frac{\textup{g}_X}{m}$ for each $a\in C_m$ and thus the `only if' part holds.

For the `if' part recall that $Q(G)=\Ker(\chi)$, $q=|Q(G)|$, and $m=qd$. Recall that the character function $\ell_{V_X}$ gives the dimensions of the eigenspaces of the action of $\sigma$ on $V_X=H^0(X,\Omega_X)$. The $k$-vector subspace $V_X^{Q(G)}$ is the sum of those eigenspaces whose characters are trivial on $Q(G)$. There exist exactly $d$ such characters of $\Sigma$. Thus
$$\dim_k(V_X^{Q(G)})=d\frac{\dim_k(V_X)}{m}=\frac{\dim_k(V_X)}{q}=\frac{\textup{g}_X}{q}.$$

The quotient map $X\rightarrow X/Q(G)$ is tame. For the tame quotient morphism $X\rightarrow X/Q(G)$, regular differentials on $X/Q(G)$ pull back isomorphically onto the $Q(G)$-invariant regular differentials on $X$. Thus
$$\dim_k(V_X^{Q(G)})=\textup{g}_{X/Q(G)}.$$
Combining the two equalities we get
$$\textup{g}_{X/Q(G)}=\frac{\textup{g}_X}{q}.$$

The point $P$ is fixed by $Q(G)$. As $Q(G)=Z(G)\cap\Sigma$ and $\Sigma$ is the stabilizer of one point of $X$ that maps to $Q$, each point of $X$ above $Q$ is fixed by $Q(G)$.

These are the only points of $X$ fixed by non-trivial elements of $Q(G)$ as any such fixed point on $X$ maps to a fixed point of the corresponding non-trivial automorphism of $Y$, and the cyclic tame action of $Q(G)$ on $Y\cong\mathbb P_k^1$ has only the two fixed points given by the image of $P$ and by $Q$. So the quotient map $X\rightarrow X/Q(G)$ has exactly $p^s+1$ ramification points, namely $P$ and the $p^s$ points above $Q$, each with ramification index $q$. From this and the Riemann--Hurwitz formula we get that
$$2\textup{g}_X-2=q(2\textup{g}_{X/Q(G)}-2)+(q-1)(p^s+1).$$
Substituting $\textup{g}_{X/Q(G)}=\frac{\textup{g}_X}{q}$ we get
$$2(q-1)=(q-1)(p^s+1).$$
Hence $q=1$. Thus $\chi$ is injective and the `if' part holds.
\end{proof}

\begin{theorem}\label{T6}
Suppose that $\chi$ is injective. If $X\rightarrow X/G$ is not an HKG cover, then we assume that $m=2$ and $Y\cong\mathbb P_k^1$. Then, for every $R$ containing $R_s$, there exists a lift $\mathcal V$ of $V_X$ to $R$ such that $r_{\mathcal V}=0$.
\end{theorem}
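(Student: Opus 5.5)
The plan is to reduce everything to Theorem \ref{T2}(2). It says that, for $R$ containing $R_s$ and $\chi$ injective, $V_X$ has a lift $\mathcal V$ with $r_{\mathcal V}=0$ if and only if two conditions hold: the character function $\ell_{V_X}$ is constant and $\jmath_{V_X}=0$. So it suffices to verify these two numerical conditions in each of the two situations allowed by the hypotheses.

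\emph{The HKG case.} Assume $X\rightarrow X/G$ is an HKG cover. Then $Y\cong\mathbb P^1_k$ by Lemma \ref{L8}. Hence $\jmath_{V_X}=0$, either by Proposition \ref{P4}(3) or directly by Example \ref{EX8}(1). Since $\chi$ is injective, Theorem \ref{T5} (its `only if' direction) gives that $\ell_{V_X}$ is constant. Theorem \ref{T2}(2) then produces the desired lift $\mathcal V$ with $r_{\mathcal V}=0$.

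\emph{The non-HKG case.} Here, by hypothesis, $m=2$ and $Y\cong\mathbb P^1_k$. Proposition \ref{P4}(3) again gives $\jmath_{V_X}=0$. For constancy of $\ell_{V_X}$ we use Proposition \ref{P4}(4), which applies since $\chi$ is injective and $Y$ is rational. When $m=2$ it states exactly that $\textup{g}_X$ is even and $\ell_{V_X}$ is constant with value $\frac{\textup{g}_X}{2}$. The underlying reason is that $\ell_{V_X}(a)+\ell_{V_X}(-a)=\frac{2\textup{g}_X}{m}$, and $-a=a$ for every $a\in C_2$. Applying Theorem \ref{T2}(2) once more finishes this case.

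The main point to watch is that Proposition \ref{P4}(3) and (4) are themselves deduced from Theorem \ref{T3}(1) and Theorem \ref{T2}(2), applied to the crystalline lift $\mathcal M_X$ with $\mathcal M_X^H=\{0\}$. I would double-check that this lift is an $R_0[G]$-module free over $R_0$ lifting $M_X$, so that those results apply after base change to $R\supset R_s$. I would also check that $\jmath_{V_X}=0$ follows from $\jmath_{M_X}=0$, which holds because $V_X$ is a submodule of $M_X$. Beyond this verification, the proof is a direct combination of earlier results; the only substantive input is the trace computation in Theorem \ref{T5} that makes $\ell_{V_X}$ constant in the HKG case.
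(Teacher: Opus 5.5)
Your proposal is correct and matches the paper's proof essentially verbatim. In the HKG case the paper gets constancy of $\ell_{V_X}$ from Theorem \ref{T5} and $\jmath_{V_X}=0$ from Example \ref{EX8}(1); in the other case it uses Proposition \ref{P4}(4) and (3), and it then concludes by Theorem \ref{T2}(2). Your side check that $\jmath_{V_X}=0$ follows from $\jmath_{M_X}=0$ is also sound: projective $k[G]$-modules are injective, so a projective summand of the submodule $V_X\subset M_X$ would split off from $M_X$.
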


\begin{proof}
If $X\rightarrow X/G$ is an HKG cover, then the function $\ell_{V_X}$ is constant by Theorem \ref{T5} and $\jmath_{V_X}=0$ by Example \ref{EX8}(1). If $m=2$ and $Y\cong\mathbb P_k^1$, then the function $\ell_{V_X}$ is constant by Proposition \ref{P4}(4) and we have $\jmath_{V_X}=0$ by Proposition \ref{P4}(3). 

As $\ell_{V_X}$ is constant and $\jmath_{V_X}=0$, the theorem holds by Theorem \ref{T2}(2).\end{proof}

\begin{corollary}\label{C5}
Suppose that $\chi$ is injective, $p$ is odd, $m$ is even, and $\rho$ is such that $Y\cong\mathbb P^1_k$. If there exists no lift $\mathcal V$ of $V_X$ to $R$ which is rational as an $R[H]$-module and satisfies $r_{\mathcal V}=0$, then the action $\rho$ does not lift to $R$ (equivalently, there exists no smooth projective curve $\mathcal X$ over $\Spec R$ whose reduction modulo $\upsilon$ is $X$ and for which we have a homomorphism $\varrho:G\rightarrow\Aut(\mathcal X)$ that lifts $\rho$).
\end{corollary}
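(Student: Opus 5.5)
We prove the contrapositive: assuming $\rho$ lifts to $R$, we produce a lift $\mathcal V$ of $V_X$ to $R$ that is rational as an $R[H]$-module and satisfies $r_{\mathcal V}=0$.

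So suppose there exists a smooth projective curve $\mathcal X$ over $\Spec R$ with reduction $X$ modulo $\upsilon$, together with a homomorphism $\varrho:G\rightarrow\Aut(\mathcal X)$ lifting $\rho$. We take $\mathcal V:=H^0(\mathcal X,\Omega_{\mathcal X})$, which is an $R[G]$-module. As noted in the proof of Proposition \ref{P5}, cohomology and base change for smooth projective curves shows that $\mathcal V$ is a free $R$-module whose reduction modulo $\upsilon$ is $V_X$. Hence $\mathcal V$ is a lift of the $k[G]$-module $V_X$ to $R$.

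Rationality comes directly from Proposition \ref{P5}(1), whose hypotheses are exactly ours: $p$ odd, $\chi$ injective, $m$ even, and $\rho$ lifting to $\mathcal X$. That result gives that $\mathcal V$ is a rational lift of $V_X$ as an $R[H]$-module. Recall the mechanism: $\mathcal M_{\mathcal X}\otimes_R B\cong\mathcal M_X\otimes_{R_0}B$ is rational, and Lemma \ref{L4} then applies to the Hodge--de Rham sequence (\ref{EQ48}), whose outer terms are dual to each other.

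It remains to show $r_{\mathcal V}=0$, i.e.\ $\mathcal V^H=\{0\}$, and this is the only step needing a separate check. By Proposition \ref{P5}(2), $\mathcal V\otimes_R B$ is isomorphic to $\mathcal R_X$, in which $\mathcal B_0$ occurs with multiplicity $\textup{g}_{Y_0}=\textup{g}_Y$. Since $Y\cong\mathbb P^1_k$, we have $\textup{g}_Y=0$, so $(\mathcal V\otimes_R B)^H=\{0\}$. Because $\mathcal V$ is torsion free, it follows that $\mathcal V^H=\{0\}$.

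Alternatively, without Proposition \ref{P5}(2): $\mathcal V\subset\mathcal M_{\mathcal X}$ via (\ref{EQ48}), and $\mathcal M_{\mathcal X}\otimes_R B\cong\mathcal M_X\otimes_{R_0}B$. The latter has no nonzero $H$-invariants by Proposition \ref{P4}(2), and $\mathcal V$ is torsion free, so again $\mathcal V^H=\{0\}$. The main point to be careful about is that the isomorphism (\ref{EQ49}) is $G$-equivariant, so that $H$-invariants correspond on the two sides; this is the functoriality asserted in the proof of Proposition \ref{P5}.

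Thus $\mathcal V$ is a rational lift with $r_{\mathcal V}=0$, which contradicts the hypothesis of the corollary. The parenthetical equivalence in the statement is simply the definition of lifting the action $\rho$.
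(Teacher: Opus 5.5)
Your proposal is correct and follows essentially the paper's proof: the paper also assumes a lift $(\mathcal X,\varrho)$ exists, gets $r_{\mathcal V_{\mathcal X}}=0$ from Isomorphism (\ref{EQ49}), Proposition \ref{P4}(1)--(2) and the sequence (\ref{EQ48}) (exactly your ``alternative'' argument), and then uses Proposition \ref{P5} for rationality to reach the contradiction. Your primary route, reading off $r_{\mathcal V}=0$ from $\textup{g}_{Y_0}=0$ in Proposition \ref{P5}(2), is only a minor repackaging of the same input.
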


\begin{proof}
Assume, for contradiction, that such a lift $\mathcal X$ and $\varrho$ exist. We use the notation of the proof of Proposition \ref{P5}. We have $r_{\mathcal M_X}=r_{M_X}$ by Isomorphism (\ref{EQ49}). Together with Proposition \ref{P4}(1) and (2), this gives $r_{\mathcal M_X}=0$. Then Short Exact Sequence (\ref{EQ48}) gives $r_{\mathcal V_{\mathcal X}}=0$. By the hypothesis on $V_X$, this lift $\mathcal V_{\mathcal X}$ is not rational as an $R[H]$-module. This contradicts Proposition \ref{P5}.\end{proof}

\begin{remark}\normalfont\label{R5}
Suppose that $G=D_{p^s}$; so $p$ is odd, $m=2$, and $\chi$ is injective. Based on Theorem \ref{T4} and its analog over each spectrum $\Spec R$ (see \cite{K}, Prop.\ 4.1.2), $G$ is a local Oort group over $k$ if and only if each HKG cover $X\rightarrow X/G$ over $k$ lifts to some $R$ that contains $R_s$. Theorem \ref{T6} shows that the $k[G]$-module $V_X$ has a lift $\mathcal V$ to $R$ with $r_{\mathcal V}=0$.
\end{remark}

\section{On $B[G]$-modules of differential forms of lifts of HKG covers}\label{S13}

Recall from Section \ref{S1} that $q=|Q(G)|$.

We recall the Artin--Schreier--Witt construction in the form used below. Let
$K$ be a field of characteristic $p$ and let $W_s(K)$ be the ring of $p$-typical
Witt vectors of length $s$ with coefficients in $K$. The additive endomorphism
$$\wp:=\Frob-\mathrm{id}:W_s(K)\rightarrow W_s(K),$$
is defined by subtracting the identity from the Witt-vector Frobenius. Here
$$\Frob(y_0,\ldots,y_{s-1})=(y_0^p,\ldots,y_{s-1}^p),$$
and the subtraction is taken in the additive group of $W_s(K)$. On valued
points, it is defined by the rule
$$(y_0,\ldots,y_{s-1})\mapsto(\wp_0,\ldots,\wp_{s-1}),$$
where $(\wp_0,\ldots,\wp_{s-1})\in K[y_0,\ldots,y_{s-1}]^s$ is an $s$-tuple of
polynomials. For instance $\wp_0(y_0,\ldots,y_{s-1})=y_0^p-y_0$.

For $\underline{z}=(z_0,\ldots,z_{s-1})\in W_s(K)$, we consider the Artin--Schreier--Witt $K$-algebra 
$$K[y_0,\ldots,y_{s-1}]/(\wp_0-z_0,\ldots,\wp_{s-1}-z_{s-1})$$ that parameterizes solutions of the equation $\wp(\underline{y})=\underline{z}$.

If $\underline{z}+\wp\bigl(W_s(K)\bigr)\in W_s(K)/\wp\bigl(W_s(K)\bigr)$ has order $p^s$, this $K$-algebra is a cyclic field
extension $K\rightarrow L$ of degree $p^s$, and the resulting Galois group $\Aut(L/K)=\{T_{[b]_{p^s}}|b\in\llbracket1,p^s\rrbracket\}$ is formed by the Witt-vector translations
\[
T_{[b]_{p^s}}(\underline{y}):=\underline{y}+[b]_{p^s}\;\;\;\textup{and}\;\;\; b\in W_s(\mathbb F_p)=\mathbb Z/p^s\mathbb Z=C_{p^s}.
\]
For references see \cite{S}, Ch.\ X, Sect. 3 and \cite{L}, Ch.\ 26,
pp.\ 105--108.

\begin{lemma}\label{L9}
With the notation above, let $g\in\Aut(K)$ and
$u\in(\mathbb Z/p^s\mathbb Z)^\times$ be such that $g(\underline{z})=u\cdot\underline{z}$ in $W_s(K)$, where $\cdot$ denotes
multiplication in the ring of $p$-typical Witt vectors of length $s$. Then the
rules
\[
\widetilde{g}|_K=g\;\;\;\textup{and}\;\;\;\widetilde{g}(\underline{y})=u\cdot\underline{y}
\]
are compatible with the equation $\wp(\underline{y})=\underline{z}$ and therefore define an extension $\widetilde{g}\in\Aut(L)$ of $g$ such that for each $b\in\llbracket1,p^s\rrbracket$, we have 
$\widetilde{g}T_{[b]_{p^s}}\widetilde{g}^{-1}=T_{u^{-1}[b]_{p^s}}$.
\end{lemma}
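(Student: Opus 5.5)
We prove Lemma \ref{L9} in three steps: first that the rules define a $K$-algebra-compatible ring map, then that this map is an automorphism of $L$, and finally the conjugation formula.

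\emph{Compatibility.} Let $g$ act coefficientwise on $W_s(K)$. This is a ring endomorphism of $W_s(K)$, since Witt addition and multiplication are given by polynomials with integer coefficients. It commutes with the Witt Frobenius, because $g$ commutes with $p$-th powers on $K$. Hence $g\circ\wp=\wp\circ g$.

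Multiplication by $u\in\mathbb Z/p^s\mathbb Z=W_s(\mathbb F_p)\subset W_s(K)$ also commutes with $\wp$. Indeed, $\wp$ is additive, so it commutes with multiplication by any integer $n$, and $u$ is represented by such an integer (the Frobenius fixes $W_s(\mathbb F_p)$). Therefore
$$\wp(u\cdot\underline y)=u\cdot\wp(\underline y)=u\cdot\underline z=g(\underline z).$$

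Now extend $g$ to a ring map $K[y_0,\ldots,y_{s-1}]\rightarrow L$ by sending the tuple $(y_0,\ldots,y_{s-1})$ to the components of $u\cdot\underline y\in W_s(L)$. Each component is a polynomial in the $y_i$ with $\mathbb F_p$-coefficients. Under this map, the component $\wp_i-z_i$ is sent to the $i$-th component of $\wp(u\cdot\underline y)-g(\underline z)$, which is $0$ by the display above. So the relations defining $L$ are killed, and the map descends to a ring endomorphism $\widetilde g$ of $L$ that restricts to $g$ on $K$.

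\emph{Automorphism.} Let $u'$ be the inverse of $u$ modulo $p^s$. Applying the same construction to $(g^{-1},u')$ is legitimate, since $g^{-1}(\underline z)=u'\cdot\underline z$. The two resulting maps compose to the identity on $K$ and on $\underline y$, because $u'u\cdot\underline y=\underline y$. Hence $\widetilde g\in\Aut(L)$.

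\emph{Conjugation formula.} It suffices to compare $\widetilde gT_{[b]}\widetilde g^{-1}$ and $T_{u^{-1}[b]}$ on the generators $\underline y$. Both maps are trivial on $K$: this is clear for $T$, and for the composite it holds because $T_{[b]}$ fixes $K$, so $\widetilde g\,T_{[b]}\,\widetilde g^{-1}|_K=g\,g^{-1}=\mathrm{id}$.

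Extending the automorphisms $\widetilde g$, $T$ componentwise to $W_s(L)$ gives ring endomorphisms. So $\widetilde g^{-1}(\underline y)=u^{-1}\cdot\underline y$, and $\widetilde g$ fixes the constant $[b]\in W_s(\mathbb F_p)$. We compute
$$\widetilde g T_{[b]}\widetilde g^{-1}(\underline y)=\widetilde g T_{[b]}(u^{-1}\underline y)=\widetilde g\bigl(u^{-1}(\underline y+[b])\bigr)=u^{-1}(u\underline y+[b])=\underline y+u^{-1}[b].$$
The right-hand side is $T_{u^{-1}[b]}(\underline y)$, which proves the formula.

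The one delicate point is the justification that ring automorphisms of $L$ act componentwise on $W_s(L)$ as ring maps and that Witt multiplication by integer classes is compatible with these maps. Both facts follow from the universality of the Witt polynomials.
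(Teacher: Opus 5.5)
Your proof is correct and follows the paper's argument. Additivity of $\wp$ gives $\wp(u\cdot\underline{y})=u\cdot\underline{z}=g(\underline{z})$, the inverse of $\widetilde{g}$ comes from the pair $(g^{-1},u^{-1})$, and the conjugation formula is checked on $\underline{y}$ using that $g$ fixes $W_s(\mathbb F_p)$; your version simply supplies more detail. One small wording fix: the image of $\wp_i-z_i$ is the difference of the $i$-th components of $\wp(u\cdot\underline{y})$ and $g(\underline{z})$, not the $i$-th component of their Witt difference, but it vanishes either way since these Witt vectors are equal.
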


\begin{proof}
The endomorphism $\wp$ is additive and commutes with multiplication by elements
of $W_s(\mathbb F_p)=\mathbb Z/p^s\mathbb Z$. Thus
$$\wp(u\cdot\underline{y})=u\cdot\wp(\underline{y})
=u\cdot\underline{z}=g(\underline{z}).$$
So the displayed assignment respects the Artin--Schreier--Witt equations and
defines an extension of $g$ to $L$; its inverse is obtained in the same way
from $g^{-1}$ and $u^{-1}$. As $[b]_{p^s}\in W_s(\mathbb F_p)=C_{p^s}$ is fixed by $g$,
$\widetilde{g}T_{[b]_{p^s}}\widetilde{g}^{-1}$ sends $\underline{y}$ to
$\underline{y}+u^{-1}b$ and hence $\widetilde{g}T_{[b]_{p^s}}\widetilde{g}^{-1}=T_{u^{-1}[b]_{p^s}}$. So the lemma holds.
\end{proof}

\smallskip
In the HKG setting below, the relevant local extension is obtained by completing
at the unique wildly ramified point, so the base local field is isomorphic to
$k((x))$. Recall the following standard cyclic Artin--Schreier--Witt upper-jump criterion (see \cite{OP}, Lem.\ 3.4). 

\begin{lemma}[{\bf Obus--Pries}]
\label{L10}
Suppose that $\underline{z}=(z_0,\ldots,z_{s-1})\in W_s(k((x)))$ is in standard form in the sense of \cite{OP}, Sect.\ 3.2, i.e.,
each non-zero $z_i$ is a polynomial in $x^{-1}$ whose monomial exponents are not
divisible by $p$. For $i\in\llbracket1,s\rrbracket$, let $d_i$ be the pole order of $z_{i-1}$ at $x=0$, with
$d_i:=-\infty$ if $z_{i-1}=0$. If $\wp(\underline{y})=\underline{z}$ defines a cyclic extension of $k((x))$ of degree
$p^s$, then its $i$-th upper jump is
\[
u_i:=\max(p^{i-j}d_j\mid j\in\llbracket1,i\rrbracket).
\]
In particular, $u_1=d_1$ and $u_i=\max(p u_{i-1},d_i)$ for $i\in\llbracket2,s\rrbracket$.
\end{lemma}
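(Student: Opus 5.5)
This is a citation of \cite{OP}, Lem.\ 3.4, so the plan is a reconstruction of the standard argument rather than a new proof. I would argue by induction on $i\in\llbracket1,s\rrbracket$ through the tower of intermediate fields. For $j\in\llbracket1,s\rrbracket$, let $L_j$ be the subextension cut out by the truncation $(z_0,\ldots,z_{j-1})\in W_j(k((x)))$. The upper numbering is compatible with quotients of Galois groups (Herbrand). Hence the first $i$ upper jumps of $L/k((x))$ are exactly the upper jumps of the cyclic degree-$p^i$ extension $L_i/k((x))$. It therefore suffices to show that the top upper jump $u_i$ of $L_i$ equals $\max(pu_{i-1},d_i)$; unwinding this recursion then gives the closed formula $\max(p^{i-j}d_j\mid j\in\llbracket1,i\rrbracket)$.

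\textbf{Base case $i=1$.} This is the Artin--Schreier case $y_0^p-y_0=z_0$. Here $z_0$ is a polynomial in $x^{-1}$ with pole order $d_1$ prime to $p$. The classical computation (\cite{S}, Ch.\ IV) gives the single jump $d_1$. I would get it by computing the valuation of $\tau(\pi)-\pi$ for a uniformizer $\pi$ built from $y_0^{-a}x^{b}$ with $ad_1-bp=1$.

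\textbf{Inductive step.} I would use the field $L_{i-1}$ and the last Witt coordinate. By the Witt vector addition polynomials, $y_{i-1}$ satisfies an Artin--Schreier equation over $L_{i-1}$:
\[
y_{i-1}^p-y_{i-1}=z_{i-1}+c_{i-1}(y_0,\ldots,y_{i-2},z_0,\ldots,z_{i-2}).
\]
The extension $L_i/L_{i-1}$ is Artin--Schreier with Galois group $H_{i-1}\cong C_p$, and its unique lower jump $\lambda$ equals the reduced pole order of the right-hand side in the $L_{i-1}$-valuation. Herbrand's function then gives $u_i=u_{i-1}+(\lambda-l_{i-1})/p^{i-1}$.

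The real work is bounding the pole order of the correction term. In the $L_{i-1}$-valuation, $z_{i-1}$ has a pole of order $p^{i-1}d_i$, prime to $p$ up to the $p^{i-1}$ factor, since $k((x))\subset L_{i-1}$ is totally ramified of degree $p^{i-1}$. I would show that $c_{i-1}$ can be modified by an element of $\wp(L_{i-1})$ so that its leading pole equals the one predicted by $p^{i-1}\cdot pu_{i-1}$ expressed in $L_{i-1}$-units. Taking the maximum of the two poles then yields $u_i=\max(pu_{i-1},d_i)$.

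\textbf{Tie case.} When the two poles coincide, cancellation could occur, and I expect this to be the main obstacle. The standard-form hypothesis, that the exponents are prime to $p$, is what rules out cancellation together with the $p$-th-power reduction. Expecting this to be delicate, I would instead appeal to Brylinski's (or Thomas's) conductor formula for Witt vectors, which computes the conductor of the character attached to $\underline{z}$ in standard form as $\max_j p^{i-j}d_j$. Identifying that conductor with $u_i$ via class field theory then gives the formula directly.
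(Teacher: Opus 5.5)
The paper does not prove this lemma; it quotes \cite{OP}, Lem.~3.4. Your closing appeal to the Brylinski--Thomas conductor formula for reduced Witt vectors is the same kind of step. Applied to each truncation $(z_0,\ldots,z_{i-1})$, it already yields the whole statement, not just a ``tie case''. Identifying the Swan conductor of a faithful character of a cyclic group with its top upper jump is elementary and needs no class field theory. The induction you present as the main argument, however, has a genuine gap at its central step.

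Since $\lambda=l_i\equiv l_1\not\equiv 0\pmod p$ is an Artin--Schreier jump, it is the pole order of a \emph{reduced} representative of $z_{i-1}+c_{i-1}$ modulo $\wp(L_{i-1})$. For $i\ge 2$ the two pole orders you compare, $p^{i-1}d_i$ and $p^{i}u_{i-1}$, are divisible by $p$. So they are not reduced orders, and their maximum is not $\lambda$. Inserting $\lambda=\max(p^{i-1}d_i,p^iu_{i-1})$ into your own relation $u_i=u_{i-1}+(\lambda-l_{i-1})/p^{i-1}$ gives $\max(d_i,pu_{i-1})+(p^{i-1}u_{i-1}-l_{i-1})/p^{i-1}$. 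This is too large, because $p^{i-1}u_{i-1}-l_{i-1}=(p-1)\sum_{j=1}^{i-1}p^{i-1-j}l_j>0$.

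What the argument really needs are the reduced orders themselves. These are $l_{i-1}+p^{i-1}(d_i-u_{i-1})$ for $z_{i-1}$ (when $d_i>u_{i-1}$) and $l_{i-1}+p^{i-1}(p-1)u_{i-1}$ for $c_{i-1}$. In other words, reduction modulo $\wp(L_{i-1})$ must lower both raw poles by exactly $p^{i-1}u_{i-1}-l_{i-1}$.

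The first of these can be obtained from Herbrand's theorem applied to the abelian extension $L_{i-1}\cdot k((x))\bigl(\wp^{-1}(z_{i-1})\bigr)$ of $k((x))$. The second is equivalent to the special case $z_{i-1}=0$ of the lemma: the vector $(z_0,\ldots,z_{i-2},0)$ has top upper jump $pu_{i-1}$. That is precisely the nontrivial content your sentence ``I would show that $c_{i-1}$ can be modified\ldots'' leaves unproved.

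Conversely, the tie case you single out as the main obstacle never occurs. By the standard-form hypothesis $p\nmid d_i$, while $p\mid pu_{i-1}$. So once the correct reduced orders are known they are distinct, and the maximum is attained without cancellation.
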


\begin{proposition}\label{P10}
Recall that $G=C_{p^s}\rtimes_\chi C_m$ and $q=|\Ker(\chi)|$. Let $l_1<\cdots<l_s$ (resp.\ $u_1<\cdots<u_s$) be a strictly increasing sequence of natural numbers that occur as the lower (resp.\ upper) jumps of a cyclic $C_{p^s}$-extension of $k((t))$, i.e., such that the conditions of Corollary \ref{C3}(2) (resp.\ Lemma \ref{L7}(2)) hold. Then there exists an HKG cover $X\rightarrow X/G$ with $l_1<\cdots<l_s$ (resp.\ $u_1<\cdots<u_s$) as the lower (resp.\ upper) jumps of the wild cyclic subgroup $H=C_{p^s}$ if and only if 
\begin{equation}\label{EQ61}
\gcd(m,l_1)=q\;\;\;\;\textup{and}\;\;\;\;l_i\equiv l_1\pmod m\;\;\;\forall i\in\llbracket2,s\rrbracket
\end{equation}
(resp.\ $\gcd(m,u_1)=q\;\;\;\;\textup{and}\;\;\;\;u_i\equiv u_1\pmod m\;\;\;\forall i\in\llbracket2,s\rrbracket$).
\end{proposition}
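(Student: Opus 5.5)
By Theorem \ref{T4}, HKG covers with group $G$ correspond to faithful local actions $\phi:G\rightarrow\Aut_k(k[[y]])$. So I would translate the statement into a local one: there is a totally ramified $G$-extension $L/k((x))$ with $L^H=k((x))$, on which $\Sigma$ acts tamely through $\sigma(x)=\overline{\zeta}_m x$, and such that $L/k((x))$ has the prescribed jumps. By Proposition \ref{P3}(1) the lower and upper versions are equivalent. Indeed $l_1=u_1$ and $l_i-l_{i-1}=p^{i-1}(u_i-u_{i-1})$ with $p^{i-1}\equiv 1\pmod m$. Here I use that $m\mid (p-1)q$ and that $\chi$ has image in $\mathbb F_p^{\times}$, so $p\equiv1$ modulo the relevant modulus; I need to double check this point, and reducing to $\gcd$ and congruences modulo $m$ is the only place it matters. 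So I would work only with upper jumps, using Lemmas \ref{L7} and \ref{L10}.

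\emph{Necessity.} Let $t$ be a uniformizer of the $H$-fixed field $k((x))$, chosen so that $\sigma(x)=\omega x$ with $\omega$ a primitive $m$-th root of unity. The key observation is the standard conjugation fact for ramification groups: if $h\in H_{(i)}\setminus H_{(i+1)}$ in lower numbering at the point $P$ of $X$, then $\sigma h\sigma^{-1}=h^{\chi(\sigma)}$. Comparing the leading terms of $h(\pi)/\pi-1\sim c\pi^{l}$ and their conjugates by $\sigma$, where $\sigma(\pi)=\omega'\pi$ and $\omega'^{p^s}=\omega$ as in the proof of Theorem \ref{T5}, gives $\omega'^{l}c=\chi(\sigma)c$ in $k$. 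Hence $\omega'^{l_i}=\chi(\sigma)$ for every lower jump $l_i$. Since $\omega'$ has order exactly $m$, this yields $l_i\equiv l_1\pmod m$. It also gives $\overline{\zeta}_m^{l_1}=\chi(\sigma)=\overline{\zeta}_m^{\epsilon}$ up to the normalization of $\omega'$. The element $\epsilon$ generates $qC_m$ because $\chi$ has kernel of order $q$, so $\gcd(m,l_1)=q$.

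\emph{Sufficiency.} I would build $L$ explicitly with Artin--Schreier--Witt theory. Take $\underline z=(z_0,\dots,z_{s-1})$ in standard form, with $z_{i-1}=x^{-d_i}$ where $d_i$ is chosen as follows. If $u_i>pu_{i-1}$, set $d_i=u_i$; this is prime to $p$ by Lemma \ref{L7}. If $u_i=pu_{i-1}$, set $z_{i-1}=0$. By Lemma \ref{L10} the upper jumps are then the $u_i$. Next let $g(x)=\omega x$ with $\omega$ a primitive $m$-th root of unity. Then $g(z_{i-1})=\omega^{-d_i}z_{i-1}$. Since all $d_i\equiv u_1\pmod m$, the factor $\omega^{-u_1}$ is common, and it is a primitive $(m/q)$-th root of unity lying in $\mathbb F_p^{\times}$. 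I want $g(\underline z)=u\cdot\underline z$ with $u$ the Teichmüller lift of $\omega^{-u_1}$ in $W_s(\mathbb F_p)$. Multiplying a Witt vector by a Teichmüller element $[\lambda]$ rescales the $i$-th component by $\lambda^{p^{i}}=\lambda$ because $\lambda\in\mathbb F_p$, so this identity holds. Lemma \ref{L9} then extends $g$ to $\widetilde g$ on $L$ with $\widetilde g T\widetilde g^{-1}=T^{u^{-1}}$.

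The main obstacle is to realise $G$ itself rather than a quotient, since $\widetilde g$ may have order $p^sm$ or fail to produce the right semidirect product. To get order exactly $m$, I would replace $\widetilde g$ by a suitable power $\widetilde g^{1+mm'}$, as in Case 1 of Section \ref{S7}. To match the kernel $Q(G)$ and the character, I would choose $\omega$ so that $u^{-1}$ equals $\chi(\sigma)$; the condition $\gcd(m,u_1)=q$ is exactly what makes the order of $\omega^{-u_1}$ equal to $m/q=|\Im\chi|$. This gives a faithful local action of $G$, and Theorem \ref{T4} produces the HKG cover. One point still to check is that the tame branch point has inertia group $\Sigma$, which follows from Definition \ref{D5} via Theorem \ref{T4}.
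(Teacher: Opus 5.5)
Your argument is correct, but it follows a different route from the paper. The paper proves both directions by citing Obus--Pries. For necessity it uses their Thm.~1.1(b) to get $\gcd(m,mw_1)=q$ for the rational upper jumps $w_i$ of the full local $G$-extension, their Lem.~2.4 to get $u_i=mw_i$, and their Prop.~4.2(i) for $l_i\equiv l_1\pmod m$. For sufficiency it sets $w_i:=u_i/m$, verifies the hypotheses of their Thm.~1.1, and uses its existence part.

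You instead derive necessity directly from the classical conjugation rule for the graded pieces of the lower ramification filtration (Serre, \emph{Local Fields}, Ch.~IV, Prop.~9). This yields $\omega'^{l_i}=\chi(\sigma)$ for every lower jump. From it you get the congruences, and by comparing orders ($m/\gcd(m,l_1)=d$) you get $\gcd(m,l_1)=q$. You obtain sufficiency from an explicit Artin--Schreier--Witt extension built only from Lemmas \ref{L7}, \ref{L9} and \ref{L10}; this is the general form of the computation the paper does by hand in Example \ref{EX11}. Your route is self-contained and never needs the non-integral upper jumps of the $G$-extension. The paper's is shorter but rests entirely on Obus--Pries.

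Some details should be tightened; none is serious.

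\begin{itemize}
\item The claim $p^{i-1}\equiv 1\pmod m$ is false when $\chi$ is not injective (e.g.\ $p=3$, $m=4$, $q=d=2$). You only need $\gcd(p,m)=1$: then $m\mid l_i-l_{i-1}=p^{i-1}(u_i-u_{i-1})$ if and only if $m\mid u_i-u_{i-1}$. The identity $l_1=u_1$ handles the $\gcd$ condition.
\item The ``main obstacle'' does not arise. Your $u$ is the Teichm\"uller lift of $\omega^{-u_1}$, which has order $d\mid m$. So $\widetilde g^{\,m}$ is trivial on $K$ and sends $\underline y$ to $u^m\cdot\underline y=\underline y$. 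Thus $\widetilde g$ already has order exactly $m$, and no power trick is needed.
\item The existence of a primitive $\omega$ with $\omega^{u_1}=\chi(\sigma)$ needs one line. Write $u_1=qu_1'$ and $\epsilon=[q\epsilon']_m$ with $u_1'$ and $\epsilon'$ prime to $d$. Take $\omega=\overline{\zeta}_m^{\,e}$ with $e\in(\mathbb Z/m\mathbb Z)^{\times}$ lifting $\epsilon'u_1'^{-1}\in(\mathbb Z/d\mathbb Z)^{\times}$. This is possible because reduction $(\mathbb Z/m\mathbb Z)^{\times}\rightarrow(\mathbb Z/d\mathbb Z)^{\times}$ is surjective.
\item Say explicitly that $\sigma\mapsto\widetilde g$, $\tau\mapsto T_{[1]_{p^s}}$ is injective. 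If $\widetilde g^{\,j}T_{[b]_{p^s}}=1$, restricting to $K$ gives $g^j=1$, so $m\mid j$, and then $p^s\mid b$. Hence you really obtain a faithful local action of $G$ to feed into Theorem \ref{T4}.
\end{itemize}
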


\begin{proof}
For the `only if' part, let $P\in X(k)$ be the unique wildly ramified point of $X$. Completing at $P$ and at its image in $X/G$, we obtain a local
integral extension of complete discrete valuation rings of residue field $k$ which is generically Galois with Galois group $G$. Let $w_1<\cdots <w_s$ be the strictly increasing sequence of rational numbers which are the upper jumps of this local $G$-extension. 

We have $q=\frac{|{\rm Cent}_G(\tau)|}{p^s}$. Therefore, by \cite{OP}, Thm.\ 1.1(b), we have $\gcd(m,mw_1)=q$. 
Let $l_1< \cdots< l_s$ and $u_1< \cdots< u_s$ be the lower and upper (respectively) jumps of the local cyclic $H$-subextension. From \cite{OP}, Lem.\ 2.4 we get that for each $i\in\llbracket1,s\rrbracket$ we have $u_i=mw_i$.
As $l_1=u_1$ by Proposition \ref{P3}(1), we have $\gcd(m,l_1)=\gcd(m,u_1)=\gcd(m,mw_1)=q$.
Moreover $l_i\equiv l_1\pmod m$ for each $i\in\llbracket1,s\rrbracket$ by \cite{OP}, Prop.\ 4.2(i). As $l_i-l_{i-1}=p^{i-1}(u_i-u_{i-1})$ for each $i\in\llbracket2,s\rrbracket$ by Proposition \ref{P3}(1) and $\gcd(m,p)=1$, we get that $u_i\equiv u_{i-1}\pmod{m}$ for each $i\in\llbracket2,s\rrbracket$. Hence $u_i\equiv u_1\pmod{m}$ for each $i\in\llbracket1,s\rrbracket$. Thus the `only if' part holds.

For the `if' part, let $u_1<\cdots<u_s$ (resp. $l_1<\cdots<l_s$) be the upper (resp.\ lower) jumps corresponding to $l_1<\cdots<l_s$ (resp.\ $u_1<\cdots<u_s$). As $l_i-l_{i-1}=p^{i-1}(u_i-u_{i-1})$ for each $i\in\llbracket2,s\rrbracket$ by Proposition \ref{P3}(1), the congruence $l_i\equiv l_1\pmod m$ (resp.\ $u_i\equiv u_1\pmod m$) for each $i\in\llbracket1,s\rrbracket$ gives that $u_i\equiv u_1\pmod m$ (resp.\ $l_i\equiv l_1\pmod m$) for each $i\in\llbracket2,s\rrbracket$.

Let $w_i:=\frac{u_i}{m}\in \frac{1}{m}\mathbb N$ for each $i\in\llbracket1,s\rrbracket$. Then $w_1<\cdots<w_s$ and $mw_i=u_i$. We have
\[
\gcd(m,mw_1)=\gcd(m,u_1)=\gcd(m,l_1)=q
\]
and $mw_i\equiv mw_1\pmod m$ for all $i$. Moreover, we have $p\nmid mw_1$ and,  for each $i\in\llbracket2,s\rrbracket$, either $w_i=pw_{i-1}$ or $w_i>pw_{i-1}$ and $p\nmid mw_i$ by Lemma \ref{L7} and $\gcd(p,m)=1$. Thus the conditions of \cite{OP}, Thm.\ 1.1, are satisfied for $G$ and the rational upper jumps $w_1<\cdots<w_s$. Therefore, there exists a local Galois extension of $k((y))$ with Galois group $G$ whose upper jumps are $w_1<\cdots<w_s$. By \cite{OP}, Lem.\ 2.4, its cyclic $p$-Sylow subextension has upper jumps $u_1<\cdots<u_s$, equivalently, lower jumps $l_1<\cdots<l_s$ by Proposition \ref{P3}(1). By the Harbater--Katz--Gabber Theorem, this local action globalizes to an HKG cover $X\rightarrow X/G$ with lower jumps $l_1<\cdots<l_s$ (resp.\  upper jumps $u_1<\cdots<u_s$). So the `if' part holds.
\end{proof}

\smallskip
We have the following application of Proposition \ref{P10} and of prior results Lemma \ref{L8} and Proposition \ref{P5}(2).

\begin{theorem}\label{C6}
Suppose that $p$ is odd, $\chi$ is injective, $m$ is even, and the morphism $X\rightarrow X/G$ is an HKG cover. Let $l_1<\cdots<l_s$ be the lower jumps and let $u_1<\cdots<u_s$ be the upper jumps of the wild cyclic subgroup $H=C_{p^s}$ acting on the completion of the local ring of $X$ at the ramified point. For each $j\in\llbracket0,s\rrbracket$ let $H_{s-j}$ and $Y_j$ be as in Corollary \ref{C4} and, if $j\ge 1$, let 
$$\l_j:=l_j-1.$$ Then the following properties hold. 

\medskip
{\bf (1)} We have $\textup{g}_{Y_0}=0$ and for each $i\in\llbracket1,s\rrbracket$ we have
\[
\textup{g}_{Y_i}=\frac{p-1}{2}\sum_{j=1}^i \l_jp^{i-j}.
\]

{\bf (2)} For each $i\in\llbracket1,s\rrbracket$, $p-1$ divides $\textup{g}_{Y_i}$.

\smallskip
{\bf (3)} Let $\mathcal R_X$ be the $B_0[H]$-module of Proposition \ref{P5}(2). Then we have identities 
$$\frac{\textup{g}_{Y_i}-\textup{g}_{Y_{i-1}}}{p^i-p^{i-1}}=\frac{\l_i+(p-1)\sum_{j=1}^{i-1}\l_jp^{i-1-j}}{2p^{i-1}}=\frac{u_i-1}{2}$$
for each $i\in\llbracket1,s\rrbracket$ and hence a $B_0[H]$-linear isomorphism 
$$\mathcal R_X\cong\oplus_{i=1}^s \frac{\l_i+(p-1)\sum_{j=1}^{i-1}\l_jp^{i-1-j}}{2p^{i-1}}\mathcal B_i= \oplus_{i=1}^s \frac{u_i-1}{2}\mathcal B_i.$$

{\bf (4)} Let $\mathcal X$ be a smooth projective curve over $\Spec R$ whose reduction modulo $\upsilon$ is $X$ and such that there exists a homomorphism $\varrho:G\rightarrow\Aut(\mathcal X)$ that lifts $\rho$. Then the following properties hold.

\medskip\noindent
{\bf (4.a) (de Jong)} For each subgroup $\Delta$ of $G$, the quotient curve $\mathcal X/\Delta$ is smooth projective over $\Spec R$ with geometrically connected fibers.

\smallskip\noindent
{\bf (4.b)} For each subgroup $\Delta$ of $G$, the special fiber $(\mathcal X/\Delta)_k$ of $\mathcal X/\Delta$ is naturally identified with $X/\Delta$.

\smallskip\noindent
{\bf (4.c) (Chinburg--Guralnick--Harbater)} The KGB obstruction for the HKG cover $X\rightarrow X/G$ vanishes.

\smallskip\noindent
{\bf (4.d)} We have $B[H]$-linear isomorphisms
$$H^0(\mathcal X,\Omega_{\mathcal X})\otimes_R B\cong H^1(\mathcal X,\mathcal O_{\mathcal X})\otimes_R B\cong\oplus_{i=1}^s \frac{u_i-1}{2}\mathcal B_i.$$
\end{theorem}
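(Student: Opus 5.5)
We treat the parts in order: first the genus computation (1)--(3), then the lifting statements (4.a)--(4.d), which rest on (3) together with Proposition \ref{P5}(2).

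\textbf{Parts (1) and (2).} Lemma \ref{L8} gives $Y_0=Y\cong\mathbb P^1_k$, so $\textup{g}_{Y_0}=0$. For $i\ge1$ we apply the Riemann--Hurwitz formula to the cyclic cover $Y_i\rightarrow Y_0$ of degree $p^i$ with Galois group $H/H_{s-i}$. By Lemma \ref{L8} this cover is ramified only above $\infty$, where it is totally ramified; the tame point $Q$ is unramified for $H$ because $\Sigma\cap H$ is trivial. The lower jumps of the quotient $H/H_{s-i}$ are $l_1<\cdots<l_i$; this needs a short Herbrand argument, since lower numbering is not stable under quotients while upper numbering is, and we get it from Proposition \ref{P3}(1) applied with $s$ replaced by $i$. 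The different exponent is then $\sum_{j=1}^i(p^j-p^{j-1})(l_j+1)\cdot p^{i-j}/\ldots$. Rather than carrying this through in general, we use the standard expression $d=\sum_{j=0}^{i-1}(|G_{(j)}|-1)$ over the ramification filtration: the jump $l_j$ contributes $(l_j+1)(p^{i-j+1}-p^{i-j})$ for the successive filtration steps. Riemann--Hurwitz then yields $2\textup{g}_{Y_i}-2=-2p^i+\sum_{j=1}^i(p-1)p^{i-j}(l_j+1)$. Rearranging, using $\sum_{j=1}^i(p-1)p^{i-j}\cdot2=2(p^i-1)$, gives $\textup{g}_{Y_i}=\frac{p-1}{2}\sum_j(l_j-1)p^{i-j}$, which is part (1). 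For part (2) we want $2\mid \sum_j \l_jp^{i-j}$, with $\sum_j\l_j p^{i-j}\equiv\sum_j \l_j\pmod 2$ because $p$ is odd. By Proposition \ref{P10} with $q=1$ we have $\gcd(m,l_1)=1$ and $l_j\equiv l_1\pmod m$. Since $m$ is even, every $l_j$ is odd, so every $\l_j$ is even, and hence $p-1$ divides $\textup{g}_{Y_i}$.

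\textbf{Part (3).} We subtract consecutive genera: $\textup{g}_{Y_i}-\textup{g}_{Y_{i-1}}=\frac{p-1}{2}\bigl[\l_i+(p-1)\sum_{j<i}\l_jp^{i-1-j}\bigr]$. Dividing by $p^{i-1}(p-1)$ gives the middle expression. Next we use Proposition \ref{P3}(3), namely $p^{i-1}u_i=l_i+(p-1)\sum_{j<i}l_jp^{i-1-j}$, together with $\sum_{j<i}(p-1)p^{i-1-j}=p^{i-1}-1$; these show that the bracket equals $p^{i-1}(u_i-1)$. Substituting into Proposition \ref{P5}(2)'s $\mathcal R_X$, and noting that the $\mathcal B_0$ term vanishes since $\textup{g}_{Y_0}=0$, gives the stated isomorphism.

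\textbf{Part (4).} For (4.a) we cite de Jong's (and Bertin--M\'ezard-type) result that the quotient of a smooth relative curve by a finite group acting with $p$ a unit on the generic fiber stays smooth; alternatively, normality of $\mathcal X/\Delta$ plus smoothness of the special fiber forces smoothness. Part (4.b) follows because formation of invariants commutes with reduction modulo $\upsilon$ for flat quotients. We check this by comparing the genera of the generic and special fibers via flatness, using $\dim H^0(\Omega)$ constancy together with cohomology and base change. Part (4.c) is immediate from (4.a) and (4.b): take $\mathfrak X=\mathcal X_B$, so each quotient $\mathfrak X/\Delta$ has the same genus as $X/\Delta$. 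Part (4.d) combines Proposition \ref{P5}(2) with part (3). The main obstacle we expect is (4.b), the identification of the special fiber of the quotient with the quotient of the special fiber when $|\Delta|$ is divisible by $p$; there we will rely on de Jong's argument as the cited source.
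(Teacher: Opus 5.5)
Your parts (1)--(3), (4.a), (4.c) and (4.d) follow the paper's route. You use Riemann--Hurwitz for $Y_i\rightarrow Y_0$ with Hilbert's different formula, Proposition \ref{P10} with $q=1$ for the parity of the $l_j$, Proposition \ref{P3}(3) for the identity with $\frac{u_i-1}{2}$, de Jong for smoothness of $\mathcal X/\Delta$, and Proposition \ref{P5}(2) for (4.d). Your final different exponent $\sum_{j=1}^i(p-1)p^{i-j}(l_j+1)$ is correct: it is the Abel-summed form of the paper's $e_i$. The intermediate $d=\sum_{j=0}^{i-1}(|G_{(j)}|-1)$ is mis-indexed, since Hilbert's formula sums $|G_t|-1$ over all $t\ge 0$.

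\textbf{The gap is in (4.b).} The principle you invoke, that invariants commute with reduction modulo $\upsilon$ for flat quotients, is false. In general one only has an injection $\mathcal O_{\mathcal X}^{\Delta}/\upsilon\mathcal O_{\mathcal X}^{\Delta}\hookrightarrow(\mathcal O_{\mathcal X}/\upsilon\mathcal O_{\mathcal X})^{\Delta}$.

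For example, let $\zeta_p\in R$ and let $\Delta=C_p$ act on $\Spec R[x]$ by $x\mapsto\zeta_p x$. Then $R[x]$ is finite free over $R[x^p]$, and $R[x^p]$ is smooth over $R$. Yet reducing the invariants gives $k[x^p]\subsetneq k[x]=k[x]^{\Delta}$, so the natural map $X/\Delta\rightarrow(\mathcal X/\Delta)_k$ is a Frobenius, not an isomorphism. Here the action on the special fiber is trivial. This shows that (4.b) must use that $\Delta$ acts faithfully on $X$, which your argument never invokes.

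Your fallback, comparing genera via cohomology and base change, only gives $\textup{g}_{(\mathcal X/\Delta)_k}=\textup{g}_{\mathcal X_B/\Delta}$. To conclude that the finite map $X/\Delta\rightarrow(\mathcal X/\Delta)_k$ is an isomorphism you would need $\textup{g}_{X/\Delta}=\textup{g}_{\mathcal X_B/\Delta}$. That is (4.c), which you derive from (4.b), so the argument is circular.

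The missing step, as in the paper, runs as follows:
\begin{itemize}
\item Because $\rho$ is faithful, the inertia of $\Delta$ at the generic point of $X\subset\mathcal X$ is trivial.
\item So $\Delta$ acts freely on an open subscheme of $\mathcal X$ containing that point, and there the quotient commutes with base change.
\item Hence $X/\Delta\rightarrow(\mathcal X/\Delta)_k$ is birational. Both sides are smooth projective curves (the target by (4.a)), so it is an isomorphism.
\end{itemize}
Equivalently, by (4.a) the scheme $\mathcal X/\Delta$ is regular, so $\mathcal X\rightarrow\mathcal X/\Delta$ is finite flat of degree $|\Delta|$. Comparing with $\deg(X\rightarrow X/\Delta)=|\Delta|$ forces degree $1$.

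Two smaller points:
\begin{itemize}
\item For (4.c) you should state $(\mathcal X/\Delta)_B=\mathcal X_B/\Delta$. This does hold, because $R\rightarrow B$ is a localization.
\item Your ``alternative'' for (4.a) is circular, since smoothness of the special fiber of $\mathcal X/\Delta$ is exactly what de Jong's result provides. Geometric connectedness of the fibers also needs a word: they are dominated by those of $\mathcal X$.
\end{itemize}
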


\begin{proof}
For part (1), we have $\textup{g}_{Y_0}=\textup{g}_Y=0$ by Lemma \ref{L8}. 

Let $i\in\llbracket1,s\rrbracket$. The cover $Y_i\rightarrow Y_0$ is generically Galois of Galois group $H/H_{s-i}$ of order $p^i$, so its lower jumps are $l_1<\cdots<l_i$ by \cite{S}, Ch.\ IV, Sect.\ 1, Prop.\ 14. It is ramified only over the unique wild branch point. By Hilbert's different formula (see \cite{P}, Sect.\ 2.A, pp.\ 288--289 applied to $F^T=E$ and $e_T=i$), the different exponent at the unique branch point is
\[
e_i:=(l_1+1)(p^i-1)+\sum_{j=2}^i(l_j-l_{j-1})(p^{i-j+1}-1).
\]
The Riemann--Hurwitz formula gives
\[
2\textup{g}_{Y_i}-2=-2p^i+e_i.
\]
Simplifying the right hand side we get that part (1) holds. 

To prove part (2) we can assume that $p$ is odd. As $q=1$, from Proposition \ref{P10} we get that $l_j$ is odd for each $j\in\llbracket1,s\rrbracket$. Thus $\l_j$ is even for each $i\in\llbracket1,s\rrbracket$ and hence $2$ divides $\sum_{j=1}^i \l_jp^{i-j}$. So from part (1) we get that $p-1$ divides $\textup{g}_{Y_i}$. So part (2) holds.

For part (3), the identities between integers are clear for $i=1$. For $i\in\llbracket 2,s\rrbracket$, we compute
\begin{align*}
\frac{\textup{g}_{Y_i}-\textup{g}_{Y_{i-1}}}{p^i-p^{i-1}}&=\frac{\sum_{j=1}^i \l_jp^{i-j}-\sum_{j=1}^{i-1} \l_jp^{i-1-j}}{2p^{i-1}}=\frac{\l_i+(p-1)\sum_{j=1}^{i-1}\l_jp^{i-1-j}}{2p^{i-1}}\\
&=\frac{l_i+(p-1)\sum_{j=1}^{i-1}l_jp^{i-1-j}}{2p^{i-1}}-\frac{1+(p-1)\sum_{j=1}^{i-1}p^{i-1-j}}{2p^{i-1}}\\
&=\frac{p^{i-1}u_i}{2p^{i-1}}-\frac{p^{i-1}}{2p^{i-1}}=\frac{u_i-1}{2},
\end{align*}
where the first equality holds by part (1) and the fourth equality holds by Proposition \ref{P3}(3). From this we get that all the identities of part (3) hold. Thus the $B_0[H]$-linear isomorphism exists by these identities and the definition of $\mathcal R_X$ in Proposition \ref{P5}(2). So part (3) holds.

For part (4.a), each fiber of $\mathcal X/\Delta$ is geometrically connected as it is dominated by a geometrically connected projective fiber of $\mathcal X$. As the quotient morphism $\mathcal X\rightarrow\mathcal X/\Delta$ is surjective with $\mathcal X$ smooth projective over $R$, $\mathcal X/\Delta$ is smooth projective over $\Spec R$ by \cite{dJ}, Prop.\ 4.2 (see also \cite{LL}, Prop. 1.6). So part (4.a) holds.

For part (4.b), as $\Delta$ acts freely on an open subscheme of $\mathcal X$ that contains the generic point of $X$, the field of fractions of $(\mathcal X/\Delta)_k$ and $X/\Delta$ are naturally identifies as $k$-algebras. Thus $(\mathcal X/\Delta)_k$ and $X/\Delta$ are also naturally identified. Thus part (4.b) holds.

For part (4.c), let $\mathfrak X:=\mathcal X_B$. As the morphism $\Spec B\rightarrow\Spec R$ is flat, we have an identity $(\mathcal X/\Delta)_B=\mathfrak X/\Delta$. As $\mathcal X/\Delta$ is smooth over $\Spec R$, its fiber $(\mathcal X/\Delta)_B=\mathfrak X/\Delta$ over $B$ and its fiber $(\mathcal X/\Delta)_k=X/\Delta$ over $k$ (see part (4.b)) have the same genus. Thus part (4.c) holds (cf.\ \cite{CGH2}, Thm.\ 4.1).

Part (4.d) follows from part (3) and Proposition \ref{P5}(2).
\end{proof}

\smallskip
Using extra previous results and proofs, we have the following strengthening of Theorem \ref{C6}(4) which shows that it is in fact a consequence of the vanishing of the KGB obstruction.

\begin{theorem}\label{C7}
Suppose that $p$ is odd, $m$ is even, $G$ is non-abelian, and the morphism $X\rightarrow X/G$ is an HKG cover associated to a faithful local action with vanishing KGB obstruction. Let
$l_1<\cdots<l_s$ be the lower jumps of the wild cyclic subgroup $H=C_{p^s}$. For each $j\in\llbracket0,s\rrbracket$ let $H_{s-j}$ and $Y_j$ be as in Corollary \ref{C4} and, if $j\ge 1$, let $\l_j:=l_j-1$. Let $\mathfrak X$ be a geometrically connected smooth projective curve over a field $\mathfrak B$ of characteristic $0$ equipped with a faithful global action $\varrho_{\phi,\mathfrak B}:G\rightarrow\Aut(\mathfrak X)$ such that for every subgroup $\Delta$ of $G$, the quotient curves $\mathfrak X/\Delta$ and $X/\Delta$ have the same genus. Then the following properties hold.

\medskip
{\bf (1) (Obus)} The character $\chi$ is injective (equivalently, we have $q=1$). 

\smallskip
{\bf (2)} We have
$\mathfrak B[H]$-linear isomorphisms
\begin{equation*}
H^0(\mathfrak X,\Omega_{\mathfrak X/\mathfrak B})\cong H^1(\mathfrak X,\mathcal O_{\mathfrak X})\\
\cong
\Big(\oplus_{i=1}^s
\frac{u_i-1}{2}\mathcal B_{i,\mathbb Q}
\Big)\otimes_{\mathbb Q}\mathfrak B.
\end{equation*}
\end{theorem}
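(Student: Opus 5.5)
Part (1) is taken as the cited result of Obus: if the KGB obstruction vanishes for a non-abelian $G=C_{p^s}\rtimes_\chi C_m$, then $q=1$. I would also give an independent check using only genera, without invoking the lift. Let $\mathfrak X$ be as in the statement and put $\Delta=Q(G)$. The quotient $\mathfrak X/Q(G)$ has the same genus as $X/Q(G)$. In characteristic $0$ the cover $\mathfrak X\to\mathfrak X/Q(G)$ is tame and Galois of degree $q$, so its Riemann--Hurwitz data can be compared with the computation in the `if' part of Theorem \ref{T5}. There $X\to X/Q(G)$ has $p^s+1$ ramification points, each of index $q$. A non-trivial $g\in Q(G)$ commutes with $H$, so its fixed-point set on $\mathfrak X$ is $H$-stable. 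This constrains the number of fixed points to be $\equiv 1$ or a multiple of $p^s$, and that can be played against the equality of genera of $\mathfrak X/\Delta$ and $X/\Delta$ for $\Delta\in\{Q(G),\,H\cdot Q(G)\}$. Since the statement attributes (1) to Obus, I would simply cite it and move on.

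For part (2), the plan is to compute the $\mathfrak B[H]$-module $H^0(\mathfrak X,\Omega_{\mathfrak X/\mathfrak B})$ purely from the genera $\textup{g}_{\mathfrak X/H_{s-i}}=\textup{g}_{Y_i}$. After extending scalars (Lemma \ref{L2}), I may assume $\mathfrak B$ contains a primitive $p^s$-th root of unity. Pass to $\mathbb C$ (or use algebraic de Rham cohomology) and set $\mathcal C:=H^1_{dR}(\mathfrak X/\mathfrak B)$. This is a rational $\mathfrak B[H]$-module: by comparison with Betti cohomology it is the extension of scalars of $H^1(\mathfrak X(\mathbb C),\mathbb Q)$. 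It sits in the Hodge short exact sequence
$$0\rightarrow H^0(\mathfrak X,\Omega_{\mathfrak X/\mathfrak B})\rightarrow\mathcal C\rightarrow H^1(\mathfrak X,\mathcal O_{\mathfrak X})\rightarrow 0$$
of $\mathfrak B[G]$-modules, whose ends are dual by Serre duality. Since $\chi$ is injective by (1) and $m$ is even, Lemma \ref{L4} gives that both ends are isomorphic rational $\mathfrak B[H]$-modules, each equal to half of $\mathcal C$.

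It then remains to determine $\varkappa_{\mathcal C}$. As in Corollary \ref{C4}, characteristic-$0$ transfer gives $\dim\mathcal C^{\tau^{p^i}}=2\textup{g}_{\mathfrak X/H_{s-i}}$, since invariants of a finite group acting on $H^1$ are the $H^1$ of the quotient. By hypothesis these dimensions equal $2\textup{g}_{Y_i}$. Equation (\ref{EQ22}) then yields $\varkappa_{\mathcal C}(0)=2\textup{g}_{Y_0}=0$ and $\varkappa_{\mathcal C}(i)=2(\textup{g}_{Y_i}-\textup{g}_{Y_{i-1}})/(p^i-p^{i-1})$. Theorem \ref{C6}(1) and (3) evaluate this as $u_i-1$; note that these parts used only that $X\to X/G$ is HKG and that $\chi$ is injective, which (1) now supplies. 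Halving, and using that $u_i=l_1+\sum\cdots$ is odd because all the $l_j$ are odd (Proposition \ref{P10} with $q=1$), gives the displayed isomorphism over $\mathbb Q$ tensored to $\mathfrak B$. Lemma \ref{L2}(1) then lets me descend from the enlarged field back to $\mathfrak B$.

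The main obstacle I expect is the rationality of $\mathcal C$ together with the invariants identity over an arbitrary characteristic-$0$ field $\mathfrak B$. I would handle it by defining $\mathfrak X$ over a finitely generated subfield, embedding that subfield into $\mathbb C$, and using Lemma \ref{L2} to move between fields. The invariants identity $\dim\mathcal C^{\Delta}=2\textup{g}_{\mathfrak X/\Delta}$ follows as in Proposition \ref{P4}(1), using pushforward and pullback with $\pi_*\pi^*=|\Delta|$ invertible.
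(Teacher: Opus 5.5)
Your argument is correct, and its skeleton matches the paper's. For (1), the paper also relies on Obus: it takes his congruence $u_1\equiv -1\pmod m$ and combines it with $\gcd(m,u_1)=q$ from Proposition \ref{P10} to force $q=1$. For (2), it uses the same ingredients you do:
\begin{itemize}
\item the transfer identity $\dim_{\mathfrak B}\bigl(H^1_{dR}(\mathfrak X)^{H_{s-j}}\bigr)=2\textup{g}_{Y_j}$;
\item the Hodge--de Rham sequence together with Serre duality;
\item Lemma \ref{L4};
\item the count of Corollary \ref{C4};
\item the identities of Theorem \ref{C6}(3).
\end{itemize}

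The one genuinely different step is how you prove that the $\mathfrak B[H]$-module $H^1_{dR}(\mathfrak X)$ is rational. You descend $\mathfrak X$ and its $G$-action to a finitely generated subfield, embed that subfield in $\mathbb C$, compare with $H^1(\mathfrak X(\mathbb C),\mathbb Q)$, and return via Lemma \ref{L2}(2). The paper instead stays over $\mathfrak B$. It applies the Lefschetz fixed point formula to $h$ and $h^{-1}$ to get $\textup{Tr}\bigl(h\mid H^1_{dR}(\mathfrak X)\bigr)=2-|\operatorname{Fix}(h)|\in\mathbb Z$, and then concludes by Lemma \ref{L3}. Your route gives the $\mathbb Q$-structure directly, but it needs the Lefschetz principle and the Betti--de Rham comparison. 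The paper's route is purely algebraic, avoids the descent, and produces an explicit trace formula.

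You should drop the ``independent check'' of (1). As written it is not an argument: an $H$-stable finite set can have any cardinality, since its $H$-orbits can have size $1$. Nothing in the sketch actually forces $q=1$. It does no harm only because you fall back on the citation.
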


\begin{proof}
We have $u_1\equiv-1\pmod m$ by \cite{O1}, Prop.\ 5.9. From this and Proposition \ref{P10} we get that part (1) holds. 

Let $\mathfrak M_{\mathfrak X}:=H^1_{dR}(\mathfrak X)$. As de Rham cohomology is a Weil cohomology, a pullback--trace argument as in the proof of Proposition \ref{P4}(1) applied to
the quotient morphism $\mathfrak X\rightarrow \mathfrak X/H_{s-j}$ gives a $\mathfrak B$-linear isomorphism
\[
H^1_{dR}(\mathfrak X/H_{s-j})
\xrightarrow{\ \sim\ }
\mathfrak M_{\mathfrak X}^{H_{s-j}}.
\]
As $\textup{g}_{\mathfrak X/H_{s-j}}=\textup{g}_{Y_j}$, we conclude that $\dim_{\mathfrak B}(\mathfrak M_{\mathfrak X}^{H_{s-j}})=2\textup{g}_{Y_j}$. 

Let $\mathfrak V_{\mathfrak X}:=
H^0(\mathfrak X,\Omega_{\mathfrak X})$ and
$\mathfrak W_{\mathfrak X}:=
H^1(\mathfrak X,\mathcal O_{\mathfrak X})$. The Hodge--de Rham
short exact sequence
\[
0\longrightarrow\mathfrak V_{\mathfrak X}\longrightarrow
\mathfrak M_{\mathfrak X}\longrightarrow\mathfrak W_{\mathfrak X}
\longrightarrow0
\]
is $G$-equivariant, and Serre duality identifies $\mathfrak W_{\mathfrak X}$ with the dual of $\mathfrak V_{\mathfrak X}$.

For a non-identity element $h\in H$, let
$\operatorname{Fix}(h)$ be the set of fixed points of its action on the extension of $\mathfrak X$ to an algebraic closure of $\mathfrak B$. For $P\in\operatorname{Fix}(h)$, let $\lambda_P$
be the eigenvalue of $h$ on the tangent space of this extension at $P$. The Lefschetz formula
used in the proof of Theorem \ref{T5}, applied to $h$ and $h^{-1}$, gives that the trace of the $\mathfrak B$-linear map $h:\mathfrak M_{\mathfrak X}\rightarrow \mathfrak M_{\mathfrak X}$ is
$$2-\sum_{P\in\operatorname{Fix}(h)}
\Big(\frac{1}{1-\lambda_P}+\frac{1}{1-\lambda_P^{-1}}\Big)=2-|\operatorname{Fix}(h)|\in\mathbb Z.$$
So the $\mathfrak B[H]$-module $\mathfrak M_{\mathfrak X}$ is rational by the equivalence $(1)\Leftrightarrow (4)$ of Lemma \ref{L3}. 

As in the proof of Corollary \ref{C4} we get a $\mathfrak B[H]$-linear isomorphism
$$\mathfrak M_{\mathfrak X}\cong \bigl(2\textup{g}_{Y_0}\mathcal B_{0,\mathbb Q}\otimes_{\mathbb Q}\mathfrak B\bigr)\bigoplus_{i=1}^s \Big(\frac{2\textup{g}_{Y_i}-2\textup{g}_{Y_{i-1}}}{p^i-p^{i-1}}\mathcal B_{i,\mathbb Q}\otimes_{\mathbb Q}\mathfrak B\Big).$$ 

Lemma \ref{L4} applied over $\mathfrak B$ gives that $\mathfrak V_{\mathfrak X}$
and $\mathfrak W_{\mathfrak X}$ are rational and isomorphic as $\mathfrak B[H]$-modules.
As the short exact sequence splits, we also have
$\mathfrak M_{\mathfrak X}\cong 2\mathfrak V_{\mathfrak X}$. From the last two isomorphisms and $\textup{g}_{Y_0}=0$ by Lemma \ref{L8}, we get first that $\mathfrak V_{\mathfrak X}\cong\oplus_{i=1}^s \Big(\frac{\textup{g}_{Y_i}-\textup{g}_{Y_{i-1}}}{p^i-p^{i-1}}\mathcal B_{i,\mathbb Q}\otimes_{\mathbb Q}\mathfrak B\Big)$ and second that part (2) holds by the identities of Theorem \ref{C6}(3).\end{proof}

\begin{remark}\normalfont\label{R6}
{\bf (1)} Recall that $G=C_{p^s}\rtimes_{\chi} C_m$. The existence of an HKG cover $X\rightarrow X/G$ does not force in general the character $\chi:C_m\rightarrow\mathbb F_p^{\times}$ to be injective as the compatibility condition is given by Equation (\ref{EQ61}) but it does force $\chi$ to be injected when the KGB obstruction vanishes and $G$ is non-abelian by Theorem \ref{C7}(1). Thus Example \ref{EX11} below is not in the applicability range of Obus Conjecture.

\smallskip
{\bf (2)} If $\chi$ is injective, then for each HKG cover $X\rightarrow X/G$, the $k[G]$-module $V_X$ has a lift $\mathcal V$ to $R_s$ with $r_{\mathcal V}=0$ by Theorem \ref{T6}. So the module-theoretic obstruction \cite{KT2}, Crit.\ 1.6 in the applicability range of Obus Conjecture is removed and replaced by additional evidences for it.
\end{remark}

\section{Examples}\label{S14}

In this section we include a few examples as follows. 

\begin{ex}\normalfont\label{EX10}
Suppose that $(p,s,m)=(5,2,4)$ and that $\chi:C_4\rightarrow \Aut(H)$ is injective. The upper jumps $u_1=3$ and $u_2=19$ satisfy the conditions of Lemma \ref{L7}: $5\nmid 3$, $19\ge 5\cdot 3$, and $5\nmid 19$. As $u_1\equiv u_2\equiv -1\pmod 4$, Proposition \ref{P10} gives an HKG cover $X\rightarrow X/G$ whose wild cyclic subgroup $H=C_{25}$ has these upper jumps.
The lower jumps are
$$l_1=u_1=3\;\;\;\textup{and}\;\;\;l_2=l_1+5(u_2-u_1)=3+5\cdot 16=83.$$
For $b=a_{1,b}+a_{2,b}5$ with $(a_{1,b},a_{2,b})\in\llbracket0,4\rrbracket^2$, Equation (\ref{EQ51}) gives
$$d_b=\Bigl\lfloor \frac{5\bigl(4+3(4-a_{1,b})\bigr)+4+83(4-a_{2,b})}{25}\Bigr\rfloor.$$
Thus the tuple $(d_0,\ldots,d_{24})$ is equal
$$(16, 16, 15, 14, 14, 13, 12, 12, 11, 10, 10, 9, 8, 8, 7, 6, 6, 5, 4, 4, 3, 2, 2, 1, 0).$$
Thus from Equation (\ref{EQ57}) or Equations (\ref{EQ53}) and (\ref{EQ55}) we get that
\begin{align*}
V_X=H^0(X,\Omega_X)\cong &\ U_{[3]_4,2}\oplus U_{[2]_4,3}\oplus U_{[1]_4,5}\oplus U_{[0]_4,6}\oplus U_{[3]_4,8}\\
&\oplus U_{[2]_4,9}\oplus U_{[1]_4,11}\oplus U_{[0]_4,12}\oplus U_{[3]_4,14}\oplus U_{[2]_4,15}\\
&\oplus U_{[1]_4,17}\oplus U_{[0]_4,18}\oplus U_{[3]_4,20}\oplus U_{[2]_4,21}\oplus U_{[1]_4,23}.
\end{align*}

As $\chi$ is injective, $\epsilon\in\mathbb Z/4\mathbb Z$ has order $4$. Recall that the composition factors of $U_{a,b}$ are
$$U_a,\;U_{a-\epsilon},\;U_{a-2\epsilon},\;\ldots,\;U_{a-(b-1)\epsilon}.$$
Summing these composition factors over the above $15$ indecomposable direct summands of $V_X$ gives that the multiplicity function $\ell_{V_X}$ is constant, which is a particular case of Theorem \ref{T5}, of constant value $46$.
\end{ex}

\begin{ex}\normalfont\label{EX11}
Suppose that $(p,s,m,q,d)=(5,2,4,2,2)$; so $\chi$ is non-injective, $G=C_{25}\rtimes_{\chi} C_4$, and $|G|=100$. We show that in such a case there exists an HKG cover $X\rightarrow X/G$. Let $K:=k((z))$, take
$$\underline{z}:=(z^{-2},z^{-14})\in W_2(K),$$
and let $L:=K(y_1,y_2)$ be defined by $\wp(y_1,y_2)=\underline{z}$. The vector $\underline{z}$ is in standard form in the sense of Lemma \ref{L10}, as the pole orders $2$ and $14$ are not divisible by $5$. Hence $K\rightarrow L$ is a cyclic extension of degree $25$ with Galois group $H=C_{25}$, and Lemma \ref{L10} gives the upper jumps
$$u_1=2\qquad\textup{and}\qquad u_2=\max(5\cdot 2,14)=14.$$
Let $\sigma$ act $k$-linearly and continuously on $K$ by $\sigma(z):=\overline{\zeta}_4z$.
As $\overline{\zeta}_4^2=-1$, we have
$\overline{\zeta}_4^{-2}=\overline{\zeta}_4^{-14}=-1$, and hence
$$\sigma(\underline{z})=((\overline{\zeta}_4z)^{-2},(\overline{\zeta}_4z)^{-14})=(-z^{-2},-z^{-14}).$$
This is also $(-1)\cdot\underline{z}$ in $W_2(K)$. Indeed, the unit
$-1\in W_2(\mathbb F_5)$ is the Teichm\"uller vector $(-1,0)$, and the
length-two Witt multiplication formula
\[
(x_0,x_1)(y_0,y_1)=(x_0y_0,x_0^5y_1+y_0^5x_1+5x_1y_1)
\]
shows that multiplication by a Teichm\"uller scalar $c$ satisfies
\[
(c,0)(y_0,y_1)=(cy_0,c^5y_1);
\]
see also \cite{L}, Ch.\ 26, Eq.\ (29). Thus
$\sigma(\underline{z})=(-1)\cdot\underline{z}$ in the Witt group. By Lemma \ref{L9}, applied to the unit $[-1]_{p^s}\in (\mathbb Z/p^s\mathbb Z)^{\times}$, $\sigma$ extends to $L$ by $\sigma(y_1,y_2)=[-1]_{p^s}\cdot(y_1,y_2)$. This extension has order $4$ and acts on the translations in $H$ by $T_{[b]_{p^s}}\mapsto T_{-[b]_{p^s}}$, i.e., by additive inversion. Therefore $G=C_{25}\rtimes_{\chi} C_4$ is a subgroup of $\Aut(L)$, where $\chi:C_4\rightarrow \Aut(C_{25})\rightarrow \mathbb F_5^{\times}$ is non-injective of image $\{[1]_5,[4]_5\}\cong C_2$. The local faithful action $G\rightarrow\Aut(L)$ is realized by an HKG cover $X\rightarrow X/G$ by Theorem \ref{T4}.

The corresponding lower jumps are
$$l_1=u_1=2\;\;\;\textup{and}\;\;\;l_2=l_1+5(u_2-u_1)=2+5(12)=62.$$
For $b=a_{1,b}+a_{2,b}5$ with $(a_{1,b},a_{2,b})\in\llbracket0,4\rrbracket^2$, Equation (\ref{EQ51}) gives
$$d_b=\Bigl\lfloor \frac{5\bigl(4+2(4-a_{1,b})\bigr)+4+62(4-a_{2,b})}{25}\Bigr\rfloor.$$
Thus the tuple $(d_0,\ldots,d_{24})$ is equal to
\[
\begin{aligned}
(12,12,11,11,10,10,9,9,8,8,7,7,6,6,5,5,4,4,3,3,2,2,1,1,0).
\end{aligned}
\]
Thus Equations (\ref{EQ53}) and (\ref{EQ55}) give that
\begin{align*}
V_X=H^0(X,\Omega_X)\cong &\ U_{[3]_4,2}\oplus U_{[2]_4,4}\oplus U_{[1]_4,6}\oplus U_{[0]_4,8}\oplus U_{[3]_4,10}\oplus U_{[2]_4,12}\\
&\oplus U_{[1]_4,14}\oplus U_{[0]_4,16}\oplus U_{[3]_4,18}\oplus U_{[2]_4,20}\oplus U_{[1]_4,22}.
\end{align*}

As $\chi(\sigma)=\overline{\zeta}_4^2=-1$, we check that we have an identity
\begin{equation}\label{EQ61.5}
\ell_{V_X}(a)=\sum_{j=0}^{24} \mu_{j,a+2[j]_4}.
\end{equation}
The character function is additive along the filtration
$\bigl(V_X^{(j)}\bigr)_{j=0}^{25}$. By Equation (\ref{EQ52}) and $\epsilon=[2]_4$, the multiplicity of $T_a$ in the quotient
$V_X^{(j+1)}/V_X^{(j)}$ is $\mu_{j,c}$ for the unique $c\in C_4$ such that
$c-2[j]_4=a$, namely $c=a+2[j]_4$. Summing these multiplicities over
$j\in\llbracket0,24\rrbracket$ we get that Equation (\ref{EQ61.5}) holds.

Evaluating these sums gives the values
$$\ell_{V_X}([0]_4)=\ell_{V_X}([2]_4)=30\;\;\;\textup{and}\;\;\;\ell_{V_X}([1]_4)=\ell_{V_X}([3]_4)=36.$$
Thus $\ell_{V_X}$ is non-constant which also follows from Theorem \ref{T5} but $V_X$ has a lift $\mathcal V$ to $R_2$ with $r_{\mathcal V}=0$ by Lemma \ref{L1} and Theorem \ref{T2}(2).
\end{ex}

Next we exemplify the sets $\mathbb J_X=\sqcup_{j=0}^{s-1} \mathbb J_{X;j}$ introduced in Proposition \ref{P8}(5.d) and (6) and the multiplicity type of the $\mu_X$s introduced in Definition \ref{D6}(5) in a way that supplements Example \ref{EX9.5}(1) and (2).

\begin{ex}\normalfont\label{EX12}
We assume that $m\ge 2$ and we restrict to $s\in\{1,2\}$. We use the notation of Proposition \ref{P8}. In particular, we have $\lfloor\frac{l_1}{p}\rfloor=m\l_{0,0}+\l_{0,-1}$ with $(\l_{0,0},\l_{0,-1})\in (\mathbb N\cup\{0\})\times\llbracket0,m-1\rrbracket$.

\medskip
{\bf (1)} Suppose that $s=1$. We have $\delta_1(p-1)=\delta_1(p)=0$. For $t\in\llbracket0,p-2\rrbracket$ we have
\[
\delta_1(t)=
\left\lfloor\frac{p-1+(p-1-t)l_1}{p}\right\rfloor=l_1+1+\left\lfloor\frac{-1-l_1-tl_1}{p}\right\rfloor\in\llbracket1,l_1\rrbracket.
\]
In particular, $\delta_1(p-2)=\left\lfloor\frac{l_1}{p}\right\rfloor+1$ (also by Proposition \ref{P8}(2)). 
For each $b\in\llbracket1,p-2\rrbracket$, by taking $t\in\{b-1,b\}$, we get
$$\delta_1(b-1)-\delta_1(b)=\left\lfloor\frac{-1-bl_1}{p}\right\rfloor-\left\lfloor\frac{-1-l_1-bl_1}{p}\right\rfloor\in\left\{\left\lfloor\frac{l_1}{p}\right\rfloor,\left\lfloor\frac{l_1}{p}\right\rfloor+1\right\}.$$

We have $\mu_X([0]_m,p-1)=\l_{0,0}$ by the proof of Proposition \ref{P8}(3). By Proposition \ref{P8}(5d), there exists a uniquely determined subset $\mathbb J_X=\mathbb J_{X;0}$ of $C_m\times\llbracket1,p-1\rrbracket$ such that
$$\mu_X(a,b)=\begin{cases}\l_{0,0}+1\quad\;\textup{if}\quad (a,b)\in\mathbb J_X\\
\l_{0,0}\quad\quad\quad\textup{if}\quad (a,b)\in(C_m\times\llbracket1,p-1\rrbracket)\setminus\mathbb J_X.\end{cases}$$

Proposition \ref{P8}(6) and Theorem \ref{C6}(1) applied to $i=1$ give that
\begin{equation}\label{EQ62}
\begin{aligned}
\sum_{(a,b)\in\mathbb J_X} b&=\textup{g}_X-\frac{mp(p-1)\l_{0,0}}{2}=\frac{(p-1)(l_1-1)}{2}-\frac{mp(p-1)\l_{0,0}}{2}\\
&=\frac{p-1}{2}(l_1-1-pm\l_{0,0}).
\end{aligned}
\end{equation}

As we have $([0]_m,p-1)\notin\mathbb J_X$ by Proposition \ref{P8}(2), we fix an arbitrary pair $(a,b)\in (C_m\times\llbracket1,p-1\rrbracket)\setminus\{([0]_m,p-1)\}$. To determine if $(a,b)$ does or does not belong to $\mathbb J_X$ we consider three disjoint cases as follows. 

{\bf Case 1: $\l_{0,-1}=0$.} Thus $\delta_1(b-1)-\delta_1(b)\in\{m\l_{0,0},m\l_{0,0}+1\}$ for each $b\in\llbracket1,p-1\rrbracket$ by Display (\ref{EQ60}) and, as $p\nmid l_1$, there exists $\l'_{0,-1}\in\llbracket1,p-1\rrbracket$ such that $l_1=pm\l_{0,0}+\l'_{0,-1}$. For $n\in\llbracket1,\l'_{0,-1}\rrbracket$, the natural number 
\[
b_n:=\left\lfloor\frac{np-1}{\l'_{0,-1}}\right\rfloor\in\llbracket1,p-1\rrbracket
\]
is the only one in the set $\llbracket1,p\rrbracket$ for which $\delta_1(b_n-1)-\delta_1(b_n)=m\l_{0,0}+1$ and $\delta_1(b_n)=\l'_{0,-1}-n+(p-1-b_n)m\l_{0,0}$. Hence $(a,b)\in\mathbb J_X$ if and only if there exists $n\in\llbracket1,\l'_{0,-1}-1\rrbracket$ such that $(a,b)=([\l'_{0,-1}-n]_m,b_n)$. As for $n=\l'_{0,-1}$ we have $([\l'_{0,-1}-n]_m,b_n)=([0]_m,p-1)$, we conclude that
$$\mathbb J_X=\{([\l'_{0,-1}-n]_m,b_n)|n\in\llbracket1,\l'_{0,-1}-1\rrbracket\}.$$
Thus $|\mathbb J_X|=\l'_{0,-1}-1$ and for each $b\in\llbracket1,p-1\rrbracket$ we have 
$$0\le|\mathbb J_X\cap (C_m\times\{b\})|\le 1.$$
So $\mu_X$ has multiplicity type $(\l_{0,0},\l'_{0,-1}-1)=(\l_{0,0},l_1-pm\l_{0,0}-1)$ and jump type $1$ if $\l'_{0,-1}\ge 2$ and jump type $0$ if $\l'_{0,-1}=1$. Moreover, Equation (\ref{EQ62}) becomes
$$\sum_{(a,b)\in\mathbb J_X} b=\frac{(p-1)(\l'_{0,-1}-1)}{2}.$$

{\bf Case 2: $\l_{0,-1}=m-1$.} By Display (\ref{EQ60}) we have the belonging relation $\delta_1(b-1)-\delta_1(b)\in\{m\l_{0,0}+m-1,(m+1)\l_{0,0}\}$ for each $b\in\llbracket1,p-1\rrbracket$. We have $(a,b)\notin\mathbb J_m$ if and only if $\delta_1(b-1)-\delta_1(b)=m\l_{0,0}+m-1$ and $\r_{-a}=\r_{\delta_1(b)-1}$ by Proposition \ref{P8}(1). Thus for each $b\in\llbracket1,p-2\rrbracket$ we have 
$$0\le|(C_m\times\{b\})\setminus \mathbb J_X|\le 1;$$
moreover, as $\delta_1(p-2)-\delta_1(p-1)=m\l_{0,0}+m$, we have the identity of sets $(C_m\times\{p-1\})\setminus \mathbb J_X=\{([0]_m,p-1)\}$.
So 
$$(m-1)(p-1)\le|\mathbb J_X|\le m(p-1)-1=(m-1)(p-1)+(p-2).$$ The function $\mu_X$ has jump type $-1$ for $m\ge 3$ and jump type $1$ for $m=2$.  

{\bf Case 3: $\l_{0,-1}\in\llbracket1,m-2\rrbracket$.} By Display (\ref{EQ60}) we have strict inequalities $m\l_{0,0}<\delta_1(b-1)-\delta_1(b)<(m+1)\l_{0,0}$ for each $b\in\llbracket1,p-1\rrbracket$; thus $\r_{s,b}>0$ for each $b\in\llbracket1,p-1\rrbracket$. From Proposition \ref{P8}(1), with its notation, we get that 
$$\mathbb J_X=\cup_{b=1}^{p-1} \bigl\{(a,b)|a\in [\delta_{s,b}]_m+\{[i]_m|i\in\llbracket0,\r_{1,b}-1\rrbracket\}\bigr\}.$$
Thus for each $b\in\llbracket1,p-1\rrbracket$ we have 
$$1\le|(C_m\times\{b\})\setminus \mathbb J_X|\;\;\;\textup{and}\;\;\;1\le|(C_m\times\{b\})\cap \mathbb J_X|.$$
So $\mu_X$ has jump type $0$ and $|\mathbb J_X|=\sum_{b=1}^{p-1} \r_{1,b}$.

\smallskip
{\bf (2)} Suppose that $s=2$. For $b\in\llbracket1,p^2-1\rrbracket$, let $N_{2,b}$ be as in Proposition \ref{P3}(2); we have $N_{2,b-1}-N_{2,b}=pl_1$ if $p\nmid b$ and $N_{2,b-1}-N_{2,b}=l_2-(p-1)pl_1$ if $p\mid b$.  As $p\nmid l_1$ and $p\mid l_2-l_1$ by Corollary \ref{C3}(2.a) and (2.b), we have $l_2-(p-1)pl_1\neq pl_1$. Let $\l_{1,2}:=\frac{l_2-l_1-(p-1)pl_1}{p}$: so, either $\l_{1,2}=0$ or $\l_{1,2}\in\mathbb N\setminus p\mathbb N$ by Corollary \ref{C3}(2.b) and (2.c). Hence the difference
$$N_{2,b-1}-N_{2,b}=\begin{cases}pl_1\quad\quad\quad\quad\textup{if}\quad p\nmid b\\
l_1+p\l_{1,2}\quad\;\;\textup{if}\quad p\mid b\end{cases}$$
can take two distinct values and either one can be greater than the other one. We note that the pair $(\l_{1,0},\l_{1,-1})\in (\mathbb N\cup\{0\})\times\llbracket0,m-1\rrbracket$ is such that $\big\lfloor\frac{l_1}{p^2}+\frac{\l_{1,2}}{p}\big\rfloor=m\l_{1,0}+\l_{1,-1}$.

To provide a concrete example we assume that $m\ge 3$ and $l_1<p$. Thus $\frac{l_1}{p^2}<\frac{1}{p}$ and $\l_{0,0}=\l_{0-1}=0$. As $\frac{l_1}{p^2}<\frac{1}{p}$, we have $\big\lfloor\frac{\l_{1,2}}{p}\big\rfloor=m\l_{1,0}+\l_{1,-1}$. If $\l_{1,-1}=0$ (resp.\ $\l_{1,-1}=m-1$ or $\l_{1,-1}\in\llbracket1,m-2\rrbracket$), then as in the cases of part (1) we argue that the jump type of $\mu_X$ is $(1,1)$ (resp.\ $(1,-1)$ or $(1,0)$).\end{ex}

\begin{ex}\normalfont\label{EX13}
We illustrate the jump and multiplicity types when $p=5$, $m=2$, $q=1$, and $s\in\{1,2,3\}$. The upper jumps
$u_1=3$, $u_2=19$, and $u_3=107$ satisfy Lemma \ref{L7}, and the corresponding lower jumps are
\[
l_1=3,\;\;\; l_2=3+5(19-3)=83,\;\;\;\textup{and}\;\;\;
l_3=83+25(107-19)=2283.
\]
They are all odd, so Proposition \ref{P10} gives HKG covers $X\rightarrow X/G$ with $G=D_5$ if $s=1$, $G=D_{25}$ if $s=2$, and $G=D_{125}$ if $s=3$. As $\lfloor\frac{3}{5}\rfloor=0$, we have $\l_{0,0}=\l_{0,-1}=0$. Moreover, $\l_{1,2}=4$ and hence $\l_{1,0}=\l_{1,-1}=0$.

For $s=1$, Equation (\ref{EQ51}) gives $(d_0,d_1,d_2,d_3,d_4)=(3,2,2,1,0)$. From the Case 1 of Example \ref{EX12} we get that $\mathbb J_X=\{([0]_2,1),([1]_2,3)\}$; so $\mu_X$ has jump type $1$ and multiplicity type $(0,2)$.

For $s=2$, using $(l_1,l_2)=(3,83)$, the tuple $(d_0,d_1,\ldots,d_{24})$ is equal to 
$$(16,16,15,14,14,13,12,12,11,10,10,9,8,8,7,6,6,5,4,4,3,2,2,1,0).$$
Using Equation (\ref{EQ54}) and then Equation (\ref{EQ55}), the non-zero
values of the $\mu_X(a,b)$s are $\mu_X([0]_2,3r+3)=1$ for $r\in\llbracket0,6\rrbracket$ and
$\mu_X([1]_2,3r+2)=1$ for $r\in\llbracket0,7\rrbracket$; so $\mu_X$ has jump type $(1,1)$ and multiplicity type $(0,12,0,3)$.

For $s=3$, using $(l_1,l_2,l_3)=(3,83,2283)$, we have
$$\Big\lfloor\frac{l_3}{p^3}-(p-1)\Big(\frac{l_1}{p}+\frac{l_2}{p^2}\Big)\Big\rfloor=\Big\lfloor\frac{323}{125}\Big\rfloor=2.$$ Hence $\l_{3,0}=1$ and $\l_{3,-1}=0$. The tuple $(d_0,\ldots,d_{124})$ is equal to
\[
\begin{aligned}
&(89,89,88,87,87,86,85,85,84,84,83,82,81,81,80,79,79,78,77,77,76,75,\\
&75,74,74,71,70,70,69,69,68,67,66,66,65,64,64,63,63,62,61,60,60,59,59,\\
&58,57,56,56,55,53,52,52,51,50,49,49,48,48,47,46,45,45,44,44,43,42,42,\\
&41,40,39,39,38,38,37,34,34,33,33,32,31,31,30,29,29,28,27,27,26,25,24,\\
&24,23,23,22,21,21,20,19,19,16,16,15,14,14,13,12,12,11,10,10,9,8,8,7,\\
&6,6,5,4,4,3,2,2,1,0).
\end{aligned}
\]
Using Equation (\ref{EQ54}) and then Equation (\ref{EQ55}), the non-zero values of the $\mu_X(a,b)$s are precisely the following. Let
\[
\begin{aligned}
S_0:=\{&2,5,8,11,14,17,20,23,25,26,30,32,35,39,41,45,\\
       &47,50,51,54,57,60,63,66,69,72,79,82,85,88,90,94,\\
       &97,103,106,109,112,115,118,121\}\\
   \end{aligned}
\]   
and
\[
\begin{aligned}       
S_1:=\{&3,6,10,12,15,18,21,28,31,34,37,40,43,46,49,50,\\
       &53,55,59,61,65,68,70,74,75,77,80,83,86,89,92,95,\\
       &98,100,102,105,108,111,114,117,120,123\}.
\end{aligned}
\]
Then $\mu_X([0]_2,b)=1$ for each $b\in S_0$, $\mu_X([1]_2,b)=1$ for each $b\in S_1$, and $\mu_X([1]_2,25)=\mu_X([0]_2,75)=\mu_X([0]_2,100)=2$; so $\mu_X$ has jump type $(1,1,1)$ and multiplicity type $(0,58,0,19,1,3)$. For instance, as $S_0$ and $S_1$ each has $29$ elements not divisible by $5$, the second entry is $58$.
\end{ex}

\begin{ex}\normalfont\label{EX14}
Suppose that $(p,s,m,q)=(5,3,2,1)$. So $G\cong D_{125}$. We define ten subsets of the set $\mathbb J_{2,125}=\mathbb Z/2\mathbb Z\times\llbracket1,125\rrbracket$ as follows. Let
$$J_1:=\mathbb Z/2\mathbb Z\times\{2,4,8,12,18,22,28,32,34,38,42,46,48,52,54,58,62,66,68\},$$
$$J_2:=\mathbb Z/2\mathbb Z\times\{72,78,82,88,92,96,98,102,104,108,112,116,118,122\},$$
$$J_3:=\{[1]_2\}\times\{6,24,26,44,64,84,86,106,124\},$$
$$J_4:=\{[0]_2\}\times\{14,16,36,56,74,76,94,114\},$$
$$J_5:=\mathbb Z/2\mathbb Z\times\{1,7,9,11,13,17,19,21,23,27,29,31,37,39,41,43,49,51,57,59\},$$
$$J_6:=\mathbb Z/2\mathbb Z\times\{61,63,69,71,73,77,79,81,83,87,89,91,93,99,101,107,109\},$$
$$J_7:=\mathbb Z/2\mathbb Z\times\{111,113,119,121,123\},$$
$$J_8:=\{[1]_2\}\times\{3,15,35,47,55,67,75,95,103,115\},$$
$$J_9:=\{[0]_2\}\times\{5,25,33,45,53,65,85,97,105,117\},$$
and $ J_0:=\cup_{i=1}^9 J_i$. For $i\in\llbracket0,9\rrbracket$, let $V_i:=\oplus_{(a,b)\in J_i} U_{a,b}$. So $V_0$ is the $k[G]$-module considered in \cite{KT2}, Sect.\ 5.1, and we have $V_0=\oplus_{i=1}^9 V_i$. From Example \ref{EX1} we get that $\ell_{V_i}([0]_2)-\ell_{V_i}([1]_2)=0$ for each $i\in\llbracket1,7\rrbracket$ and that $\ell_{V_8}([0]_2)-\ell_{V_8}([1]_2)=-10=-\ell_{V_9}([0]_2)+\ell_{V_9}([1]_2)$. Therefore we have $\ell_{V_0}([0]_2)-\ell_{V_0}([1]_2)=0$, i.e., the character function $\ell_{V_0}$ is constant. From this and the fact that $\mu_{V_0}([0]_2,125)=\mu_{V_0}([1]_2,125)=0$, we get that $V_0$ has a lift $\mathcal V$ to $R_3$ with $r_{\mathcal V}=0$ by Theorem \ref{T2}(2). Moreover, we get that $\mu_X$ has jump type $(1,1,1)$ and multiplicity type $(0,175,0,10,0,2)$.\end{ex}

\medskip\noindent
{\bf Acknowledgement.}
The second author thanks SUNY Binghamton for good working conditions and Ofer Gabber for outlining the proof of Proposition \ref{P4}(1), the suggestion to use the Krull--Schmidt--Remak--Azumaya Theorem for $R[H]$-modules instead of projective resolutions over $R[[y]]$, and the reference \cite{dJ}, Prop.\ 4.2. 

\bibliographystyle{alpha}
 
\bigskip

\hbox{Huy Dang,\;\;\;E-mail: hdang2@binghamton.edu}
\hbox{Address: Department of Mathematics and Statistics, Binghamton University,}
\hbox{P. O. Box 6000, Binghamton, New York 13902-6000, U.S.A.}

\bigskip
\hbox{Adrian Vasiu,\;\;\;E-mail: avasiu@binghamton.edu}
\hbox{Address: Department of Mathematics and Statistics, Binghamton University,}
\hbox{P. O. Box 6000, Binghamton, New York 13902-6000, U.S.A.}
\end{document}